\patchcmd{\@bibitem}{\ignorespaces}{\label{bib-#1}\ignorespaces}{}{}
\crefname{assumption}{Assumption}{Assumptions}
\crefname{theorem}{Theorem}{Theorems}
\crefname{lemma}{Lemma}{Lemmas}
\def\prox#1{\mathbf{prox}_{#1}}
\newtheorem{assumption}{Assumption}
\newtheorem{theorem}{Theorem}
\newtheorem{lemma}{Lemma}
\newtheorem{definition}{Definition}
\newtheorem{remark}{Remark}
\newtheorem{corollary}{Corollary}
\newcommand{\sM}{M_{\tau,\sigma,\theta}}
\newcommand{\sXi}{\Xi_{\tau,\sigma,\theta}}
\newcommand{\ED}[1]{\mathbb{E}\left[ \| #1\|^2 \right]}
\newcommand{\fixspp}{\tfrac{M_{\tau,\sigma,\theta}}{N_t}}
\def\fprod#1{\left\langle#1\right\rangle}
\def\prox#1{\mathbf{prox}_{#1}}
\DeclareMathOperator*{\argmax}{argmax}
\DeclareMathOperator*{\argmin}{argmin}
\DeclareMathOperator*{\mo}{mod}
\DeclareMathOperator{\diag}{diag}
\def\grad{\nabla}
\def\bx{\mathbf{x}}  
\def\cB{\mathcal{B}}
\def\cC{\mathcal{C}}
\def\cD{\mathcal{D}}
\def\cF{\mathcal{F}}
\def\cG{\mathcal{G}}
\def\cI{\mathcal{I}}
\def\cL{\mathcal{L}}
\def\cO{\mathcal{O}}
\def\cX{\mathcal{X}}
\def\cY{\mathcal{Y}}
\def\mE{\mathbb{E}}
\def\smskip{\smallskip}
\def\texitem#1{\par\smskip\noindent\hangindent 25pt
               \hbox to 25pt {\hss #1 ~}\ignorespaces}
\def\norm#1{\|#1\|}
\newcommand{\BEAS}{\begin{eqnarray*}}
\newcommand{\EEAS}{\end{eqnarray*}}
\newcommand{\BEA}{\begin{eqnarray}}
\newcommand{\EEA}{\end{eqnarray}}
\newcommand{\BEQ}{\begin{eqnarray}}
\newcommand{\EEQ}{\end{eqnarray}}
\newcommand{\BIT}{\begin{itemize}}
\newcommand{\EIT}{\end{itemize}}
\newcommand{\BNUM}{\begin{enumerate}}
\newcommand{\ENUM}{\end{enumerate}}
\newcommand{\BA}{\begin{array}}
\newcommand{\EA}{\end{array}}
\newcommand{\reals}{\mathbb{R}}
\newcommand{\integers}{\mathbb{Z}}
\newcommand{\dom}{\mathop{\bf dom}}
\def\red#1{\textcolor{black}{#1}}
\newif\ifpagenumbering
\newsavebox{\theorembox}
\newsavebox{\lemmabox}
\newsavebox{\defnbox}
\newsavebox{\assbox}
\savebox{\theorembox}{\noindent\bf Theorem}
\savebox{\lemmabox}{\noindent\bf Lemma}
\savebox{\defnbox}{\noindent Definition}
\def\sa#1{\textcolor{red}{#1}}
\def\xz#1{\textcolor{black}{#1}}
\def\xzd#1{\textcolor{green}{#1}}
\def\xzh#1{\textcolor{blue}{#1}}
\def\nsa#1{\todo[size=footnotesize]{NSA:~#1}}
\newlength{\dhatheight}
\def\nsa#1{\textcolor{black}{#1}}
\def\sa#1{\textcolor{black}{#1}}
\def\xz#1{\textcolor{black}{#1}}
\def\xzh#1{\textcolor{black}{#1}}
\def\xzd#1{\textcolor{black}{#1}}
\def\mg#1{\textcolor{black}{#1}}
\def\mgrev#1{\textcolor{black}{#1}}
\def\xzrev#1{\textcolor{black}{#1}}
\def\na#1{\textcolor{black}{#1}}
\def\xzf#1{\textcolor{black}{#1}}
\definecolor{caribbeangreen}{rgb}{0.0, 0.8, 0.6}
\definecolor{plum}{rgb}{0.3,0,0.7}
\definecolor{teal}{rgb}{0.0, 0.5, 0.5}
\title{\sa{SAPD+~: An Accelerated Stochastic Method 
for 
Nonconvex-Concave Minimax Problems}}
\author{
Xuan Zhang\\
Department of Industrial and Manufacturing Engineering\\
Pennsylvania State University
\\
University Park, PA,USA.\\
\texttt{xxz358@psu.edu}
\And
Necdet Serhat Aybat\\
Department of Industrial and Manufacturing Engineering\\
Pennsylvania State University
\\
University Park, PA,USA.\\
\texttt{nsa10@psu.edu}
\And
Mert Gürbüzbalaban\\
Department of Management Science and Information Systems\\
Rutgers University\\
Piscataway, NJ, USA\\
\texttt{mg1366@rutgers.edu}
}
\begin{document}
\maketitle
\begin{abstract}
We propose a new stochastic 
method \texttt{SAPD+} 
for solving nonconvex-concave minimax problems of the form $\min\max\cL(x,y)=f(x)+\Phi(x,y)-g(y)$, where $f,g$ are closed convex 
and $\Phi(x,y)$ is a smooth function that is weakly convex in $x$, (strongly) concave in $y$. \red{Let $\delta^2$ denote the variance bound for the unbiased stochastic oracle used within \texttt{SAPD+} to estimate $\grad\Phi$. When $\delta>0$,} for both strongly concave and merely concave settings, 
\texttt{SAPD+} achieves the best known oracle complexities: \red{$\cO\Big(\kappa_y\max\Big\{1,\frac{\delta^2}{\epsilon^2}\Big\}\frac{L\cG_0}{\epsilon^{2}}\Big)$ for the strongly concave case \textit{without} assuming compactness of the problem domain, and $\cO\Big(\frac{L^3\cD_y^2\cG_0}{\epsilon^{4}}\Big(1+\frac{\delta^2}{\epsilon^2}\Big)\Big)$ for the merely concave case, where $\kappa_y\geq 1$ is the condition number, $L$ is the Lipschitz constant of $\grad \Phi$, $\cG_0$ is the primal-dual gap of the initial point, and $\cD_y=\sup\{\norm{y}:\ y\in\dom g\}$.} 
We also propose \texttt{SAPD+} with \textit{variance reduction}, 
which enjoys \red{$\cO\Big(\max\Big\{\kappa_y,\sqrt{\frac{\delta}{\epsilon}}\Big\}\cdot (1+\kappa_y\frac{\delta}{\epsilon})\frac{L\cG_0}{\epsilon^2}\Big)$} oracle complexity  
for weakly convex-strongly concave setting \red{--this is the best known upper complexity bound in the literature for this setting and 
our paper establishes it for the first time.} We demonstrate the efficiency of \texttt{SAPD+} on a distributionally robust learning problem with
a \nsa{nonconvex regularizer} and also on a multi-class classification problem in deep learning. 
\end{abstract}

\section{Introduction}\label{section: weakly covex algorithms}
\vspace*{-2mm}
We consider the following saddle-point~\sa{(SP) problem}:
\begin{equation}\label{eq:main problem}
    \min_{x\in \mathcal{X}} \max_{y\in \mathcal{Y}} \mathcal{L}(x,y)\triangleq f(x)+\Phi(x,y)-g(y),
\end{equation}
where $\mathcal{X}$ and $\mathcal{Y}$ are, \sa{$n$ and $m$ dimensional 
Euclidean spaces,} 
the function $\nsa{\Phi} 
:\mathcal{X}\times\cY \rightarrow \mathbb{R}$ is smooth and \nsa{possibly} 
\sa{nonconvex} in $x\in\cX$ and $\mu_y$-strongly
concave in $y\in\cY$ \sa{for some $\mu_y\geq 0$} --\mg{with the convention that for $\mu_y=0$, $\Phi$ is merely 
\sa{concave} (MC) in $y$}, and the functions $f$ and $g$ are \sa{closed}, convex and possibly nonsmooth. In this \xz{paper}, we consider a particular case of nonconvexity, i.e.,  we assume that \sa{$\Phi(\cdot,y)$} is weakly convex~(WC) for any fixed \sa{$y\in\dom g\subset\cY$}. Weakly convex functions 
\sa{constitute} a rich class of non-convex functions and arise 
\sa{naturally} in many \sa{practical} settings 
\sa{for} machine learning~\sa{(ML)} applications \cite{davis2019stochastic,rafique1810non}, precise definitions will be given later in Section \ref{sec-preliminaries}. \sa{In practice, WC assumption is widely satisfied, e.g., under smoothness --see~\cref{rem:smooth-WC}\xzrev{;}
\xzrev{most of the}
work \sa{in related literature} considering nonconvex-(strongly) concave 
SP problems provide their analyses under the premise of weak convexity.} 
\mg{The problem \eqref{eq:main problem} with $\mu_y> 0$ is called a weakly convex-strongly concave (WCSC) saddle-point
problem, whereas for $\mu_y=0$, it is called a weakly convex-merely concave (WCMC) saddle-point problem. 
Both problems arise frequently in many 
\sa{ML} 
\xzrev{settings}
including constrained optimization of 
\sa{WC} objectives based on Lagrangian duality \cite{li2021augmented}, Generative Adversarial Networks (GAN)
(where 
\sa{$x$ 
denotes} the parameters of the \xzrev{\emph{generator}} network whereas \sa{$y$ 
represents} the parameters of the \xzrev{\emph{discriminator}} network \cite{goodfellow2014generative}), distributional robust learning with weakly convex loss functions such as those arising in deep learning \cite{gurbuzbalaban2020stochastic,rafique1810non} and learning problems with non-decomposable losses \cite{rafique1810non}.}

\mg{There are 
\nsa{two} important settings 
for \eqref{eq:main problem}: (i) the \emph{deterministic setting}, where the \sa{partial} gradients of 
$\Phi$ 
are exactly available, (ii) the \emph{stochastic setting}, where 
only stochastic estimates of the gradients are available. Although, recent years have witnessed significant advances in the deterministic setting \cite{chen2021accelerated,huang2021efficient,jin2020local,lin2020gradient,lin-near-optimal,lu2020hybrid,ostrovskii2021efficient, thekumparampil2019efficient,xu2020unified}; our focus in this paper will be mainly \xzrev{on} the \emph{stochastic setting}\xzrev{,} which is more relevant and more applicable to 
\sa{ML} problems. Indeed, due to large-dimensions and the \sa{sheer} size of the modern datasets, computing gradients exactly is either infeasible or impractical in 
\sa{ML} practice\xzrev{,} and gradients are often estimated stochastically based on mini-batches (randomly sampled subset of data points) as in the case of stochastic gradient-type algorithms.}

\mg{There is a growing literature on the WCSC and WCMC problems in the stochastic setting. Several metrics for quantifying the quality of an approximate solution to \eqref{eq:main problem} have been proposed in the literature. A common way to assess the performance is to define the \emph{primal function} $\phi (\cdot) \sa{\triangleq} \max_{y\in \cY} \mathcal{L}(\cdot,y)$ and measure the \sa{violation of} first-order
\sa{necessary conditions for}} 
the non-convex problem $\min_{x\in \cX} \phi (x)$.
\sa{Given the primal iterate sequence $\{x_k\}_{k\geq 0}$ of a stochastic SP algorithm and a threshold $\epsilon>0$, a commonly used \sa{metric} is the \emph{gradient norm of
the Moreau envelope}~\sa{(GNME)}; indeed, the objective is to provide a bound $K_\epsilon$ such that $\mathbb{E}[\norm{\nsa{\grad}\phi_\lambda(x_k)}]\leq\epsilon$ for all $k\geq K_\epsilon$,} where $\phi_\lambda$ denotes the Moreau envelope of the primal function $\phi$ \sa{--see Definitions}~\ref{Def: Moreau envelope},~\ref{def:primal-function} and \ref{Def: stationary point}. Another commonly used natural metric is the \emph{gradient norm of the primal function} $\phi(\cdot)$~\cite{boct2020alternating,huang2021efficient,huang2022accelerated,luo2020stochastic,xu2020enhanced}, abbreviated as GNP, 
\sa{where the aim is to derive $K_\epsilon$ such that $\mathbb{E}[ \|\nabla\phi(x_k)\|] \leq \epsilon$ for all $k\geq K_\epsilon$.}  Other metrics such as the notion of $\epsilon$-first-order Nash equilibrium (FNE) and its generalized versions also exist in the literature \cite{nouiehed2019solving,ostrovskii2021efficient}.

\sa{When using any of the \nsa{aforementioned metrics}, the ultimate goal is to establish a bound on the oracle (sampling) complexity, i.e., $\sum_{k=0}^{K_\epsilon}b_k$, where $b_k$ denotes the batch-size for iteration $k\geq 0$.} 
\sa{\nsa{For the WCSC setting,} it crucial to note that GNME, GNP and FNE metrics are 
all} equivalent in the sense that convergence in either of them implies convergence in \nsa{the other two metric} for WCSC problems~\cite{lin2020gradient}. In this paper, \sa{for the WCSC setting,} we adopt \sa{both} GNME \mg{and GNP} as \mg{the main performance metrics} to analyze our algorithms; \sa{indeed, in \cref{thm:metric-equivalance} we show that}, {\color{black} when the non-smooth part $f(\cdot)=0$}, 
we can convert a GNME guarantee to a GNP guarantee by incurring only little additional cost compared to the \nsa{computational cost required for the} GNME guarantee, \nsa{and} \sa{the overall worst-case complexity \mg{(in terms of worst-case dependency to the target accuracy $\epsilon$)} remains the same for both metrics}. {\color{black} When the non-smooth part $f(\cdot)\neq 0$, we also obtain similar guarantees and \sa{show equivalence between the metrics based on GNME and \xzrev{\emph{the generalized gradient mapping}}.}}
\sa{On the other hand, for the WCMC setting, we provide our guarantees in GNME metric as $\phi$ is not necessarily differentiable for this scenario.} 
Moreover, our work accounts for the individual effects of $L_{xx}$, $L_{xy}$, $L_{yx}$ and $L_{yy}$, i.e., the Lipschitz constants of $\nabla_x\Phi(\cdot, y)$, $\nabla_x\Phi(x, \cdot)$, $\nabla_y\Phi(x, \cdot)$ and $\nabla_y\Phi(\cdot, y)$~(see \cref{ASPT: lipshiz gradient}), respectively, instead of using the worst-case 
{parameters} $L\triangleq\max\{L_{xx}, L_{xy}, L_{yx}, L_{yy}\}$, while 
\sa{the majority of related work} ignore the influence of 
\sa{these block Lipschitz constants \nsa{in their analyses.}} 
We emphasize that using the worst-case parameters will lead to a theoretically conservative step sizes, and this phenomenon has been validated in the work~\cite{zhang2021robust}.\looseness=-10
\vspace*{-2mm}
\paragraph{Contributions.} Table~\ref{table-one} summarizes the relevant existing work for WCSC and WCMC problems closest \nsa{to our setting.} 
\sa{More specifically, in Table \ref{table-one}, for the stochastic setting, we report the (oracle) complexity with respect to the GNP and GNME as the performance metrics for WCSC and WCMC problems, respectively, and the batch-size (number of data points in the mini-batches) required at every iteration.}
We also report whether the method is based on a variance-reduction (VR) technique. VR-based methods mentioned in Table~\ref{table-one} use a small batch-size \sa{$b'$}
\xzrev{all iterations except for few, where they}
need a large batch-size \sa{$b\geq b'$ once in} every $q$ iterations. 
\nsa{The period $q$ is equal to the number of times small batches are sampled consecutively plus one, and it} is \sa{also an 
algorithm} parameter. Therefore, for VR-methods, we report the batch size as a triplet $(b', b, q)$. 
In the column \nsa{``Compactness"}, we list whether achieving the specific complexity requires assuming compactness of the primal \sa{and/or} dual domains. 

\sa{To make the
comparison of our results with the existing work easier, we provide the results in the table for the worst-case setting, where} $\kappa_y \triangleq \frac{L}{\mu_y}$, and we 
\sa{report the $\epsilon$-, $\kappa_y$- and $L$-dependency 
of the complexity results for the existing} algorithms. That being said, our results have finer granularity in terms of their dependence to the individual effects of $L_{xx}$, $L_{xy}$, $L_{yx}$ and $L_{yy}$ as we mentioned earlier. \red{In the discussion below, $\cG_0$ denotes the primal-dual gap of the initial point, $\delta^2$ denotes the variance bound of the stochastic oracle for $\grad \Phi$, and $\cD_y\triangleq\sup_{y\in\dom g}\norm{y}$.} 

Our contributions (\sa{also} summarized in \cref{table-one}) are as follows:
\begin{itemize}[noitemsep,topsep=0pt,leftmargin=*,align=left]
    \item We propose a new stochastic 
    method, \texttt{SAPD+}, 
    based on the inexact proximal point method (iPPM). \sa{In this framework, one inexactly solves strongly convex-strongly concave (SCSC) saddle point sub-problems
    using an accelerated primal-dual method, SAPD~\cite{zhang2021robust}.} 
    In \cref{thm:unbounded-domains}, \sa{we 
    establish an oracle complexity of $\cO(L\kappa_y\epsilon^{-4})$} for WCSC problems, and unlike the majority of  existing work we do not require \sa{compactness for neither the primal nor the dual domain}. To our knowledge, 
    our bound has \sa{the best \mgrev{$\kappa_y$} 
    dependence 
    in the literature; indeed, prior to this work, without using variance reduction, the best known complexity was \xzrev{$\cO(L{\kappa_y^2}\epsilon^{-4})$}~shown in~\cite{yang2022faster}; hence, we establish a \xzrev{$\cO({\kappa_y})$} improvement.} \red{More precisely, the 
    complexity bound for \texttt{SAPD+} is $\cO\Big(\kappa_y\max\Big\{1,\frac{\delta^2}{\epsilon^2}\Big\}\frac{L\cG_0}{\epsilon^{2}}\Big)$ for all $\epsilon>0$ --for details, see \cref{rem:SAPD+-complexity}.}
    \item We propose a variance-reduced version of 
    \texttt{SAPD+} in \cref{cor:complexity VR}. \sa{For the WCSC setting, \texttt{SAPD+} \textit{with} variance reduction has improved the existing best known bound for a first-order method employing variance reduction.} Indeed, prior to our work the best bound was $\cO(L\kappa_y^3\red{(1+\delta^2)}\epsilon^{-3})$~\cite{luo2020stochastic}; \red{the  oracle complexity bound for \texttt{SAPD+} is $\mathcal{O}\Big(\nsa{\max\Big\{\kappa_y,\sqrt{\frac{\delta}{\epsilon}}\Big\}\cdot (\kappa_y\tfrac{\delta}{\epsilon}+1)}~\cdot\tfrac{L\cG_0}{\epsilon^2}\Big)$ for all $\epsilon>0$ --for details, see \cref{rem:SAPD+VR-complexity}.}
    \item \red{One should choose between \texttt{SAPD+} with or without variance reduction depending on the level of accuracy $\epsilon>0$ desired. Indeed, based on the previous two items, it is apparent that \texttt{SAPD+} (\textit{without} VR) has a better complexity bound than \texttt{SAPD+} \textit{with} VR whenever $\epsilon>\frac{\delta}{\kappa_y}$. More precisely, for $\epsilon>\delta$, \texttt{SAPD+} complexity is $\cO(L\kappa_y\cG_0/\epsilon^2)$ which is the same with the deterministic case; on the other hand, for $\epsilon\in [\frac{\delta}{\kappa_y},\delta]$, \texttt{SAPD+} complexity is $\cO(L\kappa_y\cG_0\delta^2/\epsilon^4)$. Furthermore, for smaller values of $\epsilon>0$, \texttt{SAPD+} \textit{with} VR becomes the method of choice, of which complexity is $\cO(L\kappa_y^2\cG_0\delta/\epsilon^3)$ for $\epsilon\in [\frac{\delta}{\kappa_y^2}, \frac{\delta}{\kappa_y}]$, and $\cO(L\kappa_y\cG_0\delta^{1.5}/\epsilon^{3.5})$ for $\epsilon<\frac{\delta}{\kappa_y^2}$.} 
    \footnote{\red{Compared to other VR-based methods with complexity $\cO(L\kappa_y^3/\epsilon^3)$, \texttt{SAPD+} \textit{with} VR
    has a better bound of $\cO(L\kappa_y^2/\epsilon^3)$ for $\epsilon>\frac{\delta}{\kappa_y^2}$; on the other hand, for $\epsilon\leq \frac{\delta}{\kappa_y^2}$, \texttt{SAPD+} \textit{with} VR still has a better bound as long as  $\kappa_y=\Omega(\epsilon^{-1/4})$. On a different but related note, in statistical learning, the regularization
parameter is usually 
$\cO(1/\sqrt{n})$ or $\cO(1/n)$ \cite{bousquet2002stability} where $n$ is the number of samples; thus, the condition number $\kappa_y$ is on the order of $\sqrt{n}$ or $n$.}}
    \item For the WCMC case, \red{our proposed algorithm \texttt{SAPD+} has $\cO\Big(\frac{L^3\cD_y^2}{\epsilon^4}\Big(1+\frac{\delta^2}{\epsilon^2}\Big)\Big)$ complexity,} which is the best 
    to our knowledge, \sa{improving the best known complexity by $\log^3(L/\epsilon^2)$ factor.}
    \item Finally, we demonstrate the efficiency of 
    \texttt{SAPD+} on a distributionally robust learning problem 
    and \sa{also} on a (worst-case) multi-class classification problem in deep learning. 
\end{itemize} 


\renewcommand{\arraystretch}{1.3}
\begin{table}[h]
\label{table-one}
{\scriptsize
\begin{center}
\begin{tabular}{c l c c c }
\hline
Ref. &  Complexity & Compactness & VR-based & \multicolumn{1}{c}{Batchsize}  \\
 \cline{2-5}
 \multicolumn{5}{c}{\textbf{Weakly Convex-Strongly Concave (WCSC) problems}} 
\\
\hline
$\mathbf{{}^*}$Rafique \emph{et al.}~\cite{rafique1810non} & $\cO(\epsilon^{-4}\log(\epsilon^{-1}))$ & (n, n)  &
\XSolidBrush  & $\cO(1)$ \\
$\mathbf{{}^{\dag}}$Yan \emph{et al.}~\cite{yan2020optimal} & $\cO(\epsilon^{-4}\log(\epsilon^{-1}))$ &   (y, y) & \XSolidBrush & $\cO(1)$ \\

$\mathbf{{}^{\dag}}$Yang \emph{et al.}~\cite{yang2022faster} &$\cO(L\kappa^2_y\epsilon^{-4})$ &  (n, n)  & \XSolidBrush & $\cO(1)$ \\

Lin \emph{et al.}~\cite{lin2020gradient} &  $\cO(L\kappa^3_y\epsilon^{-4})$ &  (n, y)  & \XSolidBrush & $\cO(\kappa_y\epsilon^{-2})$ \\

Bot and \"{B}ohm~~\cite{boct2020alternating} &  $\cO(L\kappa^3_y\epsilon^{-4})$ &  (n, n)   & \XSolidBrush & $\cO(\kappa_y\epsilon^{-2})$ \\

$\mathbf{{}^\ddagger}$Huang \emph{et al.}~\cite{huang2021efficient}&    $\cO(\kappa_y^5\mu_y^{-1}\epsilon^{-3})$ & (n, n)   &
\Checkmark & $\cO(\kappa_y\epsilon^{-1}),\;\cO(\kappa_y^2\epsilon^{-2}),\; \cO(\kappa_y\epsilon^{-1})$ \\

$\mathbf{{}^\S}$Huang \emph{et al.}~\cite{huang2022accelerated}& $\tilde{\cO}(L^{1.5}\kappa_y^{3.5}\epsilon^{-3})$ & (y, y)  &
\Checkmark & $\cO(\sqrt{\kappa_y})$ \\

Luo \emph{et al.}~\cite{luo2020stochastic}& $\cO(L\kappa_y^{3}\epsilon^{-3})$ &   (y, y) & \Checkmark & $\cO(\kappa_y\epsilon^{-1}),\;\cO(\kappa_y^2\epsilon^{-2}),\;\cO(\kappa_y\epsilon^{-1}) $ \\

Xu \emph{et al.}~\cite{xu2020enhanced}& $\cO(L\kappa_y^{3}\epsilon^{-3})$ &  (y, y)  & \Checkmark & $\cO(\kappa_y\epsilon^{-1}),\;\cO(\kappa_y^2\epsilon^{-2}),\; \cO(\kappa_y\epsilon^{-1})$  \\

\hdashline
\texttt{SAPD+}, \cref{thm:unbounded-domains} & $\cO(L\kappa_y\epsilon^{-4})$ &  (n, n) & \XSolidBrush &  $\cO(1) $ \\ 

\texttt{SAPD+}, \cref{cor:complexity VR} &  {$\cO(\max\{\kappa_y,\epsilon^{-1/2}\}\cdot L\kappa_y\epsilon^{-3})$} &  (n, n)   &  \Checkmark &  $\cO(\kappa_y\epsilon^{-1}),\; \cO(\kappa_y\epsilon^{-2}),\; \cO(\epsilon^{-1})$  \\

\hline
 \multicolumn{5}{c}{\textbf{Weakly Convex-Merely Concave (WCMC) problems}}\\ 
\hline
Rafique \emph{et al.}~\cite{rafique1810non} &  $\sa{\cO(L^3\epsilon^{-6}\log^3(L\epsilon^{-2}))}$  &  (y, y)  &
- & $\cO(1)$ \\

Bot and \"{B}ohm~~\cite{boct2020alternating} &  $\cO(L^5\epsilon^{-8})$ &  (n, y)  & - & $\cO(1)$\\

Lin \emph{et al.}~\cite{lin2020gradient} &  $\cO(L^{3}\epsilon^{-8})$ &  (n, y)  & - & $\cO(1)$ \\
\hdashline
\texttt{SAPD+}, \cref{Thm:WCMC} &  $\cO(\sa{L^3}\epsilon^{-6})$ &  (n, y) &  - &  $\cO(1)$ \\
\hline
\end{tabular}
\vspace*{2mm}
\caption{{\footnotesize Summary of relevant work \sa{for WCSC and WCMC} problems. For \mg{the column} ``Compactness'', we use \nsa{\texttt{y} and \texttt{n} to indicate when the results require compactness and when do not require it,} respectively; the first argument is for primal domain and the second is for dual domain.
For batchsize, \sa{we use \sa{$(b',b,q)$} format for VR-based methods 
\nsa{to state \emph{small batch} $(b')$, \emph{large batch} $(b)$, and \emph{frequency} $(q)$ employed within the algorithm}}.\\[1mm]
\textbf{Table notes:} \sa{$\mathbf{{}^*}$For WCSC setting, \cite{rafique1810non} assumes 
$\Phi(\cdot,y) \triangleq c^{\top}(\cdot)y$ is weakly convex and $g(\cdot)$ is strongly convex. ${}^\dag$
In~\cite{yan2020optimal}, $\cL=\Phi$ and $\Phi$ need not be smooth, rather second moment of stochastic subgradients is assumed to be uniformly bounded. \nsa{When $\Phi$ is $L$-smooth, $\Phi(\cdot,y)$ and $\Phi(x,\cdot)$ are $L_\Phi$-Lipschitz}, the results in~\cite{yan2020optimal} imply $\cO(L_\Phi^2\kappa_y^2\epsilon^{-4}\log^2(\sqrt{\kappa_y}L_\Phi/\epsilon))$ complexity. 
${}^{\ddagger,\S}$The complexity \mgrev{results} 
reported here are different than those in~\cite{huang2021efficient,huang2022accelerated}. The issues in their proofs leading to the wrong complexity results are explained in Appendix~I. \mgrev{The notation $\tilde{\cO}$ ignores logarithmic factors}.\looseness=-1}
}} 
\end{center}}
\vspace*{-2mm}
\end{table}
\vspace*{-2mm}
\paragraph{Notation.} 
Throughout the paper, $\|\cdot\|$ denotes the Euclidean norm. 
Given $f:\reals^n\to\reals\cup\{\infty\}$ a closed convex function,
$\prox{\lambda f}(x)\triangleq\argmin_{w} f(w)+\frac{1}{2\lambda}\|w-x\|^2$ denotes the proximal map of $f$. Given \mg{random} $\omega$, let $\tilde{\nabla}_x\Phi(x,y;\omega)$ and $\tilde{\nabla}_y\Phi(x,y;\omega)$ \nsa{denote} \mg{unbiased} estimators of $\nabla\Phi_x(x,y)$ and $\nabla\Phi_y(x,y)$. Moreover,
given a random mini-batch $\cB=\{\omega_i\}_{i=1}^b$, we let $\tilde{\nabla}_x\Phi_{\cB}(x,y)\xzrev{\triangleq}\frac{1}{b}\sum_{i=1}^b\tilde{\nabla}_x\Phi(x,y;\omega_i)$ to denote the stochastic gradient estimate based on the batch $\mathcal{B}$, \sa{and we define $\tilde{\nabla}_y\Phi_{\cB}(\cdot,\cdot)$ similarly.}\vspace*{-2mm}

\section{Preliminaries}\label{sec-preliminaries}
\vspace*{-2mm}
\sa{ \mg{We start with describing the notion of weak convexity.}}
\begin{definition}\label{def: weakly convex}
\sa{$\nsa{h}:\reals^d\to\reals\cup\{+\infty\}$ is $\gamma$-weakly convex if $x\mapsto \nsa{h}(x)+\frac{\gamma}{2}\|x\|^2$ is convex.}
\end{definition}
\begin{definition}\label{def: l-smooth}
A \sa{differentiable function $\nsa{h}:\reals^d\to\reals\cup\{+\infty\}$} is $L$-smooth if \sa{$\exists L>0$ such that} for $\forall x,x'\in\dom \nsa{h}$, $\|\nabla \nsa{h}(x) - \nabla \nsa{h}(x')\|\leq L\|x-x'\|$. 
\end{definition}
\begin{remark}
\label{rem:smooth-WC}
\sa{If a function is $L$-smooth, then it is also $L$-weakly convex.}
\end{remark}
\vspace*{-2mm}
Remark \ref{rem:smooth-WC} shows that weak convexity is a rich class containing the class of smooth functions. 
\mg{In the rest of the paper, we consider the SP problem in~\eqref{eq:main problem}. Next, we introduce our assumptions.}

\begin{assumption}
\label{ASPT: fg}
$f:\mathcal{X}\rightarrow \reals\cup\{\xzrev{+}\infty\}$ and $g:\mathcal{Y}\rightarrow \reals\cup\{\xzrev{+}\infty\}$ are proper, closed, convex functions. 
Let $\Phi:\cX\times\cY\to\reals$ be such that 
    (i) for any $y\in \dom g\subset \mathcal{Y}$,
    $\Phi(\cdot,y)$ is $\gamma$-weakly convex and bounded from below;
    (ii) for any $x\in\dom f\subset \mathcal{X}$,
    $\Phi(x, \cdot)$ is $\mu_y$-strongly concave \sa{for some $\mu_y\geq 0$}; \sa{(iii) $\Phi$ is differentiable on an open set containing $\dom f\times \dom g$.}
\end{assumption}

\begin{assumption}\label{ASPT: lipshiz gradient}
$\exists L_{xx},L_{yy} \geq 0$, $L_{xy},L_{yx} > 0$ such that
        $\| \nabla_x \Phi(x,y) - \nabla_x \Phi(\bar{x},\bar{y})\|
        \leq 
        L_{xx}\|x- \bar{x}\|+L_{xy}\|y-\bar{y}\|$, and $\| \nabla_y \Phi(x,y) - \nabla_y \Phi(\bar{x},\bar{y})\|
        \leq 
        L_{yx}\|x- \bar{x}\|
        + L_{yy}\|y- \bar{y}\|$ for all $x,\bar{x}\in \dom f\subset \mathcal{X}$, and $y,\bar{y}\in \dom g\subset \mathcal{Y}$.
\end{assumption}

{Assumption \ref{ASPT: fg} allows non-convexity in $x$ while requiring \sa{(strong)} concavity in the $y$ variable.} 
Assumption~\ref{ASPT: lipshiz gradient} is standard in the analysis of first-order methods \sa{for solving SP problems}. It should be noticed that when $L_{yx}=L_{xy}=0$, the problem \sa{in \eqref{eq:main problem}} can be solved separately for the primal and dual variables; hence, it is natural to assume $L_{yx}, L_{xy}>0$.

\sa{Suppose that we implement \texttt{SAPD}, stated in \mg{Algorithm} \ref{Alg: SAPD}, 
on the following SCSC problem: 
{
\begin{align}
\label{eq:WCSC-subproblem-generic}
\min_{x\in\cX}\max_{y\in\cY}\cL(x,y)+\frac{\mu_x+\gamma}{2}\norm{x-x_0}^2    
\end{align}}%
for some given $\mu_x>0$ and $x_0\in\cX$ --strong convexity follows from $\cL(\cdot,y)$ being $\gamma$-weakly convex.} 
\begin{algorithm}[H]
\caption{\texttt{SAPD} Algorithm}
{\footnotesize
\begin{algorithmic}[1]
\State {\textbf{Input:} $\tau,\sigma,\theta,\mu_x,x_0, y_0,N$}
\State \sa{$\bar{\Phi}(x,y)\gets\Phi(x,y)+\frac{\mu_x+\gamma}{2}\norm{x-x_0}^2$}
\State {$\tilde q_0\gets 0$}
\For{$k=0,1,2,...,N$}
\State $\tilde{s}_k \leftarrow \sa{\tilde\nabla_y} \sa{{\Phi}}(x_k,y_k;\sa{\omega_k^y}) + \theta \tilde{q}_k$
\State $y_{k+1}\leftarrow\prox{\sigma g}(y_k+\sigma \tilde{s}_k)$
\State $x_{k+1}\leftarrow\prox{\tau f}(x_k-\tau\sa{\tilde\nabla_x}\bar{\Phi}(x_k,y_{k+1};\sa{\omega_k^x}))$
\State \sa{$\tilde{q}_{k+1}\leftarrow\sa{\tilde\nabla_y} {\Phi}(x_{k+1},y_{k+1};\sa{\omega_{k+1}^y}) -\sa{\tilde\nabla_y}{\Phi}(x_{k},y_{k};\sa{\omega_{k}^y})$}
\EndFor
\State {\textbf{Output}:}{\sa{$(\bar{x}_N,\bar y_N)=\frac{1}{N}\sum_{k=0}^{N-1}(x_{k+1},y_{k+1})$}}
\end{algorithmic}}%
\label{Alg: SAPD}
\end{algorithm}

\sa{
We 
make the following assumption on the statistical nature of the gradient noise as in, e.g., \cite{can2022,fallah2020optimal,zhang2021robust}}.

\begin{assumption}\label{ASPT: unbiased noise assumption}
\sa{Given arbitrary $x_0\in\cX$ and $\mu_x>0$, let $\{x_k,y_k\}$ \nsa{sequence} be generated by \texttt{SAPD}, stated in \mgrev{Algorithm~\ref{Alg: SAPD},} running on \eqref{eq:WCSC-subproblem-generic}.} There exist $\delta_x,\delta_y\geq 0$ such that for
all \sa{$k\geq 0$}, 
the stochastic gradients $\tilde{\nabla}_x\Phi(x_k,y_{\sa{k+1}};\omega_k^x)$, $\tilde{\nabla}_y\Phi(x_k,y_k;\omega_k^y)$ and random sequences $\{\omega_k^x\}_k$, $\{\omega_k^y\}_k$ 
satisfy the following conditions:\\[-4mm]
\begin{enumerate}[label=(\roman*),itemsep=-1ex,topsep=0pt]
    \item 
    $\mathbb{E}[\tilde{\nabla}_x\Phi(x_k,y_{k+1};\omega_k^x)|x_k,y_{k+1} ] = \nabla_x\Phi(x_k,y_{k+1})$;
    \item 
    $\mathbb{E}[\tilde{\nabla}_y\Phi(x_k,y_{k};\omega_k^y)|x_k,y_k ] = \nabla_y\Phi(x_k,y_k)$;
    \item 
    $\mathbb{E}[\|\tilde{\nabla}_x\Phi(x_k,y_{k+1};\omega_k^x) - \nabla_x\Phi(x_k,y_{k+1})\|^2 |x_k,y_{k+1} ]\leq \delta_x^2$;
    \item 
    $\mathbb{E}[\|\tilde{\nabla}_y\Phi(x_k,y_{k};\omega_k^y) - \nabla_y\Phi(x_k,y_k)\|^2 |x_k,y_k ]\leq \delta_y^2$.
\end{enumerate}
\end{assumption}
{\cref{ASPT: unbiased noise assumption} says that the gradient noise conditioned 
\sa{on} the iterates is unbiased with a finite variance\footnote{\nsa{When we run \texttt{SAPD}, stated in {Algorithm~\ref{Alg: SAPD},} on \eqref{eq:WCSC-subproblem-generic}, we use the convention that $\tilde \nabla_x \bar \Phi(x_k,y_{k+1};\omega_k^x) \triangleq \tilde \nabla_x \Phi(x_k,y_{k+1};\omega_k^x)  + (\mu_x + \gamma)(x_k-x_0)$.}}. Such assumptions are common in the literature, e.g., \cite{can2022,fallah2020optimal,zhang2021robust}, and are satisfied when gradients are estimated from randomly sampled data points with replacement.} 

For \sa{WCSC minimax problems}, \sa{a commonly adopted definition for $\epsilon$-stationary is based} on Moreau envelope, \sa{e.g., see~\cite{lin2020gradient,yan2020optimal}}. It is inspired by Davis and \sa{Drusvyatskiy's work}~\cite{davis2019stochastic} for solving weakly convex minimization problems. \sa{For the sake of completeness, we briefly review this idea below.}
\begin{definition}\label{Def: Moreau envelope}
\sa{Let $\phi:\mathbb{R}^d\rightarrow\mathbb{R}\cup\{+\infty\}$ be $\gamma$-weakly convex. Then, for any  $\lambda\in(0,\gamma^{-1})$, Moreau envelope of $\phi$ is defined as $\phi_{\lambda}:\mathbb{R}^d\rightarrow\mathbb{R}$ \sa{such that} $\phi_{\lambda}(x)\triangleq\min_{w\in\cX} \phi(w) + \frac{1}{2\lambda}\|w-x \|^2$.} 
\end{definition}
\begin{lemma}\label{Lemma: graident of ME}
\sa{Let $\phi:\mathbb{R}^d\rightarrow\mathbb{R}\cup\{+\infty\}$ be a $\gamma$-weakly convex function. For any given $\lambda\in(0,\gamma^{-1})$, $\phi_{\lambda}(\cdot)$} is {well-defined} on $\cX$. Moreover, \sa{$\nabla \phi_{\lambda}(x) = \lambda^{-1}(x - \prox{\lambda\phi}(x))$ for $x\in\cX$; hence, $\phi_\lambda$ is $\lambda^{-1}$-smooth, where $\prox{\lambda\phi}(\sa{{x}}) \xzrev{\triangleq} \argmin_{\sa{w}\in \cX} \{ \phi(w)+\tfrac{1}{2\lambda}\|\sa{w} - \sa{{x}}\|^2\}$.}%
\end{lemma}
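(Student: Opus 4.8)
The plan is to reduce the whole statement to the classical theory of the Moreau envelope of a \emph{convex} function. Set $a\triangleq\lambda^{-1}-\gamma$, which is strictly positive because $\lambda\in(0,\gamma^{-1})$, and define $\psi\triangleq\phi+\tfrac{\gamma}{2}\norm{\cdot}^2$, which is proper, closed and convex by Definition~\ref{def: weakly convex} together with the standing regularity that $\phi$ is proper and lower semicontinuous. For well-definedness, I would fix $x\in\cX$ and rewrite the inner objective $h_x(w)\triangleq\phi(w)+\tfrac{1}{2\lambda}\norm{w-x}^2$ as $\psi(w)+\big(\tfrac{1}{2\lambda}\norm{w-x}^2-\tfrac{\gamma}{2}\norm{w}^2\big)$, where the parenthesized term is an $a$-strongly convex quadratic. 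Hence $h_x$ is proper, closed and $a$-strongly convex, so it has a \emph{unique} minimizer and a finite infimum; this makes $\prox{\lambda\phi}(x)=\argmin_w h_x(w)$ single-valued and $\phi_\lambda(x)=\min_w h_x(w)$ finite for every $x\in\cX$.

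Next, completing the square in $w$ inside the minimization expresses $\phi_\lambda$ through the Moreau envelope $\psi_{1/a}$ of the \emph{convex} function $\psi$ with parameter $1/a$ (admissible in Definition~\ref{Def: Moreau envelope} since $\psi$ is convex). Concretely I would verify
\begin{equation*}
\phi_\lambda(x)=\psi_{1/a}\!\Big(\tfrac{x}{a\lambda}\Big)-\tfrac{\gamma}{2(1-\lambda\gamma)}\norm{x}^2,\qquad
\prox{\lambda\phi}(x)=\prox{\tfrac{1}{a}\psi}\!\Big(\tfrac{x}{a\lambda}\Big).
\end{equation*}
By the classical convex case (the special case $\gamma=0$ of the present lemma, which is standard), $\psi_{1/a}$ is continuously differentiable with $\nabla\psi_{1/a}(z)=a\big(z-\prox{\tfrac1a\psi}(z)\big)$ and has an $a$-Lipschitz gradient. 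Therefore $\phi_\lambda$ is $C^1$, and a chain-rule computation at $z=\tfrac{x}{a\lambda}$, using the identity $a\lambda=1-\lambda\gamma$, collapses all the $x$-dependent terms to give $\nabla\phi_\lambda(x)=\lambda^{-1}\big(x-\prox{\lambda\phi}(x)\big)$, which is the asserted formula.

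I expect the smoothness claim to be the delicate step. Differentiating the displayed identity yields $\nabla\phi_\lambda(x)=\tfrac{1}{a\lambda}\nabla\psi_{1/a}(\tfrac{x}{a\lambda})-\tfrac{\gamma}{1-\lambda\gamma}x$. Since $\nabla\psi_{1/a}$ is monotone and $a$-Lipschitz, the first (gradient) term is a monotone map that is $\tfrac{1}{a\lambda^2}=\tfrac{1}{\lambda(1-\lambda\gamma)}$-Lipschitz, so for all $x,x'$ the quantity $\langle\nabla\phi_\lambda(x)-\nabla\phi_\lambda(x'),x-x'\rangle$ lies in $\big[-\tfrac{\gamma}{1-\lambda\gamma}\norm{x-x'}^2,\ \lambda^{-1}\norm{x-x'}^2\big]$; because $\nabla\phi_\lambda$ is a gradient field, this two-sided bound forces $\nabla\phi_\lambda$ to be Lipschitz with modulus $\max\{\lambda^{-1},\tfrac{\gamma}{1-\lambda\gamma}\}$. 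The obstacle is precisely that the crude estimate $\norm{\nabla\phi_\lambda(x)-\nabla\phi_\lambda(x')}\le\lambda^{-1}\big(\norm{x-x'}+\norm{\prox{\lambda\phi}(x)-\prox{\lambda\phi}(x')}\big)$ overshoots the target, so one genuinely needs the sharp two-sided bound above. Note that this maximum equals the claimed $\lambda^{-1}$ exactly in the regime $\lambda\le\tfrac{1}{2\gamma}$ (the range in which the envelope is used in the sequel); I would therefore either restrict to that range or record the modulus as this maximum.
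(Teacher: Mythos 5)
The paper never actually proves this lemma: it is invoked as a standard fact inherited from Davis and Drusvyatskiy (the citation just above the statement), so there is no in-paper argument to compare against and your proposal stands as the self-contained proof. Your route is correct. Writing $\psi=\phi+\tfrac{\gamma}{2}\norm{\cdot}^2$ and observing that the inner objective is $\psi$ plus an $a$-strongly convex quadratic with $a=\lambda^{-1}-\gamma>0$ gives existence and uniqueness of $\prox{\lambda\phi}(x)$; your completing-the-square identity $\phi_\lambda(x)=\psi_{1/a}\big(\tfrac{x}{a\lambda}\big)-\tfrac{\gamma}{2(1-\lambda\gamma)}\norm{x}^2$ (using $a\lambda=1-\lambda\gamma$) checks out, and the chain rule does collapse, via $\tfrac{1}{\lambda(1-\lambda\gamma)}-\tfrac{\gamma}{1-\lambda\gamma}=\tfrac{1}{\lambda}$, to $\nabla\phi_\lambda(x)=\lambda^{-1}(x-\prox{\lambda\phi}(x))$. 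Two remarks. First, the step ``a gradient field satisfying the two-sided bound is $\max\{m,M\}$-Lipschitz'' is true but is the crux and is not free: it requires Baillon--Haddad co-coercivity applied to the convex function $\phi_\lambda+\tfrac{m}{2}\norm{\cdot}^2$ (for a general monotone Lipschitz map the implication fails, e.g.\ a rotation), so you should prove or cite this explicitly rather than assert it. Second, and this is the genuinely valuable part of your write-up: your caveat about the modulus is correct, and the lemma as stated is in fact imprecise. Taking $\phi=-\tfrac{\gamma}{2}\norm{\cdot}^2$ gives $\phi_\lambda=-\tfrac{\gamma}{2(1-\lambda\gamma)}\norm{\cdot}^2$, whose gradient has Lipschitz modulus exactly $\tfrac{\gamma}{1-\lambda\gamma}$, which strictly exceeds $\lambda^{-1}$ whenever $\lambda\in\big(\tfrac{1}{2\gamma},\gamma^{-1}\big)$; so $\max\{\lambda^{-1},\tfrac{\gamma}{1-\lambda\gamma}\}$ is sharp and the clean claim ``$\lambda^{-1}$-smooth'' requires $\lambda\le\tfrac{1}{2\gamma}$. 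This does not damage anything downstream in the paper, since every invocation uses $\lambda=(\mu_x+\gamma)^{-1}$ with $\mu_x\ge\gamma$ (in fact $\mu_x=\gamma$, i.e.\ $\lambda=(2\gamma)^{-1}$), exactly the regime where the two moduli coincide; but either the restriction on $\lambda$ or the corrected modulus should be recorded, as you propose.
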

\begin{definition}\label{def:primal-function}
\sa{Under \cref{ASPT: fg}, let $\phi,\phi^s:\reals^d\to\reals\cup\{+\infty\}$ such that $\phi(x) \triangleq \max_{y\in\cY}\mathcal{L}(x,y)$
and $\phi^s(x)=\phi(x)-f(x)$ for \xzrev{$x\in\dom f$}, i.e., $\phi^s(x)\triangleq\max_{y\in\cY}\Phi(x,y)-g(y)$ for \xzrev{$x\in\dom f$}.}
\end{definition}
\begin{remark}
\sa{Under \cref{ASPT: fg},  since \sa{$\Phi(\cdot,y)$ is $\gamma$-weakly convex for any $y\in\dom g$, $\phi^s$ is $\gamma$-weakly convex}}\footnote{One can argue that $\phi^s(\cdot) + \frac{\gamma}{2}\|\cdot\|^2$ is \sa{a pointwise supremum} of convex functions.}; hence, $\phi$ is also $\gamma$-weakly convex. Note that
\begin{equation}
\label{eq:prox-SCSC}
    \prox{\lambda\phi}(\sa{{x}}) = \argmin_{\sa{w}\in \cX} \{ \phi(w)+\tfrac{1}{2\lambda}\|\sa{w} - \sa{{x}}\|^2\}=\argmin_{\sa{w}\in \cX} \max_{y\in\cY}  \mathcal{L}(\sa{w},y)+\tfrac{1}{2\lambda}\|\sa{w} - \sa{{x}}\|^2.
\end{equation}
Furthermore, when $\mu_y>0$, $\phi^s$ is differentiable \sa{on $\dom f$}.
\end{remark}

\mg{In the following definition, we introduce the notion of $\epsilon$-stationary with respect to the GNME metric.} 
\begin{definition}\label{Def: stationary point}
A point \sa{$x_\epsilon$ is an $\epsilon$-stationary point of a $\gamma$-weakly convex function $\phi$} if $\|\nabla\phi_{\lambda}(x_\epsilon)\|\leq \epsilon$ for some $\lambda\in(0,\gamma^{-1})$. If $\epsilon=0$, then \sa{$x_\epsilon$} is a stationary point \sa{of $\phi$.}
\end{definition}
\sa{Thus, \mg{according to Lemma \ref{Lemma: graident of ME},} computing an $\epsilon$-stationary point $x_\epsilon$ for $\phi$ is equivalent to searching for $x_\epsilon$ such that $\|x_\epsilon-\prox{\lambda\phi}(x_\epsilon) \|$ is small.} 
\sa{Recall that for any $\lambda\in(0,\gamma^{-1})$, 
$\prox{\lambda\phi}(x)$ is well-defined and unique. 
\mg{We also \sa{observe} from} \eqref{eq:prox-SCSC} that  $\prox{\lambda\phi}(\cdot)$ computation is indeed an SCSC SP problem.
To compute $x_\epsilon$ such that $\|x_\epsilon-\prox{\lambda\phi}(x_\epsilon) \|$ is small,
it is natural to consider the \mg{iPPM algorithm} -- \xzrev{e.g., see}~\cite{iusem2003inexact}. A generic iPPM generates $\{x_0^t\}_{t\geq 0}$ such that $x^{t+1}_{0}\approx \prox{\lambda\phi}(x^{t}_{0})$\mg{, i.e.\xzrev{,} proximal steps are} ``inexactly'' computed for $t\geq 0$, starting from an arbitrary given point $x^0_0\in\cX$.} 

\nsa{In the next section,} we 
\sa{describe} 
\nsa{the proposed
\texttt{SAPD+} method}, \sa{an iPPM algorithm employing \texttt{SAPD} to \sa{\emph{inexactly}} solve the SCSC subproblems arising in the iPPM iterations.}
 

\begin{algorithm}[H]
\caption{\sa{\texttt{SAPD+}} Algorithm} 
{\footnotesize
\begin{algorithmic}[1]
\State \textbf{Input:} $\{\tau,\sigma,\theta,\mu_x\}$, $(x^0_0, y^0_0) \in \mathcal{X} \times \mathcal{Y}$, $\{N_t\}_{t\geq 0}\in \mathbb{Z}^+$
\For{$t=0,{1},2,...,T$}
\If{\texttt{VR-flag} == \textbf{false}}
\State $(x^{t+1}_{0},y^{t+1}_{0})\gets \texttt{SAPD}(\tau,\sigma,\theta,\mu_x,x^{t}_{0},y^{t}_{0},N_t)$ \label{SAPD step chpt 2}
\Else
\State $(x^{t+1}_{0},y^{t+1}_{0})\gets \texttt{VR-SAPD}(\tau,\sigma,\theta,\mu_x,x^{t}_{0},y^{t}_{0},N_t)$
\EndIf
\EndFor
\end{algorithmic}}%
\label{Alg: SAPD-plus}
\end{algorithm}

\section{\nsa{The proposed algorithm {\texttt{SAPD+}} and its analysis}}\label{subsection: Algorithm and analysis}
\sa{The convergence and robustness properties of \texttt{SAPD} for SCSC 
SP problems are analyzed in~\cite{zhang2021robust}. For the WCSC 
SP~problems, as we explained in the previous \nsa{section}, 
\xzrev{the} main idea is to apply \sa{the iPPM framework as stated in~\texttt{SAPD+}~(see~Algorithm~\ref{Alg: SAPD-plus})} which requires successively solving SCSC SP problems. In the rest, the counter for iPPM \nsa{outer iterations} is denoted with $t\in\integers_+$. At each outer iteration $t\geq 0$, we inexactly compute the prox map, i.e., $x^{t+1}_{0}\approx \prox{\lambda\phi}(x^{t}_{0})$, which is well-defined for $\lambda\in(0,\gamma^{-1})$; hence, to derive our preliminary results, we fix $\lambda=(\mu_x+\gamma)^{-1}$ for some given $\mu_x>0$ -- thus, $\cL(x,y)+\frac{\mu_x+\gamma}{2}\norm{x-x^{t}_{0}}^2$ is SCSC in $(x,y)$ with moduli $(\mu_x,\mu_y)$ and has a unique saddle point. Consider the following SCSC SP problem:
{
\begin{equation}\label{eq: SCSC problem chpt2}
  \min_{x\in\cX} \max_{y\in\cY}
  \cL^{t}(x,y)\triangleq f(x)+\Phi^{t}(x,y)-g(y),~\text{where}~\Phi^{t}(x,y) \xzrev{\triangleq}\Phi(x,y) + \frac{\mu_x+\gamma}{2}\|x-x^{t}_{0}\|^2.
\end{equation}}}%

\sa{
We will construct $\{x^{t}_{0}\}_{t=1}^{T}\subset \dom f$ 
by \emph{inexactly} solving 
\eqref{eq: SCSC problem chpt2} at each outer iteration $t\in\integers_+$ through running \texttt{SAPD} for $N_t\in\integers_+$ iterations} --we will specify $N_t\in\integers_+$ later. 
\sa{Next, we 
briefly explain the main step of \texttt{SAPD+} with \texttt{VR-flag}=\textbf{false}.}
\nsa{The statement 
in line \ref{SAPD step chpt 2} of \xzrev{Algorithm}~\ref{Alg: SAPD-plus} means that} $(x^{t+1}_{0},y^{t+1}_{0})$ is generated using \texttt{SAPD}, which is displayed in \xzrev{Algorithm}~\ref{Alg: SAPD} --indeed, \texttt{SAPD} is run on \eqref{eq: SCSC problem chpt2} for $N_t$ iterations with \texttt{SAPD} parameters $(\tau,\sigma,\theta)$ and starting from the initial point $(x^{t}_{0},y^{t}_{0})$. 
To analyze the convergence of \xz{\texttt{SAPD+}}, we first define the gap function $\cG^{t}$ for $t$-th outer iteration:
\begin{equation}
    \cG^{t}(x,y) \triangleq \max_{y'\in\cY} \cL^{t}(x,y') -  \min_{x'\in\cX} \cL^{t}(x',y).
\end{equation}
\sa{Recall that $\cL^{t}$ is an SCSC function; therefore, \emph{i)} it has a unique saddle point denoted by $(x^{t}_{*},y^{t}_{*})$, and \sa{it is important to note that $x^{t}_{*}=\prox{\lambda\phi}(x^{t}_{0})$ for $\phi(x)=\max_{y\in\cY}\cL(x,y)$ and $\lambda=(\gamma+\mu_x)^{-1}$;} \emph{ii)} {for any $(x,y)\in\dom f\times\dom g$},
the following quantities are well-defined:}
\begin{equation}\label{eq: x* y*}
    x^{t}_{*}(y) \triangleq\argmin_{x'\in\cX} \cL^{t}(x',y),\quad y_{*}(x) \triangleq \argmax_{y'\in\cY} \cL^{t}(x,y')\sa{=\argmax_{y'\in\cY} \mathcal{L}(x,y').}
\end{equation}
Thus, it follows that
$\cG^{t}(x,y) = \cL^{t}(x,y_{*}(x)) - \cL^{t}(x^{t}_{*}(y), y)$.
Moreover, for $(x,y)\in\dom f\times\dom g$, we also define 
    $\cG(x,y) \triangleq \sup_{y'\in\cY} \mathcal{L}(x,y') -  \inf_{x'\in\cX} \mathcal{L}(x',y)$.
\sa{\cref{ASPT: fg}} ensures that $\cG$ 
is well defined. 

\sa{\nsa{Next,} we first provide our oracle complexity 
\mg{in the GNME metric} under the compactness assumption of the primal-dual domains; later, in 
\cref{sec:compactness}, we show that under \mg{a} particular subdifferentiability assumption, \na{in particular when $f$ and $g$ are both Lipschitz on their domains,} compactness requirement can be avoided.}
\begin{assumption}
\label{assump:compact}
\sa{$\dom f$ and $\dom g$ are compact sets.}
\end{assumption}
\begin{theorem}
\label{cor:complexity-fg}
Suppose Assumptions~\ref{ASPT: fg},~\ref{ASPT: lipshiz gradient},~\ref{ASPT: unbiased noise assumption}, and~\ref{assump:compact} hold. Let $\mu_x=\gamma$, $\theta=1$, $\tau,\sigma$ and $N$ be chosen as
{\small
\begin{equation}
\label{Condition: SP solution to noisy LMI label-gap}
N = 33\max\{\tfrac{4}{\gamma\tau},\tfrac{8}{\mu_y\sigma}\},\quad \tau = \min\{\tfrac{1}{L_{yx}+L_{xx}+2\gamma},\tfrac{1}{L_{xy}},\tfrac{1}{480\gamma}\cdot\tfrac{\epsilon^2}{\delta_x^2} \},\quad \sigma = \min\{\tfrac{1}{L_{yx}+2L_{yy}},\tfrac{1}{4512\gamma}\cdot\tfrac{\epsilon^2}{\delta_y^2}\}.
\end{equation}}%
\nsa{Then, for any} $\epsilon>0$, \xz{when \texttt{VR-flag}=\textbf{false}},
\sa{\xz{\texttt{SAPD+}} \nsa{guarantees that} 
\begin{align}
\label{eq:main-bound}
    \na{\frac{1}{T+1}\sum_{t=0}^{T} \mathbb{E}\left[\|\nabla\phi_{\lambda}(x^t_0) \|^2 \right]
        \leq\epsilon^2,}
\end{align}
for \na{all} $T \geq \sa{96}\cG(x_0^0,y_0^0)  \cdot\tfrac{\gamma}{\epsilon^2}+1$, and 
\na{computing a point $x_\epsilon$ such that $\mathbb{E}[\|\nabla\phi_\lambda(x_\epsilon)\|]\leq \epsilon$} requires $C_\epsilon$ \nsa{stochastic first-order} oracle calls in total where}
{\small
\begin{align*}
\sa{C_\epsilon=}\mathcal{O}\Big(  
\Big(\tfrac{\max\{L_{xx},L_{yx},L_{xy}\}}{\gamma}  + \tfrac{\max\{L_{yy},L_{yx}\}}{\mu_y}\Big)
\gamma\cdot\epsilon^{-2}+ \Big(
\tfrac{\delta_x^2}{\gamma} +  
\tfrac{\delta_y^2}{\mu_y} \Big)
\sa{\gamma^2}\cdot\epsilon^{-4}\Big)\cG(x_0^0,y_0^0).
\end{align*}}%
\end{theorem}
\begin{proof}
\nsa{See~\cref{sec:construction} for the proof.}
\end{proof}
\begin{remark}\label{remark-exchange-min and expectation}
\na{Choosing $t_*$ uniformly at random among $\{0,1,\ldots,T\}$ and setting $x_\epsilon=x_0^{t_*}$ implies that $\mathbb{E}\left[\|\nabla\phi_{\lambda}(x_\epsilon) \| \right]\leq \epsilon$. One disadvantage of this commonly used simple randomization approach is that the variance of $\|\nabla\phi_{\lambda}(x_\epsilon) \|$ might be large.}
\nsa{\na{Alternatively,} since $\mathbb{E}\left[\min_{t=0,\ldots,T}\|\nabla\phi_{\lambda}(x^t_0) \| \right] \leq \min_{t=0,\ldots,T} \mathbb{E}\left[\|\nabla\phi_{\lambda}(x^t_0) \| \right]$, the guarantees given in \cref{cor:complexity-fg} also hold for achieving  $\mathbb{E}\big[\min_{t=0,\ldots,T}\|\nabla\phi_{\lambda}(x^t_0) \|\big]\leq \epsilon$.} \na{Naively, the random vector $x_\varepsilon$ in Theorem \ref{cor:complexity-fg} can also be chosen as $x_0^{t_*}$ where $t_*=\argmin_{0\leq t \leq T}\|\nabla\phi_{\lambda}(x^t_0) \|$. However, this choice of $t_*$ can not be computed trivially since computing $\nabla\phi_{\lambda}(\cdot)$ requires solving an SCSC minimax problem. That said, we give the details of such a method in the appendix to generate a point $x_\epsilon$ such that $\mathbb{E}[\|\nabla\phi_\lambda(x_\epsilon)\|]\leq \epsilon$ within $\tilde{\cO}\left(\frac{L\kappa_y\cG(x_0^0,y_0^0) }{\epsilon^2}+ \frac{ L\kappa_y\delta^2\cG(x_0^0,y_0^0) }{\epsilon^4} \right)$ stochastic first-order oracle calls -- see \cref{thm:find-min-GNME} in \cref{sec:eps-stationary}. It is plausible to expect that the variability of this approach would be smaller that the randomization approach stated at the beginning of the remark.} 
\end{remark}

\begin{remark}
\label{rem:SAPD+-complexity}
\sa{For any $y\in\dom g$, since $\Phi(\cdot,y)$ $L_{xx}$-smooth, it is necessarily $L_{xx}$-weakly convex; hence, $\gamma\leq L_{xx}$. To get a worst-case complexity, let}
\begin{equation}\label{eq:uniform setting}
   L \triangleq \max\{L_{xy},L_{yx},L_{xx},L_{yy}\},~\kappa_y \triangleq L/\mu_y,~\delta \triangleq \max\{\delta_x,\delta_y\},~\gamma = L. 
\end{equation}
\sa{Our oracle complexity \small{$C_\epsilon$} in \cref{cor:complexity-fg} can be simplified as} 
\small{$\sa{C_\epsilon}=\mathcal{O}\left(\max\{1, \tfrac{\delta^2}{\epsilon^2}\}\tfrac{\kappa_y L\cG(x_0^0,y_0^0)}{\epsilon^2} \right)$}.

In fact, Li et al.~\cite{li2021complexity} (see also~\cite{zhang2021complexity}) provide a lower complexity bound for a class of first-order stochastic algorithms that do not use variance reduction. The lower bound for finding $\epsilon$-stationary \sa{points of smooth WCSC problems}
\sa{in GNP metric} is  $\Omega(L\Delta_{\phi}(\sqrt{\kappa_y}\epsilon^{-2}+\kappa_y^{\frac{1}{3}}\epsilon^{-4}))$, where $\Delta_{\phi} \triangleq \phi(x_0) - \min_{x\in\mathcal{X}}\phi(x)$ \nsa{and $x_0$ is \mgrev{an arbitrary} 
initial point.}
\end{remark}
\sa{Consider $\phi=f+\phi^s$ as given in \cref{def:primal-function}.} \mg{For $\lambda>0$, the map $G_\lambda: \mathbb{R}^d \to \mathbb{R}^d$ defined as
{\small
\begin{equation}
\label{eq:generalized_GM}
G_\lambda(\tilde{x})\triangleq \frac{1}{\lambda} [\tilde{x} - 
\prox{\lambda f} 
\big(
\tilde{x} - \lambda\nabla\phi^s(\xz{\tilde{x}})
\big)]
\end{equation}}%
is called the\ \emph{generalized gradient mapping} and its norm is frequently used in optimization for assessing stationarity (see e.g. \cite{drusvyatskiy2018error}). 
\cref{cor:complexity-fg} provides guarantees in the GNME metric. \sa{
\cref{thm:metric-equivalance} shows that given $x_\epsilon$, an $\epsilon$-stationary point in GNME metric (see~\cref{Def: stationary point}) \nsa{in expectation}, we can generate $\tilde{x}$ such that $\nsa{\mathbb{E}[\norm{G_\lambda(\tilde{x})}]}\leq \epsilon$ for some $\lambda>0$, i.e.,
an $\epsilon$-stationary point in 
\emph{generalized gradient mapping} metric, 
within $\tilde{\cO}(1/\epsilon^2)$ \texttt{SAPD} 
iterations. \nsa{Indeed,} when $f(\cdot)=0$, \nsa{this metric 
and the GNP metric are the same.}}} 
\begin{theorem}
\label{thm:metric-equivalance}
\xz{Suppose Assumptions~\ref{ASPT: fg},~\ref{ASPT: lipshiz gradient},~\ref{ASPT: unbiased noise assumption} hold}, and \sa{$x_\epsilon$}, an $\epsilon$-stationary point for the $\gamma$-weakly convex function $\phi(\cdot)=\max_{y\in\mathcal{Y}} \cL(\cdot,y)$ \sa{in expectation}, i.e., $\sa{\mathbb{E}[\|\nabla\phi_{\lambda}(\sa{x_\epsilon})\|]}\leq\frac{\epsilon}{2}$ for 
\sa{some fixed} $\lambda\in(0,\gamma^{-1})$ is given. Then, 
\xzrev{there exists some $\tau,\sigma,\theta$ -- see \cref{Condition: SP solution to noisy LMI} in \cref{sec:Thm2_proof}, such that}
initialized from $x_\epsilon$, 
\texttt{SAPD}, stated in~Algorithm~\ref{Alg: SAPD}, can generate \sa{$\tilde x$} satisfying
\mg{$\mathbb{E}\big[ \norm{G_\lambda(\tilde{x})} \leq \epsilon$} within 
$\tilde{\mathcal{O}}(\frac{1}{\xz{\epsilon^2}})$ stochastic first-order oracle calls, where $\phi^s(\cdot)=\max_{y\in\cY}\Phi(\cdot,y)-g(y)$ so that $\phi=f+\phi^s$ as in Definition \ref{def:primal-function}.\vspace*{-2mm}
\end{theorem}
\begin{proof}
\nsa{See \cref{sec:Thm2_proof} for the proof.}
\end{proof}
\subsection{\sa{Relaxing the compactness assumption}}
\label{sec:compactness}
\nsa{In 
\cref{cor:complexity-fg},} we assume that $\dom f$ and $\dom g$ are compact sets, e.g., $f(\cdot)=\mathbbm{1}_X(\cdot)$ and $g(\cdot)=\mathbbm{1}_Y(\cdot)$, where $X\subset\cX$ and $Y\subset\cY$ are compact convex sets. In this section, we show that \texttt{SAPD+} can also handle unbounded domains under the following assumption.
\begin{assumption}
\label{assump:bounded-subdifferential}
\sa{For $f$ and $g$ closed convex, suppose $\exists B_f,~B_g>0$ such that $\inf\{\norm{s_f}:\ s_f\in\partial f(x)\}\leq B_f$ for all $x\in\dom f$ and $\inf\{\norm{s_g}:\ s_g\in\partial g(y)\}\leq B_g$ for all $y\in\dom g$.}
\end{assumption}
\begin{remark}
Assumption~\ref{assump:bounded-subdifferential} holds when $f$ is an indicator function of a closed convex set (not necessarily bounded). 
Two important examples for this scenario \xzrev{are}: (i) $f(\cdot)=0$, 
(ii) $f$ is a norm, e.g., $\ell_1$-, $\ell_2$-, or the Nuclear norms.
\end{remark}
\na{The next result is an important one showing that \cref{assump:bounded-subdifferential} is equivalent to $f$ and $g$ being Lipschitz on their domains.}
\begin{lemma} \cite[Lemma A.2]{kong2023iteration}
    \na{Let $f:\reals^n\to\reals\cup\{+\infty\}$ be a proper, closed, convex function. Then, for some $B_f>0$, the function $f$ satisfies \cref{assump:bounded-subdifferential} if and only if $|f(x_1)-f(x_2)|\leq B_f\|x_1-x_2\|$ for any $x_1,x_2\in \dom f$.}
\end{lemma}
\sa{The existing work based on iPPM framework either require compactness, e.g.,~\cite{yan2020optimal}, or some special structure on $\cL$, e.g.,~\cite{rafique1810non}. This is also true for VR-based methods, e.g.,\cite{huang2022accelerated,luo2020stochastic,xu2020enhanced}. To our knowledge, ours is the first one to overcome this difficulty and strictly improve \nsa{the best known} complexity bound for \nsa{the WCSC setting} without compactness assumption; moreover, the same idea also works simultaneously with a variance reduction technique that will be discussed later (see \cref{sec:VR}). Finally, the same trick for removing compactness assumption for \nsa{the} WCSC setting also helps removing \nsa{the} compactness assumption for the primal domain in WCMC setting \nsa{and we still improve the best known complexity for this setting as well} (see~\cref{sec:wcmc}).}
\xz{
\begin{remark}
In \cite{lin2020gradient}, when $f=g=0$, 
\mg{boundedness} of dual space is required while
\cref{assump:bounded-subdifferential} is a weaker 
requirement. \sa{Furthermore, based on the discussion with the authors of \cite{yan2020optimal}, 
compactness of the domain is needed for their proof to hold.} 
In~
\cite{huang2021efficient}, 
the sub-level set $\{x:\phi(x)+f(x)\leq \alpha\}$ is required \mg{to be} 
compact for all $\alpha>0$. 
\mg{There are simple convex functions that do not satisfy this condition} such as 
$f(x)=\max\{0,x\}$. 
Bot and \"{B}ohm~~\cite{boct2020alternating} use \mg{milder assumptions than \cite{lin2020gradient}} without requiring compactness; however, \mg{their complexity is the same as the complexity} of \cite{lin2020gradient}.
\end{remark}}
\begin{theorem}
\label{thm:unbounded-domains}
\sa{The result of \cref{cor:complexity-fg} continues to hold, if one replaces the compact domain assumption, i.e., \cref{assump:compact}, with \cref{assump:bounded-subdifferential}.} 
\end{theorem}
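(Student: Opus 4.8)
The plan is to re-examine the proof of \cref{cor:complexity-fg} and locate precisely where the compactness of $\dom f$ and $\dom g$ enters, and then to show that every such use can be replaced by an estimate built from \cref{assump:bounded-subdifferential}. At each outer iteration $t$, the guarantee for the inner \texttt{SAPD} run on the SCSC subproblem $\cL^t$ ultimately produces an expected duality-gap bound of the schematic form
\begin{equation*}
\mathbb{E}[\cG^t(\bar x_{N_t},\bar y_{N_t})]\ \lesssim\ \frac{1}{K_{N_t}(\theta)}\Big(\tfrac{1}{2\tau}\,\mathbb{E}\|x^t_*(\bar y_{N_t})-x^t_0\|^2+\tfrac{1}{2\sigma}\,\mathbb{E}\|y_*(\bar x_{N_t})-y^t_0\|^2+(\text{noise})\Big),
\end{equation*}
where $x^t_*(\cdot),y_*(\cdot)$ are the partial optimizers from \eqref{eq: x* y*}. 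Under \cref{assump:compact} the two distance terms are bounded by the squared diameters of $\dom f,\dom g$; the only task is therefore to bound them without compactness.

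Second, I would bound these partial-optimizer distances using strong convexity/concavity together with the bounded minimum-norm subgradients. Since $x\mapsto f(x)+\Phi^t(x,\bar y)$ is $\mu_x$-strongly convex with minimizer $x^t_*(\bar y)$, strong monotonicity gives $\|x^t_0-x^t_*(\bar y)\|\le \tfrac{1}{\mu_x}\|s\|$ for any $s\in\partial[f(\cdot)+\Phi^t(\cdot,\bar y)](x^t_0)$; choosing $s=s_f+\nabla_x\Phi(x^t_0,\bar y)$ with $s_f\in\partial f(x^t_0)$, $\|s_f\|\le B_f$ (available by \cref{assump:bounded-subdifferential} as $x^t_0\in\dom f$), and invoking the $L_{xy}$-Lipschitz bound of \cref{ASPT: lipshiz gradient}, yields
\begin{equation*}
\|x^t_0-x^t_*(\bar y)\|\ \le\ \tfrac{1}{\mu_x}\big(B_f+\|\nabla_x\Phi(x^t_0,y^t_0)\|+L_{xy}\|\bar y-y^t_0\|\big).
\end{equation*}
Symmetrically, $\mu_y$-strong concavity and $B_g$ give $\|y^t_0-y_*(\bar x)\|\le\tfrac{1}{\mu_y}\big(B_g+\|\nabla_y\Phi(x^t_0,y^t_0)\|+L_{yx}\|\bar x-x^t_0\|\big)$. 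Thus each diameter is replaced by an explicit $O((B_f+\cdots)/\mu_x)$, $O((B_g+\cdots)/\mu_y)$ quantity, at the cost of the iterate excursions $\|\bar y_{N_t}-y^t_0\|$ and $\|\bar x_{N_t}-x^t_0\|$ now appearing on the right.

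The main obstacle is closing this mild self-reference: the gap bound depends on the (random) excursions of the ergodic iterates, which are not a priori bounded. I would resolve it by invoking the SCSC boundedness estimates already underlying the \texttt{SAPD} analysis, which control $\mathbb{E}\|\bar x_{N_t}-x^t_*\|^2$ and $\mathbb{E}\|\bar y_{N_t}-y^t_*\|^2$ by the warm-start distances plus noise; the warm-start distances $\|x^t_0-x^t_*\|=\lambda\|\nabla\phi_\lambda(x^t_0)\|$ and $\|y^t_0-y^t_*\|$ are in turn finite — the latter again by the $B_g/\mu_y$ estimate applied at $\bar x=x^t_*$, using $\|\nabla_y\Phi(x^t_*,y^t_0)\|\le\|\nabla_y\Phi(x^t_0,y^t_0)\|+L_{yx}\|x^t_*-x^t_0\|$. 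Combining these yields an expected-gap bound with the same $K_{N_t}(\theta)^{-1}$ rate in which the constants depend on $B_f,B_g,\mu_x,\mu_y,L_{xy},L_{yx},\delta_x,\delta_y$ rather than on any domain diameter.

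Because these constants do not alter the dependence on $\epsilon$ or $\kappa_y$, the inexact-proximal-point telescoping of \cref{cor:complexity-fg} goes through verbatim, with the initial potential $\phi_\lambda(x^0_0)-\inf\phi_\lambda\ (\le \cG(x^0_0,y^0_0))$ remaining finite under well-posedness of \eqref{eq:main problem}. Finally, I would note that this is precisely the substitution that also enables the variance-reduced and WCMC variants to dispense with compactness, so only the inner SCSC estimate needs reworking for the present statement.
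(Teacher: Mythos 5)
Your plan mislocates where compactness actually enters the proof of \cref{cor:complexity-fg}, and this creates a genuine gap. In the paper's argument the partial-optimizer distances $\mathbb{E}\|x_*^t(y_0^{t+1})-x_0^t\|^2$ and $\mathbb{E}\|y_*(x_0^{t+1})-y_0^t\|^2$ from \cref{lemma: Gap bound with fg} are never bounded by domain diameters: they are absorbed into gap functions via the inequality $\tfrac{\mu_x}{4}\|x_*^t(y)-x'\|^2+\tfrac{\mu_y}{4}\|y_*(x)-y'\|^2\le \cG^t(x,y)+\cG^t(x',y')$ used in \cref{Lemma: gap convergence}, which produces the recursion in $\cG^t$ and, through \cref{lemma: different bound of gap} and \cref{Lemma: telescope sum chpt2}, the final bound depending only on $\cG(x_0^0,y_0^0)$. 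Compactness is invoked in \cref{thm:WC_SP} for one purpose only: to guarantee that $\mathbb{E}[\cG^t(x_0^t,y_0^t)]$ and related expectations are \emph{finite}, so that terms can legitimately be subtracted and telescoped. Consequently, your plan to substitute estimates of the form $\|x_0^t-x_*^t(\bar y)\|\le\tfrac{1}{\mu_x}\big(B_f+\|\nabla_x\Phi(x_0^t,y_0^t)\|+L_{xy}\|\bar y-y_0^t\|\big)$ into the \emph{rate} derivation does not show that the result of \cref{cor:complexity-fg} "goes through verbatim": the quantities $B_f+\|\nabla_x\Phi(x_0^t,y_0^t)\|$ are random and not uniformly bounded over $t$ precisely because the domains are unbounded, so the claim that these "constants do not alter the dependence on $\epsilon$ or $\kappa_y$" is unjustified; making them uniform would essentially require bounded iterates, i.e., the compactness you are trying to remove. (Your individual estimates are correct consequences of strong monotonicity and \cref{assump:bounded-subdifferential}; they just cannot play the role of per-stage constants in the rate.)

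The correct repair leaves the quantitative argument of \cref{cor:complexity-fg} untouched and uses \cref{assump:bounded-subdifferential} solely to restore the finiteness that compactness provided, which is what the paper does: (i) prove a per-stage bound $\max\{\mathbb{E}[\cG^t(z_0^{t+1})],\,\mathbb{E}\|z_0^{t+1}-z_*^t\|^2\}\le \rho^{N_t}N_t\,C\,\mathbb{E}\|z_0^t-z_*^t\|^2+C'$ (\cref{lemma: bounded Gap and disance}); (ii) run an induction in $t$: finiteness of $\mathbb{E}\|z_0^t-z_*^t\|^2$ yields finiteness of $\mathbb{E}\|z_0^{t+1}-z_*^t\|^2$, and then $\mathbb{E}\|z_0^{t+1}-z_*^{t+1}\|^2<\infty$ follows from non-expansiveness of $\prox{\lambda\phi}$ together with Lipschitzness of $y_*(\cdot)$ (\cref{lem:implicit_function}); (iii) bound the gap at the warm start by your strong-convexity/bounded-subgradient trick applied to the gap itself rather than to the partial optimizers, $\cG^t(z_0^t)\le \tfrac{L}{\mu}\|z_0^t-z_0^0\|^2+\tfrac{1}{\mu}\big(\|\nabla\Phi(z_0^0)\|^2+B_f^2+B_g^2\big)$, whose expectation is finite by (ii). Note also that your assertion that the warm-start distances are "finite" conflates pointwise finiteness with finiteness in expectation: $\lambda\|\nabla\phi_\lambda(x_0^t)\|$ is finite for every realization, but its second moment is exactly the quantity the induction must control. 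With (i)--(iii), every expectation manipulated in \cref{thm:WC_SP} is finite, and the proof of \cref{cor:complexity-fg} then applies word for word, with unchanged constants.
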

\vspace*{-2mm}
\begin{proof}
\nsa{See \cref{sec:Thm3_proof} for the proof.}
\end{proof}\vspace*{-2mm}
\vspace*{-2mm}
\section{Variance reduction}
\label{sec:VR}\vspace*{-2mm}
\nsa{Variance reduction techniques have been found useful for solving SCSC problems in finite sum form, e.g., \cite{palaniappan2016stochastic} --see also~\cite{can2022} using Richardson-Romberg extrapolation in solving SCSC problems with noisy gradients to obtain improved practical performance.}
In this section, 
we equip \texttt{SAPD+} with \sa{SPIDER variance reduction  technique~\cite{fang2018spider}, a variant of SARAH~\cite{nguyen2017sarah,nguyen2017sarah}} More precisely, for inexactly solving SCSC subproblems given in~\eqref{eq: SCSC problem chpt2}, we propose using \texttt{VR-SAPD} as stated in Algorithm~\ref{Alg: SAPD-VR}. Note \texttt{VR-SAPD} employs a large batchsize \sa{of $b$ in} every $q$ iterations and use small \sa{batchsizes of $\xzrev{b'_{x}}$ and ${\xzrev{b'_{y}}}$ for the rest.}
We prove that \texttt{SAPD+} 
using variance reduction, \sa{i.e., with \texttt{VR-flag}=\textbf{true}, achieves an oracle complexity of \red{${\cO(\max\{\kappa_y,\sqrt{\delta/\epsilon}\}\cdot (\kappa_y\delta/\epsilon+1)L\cG_0\epsilon^{-2})}$, which is the best known 
bound in the literature to our knowledge.}
}

\begin{algorithm}[H]
\caption{\texttt{VR-SAPD} Algorithm}
\label{Alg: SAPD-VR}
{\small
\begin{algorithmic}[1]
\State {\textbf{Input:} $\tau,\sigma,\theta,\mu_x,x_0, y_0,N,\sa{b,{\xzrev{b'_{x}}},{\xzrev{b'_{y}}},q}$}
\State \sa{$\bar{\Phi}(x,y)\gets\Phi(x,y)+\frac{\mu_x+\gamma}{2}\norm{x-x_0}^2$}
\State \sa{Let \xzrev{$\cB^x_{0}$},\xzrev{$\cB^y_{0}$} be random mini-batch samples with  \xzrev{$|\cB^x_{0}|=|\cB^y_{0}|=\xzf{b_0}$}}
\State \sa{$w_0\gets \tilde\nabla_y \Phi_{\xzrev{\cB^y_0}}(x_0,y_0)$,\quad $\tilde s_0\gets w_0$}
\For{$k\geq 0$}
\State $y_{k+1} \leftarrow \prox{\sigma g}(y_k+\sigma \tilde{s}_k)$
\If{$\mo(k,q)$ == $0$} 
\State $v_k\leftarrow\tilde\nabla_x \bar{\Phi}_{\xzrev{\cB^x_k}}(x_k,y_{k+1})$
\Else
\State Let $\cI_k^x$ be random mini-batch sample  with $|\cI_k^x|=\sa{\xzrev{b'_{x}}}$
\State $v_k\leftarrow\tilde\nabla_x \bar{\Phi}_{\cI_k^x}(x_k,y_{k+1})-\tilde\nabla_x\bar{\Phi}_{\cI_k^x}(x_{k-1},y_{k}) + v_{k-1}$
\EndIf
\State $x_{k+1} \leftarrow \prox{\tau f}(x_k-\tau v_k)$
\State \xzrev{Let $\cB^x_{k+1}$,$\cB^y_{k+1}$ be random mini-batch samples  with  \hspace*{4.4mm}$|\cB^x_{k+1}|=|\cB^y_{k+1}|=b$}
\If{$\mo(\sa{k+1},q)$ == $0$} 
\State \sa{$w_{k+1}\leftarrow\tilde\nabla_y \Phi_{\xzrev{\cB^y_{k+1}}}(x_{k+1},y_{k+1})$}
\Else
\State Let $\cI_{k+1}^y$ be mini-batch sample  with $|\cI_{k+1}^y|=\sa{{\xzrev{b'_{y}}}}$
\State $\tilde{q}_{k+1}\gets\tilde\nabla_y \Phi_{\cI_{k+1}^y}(x_{k+1},y_{k+1})-\tilde\nabla_y\Phi_{\cI_{k+1}^y}(x_{k},y_{k})$
\State \sa{$w_{k+1}\gets w_{k}+\tilde{q}_{k+1}$}
\EndIf
\State \sa{$\tilde{s}_{k+1} \leftarrow (1+\theta) w_{k+1} - \theta w_{k}$}
\EndFor
\State {\textbf{Output}:} $(\bar{x}_N,\bar{y}_N)=\frac{1}{N}\sum_{k=0}^{N-1}(x_{k+1},y_{k+1})$
\end{algorithmic}}
\end{algorithm}    

Here, we use
{\footnotesize $\tilde\nabla_y\Phi_{\xzrev{\cB^y_k}}^t(x_k,y_{k})$} to represent {\footnotesize $\frac{1}{|\xzrev{\cB^y_k}|} \sum_{\omega_k^i\in  \xzrev{\cB^y_k}} \tilde\nabla_y\Phi(x_k,y_y;\xzrev{\vartheta^{y,i}_k)}$}, where {\footnotesize$\xzrev{\cB^y_k}=\{\xzrev{\vartheta^{y,i}_k}\}_{i=1}^b$} is the mini-batch with {\footnotesize$|\xzrev{\cB^y_k}|=b$ and we define $\tilde\nabla_x\Phi_{\xzrev{\cB^x_k}}^t(x_k,y_{k+1})$} similarly. \mgrev{In} addition, {\footnotesize$\cI^x_k=\{\omega_k^{x,i}\}$} and {\footnotesize$\cI^y_k=\{\omega_k^{y,i}\}$} with {\footnotesize$|\cI_k^x|=\xzrev{b'_{x}}$} and {\footnotesize$|\cI_k^y|={\xzrev{b'_{y}}}$} denote the small mini-batches for generating {\footnotesize$\tilde\nabla_y\Phi_{\cI^y_k}^t(x_k,y_{k})$} and {\footnotesize$\tilde\nabla_x\Phi_{\cI^x_k}^t(x_k,y_{k+1})$}. \nsa{When we run \texttt{VR-SAPD} on a generic subproblem as in~\eqref{eq:WCSC-subproblem-generic}, we use the convention that {\footnotesize$\tilde \nabla_x \bar \Phi_{\cB^x_k}(x_k,y_{k+1}) \triangleq \tilde \nabla_x \Phi_{\cB^x_k}(x_k,y_{k+1})  + (\mu_x + \gamma)(x_k-x_0)$}.}

Throughout this section we make 
\nsa{a continuity assumption} on the stochastic \nsa{first-order oracles} similar
 to~\cite{huang2021efficient,huang2022accelerated,luo2020stochastic,xu2020enhanced}.
\begin{assumption}\label{ASPT: lipshiz gradient VR}
     $\exists L_{xx}, L_{xy},L_{yx},L_{yy} \sa{\geq} 0$ such that $\forall x,\bar{x}\in \dom f\subset\cX$ and $\forall y,\bar{y}\in \dom g\subset \mathcal{Y}$,
   {\small \begin{equation}\label{eq:LGX1-VR}
    \begin{aligned}
           \| \tilde\nabla_y \Phi(x,y;\omega) -\tilde \nabla_y \Phi(\bar{x},\bar{y};\omega)\|
        \leq 
        L_{yx}\|x- \bar{x}\|
        + L_{yy}\|y- \bar{y}\|,\quad \sa{w.p.~1}, \\                \| \tilde\nabla_x \Phi(x,y;\omega) - \tilde\nabla_x \Phi(\bar{x},\bar{y};\omega)\|
        \leq 
        L_{xx}\|x- \bar{x}\|+L_{xy}\|y-\bar{y}\|,\quad \sa{w.p.~1}.
    \end{aligned}
    \end{equation}}
\end{assumption}
\begin{assumption}\label{ASPT: independence} \sa{Consider $\texttt{SAPD+}$ with $\texttt{VR-flag}=\textbf{true}$.}
\xzrev{We assume} (i) \sa{for any $k\geq 0$, the random mini-batches \xzrev{$\cB_k^x$,\;$\cB_k^x$,\;}\nsa{$\cI_k^x$ and $\cI_k^y$ consist of 
independent elements, and \xzrev{$\cB^k_x$ is independent from $\cB^y_k$}; (ii) for $i\in\{k-1,k\}$} $\xzrev{\cB_k^x}$, $\cI_k^x$ are independent of $(x_i,y_{i+1})$, and $\xzrev{\cB_k^y}$, $\cI_k^y$ are independent of $(x_i,y_{i})$.}
\end{assumption}
\begin{remark}
\sa{For finite-sum type problems of the form $\min_x\max_y \mg{\frac{1}{n}}\sum_{i=1}^n\Phi_i(x,y)$, \mg{we can set the stochastic gradient according to 
$\tilde \nabla_x \Phi(x,y;\omega) =  \nabla_x \Phi_\omega(x,y)$ and 
$\tilde \nabla_y \Phi(x,y;\omega) =  \nabla_y \Phi_\omega(x,y)$ where $\omega$ is uniformly drawn at random from $\{1,\ldots,n\}$}. Therefore, if \xzrev{mini-batch samples} 
are drawn from $\{1,\ldots,n\}$ uniformly at random with replacement; batches will be independent of the past iterates satisfying \cref{ASPT: independence}.}
\end{remark}


\begin{theorem}\label{cor:complexity VR}
\sa{Suppose Assumptions~\ref{ASPT: fg},\ref{ASPT: unbiased noise assumption},\ref{ASPT: lipshiz gradient VR} and~\ref{ASPT: independence} hold. Moreover, either \cref{assump:compact} or \cref{assump:bounded-subdifferential} holds. Let \nsa{$\mu_x=\gamma$}, $\theta=1$, and 
$\tau$, $\sigma$, 
$b$ and $N$ be chosen as follows:}
{\footnotesize
\begin{equation}
\label{eq: speical paramater VR innner loop}
\begin{aligned}
    &\tau = \Big(L_{yx}+ 
    L_{xx}+2\gamma+ 2(q-1)\Big(\frac{(L_{xx}+2\gamma)^2}{\gamma \xzrev{b'_{x}}} + \frac{10L^2_{yx}}{\mu_y {\xzrev{b'_{y}}}}\Big)\Big)^{-1},\;
    \sigma = \left(2L_{yy}+L_{yx} + 2(q-1)\Big(\frac{L^2_{xy}}{\gamma \xzrev{b'_{x}}} + \frac{10L^2_{yy}}{\mu_y {\xzrev{b'_{y}}}}\Big)\right)^{-1},
    \\
    & 
    N = \xzf{\max\Big\{2(1+\zeta)\max\Big\{\frac{1}{\gamma\tau}-1,~\frac{1}{\mu_y\sigma}\Big\},\frac{288}{b_0}\frac{\delta_x^2}{\epsilon^2}, \frac{720}{b_0}\frac{\gamma}{\mu_y}\frac{\delta_y^2}{\epsilon^2 }\Big\}},
    \qquad 
    b \geq\Big\lceil \max\Big\{\xzrev{\frac{\xzf{288}\delta_x^2}{\gamma}},~\frac{\xzf{720}\delta_y^2}{\mu_y}\Big\}\frac{\gamma}{\epsilon^2}\Big\rceil.
\end{aligned}
\end{equation}
}%
\nsa{For any $\epsilon>0$ and parameters $\xzrev{b_0,b'_{x}},{\xzrev{b'_{y}}},q\in \mathbb{N}^+$,} 
\nsa{when $\texttt{VR-flag}=\textbf{true}$, $\texttt{SAPD+}$} \na{guarantees that \eqref{eq:main-bound} holds
for all} \sa{$T \geq 288\cG(x_0^0,y_0^0)  \cdot\frac{\gamma}{\epsilon^2}$}, and \na{computing a point $x_\epsilon$ such that $\mathbb{E}[\|\nabla\phi_\lambda(x_\epsilon)\|]\leq \epsilon$} requires $T(\xzf{b_0}+Nb/q+N(\xzrev{b'_{x}}+{\xzrev{b'_{y}}}))$
\nsa{stochastic first-order} oracle calls in total, where
{\footnotesize
\begin{equation}
\label{eq:N-bound-VR}
N = \mathcal{O}\Big(\max\Big\{  
\frac{L_{yx}+L_{xx}}{\gamma} +\frac{q}{\xzrev{b'_{x}}}\frac{L_{xx}^2}{\gamma^2} + \frac{q}{{\xzrev{b'_{y}}}}\frac{L^2_{yx}}{\gamma\mu_y},\quad 
\frac{L_{yy}+L_{yx}}{\mu_y}+
\frac{q}{{\xzrev{b'_{y}}}}\frac{L^2_{yy}}{\mu_y^2} + \frac{q}{\xzrev{b'_{x}}}\frac{L^2_{xy}}{\gamma\mu_y},\quad
\xzf{\frac{\delta_x^2}{b_0\epsilon^2}},\quad \frac{\gamma}{\mu_y}\xzf{\frac{\delta_y^2}{b_0\epsilon^2}}
\Big\}
\Big).
\end{equation}}%
\end{theorem}
\begin{proof}
\nsa{See~\cref{sec:Thm4_proof} for the proof.}
\end{proof}\vspace*{-4mm}
\xzf{\begin{remark}
\label{rem:SAPD+VR-complexity}
For any $y\in\dom g$, since $\Phi(\cdot,y)$ $L_{xx}$-smooth, it is necessarily $L_{xx}$-weakly convex; thus, $\gamma\leq L_{xx}$. \sa{For the worst-case complexity, consider the setting in \eqref{eq:uniform setting},} 
and let 
$\xzrev{b'_{x}} = {\xzrev{b'_{y}}}=\sa{b'}$. Then, \cref{cor:complexity VR} implies that 
{\small$b = \mathcal{O}\Big(\kappa_y\tfrac{\delta^2}{\epsilon^2}\Big)$}, {\small$\xzf{N = \mathcal{O}\big(\max\{\kappa_y+\kappa_y^2\tfrac{q}{\sa{b'}},\tfrac{\kappa_y\delta^2}{b_0\epsilon^2}\}\big)}$}, and {\small$T = \mathcal{O}\Big(\tfrac{L\cG(x_0^0,y_0^0)}{\epsilon^2}\Big)$}; hence, if we set $b'=\sqrt{b\kappa_y}$, $q=\sqrt{\frac{b}{\kappa_y}}$, and $b_0 = \kappa_y\big(\frac{\delta}{\epsilon}\big)^{1.5}$, the total complexity is given by $T\Big(\xzf{b_0}+ N(b/q+\sa{b'}+1)\Big)=\mathcal{O}\Big(\nsa{\max\Big\{\kappa_y,\sqrt{\frac{\delta}{\epsilon}}\Big\}\cdot (\kappa_y\tfrac{\delta}{\epsilon}+1)}~\cdot\tfrac{L\cG(x_0^0,y_0^0)}{\epsilon^2}\Big)$. Specifically, if $\kappa_y \geq \sqrt{\frac{\delta}{\epsilon}}$, the total complexity is $\mathcal{O}\Big(\kappa_y^2 \delta \frac{L \cG(x_0^0, y_0^0)}{\epsilon^3}\Big)$; otherwise, the total complexity is $\mathcal{O}\Big(\kappa_y {\delta^{1.5}}\frac{L \cG(x_0^0, y_0^0)}{\epsilon^{3.5}}\Big)$.
\end{remark}}

\begin{remark}
\xzrev{The results in \cref{cor:complexity VR} continues to hold under a weaker form of Assumption~\ref{ASPT: lipshiz gradient VR} as in \cite{luo2020stochastic,xu2020enhanced}, i.e., we replace \cref{eq:LGX1-VR} with
{\small
\begin{align*}
    & \mathbb{E}\Big[\| \tilde\nabla_y \Phi(x,y;\omega) -\tilde \nabla_y \Phi(\bar{x},\bar{y};\omega)\|^2\Big]
        \leq 
        2L^2_{yx}\|x- \bar{x}\|^2
        + 2L^2_{yy}\|y- \bar{y}\|^2,
        \\
        &  \mathbb{E}\Big[\| \tilde\nabla_x \Phi(x,y;\omega) - \tilde\nabla_x \Phi(\bar{x},\bar{y};\omega)\|^2\Big]
        \leq 
        2L^2_{xx}\|x- \bar{x}\|^2+2L^2_{xy}\|y-\bar{y}\|^2.
\end{align*}}}%
\end{remark}
\vspace*{-8mm}

\section{Weakly convex-\sa{merely concave~(WCMC)} problems}
\label{sec:wcmc}
In this section, \sa{we state the convergence guarantees of \texttt{SAPD+} for solving WCMC problems. In particular, we will consider \eqref{eq:main problem} such that $f(\cdot)=0$ and $\mu_y=0$, i.e., $\Phi(x,\cdot)$ is \emph{merely} concave for all $x\in\cX$. Instead of directly solving \eqref{eq:main problem} in WCMC setting,}
we will solve an approximate model obtained by smoothing the primal problem in a similar spirit to 
the technique in \cite{nesterov2005smooth}.
More precisely, we approximate \eqref{eq:main problem} with the following 
WCSC problem: \sa{given an arbitrary $\hat y\in\dom g$, consider}
\vspace*{-2mm}
{
\begin{equation}\label{eq:weakly convex-merely concave problem approx}
    \min_{x\in \mathcal{X}} \max_{y\in \mathcal{Y}} \hat{\mathcal{L}}(x,y)\triangleq \hat{\Phi}(x,y) -g(y),\quad\mbox{where}\quad \sa{\hat{\Phi}(x,y) \triangleq \Phi(x,y)-\frac{\hat{\mu}_y}{2}\|y -\hat{y} \|^2.}
\end{equation}}%


\begin{theorem}
\label{Thm:WCMC}
Under Assumptions~\ref{ASPT: fg},~\ref{ASPT: lipshiz gradient},~\ref{ASPT: unbiased noise assumption}, consider the SP problem in~\eqref{eq:main problem} such that $f(\cdot)\equiv 0$, $\mu_y=0$, 
and $\cD_{\cY} \triangleq\sup_{y_1,y_2\in\dom g}\|y_1-y_2\|\sa{<\infty}$. 
When either \cref{assump:compact} or \cref{assump:bounded-subdifferential} holds, for any given $\epsilon>0$, \texttt{SAPD+} with $\texttt{VR-flag}=\textbf{false}$, applied to~\eqref{eq:weakly convex-merely concave problem approx} with $\hat{\mu}_y=\Theta(\epsilon^2/(L\cD_y^2))$,
is guaranteed to generate $x_\epsilon\in\cX$ such that $\mathbb{E}\left[\|\nabla\phi_{\lambda}(x_\epsilon)\|\right]\leq\epsilon$ for $\lambda=1/(2\gamma)$ within $\mathcal{O}(L^{3}\epsilon^{-6})$ stochastic first-order oracle calls.
\end{theorem}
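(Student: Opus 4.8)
The plan is to reduce the WCMC problem to the already-analyzed WCSC setting via Nesterov-type dual smoothing, and then to control the bias that smoothing introduces into the Moreau-envelope gradient. Since $\Phi(x,\cdot)$ is only merely concave ($\mu_y=0$), the primal function $\phi$ need not be differentiable; we therefore work with the smoothed surrogate $\hat{\mathcal{L}}$ in \eqref{eq:weakly convex-merely concave problem approx}, whose dual part is now $\hat{\mu}_y$-strongly concave. Because the regularizer $-\tfrac{\hat{\mu}_y}{2}\|y-\hat y\|^2$ does not depend on $x$, the map $\hat{\Phi}(\cdot,y)$ remains $\gamma$-weakly convex and the block Lipschitz constants change only through $L_{yy}\mapsto L_{yy}+\hat{\mu}_y$; hence \eqref{eq:weakly convex-merely concave problem approx} is a bona fide WCSC instance (with $f=0$ and dual strong-concavity modulus $\hat{\mu}_y$) to which \cref{thm:unbounded-domains} applies, and its smoothed primal $\hat\phi(\cdot)=\max_{y\in\cY}\hat{\mathcal{L}}(\cdot,y)$ is $\gamma$-weakly convex and differentiable.

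The crux is a perturbation bound linking $\nabla\phi_\lambda$ and $\nabla\hat\phi_\lambda$ at $\lambda=1/(2\gamma)$. First I would establish the one-sided uniform estimate $0\le \phi(x)-\hat\phi(x)\le \tfrac{\hat{\mu}_y}{2}\cD_\cY^2=:\eta$ for all $x$, which uses only $\hat y\in\dom g$ together with the diameter bound $\cD_\cY<\infty$ (this is precisely where dual-domain boundedness enters). Next, writing $\bar x=\prox{\lambda\phi}(x)$ and $\hat x=\prox{\lambda\hat\phi}(x)$, I would exploit that both proximal objectives are $\rho$-strongly convex with $\rho=\lambda^{-1}-\gamma=\gamma$: adding the two first-order optimality inequalities and cancelling the identical quadratic terms yields $\rho\|\bar x-\hat x\|^2\le [\phi(\hat x)-\hat\phi(\hat x)]-[\phi(\bar x)-\hat\phi(\bar x)]\le\eta$. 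Via \cref{Lemma: graident of ME}, $\nabla\phi_\lambda(x)-\nabla\hat\phi_\lambda(x)=\lambda^{-1}(\hat x-\bar x)$, so $\|\nabla\phi_\lambda(x)-\nabla\hat\phi_\lambda(x)\|\le \lambda^{-1}\sqrt{\eta/\rho}=\cD_\cY\sqrt{2\gamma\hat{\mu}_y}$.

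With this bridge in hand, the parameter choice falls out. Choosing $\hat{\mu}_y=\Theta(\epsilon^2/(L\cD_\cY^2))$ makes the deterministic smoothing bias at most $\epsilon/2$, and running \texttt{SAPD+} with $\texttt{VR-flag}=\textbf{false}$ on \eqref{eq:weakly convex-merely concave problem approx} to accuracy $\mathbb{E}[\|\nabla\hat\phi_\lambda(x_\epsilon)\|]\le\epsilon/2$ via \cref{thm:unbounded-domains} gives, by the triangle inequality, $\mathbb{E}[\|\nabla\phi_\lambda(x_\epsilon)\|]\le\epsilon$, where $\lambda=1/(2\gamma)\in(0,\gamma^{-1})$ is admissible. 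For the complexity, the smoothed instance has condition number $\hat\kappa_y=L/\hat{\mu}_y=\Theta(L^2\cD_\cY^2/\epsilon^2)$; substituting this and target accuracy $\Theta(\epsilon)$ into the WCSC bound $\cO(\max\{1,\delta^2/\epsilon^2\}\,\hat\kappa_y L/\epsilon^2)$ from \cref{cor:complexity-fg} and treating $\delta,\cD_\cY$ as constants yields $\cO(L^3\epsilon^{-6})$.

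I expect the main obstacle to be the perturbation bound of the second paragraph: verifying that the two proximal subproblems share the strong-convexity modulus $\rho=\gamma$ at $\lambda=1/(2\gamma)$, and that it is the \emph{one-sided} envelope gap $0\le\phi-\hat\phi\le\eta$ (rather than a two-sided $|\phi-\hat\phi|$ estimate) that controls $\|\bar x-\hat x\|$, so that the bias scales like $\sqrt{\hat{\mu}_y}$ and the resulting $\hat{\mu}_y$ threshold is tight enough to keep the condition-number blow-up at $\Theta(\epsilon^{-2})$. A secondary point is confirming that the shift $L_{yy}\mapsto L_{yy}+\hat{\mu}_y$ does not inflate the worst-case $L$, which holds since $\hat{\mu}_y=\Theta(\epsilon^2/(L\cD_\cY^2))\ll L$.
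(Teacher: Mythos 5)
Your proposal is correct, but it takes a genuinely different route from the paper. The paper's bridge between the smoothed problem \eqref{eq:weakly convex-merely concave problem approx} and the original WCMC problem is \cref{lem:WCMC} (an extension of Lin et al.'s Corollary A.8 to general closed convex $g$): it converts a \emph{GNP} certificate on the smoothed problem, $\|\nabla\hat{\phi}(x_\epsilon)\|\leq \epsilon/(2\sqrt{6})$, into a GNME certificate $\|\nabla\phi_\lambda(x_\epsilon)\|\leq\epsilon$ on the original one, and its proof is fairly heavy: it builds an auxiliary point $y^+=\prox{\alpha g}\big(\hat{y}_*(x_\epsilon)+\alpha\nabla_y\Phi(x_\epsilon,\hat{y}_*(x_\epsilon))\big)$ with $\alpha=L_{yy}^{-1}$, invokes Danskin's theorem, the subgradient inequality for $g$, and strong convexity of the prox objective; since \texttt{SAPD+} natively outputs a GNME guarantee on the smoothed problem, the paper's ``immediately follows'' implicitly also relies on the GNME-to-GNP conversion (\cref{thm:metric-equivalance}, valid here since $f\equiv 0$, at negligible extra cost). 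You instead stay in the GNME metric on both problems and prove a Moreau-envelope perturbation bound: from the uniform sandwich $0\leq\phi-\hat{\phi}\leq\tfrac{\hat{\mu}_y}{2}\cD_\cY^2$ and $\gamma$-strong convexity of the two prox subproblems at $\lambda=1/(2\gamma)$, you get $\sup_x\|\nabla\phi_\lambda(x)-\nabla\hat{\phi}_\lambda(x)\|\leq\cD_\cY\sqrt{2\gamma\hat{\mu}_y}$, then conclude by the triangle inequality and \cref{thm:unbounded-domains} applied to the smoothed WCSC instance. Your argument is more elementary (no Danskin, no auxiliary prox point), removes the metric mismatch entirely, and — notably — never uses $f\equiv 0$, so it would extend verbatim to nonsmooth composite $f$, where the paper's GNP-based lemma is not even well posed; what the paper's route buys in exchange is a bridging lemma of independent interest and slightly finer constants (its $\hat{\mu}_y$ threshold exploits $L_{yy}/L_{xy}$). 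Both routes yield the same $\hat{\mu}_y=\Theta(\epsilon^2/(L\cD_\cY^2))$, hence $\hat{\kappa}_y=\Theta(L^2\epsilon^{-2})$, and the same $\cO(L^3\epsilon^{-6})$ oracle complexity.
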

\begin{proof}
\nsa{See~\cref{sec:Thm5_proof} for the proof.}
\end{proof}
\section{\mg{Numerical experiments}}
The experiments are conducted on \sa{a PC with 3.6 GHz Intel Core i7 CPU and NVIDIA
RTX2070 GPU.}\footnote{The code is made available at  \url{https://github.com/XuanZhangg/SAPD-PLUS}.} We consider distributionally robust optimization and fair classification. \sa{In the rest, $n$ and $d$ represent the number of samples in the dataset and the dimension of each data point, respectively.} \nsa{In this section, \texttt{SAPD+} means calling \texttt{SAPD+} with \texttt{VR-flag}=\textbf{false}, and  \texttt{SAPD+VR} means calling \texttt{SAPD+} with \texttt{VR-flag}=\textbf{true}.}


\paragraph{Distributionally Robust Optimization~\sa{(DRO)}.} 
First, we consider nonconvex-regularized variant of 
\sa{DRO} problem \cite{abadi2016deep,namkoong2016stochastic,kohler2017sub,luo2020stochastic,zhang2021robust,yan2019stochastic} \mg{which arises in distributionally robust learning}. Let $\{\mathbf{a}_i,b_i\}^{n}_{i=1}$ be the dataset where $\mathbf{a}_i\in\mathbb{R}^d$ \mg{are the features} and $b_i\in\{-1,1\}$ are labels. The DRO problem is
{\small
\begin{equation}\label{eq: experiment2 problem}
    \text{(DRO): } \min_{x\in \mathbb{R}^d} \max_{y \in Y}\frac{1}{n}\sum_{i=1}^n y_i \ell_i(x) + f(x) - g(y),
\end{equation}}%
where $\ell_i(x)=\log(1+\exp(-b_i\mathbf{a}^\top_i\mathbf{x}))$ \nsa{is the logistic loss}, $f(x) = \sa{\eta_1}\sum_{i=1}^d \frac{\alpha x_i^2}{1+\alpha x_i^2}$ is a nonconvex regularizer~\cite{antoniadis2011penalized}, 
$g(y)=\frac{1}{2}\sa{\eta_2}\|n {y}-\mathbf{1}\|^2$, and $Y \triangleq \{{y}\in \mathbb{R}^d_+:~\mathbf{1}^\top {y} = 1 \}$ -- here, $\mathbf{1}$ denotes the vector with all entries equal to one. \mg{This problem can be viewed as a robust formulation of empirical risk minimization where the weights $y_i$ are allowed to deviate from $1/n$; and the aim is to minimize the worst-case empirical risk.}
We perform experiments on three data sets: $i)$ \verb+a9a+ with $n = 32561$, $d = 123$; $ii)$ \verb+gisette+ with $n = 6000$, $d = 5000$; $iii)$ \verb+sido0+ with $n = 12678$, $d = 4932$. The dataset \verb+sido0+ is obtained from
Causality Workbench\footnote{http://www.causality.inf.ethz.ch/challenge.php?page=datasets} while the others can be downloaded from LIBSVM repository\footnote{https://www.csie.ntu.edu.tw/~cjlin/libsvmtools/datasets/binary.html}.

\emph{Parameter tuning.} We set the parameters \mg{according to} \cite{yan2019stochastic, luo2020stochastic,kohler2017sub}, i.e., , \sa{$\alpha=10$, $\eta_1=10^{-3}$, $\eta_2=1/n^2$}.
\mg{We compare \nsa{\texttt{SAPD+} and \texttt{SAPD+VR}} against \texttt{PASGDA}~\cite{boct2020alternating}, \texttt{SREDA}~\cite{luo2020stochastic}, \texttt{SMDA}, \texttt{SMDA-VR}~\cite{huang2021efficient} algorithms.} As suggested in \cite{luo2020stochastic}, we tune the primal stepsizes of all the algorithms \mg{based on a grid-search over the set} $\{10^{-3},10^{-2},10^{-1}\}$ and \mg{the ratio of the primal stepsize 
\nsa{to} dual stepsize, \nsa{i.e., $\tau/\sigma$,} is varied to take values \xzh{from} the set $\{10,10^{2},10^{3},10^{4}\}$}. For all variance reduction-based algorithms, i.e., for \texttt{SAPD+VR}, \texttt{SREDA}, \texttt{SMDA-VR}, we tune the large batch size $\sa{b}\triangleq|\cB|$ from the set $\{3000,6000\}$, and the small batch size $\sa{b'\triangleq}|I|$ from grid search over the set $\{10,100,200\}$. For the frequency parameter $q$, we let $q=b'=|I|$ \sa{for \texttt{SAPD+VR} and \texttt{SMDA-VR} (as suggested in \cite{huang2021efficient}); for \texttt{SREDA}, when we set $q$ and $m$ (\texttt{SREDA}'s inner loop iteration number) to $\mathcal{O}(n/|I|)$ as suggested in~\cite{luo2020stochastic}, we noticed that \texttt{SREDA} does not perform well against \texttt{SAPD+VR} and \texttt{SMDA-VR}. Therefore,} to optimize the performance of \texttt{SREDA} further, we 
tune $q,m$ from a grid search over $\{10,100,200\}$. For methods without variance reduction, i.e., for \texttt{SAPD+}, \texttt{SMDA} and \texttt{PASGDA}, we \nsa{also use mini-batch to estimate the gradients} and tune the batch size 
from $\{10,100,200\}$ as well. For \texttt{SAPD+} and \texttt{SAPD+VR}, we tune the momentum $\theta$ from $\{0.8,0.85,0.9\}$ and the inner iteration number from \xzh{$N = \{10,50,100\}$}.

\emph{Results.} \nsa{To fairly compare the performances of algorithms using different batch sizes, we plot \na{the primal function values} against epochs in x-axis\footnote{\nsa{an epoch is completed whenever an algorithm does one pass over the whole data set through sampling mini-bathes without replacement.}}.} In \cref{fig: DRO}, we plot the \mg{average} 
\na{primal function value} against the epoch number \mg{based on 30 simulations (runs)}. \mg{The standard deviations of the runs are also illustrated around 
\sa{the average} in lighter color \sa{as shaded regions}.}
We observe that \texttt{SAPD+} and \texttt{SAPD+VR} consistently outperforms over other algorithms. For \verb+a9a+, \verb+gisette+, \verb+sido0+ datasets, the average training accuracy of \texttt{SAPD+} are $84.06\%$, $95.41\%$, $96.43\%$, and of \texttt{SAPD+VR} are $84.33\%$, $97.69\%$, $97.46\%$, respectively. The best performance for \verb+a9a+, \verb+gisette+, \verb+sido0+ among all the other algorithms are $75.92\%$, $93.07\%$, $96.43\%$, respectively. More importantly, \mg{we observe that} \sa{as an accelerated method, \texttt{SAPD+VR} enjoys fast convergence properties while still being robust to gradient noise.}

\begin{figure}[t]
\centering
\caption{
Comparison of \nsa{\texttt{SAPD+} and \texttt{SAPD+VR}} against \texttt{PASGDA}~\cite{boct2020alternating}, \texttt{SREDA}~\cite{luo2020stochastic}, \texttt{SMDA}, \texttt{SMDA-VR}~\cite{huang2021efficient} on real-data for solving \cref{eq: experiment2 problem} with $30$ times simulation.}
\label{fig: DRO}
\includegraphics[width = 0.32\textwidth]{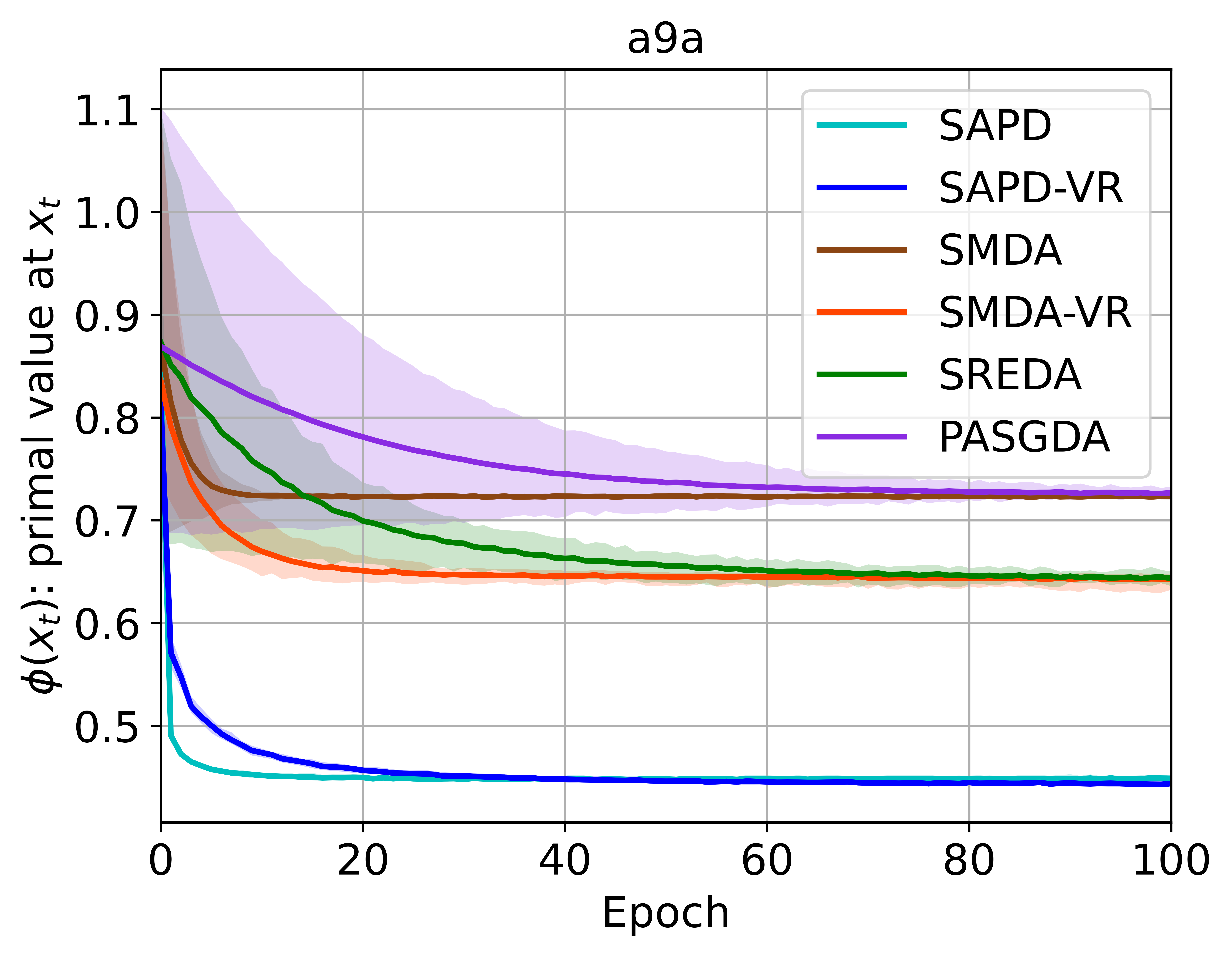}
\includegraphics[width = 0.32\textwidth]{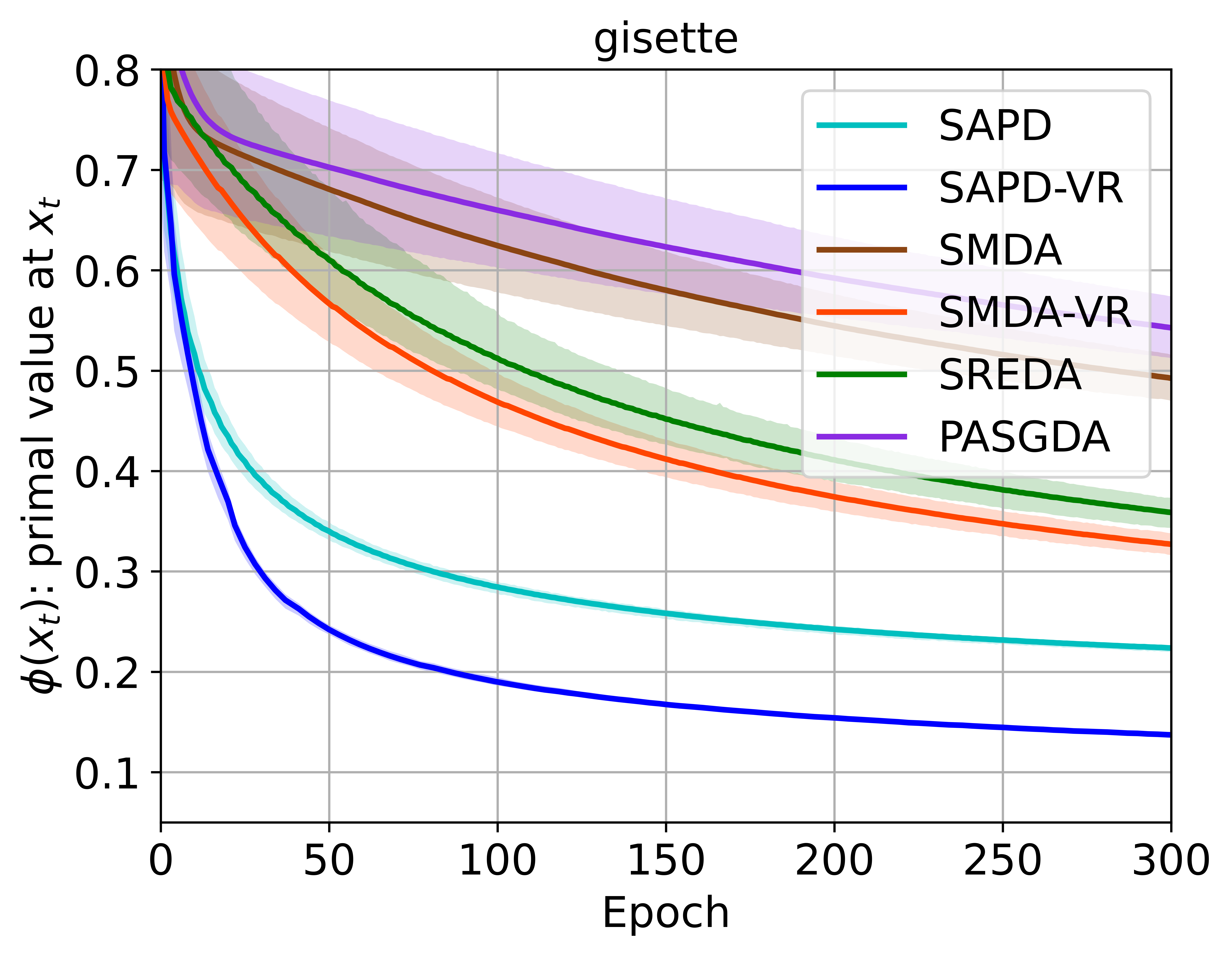}
\includegraphics[width = 0.32\textwidth]{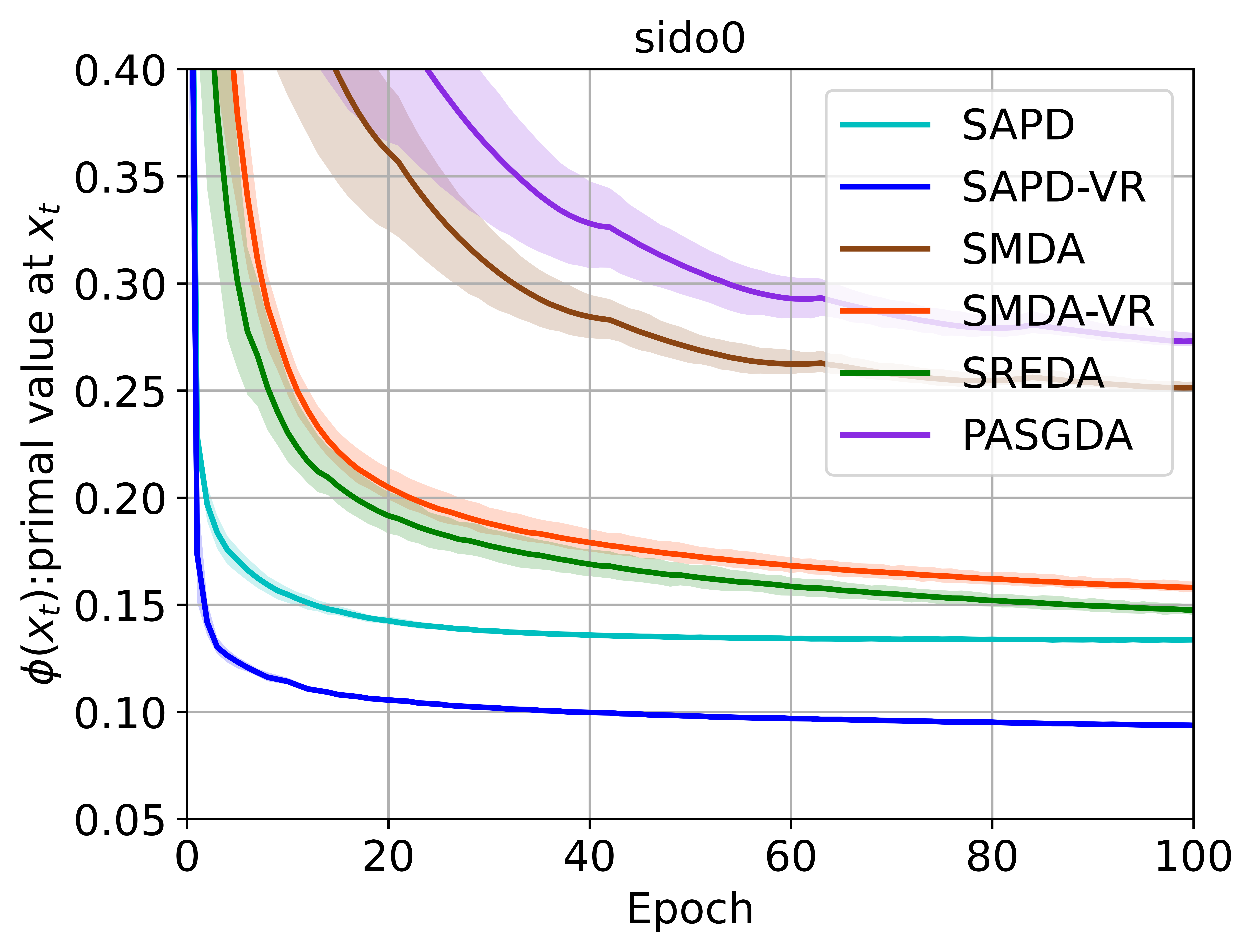}
\centering
\end{figure}

\paragraph{Fair Classification.} 
\mg{
For multi-class classification, Mohri \emph{et al.}~\cite{mohri2019agnostic} propose training a fair classifier 
\sa{thorough minimizing} the worst-case loss over the classification categories.}
In the spirit of \cite{nouiehed2019solving,huang2021efficient}, we adopt a nonconvex \mg{convolutional neural network} (CNN) model as a classifier and set the number of categories to $3$, \sa{resulting in a minimax problem of the form:}\vspace*{-2mm}
{\small
\begin{equation}
    \label{eq: experiment CNN}
    \min_{x\in\mathcal{X}} \max_{y\in\cY}\sum_{i=1}^{3}y_i \sa{\ell_i(x)} -g(y),\quad s.t.\quad \sum_{i=1}^{3}y_i=1,\;y_i\geq0,\; \forall\;i 
\end{equation}}%
where 
\sa{$x\in\reals^p$} represents the parameters of the CNN, and \mg{\sa{$\ell_1,\ell_2,\ell_3$} correspond to the loss of three categories \xzrev{whose details are given in \cref{sec:experiement-details}}, $g(y)=\frac{\sa{\eta}}{2}\|y\|_2^2$ is a regularizer with $\sa{\eta}>0$}. \sa{We 
train \eqref{eq: experiment CNN} on the datasets to classify: $i)$ gray-scale hand-written digits $\{0,2,3\}$ from \texttt{MNIST}; 
$ii)$ fashion images 
with target classes \{T-shirt/top, Sandal, Ankle boot\} from \texttt{F-MNIST}; 
$iii)$ RBG colored images with target classes \{Plane, Truck, Deer\} from \texttt{CIFAR10}. For both \texttt{MNIST} and \texttt{F-MNIST} $p=43831$, $n=18000$ and $d=28\times28\times 1$, and for \texttt{CIFAR10} $p=61411$, $n=15000$, and  $d=32\times 32\times 3$.} 

We let the regularization parameter $\sa{\eta}=0.1$ 
as suggested in \cite{huang2021efficient}. \mg{We compare \texttt{SAPD+VR} \sa{against the other} VR-based algorithms \texttt{SREDA} and \texttt{SMDA-VR} over 30 runs.}
We tune the primal stepsizes of \texttt{SAPD+VR} and \texttt{SREDA} \mg{by a grid search over the set} $\{10^{-2},5\times 10^{-3},10^{-3}\}$ and the ratio of primal \sa{to dual stepsizes, i.e., $\tau/\sigma$, is chosen from} 
$\{10,10^2,5\times 10^2,10^3\}$. For \texttt{SMDA-VR}, the primal and dual stepsizes are $10^{-3}$ and $10^{-5}$ as suggested in \cite{huang2021efficient} \sa{--we also tried stepsizes bigger than the suggested; but, it caused convergence issues in the experiments.} 
We set the large batchsize $|\cB|=3000$ and the small batchsize $|\cI|=200$ for all algorithms and data sets; the frequency $q=200$ is used for \texttt{SAPD+VR} and \texttt{SMDA-VR}, and \sa{we tune $q$ for \texttt{SREDA} taking values from $\{10,50,100,200\}$.} The momentum $\theta$ for \texttt{SAPD+VR} is tuned taking values from  $\{0.8,0.85,0.9\}$ and inner iteration number is tuned from \xzh{$N = \{10,50,100\}$}. 
For \texttt{SREDA}, we tune the inner loop iteration from $\{10,50,100\}$. \mg{Fig. \ref{fig: CNN WCSC} shows that \texttt{SAPD+VR} outperforms the other VR-based algorithms clearly in terms of both the average and the standard deviation of \na{the primal function values}.}

\begin{figure}[t]
\centering
\caption{
Comparison of \texttt{SAPD+VR} against other Variance Reduction algorithms, \texttt{SREDA}~\cite{luo2020stochastic}, \texttt{SMDA-VR}~\cite{huang2021efficient} on real-data for solving \cref{eq: experiment CNN} with $30$ times simulation.}
\label{fig: CNN WCSC}
\includegraphics[width = 0.32\textwidth]{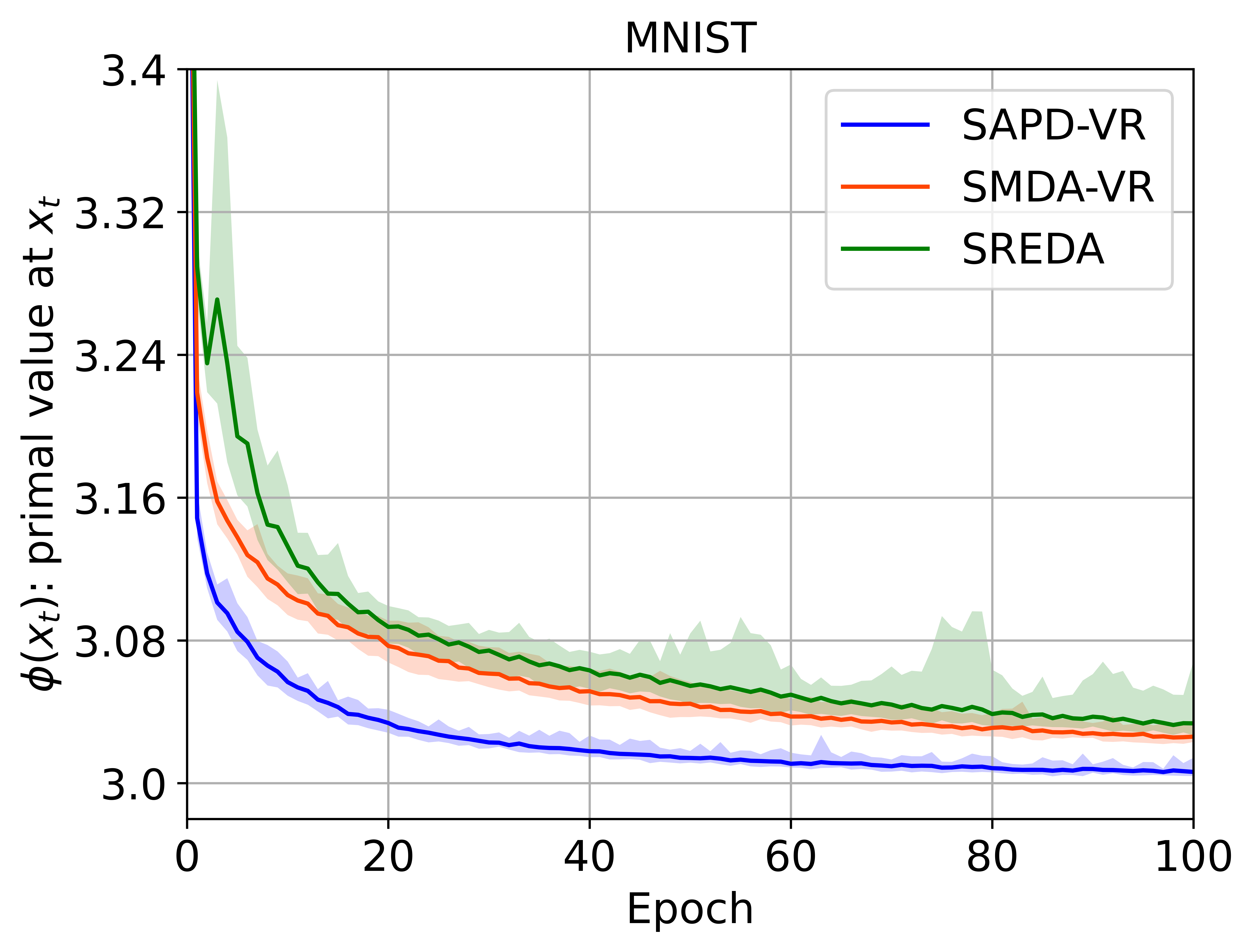}
\includegraphics[width = 0.32\textwidth]{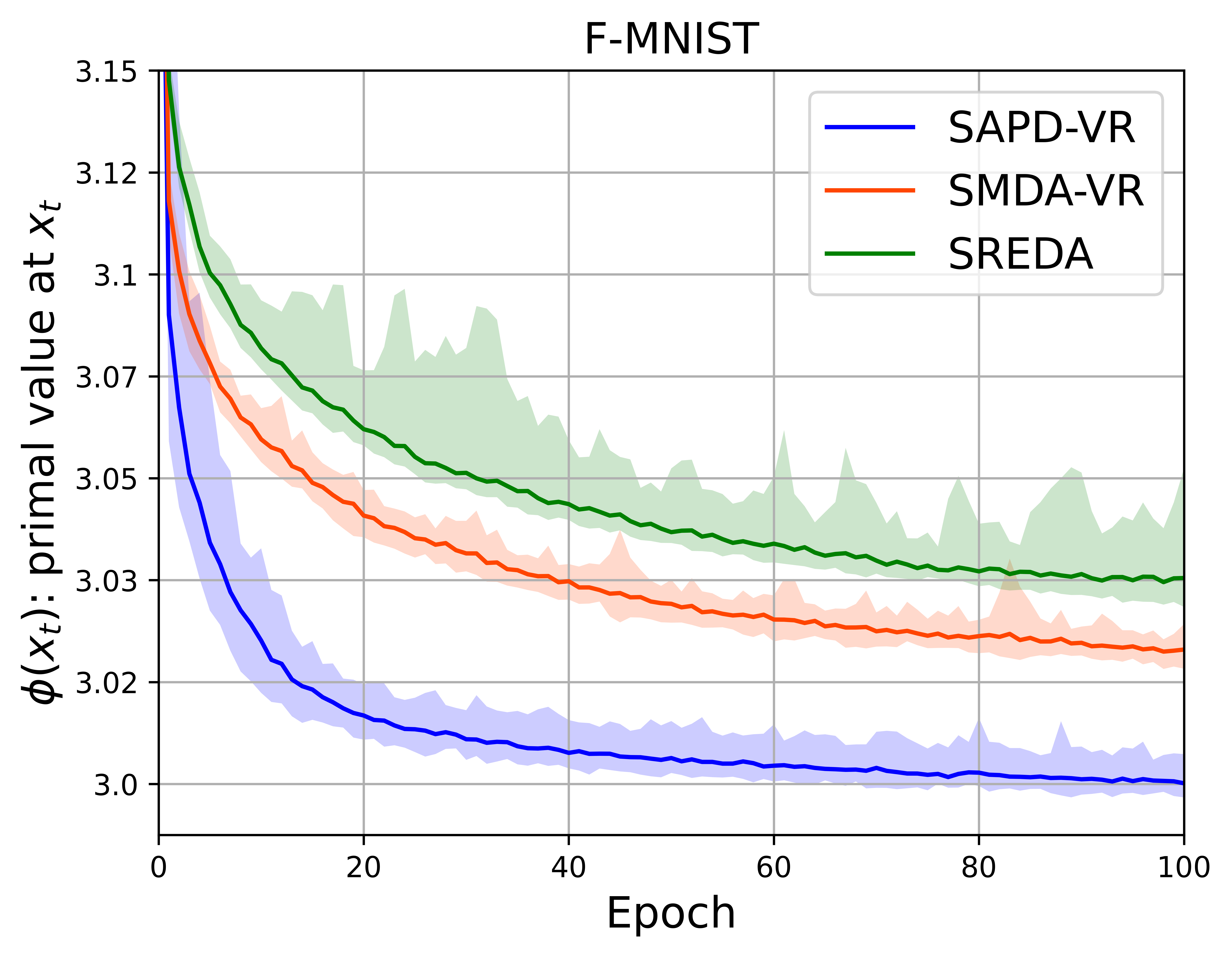}
\includegraphics[width = 0.32\textwidth]{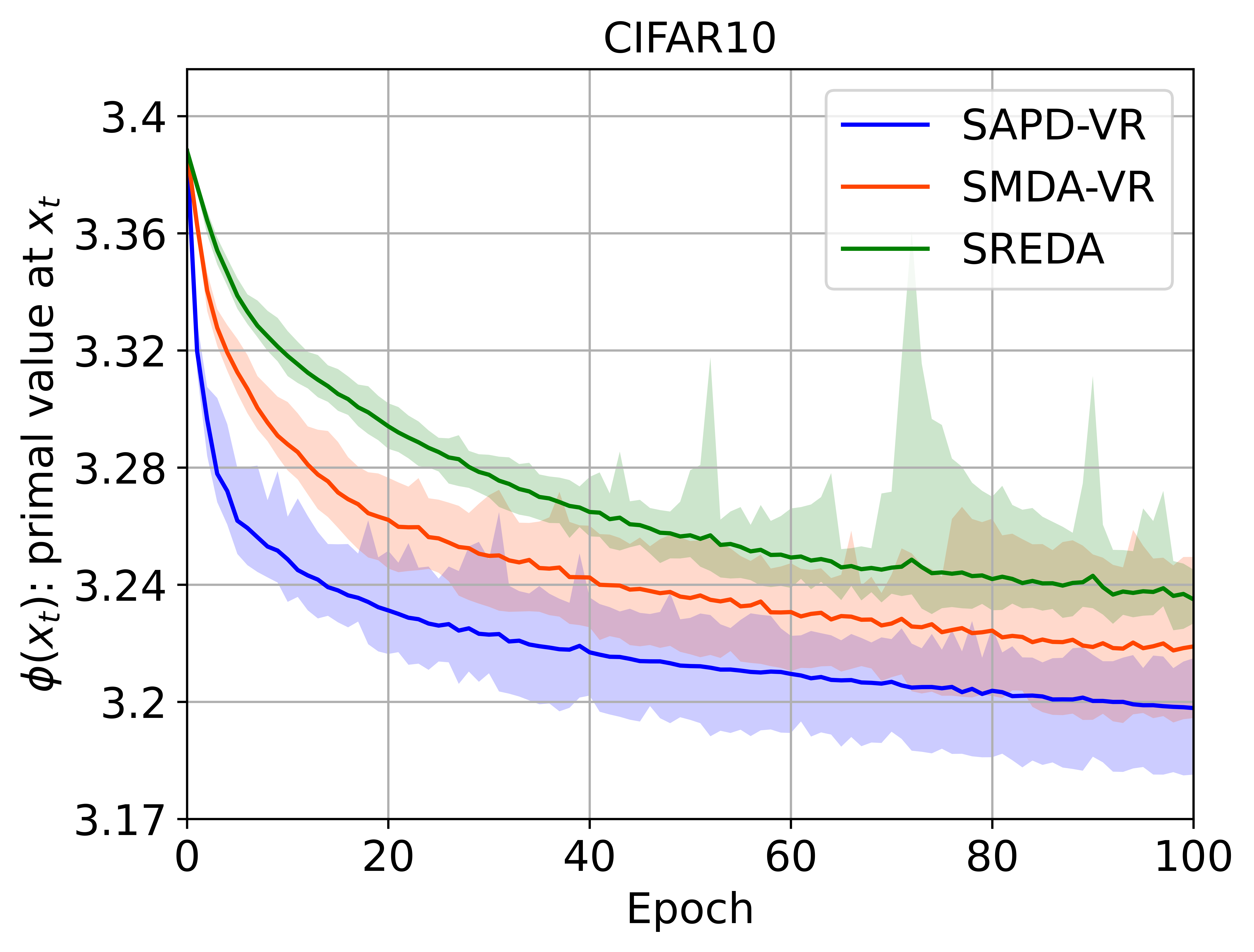}
\centering

\end{figure}

\section{Conclusion}

\mg{In this paper, we \sa{considered both WCSC and WCMC saddle-point problems assuming we only have an access to an unbiased stochastic first-oracle with a finite variance. This setting arises} in many applications \sa{ranging} from distributionally robust learning to GANs. We proposed a new method \texttt{SAPD+}, which achieves an improved complexity in terms of target accuracy $\epsilon$ for both WCSC and WCMC problems; \sa{moreover, our bound for \texttt{SAPD+} has a better dependency to the condition number $\kappa_y$ for the WCSC scenario.} \sa{We also showed that} our algorithm \texttt{SAPD+} can support the 
\sa{SPIDER} variance-reduction technique. \sa{Finally,} we provided numerical experiments 
\sa{demonstrating that \texttt{SAPD+} can achieve a} state-of-the-art performance on distributionally robust learning and on multi-class classification \sa{problems arising in ML}.\looseness=-1}

\bibliographystyle{plain} 
\bibliography{reference} 


\newpage

\section{The general construction used in the proof of Theorem~\ref{cor:complexity-fg}}
\label{sec:construction}
{In general, the proof of Theorem~\ref{cor:complexity-fg} can be divided into two parts: (1)~inner loop and outer loop convergence analysis, (2)~\sa{combining these results to derive the overall complexity}.}
{\begin{itemize}
    \item  \sa{We first study the convergence properties of Algorithm~\ref{Alg: SAPD} for solving the SCSC subproblems in \cref{eq: SCSC problem chpt2}. In \cref{lemma:bound-gap}, we provide guarantees for the inner loop iterates using the expected gap function as our metric.} 
    \item Since the convergence guarantee for the inner loop is provided in terms of $\mathcal{G}^t$, \sa{we also consider the relationship between $\mathcal{G}^t(x_0^t,y_0^t)$ and GNME, i.e., $||\nabla_x\phi_{\lambda}(x_0^t)||$.} Indeed, Lemmas~\ref{Lemma: gap convergence},\ref{lemma: different bound of gap}, and \ref{Lemma: telescope sum chpt2} allow us to translate the expected gap result of inner loops to the convergence in terms of GNME for the outer loops. In Theorem~\ref{thm:WC_SP}, we provide the convergence result in the GNME metric and \sa{state} the requirements on the parameters \sa{to be able to derive} the complexity bound in Theorem~\ref{cor:complexity-fg}.
    \item \sa{In \cref{LEMMA: Noise LMI after young's ineq-gap}, we provide a particular step size rule 
    for solving the SCSC subproblems in \cref{eq: SCSC problem chpt2}, and we use this specific choice to compute the overall complexity for solving the WCSC problem \cref{eq:main problem} by using \texttt{SAPD+}.} 
\end{itemize}}

\subsection{The construction for {the convergence analysis}}
\label{sec:pre_analysis_thm1}
\sa{Based on \cref{Lemma: graident of ME}, the key step for establishing \xz{\texttt{SAPD+}} convergence is to bound} $\|x^{t}_{0}-\prox{\lambda\phi}(x^{t}_{0})\|$, \sa{where $\phi(x) \triangleq \max_{y\in\cY}\mathcal{L}(x,y)$ for {every $x\in\cX$}
and $\lambda = (\gamma+\mu_x)^{-1}$. To achieve this, we first give a bound on the gap function $\cG^{t}$ at the $t$-th outer iteration.}
\begin{lemma}\label{lemma:bound-gap}
Suppose Assumptions~\ref{ASPT: fg},~\ref{ASPT: lipshiz gradient},~\ref{ASPT: unbiased noise assumption} hold. \sa{
Given $\{N_t\}_{t\geq 0}\subset\integers_+$, let $\{x^{t}_{0},y^{t}_{0}\}_{t\geq 1}$ be generated by \xz{\texttt{SAPD+}}, stated in~Algorithm~\ref{Alg: SAPD-plus}, when \texttt{VR-flag}=\textbf{false}, initialized from {$(x_0^0,y_0^0)\in\dom f\times\dom g$} and using} $\tau,\sigma,\theta,\mu_x>0$ that satisfy {\small
\begin{equation}
    \label{eq: SCSC SAPD LMI}
  \begin{pmatrix}
  \mu_y & (\theta  - 1)L_{yx} & (\theta  - 1)L_{yy}  & 0\\ 
   (\theta  - 1)L_{yx} & \tfrac{1}{\tau} - \sa{L'_{xx}} & 0 & -  \theta L_{yx}\\ 
  (\theta  - 1)L_{yy} & 0 & \frac{1}{\sigma} - \alpha & -  \theta L_{yy}\\
   0 & - \theta L_{yx} & -  \theta L_{yy}  &  \alpha 
\end{pmatrix}\succeq 0
\end{equation}}%
{for some} $\alpha \in [0,  {\tfrac{1}{\sigma}\sa{)}}$, \sa{where $L'_{xx}\triangleq L_{xx}+\mu_x+\gamma$}.  Then for all $t\geq {0}$, \sa{it holds that}
\begin{equation}\label{eq:Gap bound with fg}
  \mathbb{E}\left[\cG^{t}(x^{t+1}_{0}, y^{t+1}_{0})\right] \leq \frac{M_{\tau,\sigma,\theta}}{ N_t }\left(
\frac{\mu_x}{4}\ED{x^{t}_{*}(y^{t+1}_{0}) - x^{t}_{0}} + \frac{\mu_y}{4}\ED{y_*(x^{t+1}_{0}) - y^{t}_{0}}
\right) + \Xi_{\tau,\sigma,\theta},  
\end{equation}
where $ N_t \in \mathbb{N}^+$ and $\sM \triangleq \max \{ \frac{4}{\mu_x\tau}, \frac{4+4\theta}{\mu_y\sigma}\}$,
\begin{subequations}
\begin{align}
    & \Xi_{\tau,\sigma,\theta} \triangleq \tau\left( \Xi^x_{\tau,\sigma,\theta} + \frac{1}{2}\right)\delta_x^2  
     + \sigma\left( \Xi^y_{\tau,\sigma,\theta} +  \frac{1+2\theta}{2}\right)\delta_y^2,\nonumber\\
     &\Xi^x_{\tau,\sigma,\theta} \triangleq \left( 1+ \frac{\sigma\theta(1+\theta)L_{yx}}{2}\right),\label{eq:xi_x}\\
     &\Xi^y_{\tau,\sigma,\theta} \triangleq \left(1+3\theta + \sigma\theta(1+\theta)L_{yy}  +  \tau\sigma\theta(1+\theta)L_{yx}L_{xy} \right)(1+2\theta) + \frac{\tau\theta(1+\theta)L_{yx}}{2}.\label{eq:xi_y}
\end{align}
\end{subequations}
\end{lemma}
\begin{proof}
For easier readability, we provide the proof in a separate subsection, see~\cref{sec:pf-lema2-lemma7}.
\end{proof}
The following lemma provides \sa{a relation between $\cG^{t}(x^{t}_{0},y^{t}_{0})$ and $\cG^{t}(x^{t+1}_{0},y^{t+1}_{0})$.} 
\begin{lemma} 
\sa{Under the premise of Lemma~\ref{lemma:bound-gap} and \nsa{\cref{assump:compact}}, for all $t\geq 0$,}
\label{Lemma: gap convergence}
$$\left(1-\frac{\sM}{ N_t }\right)\nsa{\mathbb{E}[\cG^{t}(x^{t+1}_{0}, y^{t+1}_{0})]} \leq \frac{\sM}{ N_t }\nsa{\mathbb{E}[\cG^{t}(x^{t}_{0},y^{t}_{0})]}  + \Xi_{\tau,\sigma,\theta}.
$$
\end{lemma}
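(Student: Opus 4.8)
The plan is to feed the bound of Lemma~\ref{lemma: Gap bound with fg} back into itself: I will replace the two squared-distance terms on its right-hand side by $\cG^{t}(x^{t}_{0},y^{t}_{0})$ and $\cG^{t}(x^{t+1}_{0},y^{t+1}_{0})$, and then collect the appearance of $\cG^{t}(x^{t+1}_{0},y^{t+1}_{0})$ on the left. The tool that makes this substitution possible is the strong convexity--strong concavity of $\cL^{t}$. Writing $P(x)\triangleq\max_{y'\in\cY}\cL^{t}(x,y')$ and $D(y)\triangleq\min_{x'\in\cX}\cL^{t}(x',y)$, we have $\cG^{t}(x,y)=P(x)-D(y)$, and since $(x^{t}_{*},y^{t}_{*})$ is the unique saddle point of $\cL^{t}$, $\min_{x}P=\max_{y}D=V^{t}\triangleq\cL^{t}(x^{t}_{*},y^{t}_{*})$.

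First I would invoke quadratic growth in each block. Because $\cL^{t}(\cdot,y^{t+1}_{0})$ is $\mu_x$-strongly convex with minimizer $x^{t}_{*}(y^{t+1}_{0})$, optimality of the minimizer kills the first-order term and leaves
\[
\tfrac{\mu_x}{2}\,\|x^{t}_{*}(y^{t+1}_{0})-x^{t}_{0}\|^2
\le \cL^{t}(x^{t}_{0},y^{t+1}_{0})-\cL^{t}(x^{t}_{*}(y^{t+1}_{0}),y^{t+1}_{0})
\le P(x^{t}_{0})-D(y^{t+1}_{0}),
\]
where the last inequality uses $\cL^{t}(x^{t}_{0},y^{t+1}_{0})\le P(x^{t}_{0})$ and $\cL^{t}(x^{t}_{*}(y^{t+1}_{0}),y^{t+1}_{0})=D(y^{t+1}_{0})$. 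The symmetric argument applied to the $\mu_y$-strongly concave map $\cL^{t}(x^{t+1}_{0},\cdot)$, whose maximizer is $y_{*}(x^{t+1}_{0})$, gives $\tfrac{\mu_y}{2}\|y_{*}(x^{t+1}_{0})-y^{t}_{0}\|^2\le P(x^{t+1}_{0})-D(y^{t}_{0})$.

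Next I would turn these ``cross'' gaps into genuine gaps. Splitting $P(x^{t}_{0})-D(y^{t+1}_{0})=[P(x^{t}_{0})-V^{t}]+[V^{t}-D(y^{t+1}_{0})]$ into a primal and a dual suboptimality, both nonnegative, and adding back the discarded nonnegative terms yields $P(x^{t}_{0})-D(y^{t+1}_{0})\le \cG^{t}(x^{t}_{0},y^{t}_{0})+\cG^{t}(x^{t+1}_{0},y^{t+1}_{0})$, and identically for $P(x^{t+1}_{0})-D(y^{t}_{0})$. Taking expectations conditioned on $(x^{t}_{0},y^{t}_{0})$, so that $\cG^{t}(x^{t}_{0},y^{t}_{0})$ is constant, and combining the two block inequalities gives
\[
\tfrac{\mu_x}{4}\,\ED{x^{t}_{*}(y^{t+1}_{0})-x^{t}_{0}}
+\tfrac{\mu_y}{4}\,\ED{y_{*}(x^{t+1}_{0})-y^{t}_{0}}
\le \cG^{t}(x^{t}_{0},y^{t}_{0})+\mathbb{E}\big[\cG^{t}(x^{t+1}_{0},y^{t+1}_{0})\big].
\]

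Finally, substituting this into Lemma~\ref{lemma: Gap bound with fg} gives $\mathbb{E}[\cG^{t}(x^{t+1}_{0},y^{t+1}_{0})]\le \tfrac{\sM}{\Krho}\big(\cG^{t}(x^{t}_{0},y^{t}_{0})+\mathbb{E}[\cG^{t}(x^{t+1}_{0},y^{t+1}_{0})]\big)+\sXi$, and moving the $\cG^{t}(x^{t+1}_{0},y^{t+1}_{0})$ terms to the left yields the stated recursion (understood in conditional expectation, matching Lemma~\ref{lemma: Gap bound with fg}). The only delicate point is the bookkeeping of the mismatched arguments, namely $y^{t+1}_{0}$ inside the $x$-term and $x^{t+1}_{0}$ inside the $y$-term: the two quadratic-growth inequalities must be anchored at exactly these points so that each cross gap splits cleanly into a primal and a dual piece relative to the common value $V^{t}$. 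I would also remark that using the crude bound above--rather than exploiting the exact cancellation of $V^{t}$, which would sharpen the constant by a factor of two--is precisely what produces the clean coefficients $1-\tfrac{\sM}{\Krho}$ and $\tfrac{\sM}{\Krho}$ in the claim.
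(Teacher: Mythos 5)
Your proof is correct and follows the same route as the paper's: both arguments bound the two squared-distance terms on the right-hand side of \eqref{eq:Gap bound with fg} by $\cG^{t}(x^{t}_{0},y^{t}_{0})+\cG^{t}(x^{t+1}_{0},y^{t+1}_{0})$ and then move the $\cG^{t}(x^{t+1}_{0},y^{t+1}_{0})$ term to the left-hand side. The one substantive difference is that the paper simply cites \cite[Lemma 1]{yan2020optimal} for the key inequality $\tfrac{\mu_x}{4}\|x^{t}_{*}(y)-x'\|^2+\tfrac{\mu_y}{4}\|y_*(x)-y'\|^2\leq\cG^{t}(x,y)+\cG^{t}(x',y')$, whereas you re-derive it from scratch: quadratic growth of the $\mu_x$-strongly convex block $\cL^{t}(\cdot,y^{t+1}_{0})$ and of the $\mu_y$-strongly concave block $\cL^{t}(x^{t+1}_{0},\cdot)$, followed by splitting each cross gap around the saddle value $V^{t}$. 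This makes the argument self-contained, and your remark that exploiting the exact cancellation of $V^{t}$ would yield the sharper constants $\mu_x/2,\mu_y/2$ is accurate (the factor $1/4$ is what the cited lemma and \eqref{eq:Gap bound with fg} are calibrated to, so nothing is lost). Two bookkeeping points you should make explicit: first, the lemma's conclusion, like its proof, is to be read in expectation, as you note; second, the final rearrangement subtracts $\spp\cG^{t}(x^{t+1}_{0},y^{t+1}_{0})$ from both sides, which is only legitimate if that quantity is finite --- the paper flags this explicitly, observing that $\cL^{t}$ is SCSC so that $\cG^{t}(x^{t+1}_{0},y^{t+1}_{0})$ is finite given the iterate, and a sentence to this effect should be added to your write-up.
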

\begin{proof}
\sa{It is shown in~\cite[Lemma 1]{yan2020optimal} that 
$$\frac{\mu_x}{4}\|x^{t}_{*}(y) - \sa{x'} \|^{\sa{2}} + \frac{\mu_y}{4} \|y_*(x) - \sa{y'} \|^{\sa{2}}\leq \cG^{t}(x,y) + \cG^{t}(x',y')$$
holds {for all $(x,y),(x',y')\in \dom f \times \dom g$.}
\nsa{It is important to note that since $\dom f$ and $\dom g$ are compact sets, \eqref{eq:Gap bound with fg} implies that $\mathbb{E}[\cG^{t}(x_0^{t+1},y_0^{t+1})]<\infty$. Furthermore, since $\cG^t(\cdot,\cdot)\geq 0$, we also have $\mathbb{E}[\cG^{t}(x_0^{t+1},y_0^{t+1})]>-\infty$; hence, $-\infty<\mathbb{E}[\cG^{t}(x_0^{t+1},y_0^{t+1})]<\infty$ for all $t\geq 0$.}
\mg{Then \eqref{eq:Gap bound with fg} and above inequality with the choice of $x = x_0^{t+1}$, $y=y_0^{t+1}$, $x' = x_0^t$, $y' = y_0^t$ together yield the desired result} --\nsa{one can subtract $\frac{\sM}{ N_t }\mathbb{E}[\cG^{t}(x^{t+1}_{0}, y^{t+1}_{0})]$ from both sides $\mathbb{E}[\cG^{t}(x^{t+1}_{0}, y^{t+1}_{0})]$ is finite.}
}
\end{proof}

\sa{For the sake of completeness, we state \cite[Lemma 8]{yan2020optimal} below, which will be used in our analysis.}

\begin{lemma}
\label{lemma: different bound of gap}
\sa{\textbf{\cite[Lemma 8]{yan2020optimal}.}} Under the premise of \cref{lemma:bound-gap}, for any $\beta_{1},\beta_{2}\in(0,1)$ and $t\geq 0$,  
\begin{equation}
    \begin{aligned}
        &\cG^{t}(x^{t+1}_{0},y^{t+1}_{0})\geq \left(1-\frac{\gamma+\mu_x}{\gamma}\Big(\frac{1}{\beta_1}-1\Big)\cG_{t+1}(x^{t+1}_{0},y^{t+1}_{0})\right) - \frac{\gamma+\mu_x}{2}\frac{\beta_1}{1-\beta_1}\|x^{t+1}_{0}-x^{t}_{0}\|^2,\\
        &\cG^{t}(x^{t+1}_{0},y^{t+1}_{0})\geq \phi(x^{t+1}_{0})-\phi(x^{t}_{0}) + \frac{\gamma+\mu_x}{2}\|x^{t+1}_{0}-x^{t}_{0}\|^2,\\
        &\cG^{t}(x^{t+1}_{0},y^{t+1}_{0})\geq \frac{\gamma \beta_2}{2}\|x^{t}_{0}-x^{t}_{*}\|^2 - \frac{\gamma\beta_2}{2(1-\beta_2)}\|x^{t+1}_{0}-x^{t}_{0}\|^2,
        \end{aligned}
\end{equation}
\nsa{hold w.p. 1,} \mg{where $x^{t}_{*}=\prox{\lambda\phi}(x^{t}_{0})$.}
\end{lemma}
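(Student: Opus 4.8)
The plan is to work entirely with the primal--dual decomposition of the gap function, introducing no randomness (this is a deterministic structural lemma about the regularized subproblems). Write $\psi^t(x)\triangleq\max_{y'\in\cY}\cL^t(x,y')$ and $d^t(y)\triangleq\min_{x'\in\cX}\cL^t(x',y)$, so that $\cG^t(x,y)=\psi^t(x)-d^t(y)$. Since $\cL^t(x,y')=\cL(x,y')+D\norm{x-x^t_0}^2$ with $D\triangleq\tfrac{\gamma+\mu_x}{2}$, maximizing over $y'$ yields the identity $\psi^t(x)=\phi(x)+D\norm{x-x^t_0}^2$; in particular $\psi^{t+1}(x^{t+1}_0)=\phi(x^{t+1}_0)$ and $\psi^t(x^{t+1}_0)=\phi(x^{t+1}_0)+D\norm{x^{t+1}_0-x^t_0}^2$. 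Because $\phi$ is $\gamma$-weakly convex, $\psi^t$ is $\mu_x$-strongly convex (with $\mu_x=\gamma$ as fixed in \cref{cor:complexity-fg}) with unique minimizer $x^t_*=\prox{\lambda\phi}(x^t_0)$, and likewise $\cL^t(\cdot,y)$ is $\mu_x$-strongly convex with minimizer $x^t_*(y)$ as in \eqref{eq: x* y*}. These identities and strong-convexity facts are the only structural ingredients I would use.

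The second and third inequalities follow by upper bounding the inner minimum $d^t$ at a convenient feasible point. For the second, $d^t(y^{t+1}_0)\le\cL^t(x^t_0,y^{t+1}_0)=\cL(x^t_0,y^{t+1}_0)\le\phi(x^t_0)$, and combining with $\psi^t(x^{t+1}_0)=\phi(x^{t+1}_0)+D\norm{x^{t+1}_0-x^t_0}^2$ gives the claim at once. For the third, evaluating at $x^t_*$ gives $d^t(y^{t+1}_0)\le\max_{y'}\cL^t(x^t_*,y')=\psi^t(x^t_*)$, so strong convexity of $\psi^t$ yields $\cG^t(x^{t+1}_0,y^{t+1}_0)\ge\psi^t(x^{t+1}_0)-\psi^t(x^t_*)\ge\tfrac{\mu_x}{2}\norm{x^{t+1}_0-x^t_*}^2$, and a single Young's inequality $\norm{x^{t+1}_0-x^t_*}^2\ge\beta_2\norm{x^t_0-x^t_*}^2-\tfrac{\beta_2}{1-\beta_2}\norm{x^{t+1}_0-x^t_0}^2$ produces the stated bound.

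The first inequality, relating $\cG^t$ to $\cG^{t+1}$, is where the real work lies and is the \emph{main obstacle}. The idea is to track how shifting the proximal center from $x^t_0$ to $x^{t+1}_0$ perturbs the dual value. Viewing $d^t(y)=e_y(x^t_0)$, where $e_y(z)\triangleq\min_{x'}[\cL(x',y)+D\norm{x'-z}^2]$ is (a scaling of) the Moreau envelope of the $\gamma$-weakly convex map $\cL(\cdot,y)$ with parameter $\lambda=(\gamma+\mu_x)^{-1}<\gamma^{-1}$, \cref{Lemma: graident of ME} gives that $e_y$ is differentiable and $2D$-smooth with $\nabla e_y(z)=2D\,(z-x^t_*(y))$. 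Applying the descent inequality to $e_{y^{t+1}_0}$ between $x^{t+1}_0$ and $x^t_0$ gives
\[
d^t(y^{t+1}_0)\le d^{t+1}(y^{t+1}_0)+2D\,\langle x^{t+1}_0-x^{t+1}_*(y^{t+1}_0),\,x^t_0-x^{t+1}_0\rangle+D\norm{x^{t+1}_0-x^t_0}^2.
\]
I would split the cross term by Young's inequality with a free parameter $a>0$ and control $\norm{x^{t+1}_0-x^{t+1}_*(y^{t+1}_0)}^2\le\tfrac{2}{\mu_x}\cG^{t+1}(x^{t+1}_0,y^{t+1}_0)$, which follows from the $\mu_x$-strong convexity of $\cL^{t+1}(\cdot,y^{t+1}_0)$ together with $\psi^{t+1}(x^{t+1}_0)\ge\cL^{t+1}(x^{t+1}_0,y^{t+1}_0)$. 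Substituting into $\cG^t(x^{t+1}_0,y^{t+1}_0)=\psi^{t+1}(x^{t+1}_0)+D\norm{x^{t+1}_0-x^t_0}^2-d^t(y^{t+1}_0)$, the two $D\norm{x^{t+1}_0-x^t_0}^2$ contributions cancel, and choosing $a=\tfrac{1-\beta_1}{\beta_1}$ collapses the remaining constants to exactly $1-\tfrac{\gamma+\mu_x}{\gamma}(\tfrac{1}{\beta_1}-1)$ in front of $\cG^{t+1}$ and $\tfrac{\gamma+\mu_x}{2}\tfrac{\beta_1}{1-\beta_1}$ in front of $\norm{x^{t+1}_0-x^t_0}^2$. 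The delicate part is the constant bookkeeping and verifying that the Moreau-envelope smoothness modulus is indeed $2D$ on the weakly convex subproblem; the resulting three bounds recover \cite[Lemma 8]{yan2020optimal} specialized to the regularized subproblem \eqref{eq: SCSC problem chpt2}.
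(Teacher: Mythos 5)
Your proof is correct, but it is not the paper's route: the paper never proves this lemma at all---it imports it verbatim as \cite[Lemma 8]{yan2020optimal}---so what you have produced is the self-contained derivation that the paper delegates to the reference. Your decomposition $\cG^{t}(x,y)=\psi^t(x)-d^t(y)$ with $\psi^t(x)=\phi(x)+\tfrac{\gamma+\mu_x}{2}\|x-x^{t}_{0}\|^2$, the feasible-point bounds $d^t(y^{t+1}_{0})\le\phi(x^{t}_{0})$ and $d^t(y^{t+1}_{0})\le\psi^t(x^{t}_{*})$ for the second and third inequalities, and the Moreau-envelope perturbation argument for the first inequality are all sound, and they reconstruct Yan et al.'s argument specialized to the subproblem \eqref{eq: SCSC problem chpt2}; you also implicitly corrected the misplaced parenthesis in the paper's display, reading the first bound as $\big(1-\tfrac{\gamma+\mu_x}{\gamma}(\tfrac{1}{\beta_1}-1)\big)\cG^{t+1}(x^{t+1}_{0},y^{t+1}_{0})-\tfrac{\gamma+\mu_x}{2}\tfrac{\beta_1}{1-\beta_1}\|x^{t+1}_{0}-x^{t}_{0}\|^2$, which is how it is used in \cref{Lemma: telescope sum chpt2}. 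Two bookkeeping remarks. First, the step you flag as delicate---the $2D$-smoothness of $e_y$---only requires the one-sided majorization $e_y(z')\le e_y(z)+\langle\nabla e_y(z),z'-z\rangle+\tfrac{\gamma+\mu_x}{2}\|z'-z\|^2$, which holds unconditionally: evaluate the objective defining $e_y(z')$ at the prox point $p_y(z)$ and expand the quadratic; no two-sided Lipschitz estimate for the envelope of a weakly convex function is needed, so this part of your plan is safe as stated. Second, your derivation actually yields the constant $1-\tfrac{\gamma+\mu_x}{\mu_x}\big(\tfrac{1}{\beta_1}-1\big)$ in the first inequality and the constants $\tfrac{\mu_x\beta_2}{2}$, $\tfrac{\mu_x\beta_2}{2(1-\beta_2)}$ in the third, since the operative strong-convexity modulus (of $\psi^t$ and of $\cL^{t+1}(\cdot,y^{t+1}_{0})$) is $\mu_x$, not $\gamma$; these coincide with the stated constants exactly when $\mu_x=\gamma$, which is how \cite{yan2020optimal} states the result and the only regime in which this paper invokes it (\cref{cor:complexity-fg} and \cref{cor:complexity VR} set $\mu_x=\gamma$). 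So your argument establishes the lemma as it is used, and in fact gives the correct general-$\mu_x$ form of the constants, which the paper's transcription obscures.
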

\sa{Recall that we aim to control the quantity $x^{t}_{0} - \prox{\lambda\phi}(x^{t}_{0})$ as it directly determines $\grad\phi_\lambda(x^{t}_{0})$, and we also have that $\|x^{t}_{0} - \prox{\lambda\phi}(x^{t}_{0})\|=\|x^{t}_{0} - x^{t}_{*}\|$. Thus, in the following result, we bound $\mathbb{E}[\|x^{t}_{0} - x^{t}_{*}\|^2]$. Moreover, this result will also help us construct a telescoping sum for analyzing the convergence of $\{x_0^t\}_{t\geq 0}$ to a stationary point.}
\begin{lemma} 
\label{Lemma: telescope sum chpt2}
\sa{Under the premise of Lemma~\ref{lemma:bound-gap} and \nsa{\cref{assump:compact}}, for any $\beta_{1},\beta_{2}\in(0,1)$, and $p_1,p_2,p_3>0$ such that $p_1+p_2+p_3=1$, it holds for all $t\geq 0$ that}
{\small
\begin{equation}
\label{eq: WCSC step inequality chpt2}
    \begin{aligned}
        &\left(1- \fixspp\right)\frac{\gamma p_3\beta_2}{2}\ED{x^{t}_{0} - x^{t}_{*}} 
        \\
        \leq &  \fixspp\mathbb{E}\left[\cG^{t}(x^{t}_{0},y^{t}_{0})\right] - 
        \left(1- \fixspp\right) p_1\Big(1-\frac{\gamma+\mu_x}{\gamma}\Big(\frac{1}{\beta_{1}}-1\Big)\Big)
        \mathbb{E}\left[\mathcal{G}^{t+1}(x^{t+1}_{0},\sa{y^{t+1}_{0}}) \right]
        \\
        & + \left(1- \fixspp\right)p_2\mathbb{E}\left[\phi(x^{t}_{0}) - \phi(x^{t+1}_{0})\right]
        \\
        & \sa{+\frac{1}{2}\left(1- \fixspp\right)\left(p_1(\gamma+\mu_x) \frac{\beta_1}{1-\beta_{1}} - p_2(\gamma+\mu_x) + p_3\gamma\frac{\beta_2}{1-\beta_{2}}\right)}\ED{x^{t+1}_{0} - x^{t}_{0}} +\sXi.
    \end{aligned}
\end{equation}}%
\end{lemma}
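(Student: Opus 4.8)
The plan is to fuse the three lower bounds on $\cG^{t}(x^{t+1}_{0}, y^{t+1}_{0})$ furnished by \cref{lemma: different bound of gap} into a single inequality through the weights $p_1,p_2,p_3$, and then to eliminate the leading term $\cG^{t}(x^{t+1}_{0}, y^{t+1}_{0})$ by invoking the one-step estimate of \cref{Lemma: gap convergence}, which controls $(1-\spp)\,\cG^{t}(x^{t+1}_{0},y^{t+1}_{0})$ by $\spp\,\cG^{t}(x^{t}_{0},y^{t}_{0})+\sXi$. The net effect is to re-express $\ED{x^{t}_{0}-x^{t}_{*}}$ in terms of a telescoping pair of gaps, a telescoping pair of primal values $\phi(x^{t}_{0})-\phi(x^{t+1}_{0})$, a movement term $\ED{x^{t+1}_{0}-x^{t}_{0}}$, and the noise floor $\sXi$.

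Concretely, I would first write $\cG^{t}(x^{t+1}_{0},y^{t+1}_{0})=\sum_{i=1}^{3}p_i\,\cG^{t}(x^{t+1}_{0},y^{t+1}_{0})$ and lower bound the three copies by the first, second, and third inequalities of \cref{lemma: different bound of gap}, respectively: the copy weighted by $p_1$ supplies the $\cG_{t+1}$ term, the copy weighted by $p_2$ supplies $\phi(x^{t+1}_{0})-\phi(x^{t}_{0})$, and the copy weighted by $p_3$ supplies the target $\tfrac{\gamma\beta_2}{2}\ED{x^{t}_{0}-x^{t}_{*}}$. Rearranging to isolate $\tfrac{\gamma p_3\beta_2}{2}\|x^{t}_{0}-x^{t}_{*}\|^2$ on the left moves $\cG^{t}(x^{t+1}_{0},y^{t+1}_{0})$ to the right with coefficient $+1$, flips the primal telescoping term to $p_2(\phi(x^{t}_{0})-\phi(x^{t+1}_{0}))$, and collects the three $\|x^{t+1}_{0}-x^{t}_{0}\|^2$ contributions into the single coefficient $\tfrac12\big(p_1(\gamma+\mu_x)\tfrac{\beta_1}{1-\beta_1}-p_2(\gamma+\mu_x)+p_3\gamma\tfrac{\beta_2}{1-\beta_2}\big)$, matching \eqref{eq: WCSC step inequality chpt2}. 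Taking expectations and then multiplying through by $(1-\spp)$ lets me substitute $(1-\spp)\,\mathbb{E}[\cG^{t}(x^{t+1}_{0},y^{t+1}_{0})]\le\spp\,\mathbb{E}[\cG^{t}(x^{t}_{0},y^{t}_{0})]+\sXi$ from \cref{Lemma: gap convergence}, producing the claimed right-hand side verbatim (note that this substitution leaves $\spp\,\mathbb{E}[\cG^{t}(x^{t}_{0},y^{t}_{0})]$ and $\sXi$ free of the $(1-\spp)$ prefactor, exactly as in the statement).

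The main obstacle is not analytic but a matter of sign discipline: multiplying the rearranged combination by $(1-\spp)$ preserves the inequality only when $1-\spp>0$, i.e.\ when $\Krho>\sM$, which is guaranteed by taking $N_t$ large enough as in the parameter choice of \cref{cor:complexity-fg}; I would state this positivity explicitly before the multiplication. Beyond that, the only care needed is the bookkeeping of the movement-term coefficient --- tracking the three separate $\|x^{t+1}_{0}-x^{t}_{0}\|^2$ contributions with their correct signs --- together with the observation that $\cG^{t}(x^{t+1}_{0},y^{t+1}_{0})$ is finite (since $\cL^{t}$ is SCSC), which legitimizes both the rearrangement and the substitution drawn from \cref{Lemma: gap convergence}.
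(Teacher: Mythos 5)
Your proposal is correct and follows essentially the same route as the paper's own proof: split $\cG^{t}(x^{t+1}_{0},y^{t+1}_{0})=(p_1+p_2+p_3)\cG^{t}(x^{t+1}_{0},y^{t+1}_{0})$, apply the three lower bounds of \cref{lemma: different bound of gap} to the respective copies, take expectations, and then eliminate $\mathbb{E}[\cG^{t}(x^{t+1}_{0},y^{t+1}_{0})]$ via \cref{Lemma: gap convergence}. Your explicit remarks on the positivity of $1-\sM/\Krho$ and the finiteness of $\cG^{t}(x^{t+1}_{0},y^{t+1}_{0})$ are sound points of care that the paper's proof leaves implicit (the latter is only noted inside the proof of \cref{Lemma: gap convergence}).
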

\begin{proof}
Using \cref{lemma: different bound of gap}
and 
$\cG^{t}(x^{t+1}_{0},y^{t+1}_{0}) = (p_1+p_2+p_3)\cG^{t}(x^{t+1}_{0},y^{t+1}_{0})$
leads to 
\begin{equation*}
    \begin{aligned}
    \mathbb{E}\left[\cG^{t}(x^{t+1}_{0}, y^{t+1}_{0})\right]\geq & - \left(p_1\frac{\gamma+\mu_x}{2} \frac{\beta_1}{1-\beta_{1}} - p_2\frac{\gamma+\mu_x}{2} + p_3\frac{\gamma\beta_2}{2(1-\beta_2)}\right)\ED{x^{t+1}_{0} - x^{t}_{0}} \\
    & + p_1\left(1-\frac{\gamma+\mu_x}{\gamma}\Big(\frac{1}{\beta_{1}}-1\Big)\right)\mathbb{E}\left[\nsa{\mathcal{G}^{t+1}}(x^{t+1}_{0},y^{t+1}_{0})\right] \\
        & + p_2\mathbb{E}\left[\phi(x^{t+1}_{0}) - \phi(x^{t}_{0})\right]  + p_3\frac{\gamma\beta_{2}}{2}\ED{x^{t}_{0} - x^{t}_{*}}.
    \end{aligned}
\end{equation*}
Then, combining \sa{this inequality with} \cref{Lemma: gap convergence} yields the desired result.
\end{proof}

\sa{Finally, in the following result, we establish a preliminary convergence result for \xz{\texttt{SAPD+}} under compactness assumption stated in \cref{assump:compact}.}
\begin{theorem}
\label{thm:WC_SP}
Under the premise of Lemma~\ref{lemma:bound-gap}, given $T\in\integers_+$, suppose $N_t=N$ for all $t=\sa{0},\ldots T$ for some \sa{$N\in\integers_+$} such that $ N  \geq \sa{(1+\zeta)} M_{\tau,\sigma,\theta}$ for some $\zeta>0$, and the inequality system,
\begin{subequations}
\label{eq: additional stepsize condition WCSC}
\begin{align}
        & \tfrac{M_{\tau,\sigma,\theta}}{ N } - 
        \left(1-\tfrac{M_{\tau,\sigma,\theta}}{ N }\right) p_1\Big(1-\frac{\gamma+\mu_x}{\gamma}\Big(\frac{1}{\beta_{1}}-1\Big)\Big) \leq 0, \label{eq: additional stepsize condition WCSC-1}
        \\& 
        \nsa{(\gamma+\mu_x)\Big(p_1\frac{\beta_1}{1-\beta_{1}} - p_2\Big)} + p_3\gamma\frac{\beta_2}{1-\beta_2} \leq 0,\label{eq: additional stepsize condition WCSC-2}
\end{align}
\end{subequations}
has a solution for some $\beta_{1},\beta_{2}\in(0,1)$ and $p_1,p_2,p_3>0$ such that $p_1+p_2+p_3=1$. Then, for $\lambda=(\gamma+\mu_x)^{-1}$, \sa{under \cref{assump:compact},} the following bound holds for all $T\geq 1$:
\begin{equation}\label{eq: sketch convergence result}
    \begin{aligned}
        \frac{1}{T+1}\sum_{t=0}^{T}\mathbb{E}\left[\|\nabla\phi_{\lambda}(x^{t}_{0}) \|^2 \right]
        \leq
        &       \frac{2(1+\zeta)(\gamma+\mu_x)^2}{\zeta\gamma p_3 \beta_2}\left(\frac{1}{T+1}
        \cG(x_0^0,y_0^0)
        +\sXi \right).
    \end{aligned}
\end{equation}
\end{theorem}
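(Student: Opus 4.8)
The plan is to sum the one-step estimate \eqref{eq: WCSC step inequality chpt2} from Lemma~\ref{Lemma: telescope sum chpt2} over $t=0,\dots,T$, and then convert the resulting averaged bound on $\ED{x^{t}_{0}-x^{t}_{*}}$ into a bound on the Moreau-envelope gradient through Lemma~\ref{Lemma: graident of ME}. Before summing, I would extract the two elementary consequences of the hypotheses that drive everything: since $N_t=N$, the ratio $\spp$ is the same at every $t$, and the requirement $K_{N}(\rho)\geq(1+\zeta)\sM$ yields $\spp\leq\tfrac{1}{1+\zeta}$, equivalently $1-\spp\geq\tfrac{\zeta}{1+\zeta}>0$. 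The inequalities \eqref{eq: additional stepsize condition WCSC-1}--\eqref{eq: additional stepsize condition WCSC-2} are precisely the sign conditions that let me discard the unwanted terms after telescoping.

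Summing \eqref{eq: WCSC step inequality chpt2}, three groups of terms appear on the right. First, every increment $\ED{x^{t+1}_{0}-x^{t}_{0}}$ carries the common factor $\tfrac12(1-\spp)\big(p_1(\gamma+\mu_x)\tfrac{\beta_1}{1-\beta_1}-p_2(\gamma+\mu_x)+p_3\gamma\tfrac{\beta_2}{1-\beta_2}\big)$, which is nonpositive by \eqref{eq: additional stepsize condition WCSC-2}; as the expectations are nonnegative, these terms are dropped. Second, the gap terms telescope: the coefficient of $\cG^{t}(x^{t}_{0},y^{t}_{0})$ is $\spp$, while $\cG_{t+1}(x^{t+1}_{0},y^{t+1}_{0})$ (the same object as $\cG^{t+1}(x^{t+1}_{0},y^{t+1}_{0})$) carries $-(1-\spp)p_1\big(1-\tfrac{\gamma+\mu_x}{\gamma}(\tfrac1{\beta_1}-1)\big)$. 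Pairing the contribution from step $t$ with that from step $t+1$, condition \eqref{eq: additional stepsize condition WCSC-1} makes each interior coefficient nonpositive; since each subproblem \eqref{eq: SCSC problem chpt2} is SCSC we have $\cG^{s}\geq0$, so only the initial term $\spp\,\cG^{0}(x^{0}_{0},y^{0}_{0})$ survives (the terminal gap term is nonpositive and discarded, its coefficient being positive by \eqref{eq: additional stepsize condition WCSC-1}). Third, the primal-value terms telescope to $(1-\spp)p_2\big(\phi(x^{0}_{0})-\phi(x^{T+1}_{0})\big)$, and the noise terms contribute $(T+1)\sXi$.

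The crux, and the only place compactness is needed, is bounding the two surviving boundary terms by $\cG(x^{0}_{0},y^{0}_{0})$. Because the quadratic penalty in \eqref{eq: SCSC problem chpt2} vanishes at its center, $\cG^{0}(x^{0}_{0},y^{0}_{0})\leq\cG(x^{0}_{0},y^{0}_{0})$. For the primal-value remainder, Assumption~\ref{assump:compact} guarantees that $\phi$ attains its minimum $\phi^{\star}=\min_{x}\phi(x)>-\infty$, whence $\phi(x^{0}_{0})-\phi(x^{T+1}_{0})\leq\phi(x^{0}_{0})-\phi^{\star}$; moreover $\inf_{x'}\cL(x',y^{0}_{0})\leq\inf_{x'}\sup_{y'}\cL(x',y')=\phi^{\star}$ gives $\phi(x^{0}_{0})-\phi^{\star}\leq\sup_{y'}\cL(x^{0}_{0},y')-\inf_{x'}\cL(x',y^{0}_{0})=\cG(x^{0}_{0},y^{0}_{0})$. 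Adding the two pieces, the boundary is at most $\big(\spp+(1-\spp)p_2\big)\cG(x^{0}_{0},y^{0}_{0})\leq\cG(x^{0}_{0},y^{0}_{0})$, the last step using $p_2<1$.

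To conclude, on the left-hand side I would use $1-\spp\geq\tfrac{\zeta}{1+\zeta}$ (uniform in $t$) and substitute, via Lemma~\ref{Lemma: graident of ME}, the identity $\ED{x^{t}_{0}-x^{t}_{*}}=(\gamma+\mu_x)^{-2}\ED{\nabla\phi_{\lambda}(x^{t}_{0})}$, which holds because $x^{t}_{*}=\prox{\lambda\phi}(x^{t}_{0})$ and $\lambda=(\gamma+\mu_x)^{-1}$. This produces $\tfrac{\zeta}{1+\zeta}\tfrac{\gamma p_3\beta_2}{2}(\gamma+\mu_x)^{-2}\sum_{t=0}^{T}\ED{\nabla\phi_{\lambda}(x^{t}_{0})}\leq\cG(x^{0}_{0},y^{0}_{0})+(T+1)\sXi$; rearranging and dividing by $T+1$ yields \eqref{eq: sketch convergence result}. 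I expect the only genuinely delicate point to be the boundary estimate of the telescoped primal-value difference by $\cG(x^{0}_{0},y^{0}_{0})$ under compactness; everything else is careful bookkeeping of the coefficients licensed by \eqref{eq: additional stepsize condition WCSC-1}--\eqref{eq: additional stepsize condition WCSC-2}.
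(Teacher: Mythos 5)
Your proposal is correct and follows essentially the same route as the paper's proof: summing \eqref{eq: WCSC step inequality chpt2} over $t$, using \eqref{eq: additional stepsize condition WCSC-1}--\eqref{eq: additional stepsize condition WCSC-2} to discard the increment terms, the interior gap terms, and the (nonpositive) terminal gap term, bounding the surviving boundary terms by $\cG(x_0^0,y_0^0)$, and converting to the GNME metric via Lemma~\ref{Lemma: graident of ME}. The only minor difference is in bookkeeping: the paper bounds $\phi(x_0^0)-\phi(x_0^{T+1})\leq \cL(x_0^0,y_*(x_0^0))-\cL(x_0^{T+1},y_0^0)\leq\cG(x_0^0,y_0^0)$ directly, without invoking an attained minimum $\phi^\star$, and it invokes compactness primarily to guarantee that the $\mathbb{E}\left[\cG^{t}(x_0^t,y_0^t)\right]$ are finite so that the summation and pairing of gap terms is legitimate, rather than (as you suggest) only for the boundary estimate.
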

\begin{proof}
\sa{Since $\dom f$ and $\dom g$ are compact sets, $\mathbb{E}[\cG^{t}(x_0^t,y_0^t)]\in\reals$ exist for $t=0,\ldots,T$, i.e., $-\infty<\mathbb{E}[\cG^{t}(x_0^t,y_0^t)]<\infty$ for all $t$. Therefore,} if we sum up equation \eqref{eq: WCSC step inequality chpt2} from $0$ to T, 
we get
\begin{equation}\label{eq: proof step A}
    \begin{aligned}
        &\sum_{t=\sa{0}}^{T}\left(1- \fixspp\right)
        \frac{\gamma p_3 \beta_2}{2}
       \mathbb{E}\left[\|x^{t}_{0} - x^{t}_{*} \|^2 \right]
        \\
        & \leq  \tfrac{M_{\tau,\sigma,\theta}}{N_{0}}
        \nsa{\cG^0(x_0^0,y_0^0)}
        - \left(1-\tfrac{M_{\tau,\sigma,\theta}}{ N_t }\right) p_1\left(1-\frac{\gamma+\mu_x}{\gamma}\Big(\frac{1}{\beta_{1}}-1\Big)\right)
        \mathbb{E}\left[\mathcal{G}^{T+1}(\sa{x^{T+1}_{0},y^{T+1}_{0}}) \right]
        \\
        &
        + \sum_{t=0}^{T-1}\left(\tfrac{M_{\tau,\sigma,\theta}}{N_{t+1}} - 
        \left(1- \fixspp\right) p_1\left(1-\frac{\gamma+\mu_x}{\gamma}\Big(\frac{1}{\beta_{1}}-1\Big)\right)\right)
        \mathbb{E}\left[\mathcal{G}^{t+1}(x^{\sa{t+1}}_{0},y^{\sa{t+1}}_{0})\right] 
        \\
        & +\left(1-\tfrac{M_{\tau,\sigma,\theta}}{ N_0}\right)p_2
        \nsa{\phi(x^{\sa{0}}_{0})}
        - \left(1-\tfrac{M_{\tau,\sigma,\theta}}{N_T}\right)p_2\mathbb{E}\left[\phi(x^{\sa{T+1}}_{0})\right] + p_2\sum_{t=0}^{T-1}\underbrace{\left( \fixspp -\tfrac{M_{\tau,\sigma,\theta}}{N_{t+1}} \right)}_{\textbf{part 1}}\mathbb{E}\left[\phi(x^{\sa{t+1}}_{0}) \right]
        \\
        &
        +  \sum_{t=0}^T \left(1- \fixspp\right)\left(p_1\frac{\gamma+\mu_x}{2} \frac{\beta_{1}}{1-\beta_{1}} - p_2\frac{\gamma+\mu_x}{2} 
        +
        p_3\gamma\frac{\beta_2}{2(1-\beta_2)}\right)\mathbb{E}\left[\|x^{t+1}_{0} - x^{t}_{0}\|^2\right] \\
        &+(T+1)\sXi
    \end{aligned}
\end{equation}
\sa{Thus, using $N_t=N$ for $t=0,\ldots,N$,} it follows from \sa{the conditions in} \eqref{eq: additional stepsize condition WCSC} that
\begin{equation}
    \begin{aligned}
        &\frac{1}{T+1}\sum_{t=0}^{T}\left(1-\sa{\tfrac{M_{\tau,\sigma,\theta}}{ N }}\right) \frac{\gamma p_3 \beta_2}{2}\mathbb{E}\left[\|x^{t}_{0} - x^{t}_{*} \|^2 \right]
        \\
        \leq & \frac{1}{T+1}\tfrac{M_{\tau,\sigma,\theta}}{ N }\mathbb{E}\left[\cG^0(x_0^0,y_0^0) \right] \\
        &-
       \frac{1}{T+1} \left(1-\tfrac{M_{\tau,\sigma,\theta}}{ N }\right) p_1\left(1-\frac{\gamma+\mu_x}{\gamma}\Big(\frac{1}{\beta_{1}}-1\Big)\right)
       \mathbb{E}\left[ \mathcal{G}^{T+1}(x^{T+1}_{0},\sa{y^{T+1}_{0}}) \right]
        \\
        &
        + \frac{p_2\left(1-\tfrac{M_{\tau,\sigma,\theta}}{ N }\right)}{T+1}\mathbb{E}\left[\phi(x^0_0)  - \phi(x^{\sa{T}+1}_{0})\right]
        +\sXi
        \\
        \leq
        & \frac{1}{T+1} \tfrac{M_{\tau,\sigma,\theta}}{ N }
        \nsa{\cG^0(x_0^0,y_0^0)} 
         + \frac{p_2\left(1-\tfrac{M_{\tau,\sigma,\theta}}{ N }\right)}{T+1} 
         \nsa{\cG(x_0^0,y_0^0)} 
        +\sXi,
    \end{aligned}
\end{equation}
\sa{which follows from $(i)$} $\cG_{T+1}(x^{T+1}_{0},y^{T+1}_{0})\geq 0$, $(ii)$ $\phi(x^0_0)  - \phi(x^{T+1}_{0}) = \cL(x^0_0, y_*(x^0_0)) -\cL(x^{T+1}_{0}, y_*(x^{\sa{T}+1}_{0}))\leq \cL(x^0_0, y_*(x^0_0)) -\cL(x^{T+1}_{0}, y^0_0)\leq  \sa{\sup_{y'\in\cY}\cL(x^0_0, y')} -\inf_{x'\in\cX}\cL(x', y^0_0)=\cG(x^0_0,y^0_0)$, and also from the fact that \eqref{eq: additional stepsize condition WCSC-1} implies  $\left(1-\tfrac{M_{\tau,\sigma,\theta}}{ N }\right) p_1\Big(1-\frac{\gamma+\mu_x}{\gamma}\Big(\frac{1}{\beta_{1}}-1\Big)\Big)\geq 0$. Then dividing both sides by $\left(1-\tfrac{M_{\tau,\sigma,\theta}}{ N }\right) \frac{\gamma p_3 \beta_2}{2}$ gives us
{\small
\begin{eqnarray}
        \lefteqn{\frac{1}{T+1}\sum_{t=1}^{T}\|x^{t}_{0} - x^{t}_{*} \|^2}\\
        && \leq \frac{2}{(1-\tfrac{M_{\tau,\sigma,\theta}}{ N })\gamma p_3\beta_{2}}\Big(
        \frac{1}{T+1} \tfrac{M_{\tau,\sigma,\theta}}{ N }
        \nsa{\cG^0(x_0^0,y_0^0)} 
        + \frac{p_2\left(1-\tfrac{M_{\tau,\sigma,\theta}}{ N }\right)}{T\sa{+1}}
        \nsa{\cG(x_0^0,y_0^0)}
        +\sXi\Big),\nonumber
        \\
        && \leq \frac{2(1+\zeta)}{\zeta\gamma p_3 \beta_2}\left(\frac{1}{T+1}
        \nsa{\cG(x_0^0,y_0^0)}
        +\sXi \right),\nonumber
\end{eqnarray}}%
where the second inequality 
\sa{follows from} $\cG(x_0^0,y_0^0)\geq \cG^0(x_0^0,y_0^0)$, \sa{and for $p_2\in(0,1)$, we have} $ N \geq (1+\zeta)M_{\tau,\sigma,\theta}$. Finally, we get the desired result using Lemma \ref{Lemma: graident of ME}.
\end{proof}

{\subsection{A particular parameter choice}}
We employ the 
matrix inequality~(MI) in~\cref{eq: SCSC SAPD LMI} to describe the admissible set of algorithm parameters that guarantee convergence of \nsa{Algorithm~\ref{Alg: SAPD}, i.e., inner loop \xz{of} \texttt{SAPD+} when \texttt{VR-flag} is \textbf{false}.} In this subsection, we  compute a particular solution 
 \sa{by exploiting 
 the structure of MI in~\cref{eq: SCSC SAPD LMI}.}  
 \nsa{This particular solution is for} \sa{solving the SCSC subproblems in \cref{eq: SCSC problem chpt2}.}
\begin{lemma}\label{LEMMA: Noise LMI after young's ineq-gap}
For any $\mu_x\geq0$, let $L'_{xx}= L_{xx}+\gamma+\mu_x$. Suppose $\theta = 1$, and $\tau, \sigma>0$, \sa{satisfy}
{\small
\begin{equation}
\label{eq:sufficient_cond_noisy_LMI-gap}
\tau \leq \frac{1}{L'_{xx}+L_{yx}}, \quad \sigma \leq \frac{1}{2L_{yy}+L_{yx}}.
\end{equation}}
Then $\{\tau, \sigma,\theta,\alpha\}$ is a solution to \eqref{eq: SCSC SAPD LMI} for $\alpha =  L_{yx} +  L_{yy}$.
\end{lemma}
\begin{proof}
\sa{It follows from the choice of $\tau$ and $\sigma$ in \eqref{eq:sufficient_cond_noisy_LMI-gap} and $\theta=1$ that a sufficient condition for \eqref{eq: SCSC SAPD LMI} is given by the following smaller matrix inequality for $\alpha =  L_{yx} +  L_{yy}$,}
{\footnotesize
\begin{equation*}
\begin{aligned}
\sa{\mathbf{0}\preceq}
\begin{pmatrix}
  \tfrac{1}{\tau} - \xz{L'_{xx}} & 0 & - L_{yx}\\ 
 0 &\tfrac{1}{\sigma}- \alpha & - L_{yy}\\
 - L_{yx} & - L_{yy} & \alpha
\end{pmatrix}
= & 
    \begin{pmatrix}
  \tfrac{1}{\tau} - \xz{L'_{xx}} & 0 & - L_{yx}\\ 
 0 & \tfrac{1}{\sigma}-   L_{yx} -  L_{yy} & - L_{yy}\\
 - L_{yx} & - L_{yy} & L_{yx} + L_{yy}
\end{pmatrix}
\triangleq & 
M_1 + M_2,
\end{aligned}
\end{equation*}}
where
${\small M_1 
 \triangleq 
 \begin{pmatrix}
\tfrac{1}{\tau} - \xz{L'_{xx}} & 0 & - L_{yx}\\ 
 0 & 0 & 0 \\
 - L_{yx} & 0 & L_{yx}
\end{pmatrix}}$ and 
${\small
M_2 \triangleq 
    \begin{pmatrix}
  0 & 0 & 0\\ 
 0 & \tfrac{1}{\sigma}-   L_{yx} -  L_{yy} & - L_{yy}\\
 0 & - L_{yy} &  L_{yy}
\end{pmatrix}}$. 
Therefore, the Schur complement conditions together with eq.~\eqref{eq:sufficient_cond_noisy_LMI-gap}
 imply $M_1 
 \succeq 0$ and $M_2 \succeq 0$, respectively.
Thus, $M_1+M_2\succeq 0$. 
\end{proof}
{\subsection{Proof of Theorem~\ref{cor:complexity-fg}}}
\label{pf:complexity-fg}
\begin{proof}
\sa{Using the results we derived in the previous two subsections, we are now ready to provide the proof of Theorem~\ref{cor:complexity-fg}.}

\sa{For the inner loop iterations, \cref{LEMMA: Noise LMI after young's ineq-gap} ensures that \cref{eq: SCSC SAPD LMI} holds for our $\{\tau,\sigma,\theta\}$  choice in \cref{Condition: SP solution to noisy LMI label-gap}.} 
For the outer loop, if we \sa{set $N$ as in \cref{Condition: SP solution to noisy LMI label-gap} and}
\begin{equation}\label{eq: speical paramater-gap}
    \begin{aligned}
          p_1 = \frac{1}{16},~p_2 = \sa{\frac{19}{32}},~p_3=\sa{\frac{11}{32}},~\beta_1 = \frac{4}{5},~\beta_2 = \frac{1}{2},~\zeta=32,
    \end{aligned}
\end{equation}
\sa{all assumptions of \cref{thm:WC_SP} are satisfied, i.e., both the inequality system \cref{eq: additional stepsize condition WCSC} and $N \geq \sa{(1+\zeta)} M_{\tau,\sigma,\theta}$ hold.} 

Specifically, because $\mu_x=\gamma$ and $\theta=1$, we have $\sM = \max \{ \frac{4}{\gamma\tau}, \frac{8}{\mu_y\sigma}\}$. Therefore, we know that $N \geq \sa{(1+\zeta)} M_{\tau,\sigma,\theta}$ is trivially true.
Moreover, using $\sM/N\leq (1+\zeta)^{-1}$, it follows that \cref{eq: additional stepsize condition WCSC-1} holds for $\mu_x=\gamma$, $p_1 = \frac{1}{16}$ and $\beta_1 = \frac{4}{5}$, i.e.,
$$
 \sa{\frac{M_{\tau,\sigma,\theta}}{N} - 
        \left(1-\frac{M_{\tau,\sigma,\theta}}{N}\right) p_1\Big(1-\frac{\gamma+\mu_x}{\gamma}\Big(\frac{1}{\beta_{1}}-1\Big)\Big) =\frac{33}{32} \tfrac{M_{\tau,\sigma,\theta}}{ N } - \frac{1}{32}
        \leq \frac{33}{32}\frac{1}{1+\zeta}-\frac{1}{32}=0.}
$$
\sa{Moreover, it is trivial to check that \cref{eq: additional stepsize condition WCSC-2} holds for the parameter values given in \cref{eq: speical paramater-gap}.} 

Since all assumptions of \cref{thm:WC_SP} are satisfied for parameters chosen as in \cref{Condition: SP solution to noisy LMI label-gap} and \cref{eq: speical paramater-gap},
if we substitute \cref{eq: speical paramater-gap} into \cref{eq: sketch convergence result}, if follows that 
\begin{equation*}
    \begin{aligned}
        \frac{1}{T+1}\sum_{t=0}^{T} \mathbb{E}\left[\|\nabla\phi_{\lambda}(x^t_0) \|^2 \right]
        \leq
        &       
        \sa{48\gamma}
        \left(\frac{1}{T+1} 
        \sa{\cG(x_0^0,y_0^0)}
        +\sXi \right).
    \end{aligned}
\end{equation*}
Thus, for any $\epsilon>0$,
the right side of the above inequality can be bounded by $\epsilon^2$ when
\begin{equation}\label{eq: split bound-gap}
    \frac{\sa{48\gamma}}{T+1}
    \cG(x_0^0,y_0^0)
    \leq \frac{\epsilon^2}{2},\qquad 
    \sa{48\gamma}\sXi\leq\frac{\epsilon^2}{2}.
\end{equation}

Note that \xz{because
    $\Xi_{\tau,\sigma,\theta} = \tau\left( \Xi^x_{\tau,\sigma,\theta} + \tfrac{1}{2}\right)\delta_x^2  
     + \sigma\left( \Xi^y_{\tau,\sigma,\theta} +  \tfrac{3}{2}\right)\delta_y^2$}, a sufficient condition for the second inequality in \cref{eq: split bound-gap} is that 
\begin{equation}\label{eq: sufficient condition for variance complexity-gap}
    \sa{24\gamma}\tau(1+2\Xi^x_{\tau,\sigma,\theta})\delta^2_x\leq\frac{\epsilon^2}{4},
    \qquad
    \sa{24\gamma}\sigma(3+2\Xi^{\sa{y}}_{\tau,\sigma,\theta})\delta_y^2\leq\frac{\epsilon^2}{4}.
\end{equation}
{Moreover, \sa{recall that $\Xi^x_{\tau,\sigma,\theta}$ and $\Xi^y_{\tau,\sigma,\theta}$ are defined in \cref{lemma:bound-gap}; for $\theta=1$, they can be simplified as follows:}
 \begin{align*}
     \Xi^x_{\tau,\sigma,\theta} =  1+ \sigma L_{yx}, \quad
     \Xi^y_{\tau,\sigma,\theta} = 3\left(4 + 2\sigma L_{yy}  +  2\tau\sigma L_{yx}L_{xy} \right) + \tau L_{yx}.
\end{align*}}%
Because the choice of $\{\tau,~\sigma\}$ in \cref{Condition: SP solution to noisy LMI label-gap} implies that 
$$
\tau L_{yx}\leq 1,\quad \tau L_{xy}\leq 1,\quad\sigma L_{yy}\leq \frac{1}{2}, \quad \sigma L_{yx}\leq 1,
$$
we can upper bound $\Xi^x_{\tau,\sigma,\theta}$ and $\Xi^y_{\tau,\sigma,\theta}$ as follows:
$$
 \Xi^x_{\tau,\sigma,\theta}\leq 2,\quad \Xi^y_{\tau,\sigma,\theta} \leq 22.
$$
Therefore, with the choice of $\{\tau,\sigma\}$ in \cref{Condition: SP solution to noisy LMI label-gap}, we have a sufficient condition for \cref{eq: sufficient condition for variance complexity-gap} as follows:
\begin{equation*}
    120\gamma\tau\delta^2_x\leq\frac{\epsilon^2}{4},
    \qquad
    1128\gamma\sigma\delta_y^2\leq\frac{\epsilon^2}{4}.
\end{equation*}
Indeed, the above condition is trivially \sa{satisfied by our choice of $\{\tau,\sigma\}$ given in \cref{Condition: SP solution to noisy LMI label-gap}}. Therefore, \sa{the second condition in \eqref{eq: split bound-gap}, i.e., $
\sa{48\gamma}\sXi\leq\frac{\epsilon^2}{2}$, holds for} the choice of $\{\tau,\sigma\}$ in \cref{Condition: SP solution to noisy LMI label-gap}. Thus, from the first inequality in \cref{eq: split bound-gap}, we get 
{
\begin{align}
    \na{\frac{1}{T+1}\sum_{t=0}^{T} \mathbb{E}\left[\|\nabla\phi_{\lambda}(x^t_0) \|^2 \right]
        \leq\epsilon^2,}\quad \forall~T \geq \sa{96}\cG(x_0^0,y_0^0)  \cdot\frac{\gamma}{\epsilon^2}+1.
    \label{eq:T-bound-gap}
\end{align}}%
\nsa{
\na{Suppose that $t_*$ is chosen uniformly at random among $\{0,1,\ldots,T\}$, then $x_\epsilon=x_0^{t_*}$ satisfies $\mathbb{E}\left[\|\nabla\phi_{\lambda}(x_\epsilon) \|^2 \right]
        \leq\epsilon^2$. Combining with this observation,
to finally show} the complexity result, recall that}
$N = 33\max\{\frac{4}{\gamma\tau},\frac{8}{\mu_y\sigma}\}$. 
Using the the choice of $\{\tau,\sigma\}$ in \cref{Condition: SP solution to noisy LMI label-gap}  we derive that
{\footnotesize
\begin{equation}
\label{eq:N-bound-gap}
N = \mathcal{O}\Big(  
\frac{\max\{L_{xx},L_{yx},L_{xy}\}}{\gamma}  + \frac{\max\{L_{yy},L_{yx}\}}{\mu_y}
+ \Big(
\frac{\delta_x^2}{\gamma} +
\frac{\delta_y^2}{\mu_y} \Big)
\frac{\gamma}{\epsilon^2}\Big).
\end{equation}}%
\nsa{Moreover,} \sa{since \xz{\texttt{SAPD+}} requires $NT$ oracle calls in total, combining \eqref{eq:T-bound-gap} with \eqref{eq:N-bound-gap} leads to $\cO(\epsilon^{-4})$ bound on $C_\epsilon$ as stated in~\cref{cor:complexity-fg}, which completes the proof.}
\end{proof}

\section{Proof of Theorem~\ref{thm:metric-equivalance} and preliminary technical results}
\label{sec:Thm2_proof}
\sa{\xz{Suppose Assumptions~\ref{ASPT: fg},~\ref{ASPT: lipshiz gradient},~\ref{ASPT: unbiased noise assumption} hold}. Given \sa{$x_\epsilon$}, an $\epsilon$-stationary point for the $\gamma$-weakly convex function $\phi(\cdot)=\max_{y\in\mathcal{Y}} \cL(\cdot,y)$, i.e., $\mathbb{E}\big[\|\nabla\phi_{\lambda}(\sa{x_\epsilon})\|\big]\leq\frac{\epsilon}{2}$ for 
\sa{some fixed} $\lambda\in(0,\gamma^{-1})$. \sa{Let $\phi^s(\cdot)\triangleq\max_{y\in\cY}\Phi(\cdot,y)-g(y)$ so that $\phi=f+\phi^s$.} In this section we show that initialized from $x_\epsilon$ and using appropriately selected step size parameters, within 
$\tilde{\mathcal{O}}(\frac{1}{\xz{\epsilon^2}})$ stochastic first-order oracle calls,
\texttt{SAPD}, stated in~Algorithm~\ref{Alg: SAPD}, can generate \sa{$\tilde x$} such that
\mg{$\mathbb{E}\big[ \norm{G_\lambda(\tilde{x})} \leq \epsilon$,} where generalized gradient mapping $G_\lambda$ is defined in \eqref{eq:generalized_GM}.}
\begin{lemma}\label{lemma: dist bound with fg}
Suppose Assumptions~\ref{ASPT: fg},~\ref{ASPT: lipshiz gradient},~\ref{ASPT: unbiased noise assumption} hold. \sa{\nsa{Given some $(x_0,y_0)\in\dom f\times\dom g$,} consider the SCSC problem in~\eqref{eq:WCSC-subproblem-generic} for some  $\mu_x>0$. Let $\{x_k,y_k\}_{k\geq 0}$ be generated by \xz{\texttt{SAPD}}, stated in~Algorithm~\ref{Alg: SAPD}, initialized from {$(x_0,y_0)$} and using} $\tau,\sigma,\theta>0$ that satisfy {\small
\begin{equation}
    \label{eq: SCSC SAPD LMI-dist}
  \sa{G\triangleq}\begin{pmatrix}
    \frac{1}{\tau}(1-\frac{1}{\rho})+\frac{\mu_x}{\rho} & 0 & 0 & 0 & 0\\ 
  0 & \frac{1}{\sigma}(1-\frac{1}{\rho})+\mu_y & (\frac{\theta}{\rho} - 1)L_{yx} & (\frac{\theta}{\rho} - 1)L_{yy}  & 0\\ 
  0 & (\frac{\theta}{\rho} - 1)L_{yx} & \tfrac{1}{\tau} - L'_{xx} & 0 & -  \frac{\theta}{\rho}L_{yx}\\ 
  0& (\frac{\theta}{\rho} - 1)L_{yy} & 0 & \frac{1}{\sigma} - \alpha & -  \frac{\theta}{\rho}L_{yy}\\
  0 & 0 & - \frac{\theta}{\rho}L_{yx} & -  \frac{\theta}{\rho}L_{yy}  & \frac{\alpha}{\rho}
\end{pmatrix}\succeq 0
\end{equation}}%
{for some} $\alpha \in [0,  {\tfrac{1}{\sigma}\sa{)}}$ and \xzh{$\rho\in (0,1\sa{)}$}, \sa{where $L'_{xx}\triangleq L_{xx}+\mu_x+\gamma$}. Define $\phi(x)=\max_{y\in\cY}\cL(x,y)$; and let \sa{$\hat x=\prox{\lambda\phi}(\xzh{x_0})$ for $\lambda=(\mu_x+\gamma)^{-1}$ and  $\nsa{y_*(\hat{x})}=\argmax_{y\in\cY}\cL(\hat{x},y)$.} Then for all $N\in\integers_+$, \sa{it holds that}
\begin{equation}\label{eq:dist bound}
      \begin{aligned}
        \mathbb{E}\Big[& \Big(\frac{1}{\tau}-\mu_x\Big) \|x_N-\hat{x}\|^2 +  {\Big(\frac{1}{\sigma} - \alpha \Big)}\|y_N-\nsa{y_{*}(\hat x)}\|^2 \Big]  \\
        & \leq
\rho^{N}\left( \frac{1}{\tau}\|x_{0}-\hat x\|^2
      +\frac{1}{\sigma}\|y_{0}-\nsa{y_{*}(\hat x)}\|^2\right)
      +\frac{\rho}{1-\rho}\Big(\tau\Xi_{\tau,\sigma,\theta}^x \delta_x^2  +  \sigma\Xi_{\tau,\sigma,\theta}^y\delta_y^2\Big),
    \end{aligned}
\end{equation}
where \sa{$\Xi^x_{\tau,\sigma,\theta}$ and $\Xi^y_{\tau,\sigma,\theta}$ are defined in~\eqref{eq:xi_x} and \eqref{eq:xi_y}, respectively.}
\end{lemma}
\begin{proof}
For easier readability, we provide the proof in a separate subsection, see~\cref{sec:pf-lema2-lemma7}.
\end{proof}
In the following part, we will compute a particular solution 
 \sa{by exploiting 
 the structure of MI in~\cref{eq: SCSC SAPD LMI-dist} and use this particular solution 
 \nsa{for the rest of the proof}.} First, in \cref{LEMMA: Noise LMI after young's ineq}, we give an intermediate condition to help us construct the particular solution subsequently provided in \cref{lemma: explicit solution to noisy LMI} for \sa{solving the generic SCSC subproblems in \cref{eq:WCSC-subproblem-generic}.}
\begin{lemma}\label{LEMMA: Noise LMI after young's ineq}
For any $\mu_x>0$, let $L'_{xx}= L_{xx}+\gamma+\mu_x$. Suppose $\rho = \theta$, and $\tau, \sigma>0$, $\theta\in(0,1\sa{)}$ \sa{satisfy}
{\small
\begin{equation}
\label{eq:sufficient_cond_noisy_LMI}
\tau \geq \frac{1-\theta}{\mu_x}, \quad \sigma\geq \frac{1-\theta}{\mu_y\theta},\quad \frac{1}{\tau} \geq \xz{L'_{xx}} + \pi_1  L_{yx},
\quad \frac{1}{\sigma} \geq \frac{ \theta L_{yx}}{\pi_1} + \Big(\frac{\theta}{\pi_2} +\pi_2\Big)  L_{yy},
\end{equation}}
for some $\pi_1,\pi_2>0$.
Then $\{\tau, \sigma,\theta,\alpha\}$ is a solution to \eqref{eq: SCSC SAPD LMI-dist} for $\alpha = \frac{\theta L_{yx}}{\pi_1} + \frac{\theta L_{yy}}{\pi_2}$.
\end{lemma}
\begin{proof}
\sa{It follows from the choice of $\tau$ and $\sigma$ in \eqref{eq:sufficient_cond_noisy_LMI} and $\rho=\theta$ that a sufficient condition for 
\cref{eq: SCSC SAPD LMI-dist}, i.e., for $G\succeq 0$, is given by the following smaller matrix inequality for $\alpha = \frac{\theta L_{yx}}{\pi_1} + \frac{\theta L_{yy}}{\pi_2}$,}
{\footnotesize
\begin{equation*}
\begin{aligned}
\sa{\mathbf{0}\preceq}
\begin{pmatrix}
  \tfrac{1}{\tau} - \xz{L'_{xx}} & 0 & - L_{yx}\\ 
 0 &\tfrac{1}{\sigma}- \alpha & - L_{yy}\\
 - L_{yx} & - L_{yy} & \tfrac{\alpha}{ \theta} 
\end{pmatrix}
= & 
    \begin{pmatrix}
  \tfrac{1}{\tau} - \xz{L'_{xx}} & 0 & - L_{yx}\\ 
 0 & \tfrac{1}{\sigma}-  \tfrac{\theta L_{yx}}{\pi_1} - \tfrac{\theta L_{yy}}{\pi_2} & - L_{yy}\\
 - L_{yx} & - L_{yy} & \tfrac{L_{yx}}{\pi_1} + \tfrac{ L_{yy}}{\pi_2}
\end{pmatrix}
\triangleq & 
M_1 + M_2,
\end{aligned}
\end{equation*}}
where
${\small M_1 
 \triangleq 
 \begin{pmatrix}
\tfrac{1}{\tau} - \xz{L'_{xx}} & 0 & - L_{yx}\\ 
 0 & 0 & 0 \\
 - L_{yx} & 0 & \tfrac{L_{yx}}{\pi_1}
\end{pmatrix}}$ and 
${\small
M_2 \triangleq 
    \begin{pmatrix}
  0 & 0 & 0\\ 
 0 & \tfrac{1}{\sigma}-  \tfrac{\theta L_{yx}}{\pi_1} - \tfrac{\theta L_{yy}}{\pi_2} & - L_{yy}\\
 0 & - L_{yy} & \tfrac{ L_{yy}}{\pi_2}
\end{pmatrix}}$. 
Therefore, since $\pi_1,\pi_2>0$, the Schur complement conditions in~\eqref{eq:sufficient_cond_noisy_LMI}, i.e., \sa{the third and the fourth} inequalities,
 imply $M_1 
 \succeq 0$ and $M_2 \succeq 0$, respectively.
Thus, $M_1+M_2\succeq 0$. 
\end{proof}
Lemma~\ref{LEMMA: Noise LMI after young's ineq} shows
 that every solution to 
\eqref{eq:sufficient_cond_noisy_LMI} can be converted to a solution to 
\eqref{eq: SCSC SAPD LMI-dist}. Next, based on
\cref{LEMMA: Noise LMI after young's ineq}, we will give another explicit parameter choice for Algorithm~\ref{Alg: SAPD} \sa{in addition to the solution we provided earlier in} \cref{LEMMA: Noise LMI after young's ineq-gap}.

\begin{lemma}\label{lemma: explicit solution to noisy LMI}
\sa{For any $\mu_x>0$, let} $\xz{L'_{xx}=} L_{xx}+\gamma+\mu_x$. For any given $\beta\in(0,1]$, let $\tau, \sigma>0$ and $\theta\in (0,1)$ be chosen satisfying
{\small
\begin{equation}
\label{Condition: SP solution to noisy LMI}
\tau = \frac{1-\theta}{\mu_x },\quad \sigma = \frac{1-\theta}{\mu_y\theta},
\quad 
\theta  \geq \bar{\theta}(\beta),
\end{equation}}%
 where $\bar{\theta}(\beta)\triangleq\max\{\bar{\theta}_1(\beta),~\bar{\theta}_2(\beta)\}\in(0,1)$ such that
\sa{
\begin{equation*}
        \bar{\theta}_1(\beta)\triangleq 1 
         - \tfrac{\beta\mu_y\xz{L'_{xx}}}{2L^2_{yx}}\big(\sqrt{1 + \tfrac{4L^2_{yx}\mu_x}{\beta \xz{{L'_{xx}}^2}\mu_y}} - 1 \big),
\end{equation*}
\begin{equation*}
        \bar{\theta}_2(\beta)\triangleq 
        \begin{cases}
        1 - \tfrac{(1-\beta)^2}{8}\tfrac{\mu_y^2}{L_{yy}^2} \Big( \sqrt{1+\tfrac{16L_{yy}^2}{(1-\beta)^2\mu_y^2}}-1\Big) & L_{yy}>0\\
        0 & L_{yy}=0.
        \end{cases}
\end{equation*}}
Then, $\{\tau, \sigma,\theta,\alpha,\xz{\rho}\}$ \sa{with $\alpha = \frac{1}{\sigma}-\sqrt{\theta}L_{yy}>0$} and $\xz{\rho=\theta}$ is a solution to \eqref{eq: SCSC SAPD LMI-dist}. 
\end{lemma}
\begin{proof}
Consider arbitrary $\tau,\sigma,\pi_1,\pi_2>0$ and $\theta\in(0,1)$. By a straightforward
calculation,  
$\{\tau, \sigma,\theta,\pi_1,\pi_2\}$ is a solution to \eqref{eq:sufficient_cond_noisy_LMI} 
if and only if
{\small
\begin{subequations}
\label{Condition: SP solution sufficient}
\begin{gather}
        \tau\geq \frac{1-\theta}{\mu_x},\quad \sigma\geq \frac{1-\theta}{\theta\mu_y},\quad 
    \pi_1 \geq  \frac{\sigma\theta L_{yx}}{1- \sigma(\pi_2+\frac{\theta}{\pi_2})L_{yy}},\label{Condition: SP solution sufficient 123}\\
    \sigma(\pi_2+\frac{\theta}{\pi_2})L_{yy} < 1,\quad
    \frac{1}{\tau}-\xz{L'_{xx}} \geq\pi_1 L_{yx}. \label{Condition: SP solution sufficient 4}
\end{gather}
\end{subequations}}
In the remainder of the proof, we fix $(\pi_1,\pi_2)$ as follows:
{\small
\begin{align}
        \pi_1 =   \frac{\sigma\theta L_{yx}}{1-\sigma\left(\pi_2+\frac{\theta}{\pi_2}\right)L_{yy}},\quad \pi_2 = \sqrt{\theta}.
        \label{Condition: SP solution to noisy LMI 3}
\end{align}}
Note the definition of $\xz{\bar{\theta}(\beta)}$ implies that $\xz{\bar{\theta}(\beta)}\in (0,1)$. Next, we show that $\theta\in [\xz{\bar{\theta}(\beta)},1)$ implies $\pi_1,\pi_2>0$; furthermore, we also show that $\tau,\sigma>0$ defined as in \eqref{Condition: SP solution to noisy LMI} for $\theta\in[\xz{\bar{\theta}(\beta)}, 1)$ together with $(\pi_1, \pi_2)$ as in \eqref{Condition: SP solution to noisy LMI 3} is a solution to \eqref{Condition: SP solution sufficient}.

First, setting $\tau,\sigma$ as in~\eqref{Condition: SP solution to noisy LMI} and $\pi_1,\pi_2$ as in~\eqref{Condition: SP solution to noisy LMI 3} imply that \eqref{Condition: SP solution sufficient 123}  
is trivially satisfied. Next, by substituting $\{\tau,\sigma,\pi_1,\pi_2\}$, chosen as in \eqref{Condition: SP solution to noisy LMI} and \eqref{Condition: SP solution to noisy LMI 3}, into \eqref{Condition: SP solution sufficient 4}, we conclude that $\{\tau,\sigma,\theta,\pi_1,\pi_2\}$ satisfies \eqref{Condition: SP solution sufficient} for any $\theta\in (0,1)$ such that
{\small
\begin{gather}
         \frac{2L_{yy}}{\mu_y}\cdot\frac{1-\theta}{\sqrt{\theta}}  \leq 1-\beta,
         \label{Condition: SP solution from sigma 2}\\
         \frac{\mu_x}{1-\theta} - \xz{L'_{xx}} \geq (1-\theta)\frac{L_{yx}^2}{\mu_y}\cdot\Big(1- \frac{2L_{yy}}{\mu_y}\cdot\frac{1-\theta}{\sqrt{\theta}}\Big)^{-1}, \label{Condition: SP solution from tau}
\end{gather}}%
for some $\beta\in(0,1]$. 
Clearly, a sufficient condition for \eqref{Condition: SP solution from tau} is
{\small
\begin{equation}\label{Condition: SP solution from tau 2}
            \frac{\mu_x}{1-\theta} - \xz{L'_{xx}} \geq (1-\theta)\frac{L_{yx}^2}{\mu_y}\cdot\frac{1}{\beta}. 
\end{equation}}%
Note that \eqref{Condition: SP solution from sigma 2} implies that $\pi_1>0$. We also have $\pi_2=\sqrt{\theta}>0$ trivially.

When $L_{yy}>0$, given any $\beta\in(0,1)$, solving \cref{Condition: SP solution from sigma 2,Condition: SP solution from tau 2} for $\theta\in(0,1)$, we get the third condition in \eqref{Condition: SP solution to noisy LMI}. Indeed, it can be checked that $\theta\in[\bar{\theta}_2\xz{(\beta)},1)$ satisfies \eqref{Condition: SP solution from sigma 2} and $\theta\in[\bar{\theta}_1\xz{(\beta)},1)$ satisfies \eqref{Condition: SP solution from tau 2}; thus, $\theta\in[\bar{\theta}\xz{(\beta)},1)$ satisfies \eqref{Condition: SP solution from sigma 2} and  \eqref{Condition: SP solution from tau 2} simultaneously. Moreover, when $L_{yy}=0$, one does not need to solve \cref{Condition: SP solution from sigma 2} as the first inequality in \eqref{Condition: SP solution sufficient 4} holds trivially; thus, the only condition on $\theta$ comes from \eqref{Condition: SP solution from tau} which is equivalent to \eqref{Condition: SP solution from tau 2} with $\beta=1$.  
The rest follows from  Lemma~\ref{LEMMA: Noise LMI after young's ineq} by setting $\alpha = \frac{\theta L_{yx}}{\pi_1} + \frac{\theta L_{yy}}{\pi_2}$. Indeed, the particular choice of $(\pi_1,\pi_2)$ in \eqref{Condition: SP solution to noisy LMI 3} gives us $\alpha = \frac{1}{\sigma}-\sqrt{\theta}L_{yy}$.
\end{proof}
\sa{Now that we have provided a particular solution to \cref{eq: SCSC SAPD LMI-dist}, we will next use this particular solution within \cref{lemma: dist bound with fg} to derive an error bound customized for this choice of parameters.} The following two technical results, i.e., Lemmas~\ref{lemma: SAPD single call complexity} and~\ref{lemma: proximal point lemma v2},  will be used later within the proof of \cref{thm:metric-equivalance}.
\begin{lemma}\label{lemma: SAPD single call complexity}
\sa{Consider $\cL$ defined in \eqref{eq:main problem}. \xz{Suppose Assumptions~\ref{ASPT: fg},~\ref{ASPT: lipshiz gradient},~\ref{ASPT: unbiased noise assumption} hold}. Given arbitrary $\xzh{x_0}$, let $\hat x=\prox{\lambda\phi}(\xzh{x_0})$, where $\phi(\cdot)=\max_{y\in\cY}\cL(\cdot,y)$ and $\lambda=(2\gamma)^{-1}$. 
For any given $\hat{\epsilon}>0$, \texttt{SAPD}, displayed in Algorithm~\ref{Alg: SAPD}, can generate $\tilde x_*\in\cX$ such that $\mathbb{E}\left[\|\tilde x_* - \hat{x}\|\right]\leq\hat{\epsilon}$ 
within $\mathcal{\tilde{O}}(\frac{1}{\hat{\epsilon}^2})$ stochastic first-order oracle calls.}
\end{lemma}
\begin{proof}
Recall that $y_*(x)=\argmax_{y\in\cY}\cL(x,y)$ for {$x\in\dom f$}.
Hence, 
$(\hat{x},y_*(\hat{x}))$ is the unique saddle point to the SCSC problem: 
\begin{align}
\label{eq:additional_SAPD label 2}
\min_{x\in\mathcal{X}} \max_{y\in\mathcal{Y}} \bar{\cL}(x,y) \triangleq f(x) + \Phi(x,y) + 
\sa{\gamma}\|x- \xzh{x_0}\|^2-g(y),
\end{align}
\sa{which is equivalent to the SCSC problem in \cref{eq:WCSC-subproblem-generic} with $\mu_x=\gamma$.}
Let $\{x_k,y_k\}$ be the iterate sequence generated by \texttt{SAPD} running on \eqref{eq:additional_SAPD label 2}, initialized from an arbitrary point $(x_0,y_0)$,
with parameters $\{\tau,\sigma,\theta\}$ chosen as follows:
{\small
\begin{equation}
\label{Condition: SP solution to noisy LMI label-dist}
\tau = \frac{1-\theta}{
\gamma},\quad \sigma = \frac{1-\theta}{\mu_y\theta},
\quad 
\theta  \sa{=} \max\{\bar{\theta}(\beta),~\hat{\theta}_1,~\hat{\theta}_2\},
\end{equation}}
for $\beta=\min\{\frac{1}{2},~\frac{\mu_y}{\gamma},~\frac{\gamma}{\mu_y},~\frac{L_{yx}}{L_{xy}}\}$, 
 where $\bar{\theta}(\beta)\triangleq\max\{\bar{\theta}_1(\beta),~\bar{\theta}_2(\beta)\}\in(0,1)$ such that
\sa{
\begin{equation*}
        \bar{\theta}_1(\beta)\triangleq 1 
         - \tfrac{\beta\mu_y\xz{L'_{xx}}}{2L^2_{yx}}\big(\sqrt{1 + \tfrac{4L^2_{yx}\gamma}{\beta \xz{{L'_{xx}}^2}\mu_y}} - 1 \big),
\end{equation*}
\begin{equation*}
        \bar{\theta}_2(\beta)\triangleq 
        \begin{cases}
        1 - \tfrac{(1-\beta)^2}{8}\tfrac{\mu_y^2}{L_{yy}^2} \Big( \sqrt{1+\tfrac{16L_{yy}^2}{(1-\beta)^2\mu_y^2}}-1\Big) & L_{yy}>0\\
        0 & L_{yy}=0,
        \end{cases}
\end{equation*}
}
with $L'_{xx}=L_{xx}+2\gamma$ and
{
\begin{equation}
\label{eq:theta_bound single call SAPD}
    \hat{\theta}_1  \triangleq \max\Big\{0,1 - \frac{1}{8}\cdot\gamma^2\cdot\frac{\hat{\epsilon}^2}{\delta_x^2}\Big\},
\quad 
\hat{\theta}_2 \triangleq \left(1+ \frac{1}{8}\cdot\frac{\mu_y\gamma}{11}\cdot\frac{\hat{\epsilon}^2}{\delta_y^2}\right)^{-1}.
\end{equation}}%
In fact, \sa{in \cref{lemma: dist bound with fg} we provide a convergence guarantee for solving the above problem in~\eqref{eq:additional_SAPD label 2} using Algorithm~\ref{Alg: SAPD}.} \sa{Since the parameter choice above satisfies the condition \eqref{eq: SCSC SAPD LMI-dist} in \cref{lemma: dist bound with fg}, we can invoke} \cref{eq:dist bound} to complete the rest of the analysis. To be more precise, the problem in~\cref{eq:additional_SAPD label 2} is a generic form of the SCSC subproblems given in~\cref{eq: SCSC problem chpt2} with $\mu_x = \gamma$; furthermore, by \cref{lemma: explicit solution to noisy LMI}, $(\tau,\sigma,\theta)$ chosen as in~\eqref{Condition: SP solution to noisy LMI label-dist} satisfies \eqref{eq: SCSC SAPD LMI-dist} with $\rho=\theta$, $\mu_x=\gamma$, $\alpha = \frac{1}{\sigma}-\sqrt{\theta}L_{yy}>0$, and $L'_{xx}=L_{xx}+2\gamma$.

Since $(\hat x, y_*(\hat x))$ is the saddle point of $\bar\cL$, then by \cref{lemma: dist bound with fg}, we get
\begin{small}
\begin{equation*}
          \begin{aligned}
        \mathbb{E}\Big[& \Big(\frac{1}{\tau}-\gamma\Big) \|x_{N}-\hat{x}\|^2  \Big]  \leq
\theta^{N}\left( \frac{1}{\tau}\|x_0-\hat{x}\|^2
      +\frac{1}{\sigma}\|y_{0}-y_*(\hat{x})\|^2\right)
      +\frac{\theta}{1-\theta}\Big(\tau\Xi_{\tau,\sigma,\theta}^x \delta_x^2  +  \sigma\Xi_{\tau,\sigma,\theta}^y\delta_y^2\Big).
    \end{aligned}
\end{equation*}
\end{small}
If we 
substitute the choice of $\{\tau,\sigma\}$ in \cref{Condition: SP solution to noisy LMI label-dist} into the above inequality, it follows that
\begin{small}
\begin{equation*}
          \begin{aligned}
        \mathbb{E}\Big[\|x_{N}-\hat{x}\|^2  \Big]  \leq
\theta^{N-1}\max\Big\{1,\frac{\mu_y}{\gamma}\Big\}\left( \|x_0-\hat{x}\|^2
      +\|y_{0}-y_*(\hat{x})\|^2\right)
      +\frac{1}{\gamma}\Big(\tau\Xi_{\tau,\sigma,\theta}^x \delta_x^2  +  \sigma\Xi_{\tau,\sigma,\theta}^y\delta_y^2\Big).
    \end{aligned}
\end{equation*}
\end{small}
Then, by Jensen's inequality, it follows that
\begin{equation*}
    \Big(\mathbb{E}\left[ \|x_{N}-\hat x\| \right]\Big)^2 \leq \mathbb{E}\left[ \|x_{N}-\hat x\|^2 \right] \leq \theta^{N-1}\max\Big\{1,\frac{\mu_y}{\gamma}\Big\}\cD_0^2 + \frac{1}{\gamma}\Big(\tau\Xi_{\tau,\sigma,\theta}^x \delta_x^2  +  \sigma\Xi_{\tau,\sigma,\theta}^y\delta_y^2\Big),
\end{equation*}
where $\cD_0 \triangleq \big(\|\hat{x}-x_{0}\|^2 + \|y_*(\hat{x})-y_{0}\|^2\big)^{1/2}$.
\nsa{Thus,} for any given $\hat{\epsilon}>0$, $\mathbb{E}\left[ \|x_{N}-\hat{x}\| \right]$ can be bounded by $\hat{\epsilon}$ when
\begin{subequations}
\label{eq: split bound of distance metric}
\begin{align}
    &\frac{1}{\gamma}\Big(\tau\Xi_{\tau,\sigma,\theta}^x \delta_x^2  +  \sigma\Xi_{\tau,\sigma,\theta}^y\delta_y^2\Big)\leq \frac{\hat{\epsilon}^2}{2},
    \label{eq: split bound of distance metric-xi}\\
    &\theta^{N-1}\max\Big\{1,\frac{\mu_y}{\gamma}\Big\}\cD_0^2 \leq\frac{\hat{\epsilon}^2}{2}.
    \label{eq: split bound of distance metric-N}
\end{align}
\end{subequations}
Recall that $\Xi^x_{\tau,\sigma,\theta}$, $\Xi^y_{\tau,\sigma,\theta}$ are defined in \cref{lemma: dist bound with fg}. 
Thus, the choice of $\tau$ and $\sigma$ in~\eqref{Condition: SP solution to noisy LMI label-dist} further implies that 
{
\begin{align*}
 &   \Xi^x_{\tau,\sigma,\theta} = \sa{1 + 
(1-\theta^2)\frac{L_{yx}}{2\mu_y}},\\
&     \Xi^y_{\tau,\sigma,\theta} = \Big( 1 + 3\theta + (1-\theta^2)\frac{L_{yy}}{\mu_y} + (1+\theta)(1-\theta)^2\frac{ L_{yx}L_{xy}}{\sa{\gamma}\mu_y}\Big)(1+2\theta) + \sa{\theta}(1-\theta^2)\frac{L_{yx}}{2\sa{\gamma}}.
\end{align*}
}%
Since $0<\theta < 1$ and $1-\theta^2\leq 2(1-\theta)$, we have
{
\begin{subequations}
\begin{align}
\label{eq:Xi_x_1}
& \Xi^x_{\tau,\sigma,\theta} \leq 1+ (1-\theta)\frac{L_{yx}}{\mu_y},
\\
    \label{eq:Xi_y_1}
  &  \Xi^y_{\tau,\sigma,\theta} \leq  3\Big(4 + 2(1-\theta)\frac{L_{yy}}{\mu_y} + 2(1-\theta)^2\frac{ L_{yx}L_{xy}}{\gamma\mu_y}\Big) + (1-\theta)\frac{L_{yx}}{\gamma}.
\end{align}
\end{subequations}}%
On the other hand, since $\theta\geq \bar{\theta}(\beta)=\max\{\bar{\theta}_1(\beta),\bar{\theta}_2(\beta)\}$, the inequality $\sqrt{a+b} \leq \sqrt{a} + \sqrt{b}$ for all $a,b\geq 0$, and the definition of $\bar{\theta}(\beta)$ 
  together imply that
  \begin{equation}
      \label{eq:1-theta-bound}
      1-\theta\leq \min\Big\{\frac{\sqrt{\beta\gamma\mu_y}}{L_{yx}}, \frac{1-\beta}{2}\frac{\mu_y}{L_{yy}}\Big\}.
  \end{equation}
Therefore, by \cref{eq:1-theta-bound}, we can derive that
$$
(1-\theta)\frac{L_{yx}}{\mu_y}\leq \sqrt{\frac{\beta \gamma}{\mu_y}},\;
(1-\theta)\frac{L_{yy}}{\mu_y}\leq \frac{1-\beta}{2},\;
(1-\theta)^2\frac{ L_{yx}L_{xy}}{\gamma\mu_y} \leq \frac{\beta L_{xy}}{L_{yx}},\;
(1-\theta)\frac{L_{yx}}{\gamma}\leq \sqrt{\frac{\beta\mu_y}{\gamma}};
$$
thus, using those inequalities
within \cref{eq:Xi_x_1} and \cref{eq:Xi_y_1}, we get
{
\begin{subequations}
\begin{align}
\label{eq: Xi_x bound}
    & \Xi^x_{\tau,\sigma,\theta} \leq 1+ \sqrt{\frac{\beta\gamma}{\mu_y}},
\\
\label{eq: Xi_y bound}
    &\Xi^y_{\tau,\sigma,\theta} \leq  
    15-3\beta +6\beta\frac{L_{xy}}{L_{yx}} +\sqrt{\frac{\beta\mu_y}{\gamma}}.
\end{align}
\end{subequations}
}
Note that $\beta=\min\{\frac{1}{2},~\frac{\mu_y}{\gamma},~\frac{\gamma}{\mu_y},~\frac{L_{yx}}{L_{xy}}\}\in(0,1)$ implies that
$$
\Xi^x_{\tau,\sigma,\theta}\leq 2,\qquad \Xi^y_{\tau,\sigma,\theta}\leq 22.
$$
Therefore, using the choice of $\{\tau,\sigma\}$ in \cref{Condition: SP solution to noisy LMI label-dist}, we obtain a sufficient condition \sa{for}
    \cref{eq: split bound of distance metric-xi} as given below:
    \begin{equation}
    \label{eq:sufficient_cond_theta}
    \frac{1-\theta}{\gamma}\frac{2}{\gamma}\delta_x^2  +  \frac{1-\theta}{\mu_y\theta} \frac{22}{\gamma}\delta_y^2\leq \frac{\hat{\epsilon}^2}{2}.
    \end{equation}
\sa{Our choice of $\theta\in(0,1)$ in~\eqref{Condition: SP solution to noisy LMI label-dist} implies that $\theta\geq \max\{\hat{\theta}_1,\hat{\theta}_2\}$, where $\hat{\theta}_1$ and $\hat{\theta}_2$ are defined in \cref{eq:theta_bound single call SAPD}. Note $\theta\geq \max\{\hat{\theta}_1,\hat{\theta}_2\}$ immediately implies that the above sufficient condition in~\eqref{eq:sufficient_cond_theta} holds.} Therefore, with the choice of $\{\tau,\sigma,\theta\}$ in \cref{Condition: SP solution to noisy LMI label-dist} we obtain that \cref{eq: split bound of distance metric-xi} holds, i.e.,
$$\frac{1}{\gamma}\Big(\tau\Xi_{\tau,\sigma,\theta}^x \delta_x^2  +  \sigma\Xi_{\tau,\sigma,\theta}^y\delta_y^2\Big)\leq \frac{\hat{\epsilon}^2}{2}.$$

Furthermore, \eqref{eq: split bound of distance metric-N} holds when $N\geq \ln\Big( \frac{2\max\{1,~\mu_y/\gamma\}\cD_0^2}{\hat{\epsilon}^2}\Big)/\ln\Big(\frac{1}{\theta}\Big) + 1$. Thus, we conclude that for any $\hat{\epsilon}>0$, \texttt{SAPD}, stated in Algorithm~\ref{Alg: SAPD}, can generate $x_N$ such that $\mathbb{E}\left[ \|x_{N}-\hat{x}\| \right]\leq\hat{\epsilon}$ within $N_{\hat\epsilon}$ iterations for $\theta  \sa{=} \max\{\bar{\theta}(\beta),~\hat{\theta}_1,~\hat{\theta}_2\}$, where 
\begin{equation}\label{eq: N bound with theta single SAPD}
    N_{\hat\epsilon} = \mathcal{O}\Big(\ln\Big( \frac{\max\{1,\mu_y/\gamma\}}{\hat{\epsilon}}\Big)/\ln\Big(\frac{1}{\theta}\Big) + 1 \Big).
\end{equation}
Note $\frac{1}{\ln(\frac{1}{\theta})}\leq(1-\theta)^{-1}$ for $\theta\in(0,1)$ implies that
$$
\frac{1}{\ln(\frac{1}{\theta})} \leq \mathcal{O}\Big(\max\{(1-\overline{\theta}_1(\beta))^{-1},(1-\overline{\theta}_2(\beta))^{-1},(1-\hat{\theta}_1)^{-1},(1-\hat{\theta}_2)^{-1}\}\Big).
$$
First, we equivalently rewrite $(1-\overline{\theta}_1(\beta))^{-1}$
and $(1-\overline{\theta}_2(\beta))^{-1}$ 
as follows:
{
\begin{equation*}
         (1-\overline{\theta}_1(\beta))^{-1}
         =  \frac{1}{2}\frac{\xz{L'_{xx}}}{\gamma} + \sqrt{\frac{1}{4}\frac{\xz{{L'_{xx}}^2}}{\gamma^2}+ \frac{L_{yx}^2}{\beta \gamma\mu_y}},\quad
         (1-\overline{\theta}_2(\beta))^{-1}
     = \frac{1}{2} + \sqrt{\frac{1}{4} +\frac{4L_{yy}^2}{\left(1-\beta\right)^2\mu_y^2}};
\end{equation*}}%
thus, 
$$(1-\overline{\theta}_1)^{-1}\leq \frac{\xz{L'_{xx}}}{\gamma}+\frac{L_{yx}}{\sqrt{\beta\gamma\mu_y}},\qquad (1-\overline{\theta}_2)^{-1}\leq 1+\frac{2}{1-\beta}\cdot\frac{L_{yy}}{\mu_y}.$$
Finally, 
$$(1-\hat{\theta}_1)^{-1}=\cO\Big(\frac{1}{\gamma^2}\cdot\frac{\delta_x^2}{\hat{\epsilon}^2}\Big),\qquad  (1-\hat{\theta}_2)^{-1}=\cO\Big(\frac{1}{\gamma\mu_y}\cdot\frac{\delta_y^2}{\hat{\epsilon}^2}+1\Big).$$ Recall that $\xz{L'_{xx}} = 2\gamma+L_{xx}$, using the above four identities that and our choice of $\beta=\min\{\frac{1}{2},~\frac{\mu_y}{\gamma},~\frac{\gamma}{\mu_y},~\frac{L_{yx}}{L_{xy}}\}$  we derive that
{
\begin{equation*}
\frac{1}{\ln(\frac{1}{\theta})}= \mathcal{O}\Big(  
\frac{\max\{L_{xx},L_{yx}\}}{\gamma} + \frac{\max\{L_{yx},L_{xy}\}}{\sqrt{\gamma\mu_y}} + \frac{\max\{L_{yy},L_{yx}\}}{\mu_y}
+ \Big(
\frac{\delta_x^2}{\gamma} +
\frac{\delta_y^2}{\mu_y} \Big)
\frac{1}{\gamma\hat{\epsilon}^2}\Big),
\end{equation*}}%
From \eqref{eq: N bound with theta single SAPD}, we conclude that
{\footnotesize
\begin{equation*}
N_{\hat\epsilon} = \mathcal{O}\left(  
\frac{\max\{L_{xx},L_{yx}\}}{\gamma} + \frac{\max\{L_{yx},L_{xy}\}}{\sqrt{\gamma\mu_y}} + \frac{\max\{L_{yy},L_{yx}\}}{\mu_y}
+ \Big(
\frac{\delta_x^2}{\gamma} +
\frac{\delta_y^2}{\mu_y} \Big)
\frac{1}{\gamma\hat{\epsilon}^2}\right)\cdot\ln\Big( \frac{\max\{1,\mu_y/\gamma\}}{\hat{\epsilon}}\Big),
\end{equation*}}%
which completes the proof.
\end{proof}

\begin{lemma}\label{lemma: proximal point lemma v2}
\sa{Suppose $f:\cX\to\reals\cup\{+\infty\}$ is closed convex, and $V$ is a strictly convex function on $\dom f$ and differentiable on an open set containing $\dom f$.} Let $x_* = \argmin_{x\in\cX} f(x) + V(x)$. Then, for any $\alpha>0$, it holds that $x_* =  \prox{\alpha f}(x_* - \alpha \nabla V(x_*))$.
\end{lemma}
\begin{proof}
\sa{From the first-order optimality condition}, we have
\begin{equation}\label{eq: optimality condition lemma v2}
   0 \in \partial f(x_*) + \nabla V(x_*). 
\end{equation}
\sa{Moreover, from the definition of $\prox{\alpha f}(\cdot)$ operator, it follows that}
\begin{align}
\label{eq:prox-condition}
\prox{\alpha f}(x_* - \alpha \nabla V(x_*)) = \argmin_{x\in\cX} f(x) + \nabla V(x_*)^\top(x-x_*) + \frac{1}{2\alpha}\|x-x_*\|^2.
\end{align}
\sa{Finally, \eqref{eq: optimality condition lemma v2} implies that $x_*$ is the unique minimizer of the problem on the rhs of \eqref{eq:prox-condition}. Therefore, we get that $x_* =\prox{\alpha f}(x_* - \alpha \nabla V(x_*))$, which completes the proof.}
\end{proof}
\subsection{Proof of Theorem~\ref{thm:metric-equivalance}}
\sa{We are now ready to prove \cref{thm:metric-equivalance}.}
\begin{proof}
Let $\hat{x}= \prox{\lambda\phi}(\sa{x_\epsilon})$, and $\phi^s$ be the smooth part of $\phi$, i.e., $\phi=f+\phi^s$.
Moreover, since $\Phi(x,\cdot)-g(\cdot)$ is strongly concave and $\Phi(\cdot,y)$ is differentiable, we have that $\phi^s$ is differentiable; hence, for any \sa{$x\in\dom f$},
$$
\nabla \phi^s(x) = \nabla_x \Phi(x,y_*(x)), \quad \text{where} \quad y_*(x) = \argmax_{y\in\mathcal{Y}} \Phi(x,y)-g(y).
$$
Then we can explicitly write $\hat{x}$ as
\begin{equation*}
    \hat{x} = \argmin_{x\in\cX} f(x) + \phi^s(x) + \frac{1}{2\lambda}\|x- x_\epsilon\|^2.
\end{equation*}
\sa{Since $\phi^s(\cdot) + \frac{1}{2\lambda}\|\cdot- x_\epsilon\|^2$ is smooth and strongly convex,} for any $\alpha>0$, \cref{lemma: proximal point lemma v2} implies that
$$
\hat{x} = \prox{\alpha f}\left(\hat{x} - \alpha\big(\nabla\phi^s(\hat{x})\xz{+}\frac{1}{\lambda}(\hat{x}-x_\epsilon) \big)\right). 
$$
If we let $\alpha=\lambda$, it follows that 
$$
\hat{x} = \prox{\lambda f}\big(x_\epsilon - \lambda\nabla_x\phi^s(\hat{x}) \big). 
$$
\sa{Moreover, since $f$ is closed convex, $\prox{f}(\cdot)$ is nonexpansive; hence,}
\begin{equation}
    \begin{aligned}
 \mathbb{E}\left[\|\hat{x} - \prox{\lambda f}\big(
\hat{x} - \lambda\nabla_x\phi^s(\hat{x})\big) \|\right] 
\leq  \mathbb{E}\left[\|x_\epsilon-\hat{x}\| \right]\leq \frac{\lambda \epsilon}{2},
    \end{aligned}
\end{equation}
where we  used  \cref{Lemma: graident of ME} for the last inequality, i.e.,
$
\|x_\epsilon - \hat{x}\|=\lambda\|\nabla \phi_{\lambda}(x_\epsilon)\|.
$
On the other hand, for any $\tilde{x}\in\sa{\dom f}$,
\begin{equation}\label{eq: gradient bound v2}
    \begin{aligned}
   &\mathbb{E}\left[ \|
\tilde{x} - 
\prox{\lambda f} 
\big(
\tilde{x} - \lambda\nabla_x\phi^s(\tilde{x})  
\big)  \|\right]
 \\
\leq &         \mathbb{E}\left[   \|
\tilde{x} - 
\prox{\lambda f} 
\big(
\tilde{x} - \lambda\nabla_x\phi^s(\tilde{x})  
\big)  -  \hat{x} + 
\prox{\lambda f} 
\big(
\hat{x} - \lambda\nabla_x\phi^s(\hat{x}) 
\big)\|\right] + \frac{\lambda\epsilon }{2}
\\
\leq & 2\mathbb{E}\left[\|\tilde{x}-\hat{x}\|\right] + \lambda
\mathbb{E}\left[\norm{\nabla_x \Phi(\tilde x,y_*(\tilde x))-\nabla_x \Phi(\hat x,y_*(\hat x))} \right]+ \frac{\lambda\epsilon}{2}.
\end{aligned}
\end{equation}
According to 
\cite[Proposition 1]{chen2021proximal}, $y_*(\cdot)$ is Lipschitz with constant \xz{$\kappa_{yx}=\frac{L_{yx}}{\mu_y}$}.
Therefore, we get
$$
\norm{\nabla_x \Phi(\tilde x,y_*(\tilde x))-\nabla_x \Phi(\hat x,y_*(\hat x))}
\leq L_{xx}\|\tilde x-\hat x\| + L_{xy}\|y_*(\tilde x)-y_*(\hat x)\|\leq \big(L_{xx}+L_{xy}\xz{\kappa_{yx}}\big)\|\tilde x-\hat x\|,
$$
which together with \cref{eq: gradient bound v2} implies that
\begin{equation}\label{eq:gradient bound 2 v2}
    \frac{1}{\lambda}\mathbb{E}\left[\norm{\tilde{x} - 
\prox{\lambda f} 
\big(
\tilde{x} - \lambda\nabla_x\phi^s(\tilde{x})  
\big) } \right]
    \leq  \big(\frac{2}{\lambda}+L_{xx}+L_{xy}\xz{\kappa_{yx}}\big)\mathbb{E}\left[\|\tilde{x}-\hat{x}\|\right] + \frac{\epsilon}{2}. 
\end{equation}
Let $\lambda^{-1}=2\gamma$, and $C\triangleq (4\gamma+ L_{xx}+L_{xy}\xz{\kappa_{yx}})^{-1}/2$. Thus, for any $\tilde x\in\sa{\dom f}$ such that {$\mathbb{E}\left[\norm{\tilde x-\hat x}\right]\leq C\epsilon$}, we have 
$$
 \mathbb{E}\left[\frac{1}{\lambda}\norm{\tilde{x} - 
\prox{\lambda f} 
\big(
\tilde{x} - \lambda\nabla_x\phi^s(\tilde{x})  
\big) }\right]\leq\epsilon.
$$
Indeed, when $f(x)=0$ for all $x\in\cX$, we get $\phi(x)=\phi^s(x)$ and the above inequality implies that
$$
\mathbb{E}\left[\|\nabla\phi(\tilde{x})\|\right]\leq\epsilon.
$$
The rest directly follows from invoking \cref{lemma: SAPD single call complexity} with $\hat\epsilon=C\epsilon$, and $\xzh{x_0}=x_\epsilon$. 
\end{proof}
\section{Proofs of Lemma~\ref{lemma:bound-gap} and Lemma~\ref{lemma: dist bound with fg}}
\label{sec:pf-lema2-lemma7}
\sa{We first discuss the proof of Lemma~\ref{lemma: dist bound with fg} and later establish Lemma~\ref{lemma:bound-gap} through specializing some parts of this proof. Indeed recall that Lemma~\ref{lemma: dist bound with fg} is stated for a generic \texttt{SAPD+} subproblem of the form \eqref{eq:WCSC-subproblem-generic}. In \cref{lemma: dist bound with fg-general} below, we restate Lemma~\ref{lemma: dist bound with fg} and rather than using a generic subproblem, we state it for the specific subproblems as in \eqref{eq: SCSC problem chpt2}, which arise while implementing \texttt{SAPD+}.} It is crucial to remind that the matrix inequality~(MI) we establish in~ \cref{lemma: dist bound with fg}, i.e., \cref{eq: SCSC SAPD LMI-dist}, helps us describe the admissible set of algorithm parameters that guarantee the \sa{\emph{linear} convergence of inner loop iterates \nsa{generated by} \texttt{SAPD}, i.e.,  
$\Big\{\mathbb{E}\big[\|x^t_k-x^t_*\|^2 + \|y_k^t-y^t_*\|^2\big]\Big\}_{k\geq 0}$, for any $t\geq 0$.}

\begin{lemma}\label{lemma: dist bound with fg-general}
Suppose Assumptions~\ref{ASPT: fg},~\ref{ASPT: lipshiz gradient},~\ref{ASPT: unbiased noise assumption} hold. \sa{For any given $\mu_x>0$ and $t\in\integers_+$, consider solving the SCSC subproblem in~\eqref{eq: SCSC problem chpt2} using \texttt{SAPD}, displayed in~Algorithm~\ref{Alg: SAPD}. Let $(x_*^t,y_*^t)$ denote the unique saddle point of \eqref{eq: SCSC problem chpt2}, and let $\{x^{t}_{k},y^{t}_{k}\}_{k\geq 0}$ be the iterate sequence when initialized from {$(x_0^t,y_0^t)\in\dom f\times\dom g$} and using} $\tau,\sigma,\theta$ that satisfy \eqref{eq: SCSC SAPD LMI-dist}
{for some} $\alpha \in [0,  {\tfrac{1}{\sigma}\sa{)}}$ and {$\rho\in (0,1\sa{)}$}, \sa{where $L'_{xx}\triangleq L_{xx}+\mu_x+\gamma$}.  Then for all $N\geq \integers_+$, \sa{it holds that}
\begin{equation}\label{eq:dist bound-general}
      \begin{aligned}
        \mathbb{E}\Big[& \Big(\frac{1}{\tau}-\mu_x\Big) \|x^{t}_{N}-x^t_*\|^2 +  {\Big(\frac{1}{\sigma} - \alpha \Big)}\|y^{t}_{N}-y^t_*\|^2 \Big]  \\
        & \leq
\rho^{N}\mathbb{E}\left[ \frac{1}{\tau}\|x^{t}_{0}-x^{t}_*\|^2
      +\frac{1}{\sigma}\|y^{t}_{0}-y^{t}_{*}\|^2\right]
      +\frac{\rho}{1-\rho}\Big(\tau\Xi_{\tau,\sigma,\theta}^x \delta_x^2  +  \sigma\Xi_{\tau,\sigma,\theta}^y\delta_y^2\Big),
    \end{aligned}
\end{equation}
where \sa{$\Xi^x_{\tau,\sigma,\theta}$ and $\Xi^y_{\tau,\sigma,\theta}$ are defined in~\eqref{eq:xi_x} and \eqref{eq:xi_y}, respectively.}
\end{lemma}
\begin{proof}
\sa{The proof is provided in \cref{pf:dist-lemma}.}
\end{proof}
\sa{Clearly, it is sufficient to prove \cref{lemma: dist bound with fg-general} in order to establish \cref{lemma: dist bound with fg}. Moreover, as stated earlier, we will exploit the techniques used in the proof of \cref{lemma: dist bound with fg-general} when showing \cref{lemma:bound-gap} --this is why we restated \cref{lemma: dist bound with fg}.} 
\subsection{Preliminary technical results}
\sa{Recall that given some {$x^t_0\in\dom f$} and $\mu_x>0$, we define
\begin{subequations}
\begin{align}
    \cL^{t}(x,y)&\triangleq f(x)+\Phi^t(x,y)-g(y),\\
    \Phi^t(x,y) &\triangleq\Phi(x,y) + \frac{\mu_x+\gamma}{2}\|x-\xz{x^t_0}\|^2,\label{eq:Phi_t}
\end{align}
\end{subequations}
where $\gamma>0$ is the weak-convexity constant of $\Phi(\cdot,y)$ for any $y\in\dom \xz{g}$. It follows from \cref{ASPT: lipshiz gradient} that $\grad_y \Phi^t$ and $\grad_x \Phi^t$ are Lipschitz such that
\begin{align}
    \norm{\grad_y \Phi^t(x,y)-\grad_y \Phi^t(x',y')}\leq L_{yx}\norm{x-x'}+L_{yy}\norm{y-y'},\\
    \norm{\grad_x \Phi^t(x,y)-\grad_x \Phi^t(x',y')}\leq L'_{xx}\norm{x-x'}+L_{xy}\norm{y-y'}
\end{align}
such that $L'_{xx}\triangleq L_{xx}+\mu_x+\gamma$. Furthermore, \eqref{eq:Phi_t} implies that for any $y\in\dom \xz{g}$, $\Phi^t(\cdot,y)$ is strongly convex with modulus $\mu_x>0$.}%

We will derive some key inequalities below for \xz{\texttt{SAPD}} iterates \sa{$\{x_k^t,y_k^t\}_{k\geq 0}$} generated
by Algorithm~\ref{Alg: SAPD} \sa{to solve $\min_x\max_y\cL^{t}(x,y)$}. Let $x_{-1}^t = x_0^t$, $y_{-1}^t = y_0^t$, and for $k\geq 0$, define
{\small
\begin{align}
\label{eq:qksk}
{q}_k^t \triangleq \nabla_y \sa{\Phi^t}(x_k^t,y_k^t) - \nabla_y\sa{\Phi^t}(x_{k-1}^t,y_{k-1}^t),\qquad s^{t}_{k} \triangleq \nabla_y \sa{\Phi^t}(x_k^t, y_k^t) + \theta q_k^t.
\end{align}}%
\sa{Thus $q_0^t=\mathbf{0}$; and for $k\geq 0$, \cref{ASPT: lipshiz gradient} implies that
{\small
\begin{equation}
           \norm{q_{k+1}^t} \leq  L_{yx} \|x_{k+1}^t - x_{k}^t \|+  L_{yy} \|y_{k+1}^t - y_{k}^t \|. \label{INEQ: Cauchy Ineqaulity 1}
\end{equation}}}%

\begin{lemma}\label{lem: basic lemma for SCSC}
\xz{Suppose Assumptions~\ref{ASPT: fg},~\ref{ASPT: lipshiz gradient},~\ref{ASPT: unbiased noise assumption} hold.} Let $\{x_k^t,y_k^t\}_{k\geq 0}$ be \xz{\texttt{SAPD}} iterates generated according to Algorithm~\ref{Alg: SAPD} \sa{for solving $\min_x\max_y\cL^{t}(x,y)$}. 
Then 
for all $x\in\dom f\subset \cX$, $y\in\dom g\subset \cY$, and $k\geq 0$,
{\small
\begin{equation}\label{D1}
    \begin{aligned}
       \cL^{t}( & x^{t}_{k+1}, y)  - \cL^{t}(x, y^{t}_{k+1})
        \\
    \leq
    &-\langle q^{t}_{k+1}, y^{t}_{k+1} - y \rangle + \theta \langle q^{t}_{k}, y^{t}_{k} - y \rangle  
    + \Lambda^{t}_k(x,y) - \Sigma^{t}_{k+1}(x,y)+ \Gamma^{t}_{k+1}+\sa{\varepsilon^{t,x}_{k}(x)+\varepsilon^{t,y}_{k}(y)},
\end{aligned}
\end{equation}}%
where 
\begin{align*}
    &\sa{\varepsilon^{t,x}_k(x)}\triangleq 
       \langle  \tilde{\nabla}_x \Phi^{t}(x^{t}_{k}, y^{t}_{k+1};\omega_k^x) - \nabla_x \Phi^{t}(x^{t}_{k}, y^{t}_{k+1}),~x-x^{t}_{k+1} \rangle
       ,\quad
       \sa{\varepsilon^{t,y}_{k}(y)}\triangleq\langle \tilde{s}^{t}_{k} -s^{t}_{k}, y^{t}_{k+1} - y \rangle,
\end{align*}
$q^{t}_{k}$ and $s^{t}_{k}$ are defined as in \eqref{eq:qksk}, and
{\small
\begin{align*}
\Lambda^{t}_{k}(x,y) &\triangleq  (\frac{1}{2\tau}-\frac{\mu_x}{2}) \|x - x^{t}_{k}\|^2 + \frac{1}{2\sigma}\|y-y^{t}_{k}\|^2,\\
\Sigma^{t}_{k+1}(x,y) &\triangleq  \frac{1}{2\tau} \|x - x^{t}_{k+1}\|^2 + (\frac{1}{2\sigma} + \frac{\mu_y}{2})\|y-y^{t}_{k+1}\|^2,\\ \Gamma^{t}_{k+1} &\triangleq
     (\frac{ L'_{xx} }{2} - \frac{1}{2\tau}) \| x^{t}_{k+1} - x^{t}_{k} \|^2 - \frac{1}{2\sigma}\|y^{t}_{k+1}- y^{t}_{k}\|^2 \\ &\quad + \theta L_{yx} \|x^{t}_{k} - x^{t}_{k-1} \|\| y^{t}_{k+1} - y^{t}_{k} \| +   \theta   L_{yy} \|y^{t}_{k} - y^{t}_{k-1} \| \| y^{t}_{k+1} - y^{t}_{k} \|.
\end{align*}}%
\end{lemma}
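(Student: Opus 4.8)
The plan is to carry out the standard one-iteration primal--dual analysis of \texttt{SAPD} (in the spirit of Chambolle--Pock type arguments), evaluating everything at the generic comparison pair $(x,y)\in\dom f\times\dom g$. First I would record the first-order optimality conditions of the two proximal updates in \cref{ALG: SAPD} when run on $\min_x\max_y\cL^t(x,y)$. Writing $x_{k+1}^t=\prox{\tau f}(x_k^t-\tau\tilde\nabla_x\Phi^t(x_k^t,y_{k+1}^t;\omega_k^x))$ gives $\tfrac1\tau(x_k^t-x_{k+1}^t)-\tilde\nabla_x\Phi^t(x_k^t,y_{k+1}^t;\omega_k^x)\in\partial f(x_{k+1}^t)$, and convexity of $f$ yields $f(x_{k+1}^t)-f(x)\le\langle\tfrac1\tau(x_k^t-x_{k+1}^t)-\tilde\nabla_x\Phi^t(x_k^t,y_{k+1}^t;\omega_k^x),\,x_{k+1}^t-x\rangle$; similarly the $y$-update with the momentum direction $\tilde s_k^t$ gives $g(y)-g(y_{k+1}^t)\ge\langle\tfrac1\sigma(y_k^t-y_{k+1}^t)+\tilde s_k^t,\,y-y_{k+1}^t\rangle$. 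Splitting the stochastic gradients into their exact parts plus noise isolates exactly $\varepsilon_k^{t,x}(x)$ and $\varepsilon_k^{t,y}(y)$; note the deterministic prox-regularizer in $\Phi^t$ cancels in these differences, so the noise matches \cref{ASPT: unbiased noise assumption}.

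Next I would handle the smooth coupling $\Phi^t(x_{k+1}^t,y)-\Phi^t(x,y_{k+1}^t)$ by inserting $\Phi^t(x_{k+1}^t,y_{k+1}^t)$ and treating the two pieces separately. For the primal piece, I linearize both at the common point $x_k^t$ used by the algorithm: $\mu_x$-strong convexity of $\Phi^t(\cdot,y_{k+1}^t)$ lower-bounds $\Phi^t(x,y_{k+1}^t)$ and produces $-\tfrac{\mu_x}2\|x-x_k^t\|^2$, while the descent lemma for the $L'_{xx}$-smooth map $\Phi^t(\cdot,y_{k+1}^t)$ upper-bounds $\Phi^t(x_{k+1}^t,y_{k+1}^t)$ and produces $\tfrac{L'_{xx}}2\|x_{k+1}^t-x_k^t\|^2$; crucially the shared linear term $\langle\nabla_x\Phi^t(x_k^t,y_{k+1}^t),x_{k+1}^t-x\rangle$ then cancels against the deterministic gradient coming from the $f$-inequality. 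For the dual piece, $\mu_y$-strong concavity of $\Phi^t(x_{k+1}^t,\cdot)$ gives $-\tfrac{\mu_y}2\|y-y_{k+1}^t\|^2$ together with $\langle\nabla_y\Phi^t(x_{k+1}^t,y_{k+1}^t),y-y_{k+1}^t\rangle$. The key algebraic step is to re-express this last gradient through the momentum bookkeeping: since $s_k^t=\nabla_y\Phi^t(x_k^t,y_k^t)+\theta q_k^t$ and $q_{k+1}^t=\nabla_y\Phi^t(x_{k+1}^t,y_{k+1}^t)-\nabla_y\Phi^t(x_k^t,y_k^t)$ by \eqref{eq:qksk}, one has $\nabla_y\Phi^t(x_{k+1}^t,y_{k+1}^t)-s_k^t=q_{k+1}^t-\theta q_k^t$, so after cancelling $s_k^t$ against the $g$-inequality the surviving terms are $-\langle q_{k+1}^t,y_{k+1}^t-y\rangle+\theta\langle q_k^t,y_{k+1}^t-y\rangle$.

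Finally I would split $\theta\langle q_k^t,y_{k+1}^t-y\rangle=\theta\langle q_k^t,y_k^t-y\rangle+\theta\langle q_k^t,y_{k+1}^t-y_k^t\rangle$, keeping the first term (matching the statement) and bounding the second by Cauchy--Schwarz together with \eqref{INEQ: Cauchy Ineqaulity 1}, which turns it into $\theta L_{yx}\|x_k^t-x_{k-1}^t\|\,\|y_{k+1}^t-y_k^t\|+\theta L_{yy}\|y_k^t-y_{k-1}^t\|\,\|y_{k+1}^t-y_k^t\|$, i.e.\ the last two terms of $\Gamma_{k+1}^t$. The remaining inner products $\langle\tfrac1\tau(x_k^t-x_{k+1}^t),x_{k+1}^t-x\rangle$ and $\langle\tfrac1\sigma(y_k^t-y_{k+1}^t),y_{k+1}^t-y\rangle$ are converted by the three-point (cosine) identity $\langle a-b,b-c\rangle=\tfrac12(\|a-c\|^2-\|b-c\|^2-\|a-b\|^2)$ into the squared-distance terms collected in $\Lambda_k^t$, $-\Sigma_{k+1}^t$, and the first two terms of $\Gamma_{k+1}^t$ (the $-\tfrac{1}{2\tau}\|x_{k+1}^t-x_k^t\|^2$ combining with $\tfrac{L'_{xx}}2\|x_{k+1}^t-x_k^t\|^2$, and the $-\tfrac{1}{2\sigma}\|y_{k+1}^t-y_k^t\|^2$ appearing directly). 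Adding the primal and dual contributions reproduces \eqref{D1}. I expect the main obstacle to be the careful momentum/gradient bookkeeping in the dual piece --- making the telescoping identity $\nabla_y\Phi^t(x_{k+1}^t,y_{k+1}^t)-s_k^t=q_{k+1}^t-\theta q_k^t$ line up so that $s_k^t$ cancels exactly --- together with keeping the linearization point $x_k^t$ consistent across the strong-convexity lower bound and the descent-lemma upper bound so that the primal gradient term cancels; the remaining steps are routine applications of the three-point identity and Cauchy--Schwarz.
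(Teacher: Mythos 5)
Your proposal is correct and follows essentially the same route as the paper's proof: the same decomposition via $\Phi^t(x^t_{k+1},y^t_{k+1})$, the same use of $\mu_x$-strong convexity plus the $L'_{xx}$-descent lemma linearized at $x^t_k$ so the primal gradient cancels, the identity $\nabla_y\Phi^t(x^t_{k+1},y^t_{k+1})-s^t_k=q^t_{k+1}-\theta q^t_k$, and the final Cauchy--Schwarz step with \eqref{INEQ: Cauchy Ineqaulity 1}. The only cosmetic difference is that you derive the prox inequalities from optimality conditions and the three-point identity by hand, whereas the paper cites \cite[Lemma 7.1]{hamedani2018primal}, which packages exactly those steps.
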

\begin{proof}
Fix $x\in\dom f$ and $y\in\dom g$.
Using Lemma 7.1 from \cite{hamedani2018primal} for the $y-$ and $x-$subproblems in Algorithm~\ref{Alg: SAPD}, we get 
\begin{small}
\begin{align*}
    f(x^{t}_{k+1}) + \langle \tilde{\nabla}_x \Phi^{t}(x^{t}_{k}, y^{t}_{k+1};\omega^x_k) , x^{t}_{k+1} - x \rangle 
       & \leq  
       f(x) + \frac{1}{2\tau} \left[  \|x- x^{t}_{k}\|^2 -\|x- x^{t}_{k+1}\|^2 - \|x^{t}_{k+1}- x^{t}_{k}\|^2
       \right],\\
       g(y^{t}_{k+1}) - \langle \tilde{s}^{t}_k,
       y^{t}_{k+1} - y \rangle 
       & \leq 
       g(y) + \frac{1}{2\sigma} \left[  \|y-y^{t}_{k}\|^2- \|y -  y^{t}_{k+1}\|^2 - 
      \|y^{t}_{k+1}- y^{t}_{k}\|^2 
       \right].
\end{align*}
\end{small}
Thus, by adding and subtracting we further get
\begin{subequations}
\begin{equation} \label{IX}
    \begin{aligned}
 f(x^{t}_{k+1}) &+ \langle {\nabla}_x \Phi^{t}(x^{t}_{k}, y^{t}_{k+1}
 ),~x^{t}_{k+1} - x \rangle\\
       \leq &  f(x) + \tfrac{1}{2\tau} (  \|x- x^{t}_{k}\|^2 - \|x- x^{t}_{k+1}\|^2 - \|x^{t}_{k+1}- x^{t}_{k}\|^2
       )+\varepsilon^{t,x}_k(x),
    \end{aligned}
\end{equation}
\begin{equation}\label{IY1}
    \begin{aligned}
    g(y^{t}_{k+1})  &- \langle s^{t}_{k}, 
       y^{t}_{k+1} - y \rangle 
       \\
       \leq & g(y) + \tfrac{1}{2\sigma} (  \|y- y^{t}_{k}\|^2 - \|y- y^{t}_{k+1}\|^2 - 
       \|y^{t}_{k+1}- y^{t}_{k}\|^2
       )+\varepsilon^{t,y}_k(y). 
    \end{aligned}
\end{equation}
\end{subequations} 
Rearranging the terms in \eqref{IY1}, we get
\begin{equation}\label{IY2}
\begin{aligned}
    -g(y)&+ g(y^{t}_{k+1}) 
       \\
       \leq &\langle s^{t}_{k},  y^{t}_{k+1} - y \rangle + \tfrac{1}{2\sigma} \left[  \|y-y^{t}_{k}\|^2- \|y -  y^{t}_{k+1}\|^2 - 
      \|y^{t}_{k+1}- y^{t}_{k}\|^2
       \right] +\varepsilon^{t,y}_k(y).
\end{aligned}
\end{equation}
Since $y^{t}_{k+1}\in \dom g$, the inner product in (\ref{IX}) can be lower bounded using convexity of $\Phi^{t}(\cdot, y^{t}_{k+1})$ as follows (see \cref{ASPT: lipshiz gradient}):
\begin{eqnarray*}
    \lefteqn{\langle \nabla_x  \Phi^{t}(x^{t}_{k}, y^{t}_{k+1}), x^{t}_{k+1} - x \rangle
     =  
    \langle \nabla_x \Phi^{t}(x^{t}_{k}, y^{t}_{k+1}), x^{t}_{k} - x \rangle
    +
    \langle \nabla_x \Phi^{t}(x^{t}_{k}, y^{t}_{k+1}), x^{t}_{k+1} - x^{t}_{k} \rangle}
    \\
    &&\geq 
    \Phi^{t}(x^{t}_{k}, y^{t}_{k+1}) - \Phi^{t}(x, y^{t}_{k+1})  +\frac{\mu_x}{2}\|x-x^{t}_{k}\|^2
    + \langle \nabla_x \Phi^{t}(x^{t}_{k}, y^{t}_{k+1}),~x^{t}_{k+1} - x^{t}_{k} \rangle.
\end{eqnarray*}
Using this inequality after adding $\Phi^{t}(x^{t}_{k+1}, y^{t}_{k+1})$ to both sides of \eqref{IX}, we get
{
\begin{equation}
\begin{aligned}
   \Phi^{t}( & x^{t}_{k+1}, y^{t}_{k+1}) + f(x^{t}_{k+1}) \\ 
    \leq &  \Phi^{t}(x, y^{t}_{k+1}) + f(x) 
     + \Phi^{t}(x^{t}_{k+1}, y^{t}_{k+1}) - \Phi^{t}(x^{t}_{k}, y^{t}_{k+1}) - \langle \nabla_x \Phi^{t}(x^{t}_{k}, y^{t}_{k+1}), x^{t}_{k+1} - x^{t}_{k} \rangle 
   \\
   & 
   +  \tfrac{1}{2\tau} \left[ \| x - x^{t}_{k} \| ^ 2 - \| x - x^{t}_{k+1}  \| ^ 2 - 
       \|x^{t}_{k+1}- x^{t}_{k}\|^2\right] - \tfrac{\mu_x}{2}\|x - x^{t}_{k}\|^2+\varepsilon^{t,x}_k(x)
   \\
   \leq & \Phi^{t}(x, y^{t}_{k+1}) +  f(x) + \frac{L'_{xx}}{2} \| x^{t}_{k+1} - x^{t}_{k} \|^2
   \\
   &    +  \tfrac{1}{2\tau} \left[ \| x - x^{t}_{k} \| ^ 2 - \|x-x^{t}_{k+1}\| ^ 2 - 
       \|x^{t}_{k+1}- x^{t}_{k}\|^2\right]- \tfrac{\mu_x}{2}\|x - x^{t}_{k}\|^2+\sa{\varepsilon^{t,x}_k(x)},
\end{aligned}
\end{equation}}%
where the last step uses 
\cref{ASPT: lipshiz gradient}. Rearranging the terms gives us
{
\begin{equation}\label{IX2}
\begin{aligned}
     f(&x^{t}_{k+1}) - f(x) - \Phi^{t}(x,y^{t}_{k+1}) 
    \leq -\Phi^{t}(x^{t}_{k+1},y^{t}_{k+1}) + \frac{L'_{xx}}{2} \| x^{t}_{k+1} - x^{t}_{k} \|^2
    \\
    &    +  \frac{1}{\sa{2}\tau} \left[ \| x - x^{t}_{k} \| ^ 2 - \| x - x^{t}_{k+1}  \| ^ 2 - 
       \|x^{t}_{k+1}- x^{t}_{k}\|^2\right]- \frac{\mu_x}{2}\|x - x^{t}_{k}\|^2+\sa{\varepsilon^{t,x}_k(x)}.
\end{aligned}
\end{equation}}
Then, for $k \geq 0$, by summing \eqref{IY2} and \eqref{IX2}, we obtain
{
\begin{equation}
\label{eq:one-step-aux1}
\begin{aligned}
   \mathcal{L}( & x^{t}_{k+1}, y)  - \mathcal{L}(x, y^{t}_{k+1}) 
   = 
   f(x^{t}_{k+1}) + \Phi^{t}(x^{t}_{k+1},y) - g(y) - f(x) - \Phi^{t}(x,y^{t}_{k+1}) + g(y^{t}_{k+1})
   \\
   \leq & \Phi^{t}(x^{t}_{k+1}, y) - \Phi^{t}(x^{t}_{k+1}, y_{k+1}) + \langle s^{t}_{k}, y^{t}_{k+1} - y \rangle + \frac{L'_{xx}}{2} \| x^{t}_{k+1} - x^{t}_{k} \|^2
   \\
   &  +  \frac{1}{2\sigma} \left[  \|y-y^{t}_{k}\|^2- \|y -  y^{t}_{k+1}\|^2 - 
      \|y^{t}_{k+1}- y^{t}_{k}\|^2 \right]+ \sa{\varepsilon_k^{t,y}(y)}
   \\
   & +  \frac{1}{2\tau} \left[ \| x - x^{t}_{k} \| ^ 2 - \| x - x^{t}_{k+1}\| ^ 2 - 
       \|x^{t}_{k+1}- x^{t}_{k}\|^2\right]
       - \frac{\mu_x}{2}\|x - x^{t}_{k}\|^2 + \sa{\varepsilon_k^{t,x}(x)}. 
\end{aligned}
\end{equation}}
From \cref{ASPT: lipshiz gradient},
the $\mu_y$-strongly concavity of $\Phi^t(x, \cdot)$ for fixed $x \in \dom f\subset \mathcal{X}$ implies
{
\begin{equation*}
    \begin{aligned}
         \Phi^{t}( & x^{t}_{k+1}, y) -  \Phi^{t}(x^{t}_{k+1}, y^{t}_{k+1}) + \langle s^{t}_{k}, y^{t}_{k+1} - y \rangle
         \\
         \leq & \langle \nabla_y\Phi^{t}(x^{t}_{k+1},y^{t}_{k+1}), y - y^{t}_{k+1} \rangle - \frac{\mu_y}{2}\|y - y^{t}_{k+1}\|^2 + \langle \nabla_y \Phi^{t}(x^{t}_{k}, y^{t}_{k}) + \theta q^{t}_{k}, y^{t}_{k+1} - y \rangle
         \\
         = & -\langle q^{t}_{k+1}, y^{t}_{k+1} - y \rangle - \frac{\mu_y}{2}\|y - y^{t}_{k+1}\|^2 + \theta \langle q^{t}_{k}, y^{t}_{k} - y \rangle +\theta \langle q^{t}_{k}, y^{t}_{k+1}-y^{t}_{k}\rangle.
    \end{aligned}
\end{equation*}}%
Thus, using the above inequality within~\eqref{eq:one-step-aux1}, we get
{
\begin{equation*}
\begin{aligned}
    \cL^{t}( & x^{t}_{k+1}, y)  - \cL^{t}(x, y^{t}_{k+1}) \leq 
    -\langle q^{t}_{k+1}, y^{t}_{k+1} - y \rangle + \theta \langle q^{t}_{k}, y^{t}_{k} - y\rangle+\theta\langle q^t_{k},y^{t}_{k+1}-y^{t}_{k} \rangle \\
    &
    + \frac{L'_{xx}}{2} \| x^{t}_{k+1} - x^{t}_{k} \|^2
    +  \frac{1}{2\sigma} \left[  \|y-y^{t}_{k}\|^2- \|y -  y^{t}_{k+1}\|^2 -
      \|y^{t}_{k+1}- y^{t}_{k}\|^2
       \right]- \frac{\mu_y}{2}\|y - y^{t}_{k+1}\|^2 
   \\
   & +  \frac{1}{2\tau} \left[ \| x - x^{t}_{k} \| ^ 2 - \| x - x^{t}_{k+1}  \| ^ 2 - 
       \|x^{t}_{k+1}- x^{t}_{k}\|^2\right]
       - \frac{\mu_x}{2}\|x - x^{t}_{k}\|^2+\varepsilon^{t,x}_k(x)+\sa{\varepsilon^{t,y}_k(y).}
\end{aligned}
\end{equation*}}%
Finally, \eqref{D1} follows from using Cauchy-Schwarz 
for \sa{$\fprod{q^t_k,y^t_{k+1}-y^t_k}$} and \eqref{INEQ: Cauchy Ineqaulity 1}.
\end{proof}
\begin{lemma}\label{lemma: proximal ineq}
\cite[Theorem 6.42]{beck2017first} Let $f$ be proper, closed and convex function. Then for any {$x,x'\in \cX$},
we get
{\small
$
\|\prox{f}(x) - \prox{f}(x') \| \leq \|  x- x' \|.
$}
\end{lemma}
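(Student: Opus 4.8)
The plan is to establish the (firm) nonexpansiveness of the proximal map through the monotonicity of the subdifferential. First I would recall that since $f$ is proper, closed and convex, the objective $w\mapsto f(w)+\tfrac{1}{2}\|w-x\|^2$ defining $\prox{f}(x)$ is strongly convex, so its minimizer exists and is unique; writing $u\triangleq\prox{f}(x)$, the first-order optimality condition reads $0\in\partial f(u)+(u-x)$, i.e.\ $x-u\in\partial f(u)$. By convexity this inclusion is both necessary and sufficient, so it completely characterizes the proximal point, and the same characterization applies verbatim at $x'$.

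Next I would apply this to both points simultaneously. Setting $u\triangleq\prox{f}(x)$ and $u'\triangleq\prox{f}(x')$, we obtain $x-u\in\partial f(u)$ and $x'-u'\in\partial f(u')$. Since the subdifferential of a convex function is a monotone operator, the two subgradients satisfy
\[
\langle (x-u)-(x'-u'),\ u-u'\rangle\ \geq\ 0,
\]
which after rearranging gives the firm-nonexpansiveness inequality $\langle x-x',\ u-u'\rangle\geq\|u-u'\|^2$.

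Finally, I would invoke Cauchy--Schwarz on the left-hand side to obtain
\[
\|u-u'\|^2\ \leq\ \langle x-x',\ u-u'\rangle\ \leq\ \|x-x'\|\,\|u-u'\|,
\]
and dividing by $\|u-u'\|$ when it is nonzero (the claim being trivial otherwise) yields $\|u-u'\|\leq\|x-x'\|$, exactly the stated bound. The argument is entirely standard---indeed it is the textbook reasoning behind the cited \cite[Theorem 6.42]{beck2017first}---so there is no genuine obstacle here; the only point deserving care is justifying that the subgradient inclusion is an \emph{equivalence} (necessity and sufficiency of the first-order condition under convexity), since it is precisely this two-sided characterization that lets us read off the monotonicity relation for both proximal points at once.
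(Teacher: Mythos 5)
Your proof is correct: the paper itself gives no argument for this lemma and simply cites \cite[Theorem 6.42]{beck2017first}, and your derivation (optimality characterization $x-u\in\partial f(u)$, monotonicity of $\partial f$ giving firm nonexpansiveness, then Cauchy--Schwarz) is exactly the standard textbook reasoning behind that cited result. Nothing is missing; in particular you correctly flag that the prox is well defined and that the subgradient inclusion characterizes it, which is all the argument needs.
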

Next, based on the above inequality, we prove an intermediate result, \sa{which we use later} to bound the variance of the \xz{\texttt{SAPD}} iterate sequence.

\begin{lemma}\label{lemma: intermediate noisy bound}
\xz{Suppose Assumptions~\ref{ASPT: fg},~\ref{ASPT: lipshiz gradient},~\ref{ASPT: unbiased noise assumption} hold.} Let $\{x_k^t,y_k^t\}_{k\geq 0}$ be \xz{\texttt{SAPD}}  iterates generated 
as in Algorithm~\ref{Alg: SAPD} \sa{for solving $\min_x\max_y\cL^{t}(x,y)$}.  
\sa{For $k\geq 0$, let $q^{t}_{k}$ and $s^{t}_{k}$ be defined as in \eqref{eq:qksk}, and 
let} 
{\small
\begin{equation*}
    \begin{aligned}
       &{\hat{x}^{t}_{k+1}} \triangleq   \prox{\tau f}\left({x^{t}_{k}- \tau{\nabla}_x \Phi^{t}(x^{t}_{k}, y^{t}_{k+1}})\right), \quad
       {\hat{\hat{x}}^{t}_{k+1}  \triangleq   \prox{\tau f}\left({x^{t}_{k}- \tau{\nabla}_x \Phi^{t}(x^{t}_{k}, \hat{y}^{t}_{k+1}})\right)},\\
       &\hat{y}^{t}_{k+1} \triangleq  \prox{\sigma g}\left(y^{t}_{k} + \sigma s^{t}_{k}\right), \quad \hat{\hat{y}}^{t}_{k+1}\triangleq \prox{\sigma \sa{g}}\left( \hat{y}^{t}_{k} + \sigma(1+\theta){\nabla}_y \Phi^{t}(\hat{\hat{x}}^{t}_{k}, \hat{y}^{t}_{k}) - \sigma\theta{\nabla}_y \Phi^{t}(x^{t}_{k-1}, y^{t}_{k-1})  \right),
    \end{aligned}
\end{equation*}}%
then the following inequalities hold for $k\geq 0$:
{
\begin{subequations}
\begin{align}
    \|x^{t}_{k+1} - \hat{x}^{t}_{k+1} \| &\leq \tau\| \Delta^{t,x}_{k}\|,
    \qquad \|y^{t}_{k+1} - \hat{y}^{t}_{k+1}\| \leq \sigma\left((1+\theta)\|\Delta^{t,y}_{k}\| + \theta\|\Delta^{t,y}_{k-1}\|\right),\label{eq:y_k - y_hat_k}\\
    \|{y}^{t}_{k+1} - \hat{\hat{y}}^{t}_{k+1}\| &\leq  \sigma\left( 
    (1+\theta)\|\Delta^{t,y}_{k}\| + \theta\|\Delta^{t,y}_{k-1}\|
    +
   \tau(1+\theta)L_{yx}\|\Delta^{t,x}_{k-1}\|\right)
    \label{eq:y_k - y_hat_hat_k}\\
    & 
    \quad + \sigma\left(1 + \sigma(1+\theta)L_{yy} +  \tau\sigma(1+\theta)L_{yx}L_{xy} \right)\left((1+\theta)\|\Delta^{t,y}_{k-1}\| + \theta\|\Delta^{t,y}_{k-2}\|\right), \nonumber
    \end{align}
\end{subequations}}%
where
{\small
    $\Delta^{t,x}_{k} \sa{\triangleq} \tilde{\nabla}_x \Phi^{t}(x^{t}_{k}, y^{t}_{k+1};\omega_k^x) - \nabla_x \Phi^{t}(x^{t}_{k}, y^{t}_{k+1})$, and  $\Delta^{t,y}_{k} \sa{\triangleq} \sa{\tilde\nabla_y}\Phi^{t}(x^{t}_{k}, y^{t}_{k};\omega_k^y) - \nabla_y \Phi^{t}(x^{t}_{k}, y^{t}_{k})$.
}
\end{lemma}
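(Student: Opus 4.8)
The plan is to derive all three estimates from the nonexpansiveness of the proximal operator (Lemma~\ref{lemma: proximal ineq}), combined with the triangle inequality and the Lipschitz continuity of $\nabla_x\Phi^t$ and $\nabla_y\Phi^t$ recorded in the preamble of this subsection. In every case the actual \texttt{SAPD} iterate and its ``hat'' counterpart are images under the same proximal map $\prox{\tau f}$ or $\prox{\sigma g}$, so the gap between them is bounded by the gap between the two prox-arguments; the whole task therefore reduces to expressing each argument difference through the gradient-noise vectors $\Delta^{\Phi^t,x}_k$ and $\Delta^{\Phi^t,y}_k$.

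The first bound is immediate: $x^t_{k+1}$ and $\hat{x}^t_{k+1}$ differ only in that the former uses $\tilde\nabla_x\Phi^t(x^t_k,y^t_{k+1};\omega^x_k)$ while the latter uses $\nabla_x\Phi^t(x^t_k,y^t_{k+1})$, so Lemma~\ref{lemma: proximal ineq} gives $\|x^t_{k+1}-\hat{x}^t_{k+1}\|\le\tau\|\Delta^{\Phi^t,x}_k\|$. For the second bound I would first rewrite the momentum direction as $\tilde s^t_k=(1+\theta)\tilde\nabla_y\Phi^t(x^t_k,y^t_k;\omega^y_k)-\theta\tilde\nabla_y\Phi^t(x^t_{k-1},y^t_{k-1};\omega^y_{k-1})$ and, analogously, $s^t_k=(1+\theta)\nabla_y\Phi^t(x^t_k,y^t_k)-\theta\nabla_y\Phi^t(x^t_{k-1},y^t_{k-1})$, whence $\tilde s^t_k-s^t_k=(1+\theta)\Delta^{\Phi^t,y}_k-\theta\Delta^{\Phi^t,y}_{k-1}$. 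Nonexpansiveness then yields $\|y^t_{k+1}-\hat{y}^t_{k+1}\|\le\sigma\|\tilde s^t_k-s^t_k\|$, and the triangle inequality produces exactly the bound in~\eqref{eq:y_k - y_hat_k}.

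The third bound is where the real work lies and is where I would concentrate. The first step is an auxiliary estimate on $\|x^t_k-\hat{\hat{x}}^t_k\|$: these are prox-images whose arguments differ by $\tau[\tilde\nabla_x\Phi^t(x^t_{k-1},y^t_k;\omega^x_{k-1})-\nabla_x\Phi^t(x^t_{k-1},\hat{y}^t_k)]$, which I would split into the noise piece $\Delta^{\Phi^t,x}_{k-1}$ plus a Lipschitz piece controlled by $L_{xy}\|y^t_k-\hat{y}^t_k\|$, giving $\|x^t_k-\hat{\hat{x}}^t_k\|\le\tau(\|\Delta^{\Phi^t,x}_{k-1}\|+L_{xy}\|y^t_k-\hat{y}^t_k\|)$. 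Then I would apply nonexpansiveness to $y^t_{k+1}$ against $\hat{\hat{y}}^t_{k+1}$; the argument difference decomposes into $(y^t_k-\hat{y}^t_k)$, the term $-\sigma\theta\Delta^{\Phi^t,y}_{k-1}$, and the term $\sigma(1+\theta)[\Delta^{\Phi^t,y}_k+(\nabla_y\Phi^t(x^t_k,y^t_k)-\nabla_y\Phi^t(\hat{\hat{x}}^t_k,\hat{y}^t_k))]$, where the last gradient gap is bounded via $L_{yx}\|x^t_k-\hat{\hat{x}}^t_k\|+L_{yy}\|y^t_k-\hat{y}^t_k\|$. Substituting the auxiliary $x$-estimate and then the index-shifted second bound $\|y^t_k-\hat{y}^t_k\|\le\sigma((1+\theta)\|\Delta^{\Phi^t,y}_{k-1}\|+\theta\|\Delta^{\Phi^t,y}_{k-2}\|)$ and collecting coefficients reproduces~\eqref{eq:y_k - y_hat_hat_k}.

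The main obstacle is purely the organization of this final chain: one must keep the index shifts straight (the $y$-update at stage $k+1$ reaches back through $\hat{\hat{x}}^t_k$ and $\hat{y}^t_k$ to noise at stages $k$, $k-1$, and $k-2$), and must carefully separate each discrepancy between a stochastic and an exact gradient into a ``pure noise'' contribution and a ``Lipschitz'' contribution, so that the latter feeds back into the already-established first two bounds rather than into new unbounded quantities. Once the decomposition is arranged correctly, collecting the $\sigma$- and $L$-dependent coefficients is routine and recovers the stated factor $1+\sigma(1+\theta)L_{yy}+\tau\sigma(1+\theta)L_{yx}L_{xy}$.
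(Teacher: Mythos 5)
Your proposal is correct and follows essentially the same route as the paper's proof: prox nonexpansiveness (Lemma~\ref{lemma: proximal ineq}) for the first two bounds, then for the third the auxiliary estimate $\|x^t_k-\hat{\hat{x}}^t_k\|\le\tau(\|\Delta^{\Phi^t,x}_{k-1}\|+L_{xy}\|y^t_k-\hat{y}^t_k\|)$, the Lipschitz splitting of the gradient gap, and recursion into the index-shifted second bound. The only cosmetic difference is that you apply nonexpansiveness once to the pair $(y^t_{k+1},\hat{\hat{y}}^t_{k+1})$ and take the triangle inequality on the prox arguments, whereas the paper first splits $\|y^t_{k+1}-\hat{\hat{y}}^t_{k+1}\|\le\|y^t_{k+1}-\hat{y}^t_{k+1}\|+\|\hat{y}^t_{k+1}-\hat{\hat{y}}^t_{k+1}\|$ and applies nonexpansiveness to each piece; both orderings yield identical coefficients.
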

\begin{proof}
The first inequality in~\cref{eq:y_k - y_hat_k} is 
from \cref{lemma: proximal ineq}; for the second, we have
{
\begin{equation*}
\begin{aligned}
    \|y^{t}_{k+1} - \hat{y}^{t}_{k+1}\| \leq \sigma\|\tilde{s}^{t}_{k} - s^{t}_{k} \| \leq \sigma\left((1+\theta)\|\Delta^{t,y}_{k}\| + \theta\|\Delta^{t,y}_{k-1}\|\right),
\end{aligned}
\end{equation*}}%
\sa{which follows from \cref{lemma: proximal ineq} and the 
triangle inequality. 
To show \cref{eq:y_k - y_hat_hat_k}, 
we bound $\| y^{t}_{k+1} - \hat{y}^{t}_{k+1}\|$ and $\| \hat{y}^{t}_{k+1}- \hat{\hat{y}}^{t}_{k+1}\|$ separately.} It follows from \cref{lemma: proximal ineq} that
    $\|x^{t}_{k+1} - \hat{\hat{x}}^{t}_{k+1} \| \leq \tau\|\tilde{\nabla}_x\Phi^{t}(x^{t}_{k},y^{t}_{k+1};\omega_k^x) - \nabla_x\Phi^{t}(x^{t}_{k},\hat{y}^t_{k+1})\|$.
\sa{After adding and subtracting $\nabla_x\Phi^{t}(x^{t}_{k},y^{t}_{k+1})$, \cref{ASPT: lipshiz gradient} implies that} 
{
\begin{equation}\label{ineq: x_k - x_hat_hat_k}
    \begin{aligned}
    \|x^{t}_{k+1} - \hat{\hat{x}}^{t}_{k+1} \| \leq \tau\left(\|\Delta^{t,x}_{k}\| + L_{xy}\|y^{t}_{k+1} - \hat{y}^{t}_{k+1}\|\right).
    \end{aligned}
\end{equation}}
\sa{We will use this relation to bound $\|\hat{y}^{t}_{k+1} - \hat{\hat{y}}^{t}_{k+1}\|$. 
Indeed,} using \cref{lemma: proximal ineq}, we have 
{
\begin{align*}
    \|\hat{y}^{t}_{k+1} - \hat{\hat{y}}^{t}_{k+1}\| \leq & \|y^{t}_{k} - \hat{y}^{t}_{k} + \sigma(1+\theta) \left( \nabla_y\Phi^{t}(x^{t}_{k},y^{t}_{k}) - \nabla_y\Phi^{t}(\hat{\hat{x}}^{t}_{k},\hat{y}^{t}_{k}) \right) \|\\
    \leq &(1 + \sigma(1+\theta)L_{yy}) \|y^{t}_{k} - \hat{y}^{t}_{k}\| + \sigma(1+\theta)L_{yx}\|x^{t}_{k} - \hat{\hat{x}}^{t}_{k} \|\\
    \leq & \Big(1 + \sigma(1+\theta)L_{yy} + \tau\sigma(1+\theta)L_{yx}L_{xy}\Big) \|y^{t}_{k} - \hat{y}^{t}_{k}\| 
    + \tau\sigma(1+\theta)L_{yx}\| \Delta^{t,x}_{k-1}\|\\
    \leq & \sigma\left(1 + \sigma(1+\theta)L_{yy} +  \tau\sigma(1+\theta)L_{yx}L_{xy} \right) \cdot \left((1+\theta)\|\Delta^{t,y}_{k-1}\| + \theta\|\Delta^{t,y}_{k-2}\|\right)\\
    & 
    + \sigma\tau(1+\theta)L_{yx}\|\Delta^{t,x}_{k-1}\|, 
\end{align*}}%
\sa{where the second, third and fourth inequalities follow from \cref{ASPT: lipshiz gradient}, \cref{ineq: x_k - x_hat_hat_k} and the second inequality in~\cref{eq:y_k - y_hat_k}, respectively.  Combining this 
with
    $\| y^{t}_{k+1} - \hat{\hat{y}}^{t}_{k+1}\| 
    \leq \| y^{t}_{k+1} - \hat{y}^{t}_{k+1}\| + \| \hat{y}^{t}_{k+1}- \hat{\hat{y}}^{t}_{k+1}\|$, 
and the second inequality in~\cref{eq:y_k - y_hat_k} 
give us the desired bound.} 
\end{proof}

\sa{Next, we provide some inequalities to bound the \xz{\texttt{SAPD}} variance term later} in our analysis.

\begin{lemma}\label{Lemma:noisy bound with fg}
\xz{Suppose Assumptions~\ref{ASPT: fg},~\ref{ASPT: lipshiz gradient},~\ref{ASPT: unbiased noise assumption} hold.} Let $\{x_k^t,y_k^t\}_{k\geq 0}$ be \xz{\texttt{SAPD}} iterates generated according to Algorithm~\ref{Alg: SAPD} for solving $\min_x\max_y\cL^{t}(x,y)$.  
The following inequality holds for all $k\geq 0$:
{\begin{small}
\begin{eqnarray*}
\lefteqn{\mathbb{E}\left[\langle \Delta^{t,x}_{k},  \hat{x}^{t}_{k+1}-x^{t}_{k+1} \rangle\right] \leq \tau\delta_x^2,\qquad
     \mathbb{E}\left[\langle \Delta^{t,y}_{k},  y^{t}_{k+1} - \hat{y}^{t}_{k+1}\rangle\right] \leq \sigma(1+2\theta)\delta_y^2,}\\
    \lefteqn{\mathbb{E}\left[ \langle \Delta^{t,y}_{k-1},  \hat{\hat{y}}^{t}_{k+1}-y^{t}_{k+1} \rangle \right]}\\
    &&\leq\sigma \left[  \left( \left(2 + \sigma(1+\theta)L_{yy}  +  \tau\sigma(1+\theta)L_{yx}L_{xy} \right)\cdot 
    (1+2\theta)
    + \frac{\tau(1+\theta)L_{yx}}{
     \sa{2}}\right)\delta_y^2 + \frac{\tau(1+\theta)L_{yx}}{
     \sa{2}}\delta_x^2\right],
\end{eqnarray*}
\end{small}
}%
\sa{where $\Delta_k^{t,x}$ and $\Delta_k^{t,y}$ are defined in~\cref{lemma: intermediate noisy bound}.}
\end{lemma}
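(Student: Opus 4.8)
The plan is to bound each of the three inner products by Cauchy--Schwarz and then control the resulting products of noise norms using the deviation estimates already established in \cref{lemma: intermediate noisy bound} together with the second-moment bounds of \cref{ASPT: unbiased noise assumption}. A useful preliminary observation is that, since $\Phi^t(x,y)=\Phi(x,y)+\tfrac{\mu_x+\gamma}{2}\|x-x_0^t\|^2$ differs from $\Phi$ only by a deterministic quadratic in $x$, we have $\Delta_k^{\Phi^t,x}=\tilde\nabla_x\Phi(x_k^t,y_{k+1}^t;\omega_k^x)-\nabla_x\Phi(x_k^t,y_{k+1}^t)$ and $\Delta_k^{\Phi^t,y}=\tilde\nabla_y\Phi(x_k^t,y_k^t;\omega_k^y)-\nabla_y\Phi(x_k^t,y_k^t)$; hence parts (iv)--(v) of \cref{ASPT: unbiased noise assumption} give $\mathbb{E}[\|\Delta_k^{\Phi^t,x}\|^2]\le\delta_x^2$ and $\mathbb{E}[\|\Delta_k^{\Phi^t,y}\|^2]\le\delta_y^2$ for every $k$. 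Note that no independence between distinct noise samples will be needed, only these per-sample variance bounds.

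First I would dispatch the two easy bounds. By Cauchy--Schwarz, $\langle \Delta_k^{\Phi^t,x},\hat x_{k+1}^t-x_{k+1}^t\rangle\le\|\Delta_k^{\Phi^t,x}\|\,\|x_{k+1}^t-\hat x_{k+1}^t\|$, and the first inequality of \cref{lemma: intermediate noisy bound} bounds the last factor by $\tau\|\Delta_k^{\Phi^t,x}\|$; taking expectations and applying $\mathbb{E}[\|\Delta_k^{\Phi^t,x}\|^2]\le\delta_x^2$ yields $\tau\delta_x^2$. The second bound is analogous: Cauchy--Schwarz together with the second inequality in \eqref{eq:y_k - y_hat_k} gives $\langle \Delta_k^{\Phi^t,y},y_{k+1}^t-\hat y_{k+1}^t\rangle\le\sigma\|\Delta_k^{\Phi^t,y}\|\big((1+\theta)\|\Delta_k^{\Phi^t,y}\|+\theta\|\Delta_{k-1}^{\Phi^t,y}\|\big)$. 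The only point requiring care is the cross term $\mathbb{E}[\|\Delta_k^{\Phi^t,y}\|\,\|\Delta_{k-1}^{\Phi^t,y}\|]$, which I would bound by a further Cauchy--Schwarz in expectation, $\mathbb{E}[\|\Delta_k^{\Phi^t,y}\|\,\|\Delta_{k-1}^{\Phi^t,y}\|]\le(\mathbb{E}[\|\Delta_k^{\Phi^t,y}\|^2]\,\mathbb{E}[\|\Delta_{k-1}^{\Phi^t,y}\|^2])^{1/2}\le\delta_y^2$; collecting the two contributions produces $\sigma(1+2\theta)\delta_y^2$.

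The third bound is the main computational step and will be the principal obstacle, though only in the sense of bookkeeping. Applying Cauchy--Schwarz and then the estimate \eqref{eq:y_k - y_hat_hat_k} for $\|y_{k+1}^t-\hat{\hat y}_{k+1}^t\|$, I would multiply $\|\Delta_{k-1}^{\Phi^t,y}\|$ into each of the five summands of that bound and take expectations term by term. Writing $C\triangleq 1+\sigma(1+\theta)L_{yy}+\tau\sigma(1+\theta)L_{yx}L_{xy}$, the four summands carrying a $\Delta^{\Phi^t,y}$ factor are handled exactly as in the second bound (second-moment bound plus the Cauchy--Schwarz cross-term estimate for the mixed indices $k,k-1,k-2$) and contribute $\sigma\big[(1+\theta)+\theta+C(1+\theta)+C\theta\big]\delta_y^2=\sigma(1+C)(1+2\theta)\delta_y^2$. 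The remaining summand carries the mixed factor $\mathbb{E}[\|\Delta_{k-1}^{\Phi^t,y}\|\,\|\Delta_{k-1}^{\Phi^t,x}\|]$ with coefficient $\sigma\tau(1+\theta)L_{yx}$; here I would apply Young's inequality $\|\Delta_{k-1}^{\Phi^t,y}\|\,\|\Delta_{k-1}^{\Phi^t,x}\|\le\tfrac12(\|\Delta_{k-1}^{\Phi^t,y}\|^2+\|\Delta_{k-1}^{\Phi^t,x}\|^2)$ to split it symmetrically, after taking expectations, into $\tfrac{\tau(1+\theta)L_{yx}}{2}\delta_y^2$ and $\tfrac{\tau(1+\theta)L_{yx}}{2}\delta_x^2$. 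Since $1+C=2+\sigma(1+\theta)L_{yy}+\tau\sigma(1+\theta)L_{yx}L_{xy}$, adding the $\delta_y^2$ pieces reproduces exactly the stated coefficient, while the lone $\delta_x^2$ piece matches $\tfrac{\tau(1+\theta)L_{yx}}{2}\delta_x^2$. The hard part is simply keeping the indices $k,k-1,k-2$ straight so that every cross term is bounded by the uniform second moment, and recognizing that the symmetric Young split is what generates both the $\delta_x^2$ and the extra $\delta_y^2$ contributions with the common factor $\tfrac{\tau(1+\theta)L_{yx}}{2}$.
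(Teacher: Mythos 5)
Your proof is correct and follows essentially the same route as the paper's: both reduce each inner product via Cauchy--Schwarz and the deviation estimates of \cref{lemma: intermediate noisy bound} to products of noise norms, then apply the variance bounds of \cref{ASPT: unbiased noise assumption}, with the symmetric Young split $\|\Delta^{\Phi^t,x}_{k-1}\|\|\Delta^{\Phi^t,y}_{k-1}\|\leq\tfrac12\|\Delta^{\Phi^t,x}_{k-1}\|^2+\tfrac12\|\Delta^{\Phi^t,y}_{k-1}\|^2$ producing the matching $\tfrac{\tau(1+\theta)L_{yx}}{2}$ coefficients on $\delta_x^2$ and $\delta_y^2$. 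The only (immaterial) difference is that you bound same-type cross terms such as $\mathbb{E}\big[\|\Delta^{\Phi^t,y}_k\|\,\|\Delta^{\Phi^t,y}_{k-1}\|\big]$ by Cauchy--Schwarz in expectation, whereas the paper applies Young's inequality pointwise to all cross terms; both yield $\delta_y^2$ and the identical final constants.
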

\begin{proof}
With the convention that $y^{t}_{-2} = y^{t}_{-1} = y^{t}_0$, and $x^{t}_{-2} = x^{t}_{-1} = x^{t}_0$, \cref{lemma: intermediate noisy bound} and Cauchy-Schwarz inequality
imply for all $k\geq 0$ that 
{
\begin{equation*}
    \begin{aligned}
    & \langle \Delta^{t,x}_{k}, x^{t}_{k+1} - \hat{x}^{t}_{k+1} \rangle \leq \tau\|\Delta^{t,x}_{k} \|^2, \\
    & \langle \Delta^{t,y}_{k},  y^{t}_{k+1} - \hat{y}^{t}_{k+1}\rangle \leq \sigma\left((1+\theta)\|\Delta^{t,y}_{k}\|^2 + \theta\|\Delta^{t,y}_{k-1}\|\|\Delta^{t,y}_{k}\|\right),\\
    &  \langle \Delta^{t,y}_{k-1},  y^{t}_{k+1} - \hat{\hat{y}}^{t}_{k+1}\rangle
    \leq
    \sigma\Bigg( 
    (1+\theta)\|\Delta^{t,y}_{k}\|\|\Delta^{t,y}_{k-1}\|
    +
    \theta\|\Delta^{t,y}_{k-1}\|^2+
    \tau(1+\theta)L_{yx}\|\Delta^{t,x}_{k-1}\|
    \|\Delta^{t,y}_{k-1}\|\\
    & 
    + 
    \Big(1 + \sigma(1+\theta)L_{yy} +  \tau\sigma(1+\theta)L_{yx}L_{xy} \Big)\cdot
    \Big((1+\theta)\|\Delta^{t,y}_{k-1}\|^2
    +
    \theta\|\Delta^{t,y}_{k-2}\|\|\Delta^{t,y}_{k-1}\|\Big)
    \Bigg).
    \end{aligned}
\end{equation*}}%
Next, using \cref{ASPT: unbiased noise assumption} and 
$\| a\|\|b\|\leq  \frac{1}{2}\| \xzh{a}\|^2
 + \frac{1}{2}\| b\|^2$,
which holds for $a,b \in \mathbb{R}^n$, and taking the expectation leads to the desired result.
\end{proof}
Before we move on to prove our intermediate result \sa{in~\cref{lemma: dist bound with fg-general},} 
we give two technical lemmas that help us simplify the \xz{\texttt{SAPD}} parameter selection rule \sa{and lead to} the \sa{matrix inequality in \cref{eq: SCSC SAPD LMI-dist}.} 
\begin{lemma}
\label{lem:equivalent_systems}
Given $\tau,\sigma>0$, $\theta,\alpha\geq 0$, and $\rho\in(0,1)$, let
{\footnotesize
\begin{equation}
\label{eq:Gp-def}
 \sa{G'} \triangleq  
 \begin{pmatrix}
    \frac{1}{\tau}(1-\frac{1}{\rho})+\frac{\mu_x}{\rho} & 0 & 0 & 0 & 0\\ 
  0 & \frac{1}{\sigma}(1-\frac{1}{\rho})+\mu_y & \sa{-|1-\frac{\theta}{\rho}|}~L_{yx} & \sa{-|1-\frac{\theta}{\rho}|}~L_{yy} & 0\\ 
  0 & \sa{-|1-\frac{\theta}{\rho}|}~L_{yx} & \tfrac{1}{\tau} - \sa{L'_{xx}} & 0 & -  \frac{\theta}{\rho}L_{yx}\\ 
  0& \sa{-|1-\frac{\theta}{\rho}|}~L_{yy} & 0 & \frac{1}{\sigma} - \alpha & -  \frac{\theta}{\rho}L_{yy}\\
  0 & 0 & - \frac{\theta}{\rho}L_{yx} & -  \frac{\theta}{\rho}L_{yy} & \frac{\alpha}{\rho}
\end{pmatrix},
\end{equation}}%
then \sa{$G\succeq 0$ if and only if $G'\succeq 0$, where $G$ is defined in \cref{eq: SCSC SAPD LMI-dist}}.
\end{lemma}
\begin{proof}
$\forall ~ \mathbf{y} = (y_1,y_2,y_3,y_4,y_5)^\top\in \mathbb{R}^5$, letting $\tilde{\mathbf{y}}= (y_1,-y_2,y_3,y_4,y_5)^\top$, we have
{\footnotesize
\begin{equation*}
    \mathbf{y}^\top G' \mathbf{y} = 
    \begin{cases}
    \mathbf{y}^\top \sa{G} \mathbf{y} & \text{if} ~\sa{\theta\leq \rho,}\\
    \tilde{\mathbf{y}}^\top G \tilde{\mathbf{y}} & \text{else;}
    \end{cases}\quad
    \mathbf{y}^\top G \mathbf{y} = 
    \begin{cases}
    \mathbf{y}^\top G' \mathbf{y} & \text{if} ~\sa{\theta\leq \rho},\\
    \tilde{\mathbf{y}}^\top G' \tilde{\mathbf{y}} & \text{else.}
    \end{cases}
\end{equation*}}%
\sa{Thus, $G\succeq 0$ is equivalent to $G'\succeq 0$.}
\end{proof}
\begin{lemma}\label{lemma: sub positive matrix}
\sa{Given $\tau,\sigma>0$, $\theta,\alpha\geq 0$, and $\rho\in(0,1)$, consider $G$ defined in \cref{eq: SCSC SAPD LMI-dist}. If $G\succeq 0$, then $G''\succeq 0$, where}
{\footnotesize
\begin{equation}
    \label{eq: sub matrix of general SAPD LMI}
  \sa{G''} \triangleq \begin{pmatrix}
   \frac{1}{\sigma}(1-\frac{1}{\rho})+\mu_y +\frac{\alpha}{\rho} & (-| 1 - \tfrac{\theta}{\rho} | - \tfrac{\theta}{\rho})L_{yx} & (-| 1 - \tfrac{\theta}{\rho} | - \tfrac{\theta}{\rho})L_{yy} \\ 
 (-| 1 - \tfrac{\theta}{\rho} | - \tfrac{\theta}{\rho}) L_{yx} & \tfrac{1}{\tau} - \sa{L'_{xx}} & 0 \\ 
 (-| 1 - \tfrac{\theta}{\rho} | - \tfrac{\theta}{\rho})L_{yy} & 0 & \frac{1}{\sigma} - \alpha \\
\end{pmatrix}\sa{\succeq 0}.
\end{equation}}%
\end{lemma}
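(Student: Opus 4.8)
The plan is to reduce the claim to two standard facts about positive semidefinite matrices: that the sign-flip of a symmetric matrix preserves definiteness (already established in \cref{lem:equivalent_systems}), and that every principal submatrix of a PSD matrix is itself PSD. First I would invoke \cref{lem:equivalent_systems} to replace $G$ in \eqref{eq:G-def} by $G'$ in \eqref{eq:Gp-def}; since $G\succeq 0\iff G'\succeq 0$, it suffices to derive $G''\succeq 0$ from $G'\succeq 0$. The advantage of passing to $G'$ is that its off-diagonal entries already carry the factor $-|1-\tfrac{\theta}{\rho}|$ that appears in $G''$, so no case analysis on the sign of $1-\tfrac{\theta}{\rho}$ is needed downstream.

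Next I would delete the first row and column of $G'$ — whose only nonzero entry sits on the diagonal and corresponds to the $x$-direction, which is decoupled from the rest — to obtain the $4\times 4$ principal submatrix $\tilde G'$ on the remaining coordinates. Being a principal submatrix of a PSD matrix, $\tilde G'\succeq 0$. The heart of the argument is then a ``folding'' substitution: given an arbitrary $v=(v_1,v_2,v_3)^\top\in\mathbb{R}^3$, I would set $w\triangleq(v_1,v_2,v_3,v_1)^\top\in\mathbb{R}^4$, duplicating $v_1$ into both the first and the last coordinate of $w$. The structural point is that in $\tilde G'$ the first and last coordinates each couple to the two middle coordinates but not to each other, so duplicating $v_1$ merges their diagonal contributions into $\tfrac{1}{\sigma}(1-\tfrac{1}{\rho})+\mu_y+\tfrac{\alpha}{\rho}$ — exactly the $(1,1)$ entry of $G''$ in \eqref{eq: sub matrix of general SAPD LMI} — while their two cross-couplings to the middle coordinate add as $-|1-\tfrac{\theta}{\rho}|L_{yx}-\tfrac{\theta}{\rho}L_{yx}$, matching the corresponding off-diagonal entry of $G''$, and analogously with $L_{yy}$.

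A short direct expansion then yields the identity $v^\top G'' v=w^\top\tilde G' w$, and since the right-hand side is nonnegative by $\tilde G'\succeq 0$, we conclude $v^\top G'' v\geq 0$ for all $v$, i.e., $G''\succeq 0$. I expect the only genuinely nontrivial step to be discovering the folding map $v\mapsto(v_1,v_2,v_3,v_1)$ and recognizing that it is precisely the sign-symmetrization in \cref{lem:equivalent_systems} that makes the absolute value $|1-\tfrac{\theta}{\rho}|$ fall out cleanly (a naive fold applied directly to $G$ would instead produce an off-diagonal coefficient $-1$ rather than $-|1-\tfrac{\theta}{\rho}|-\tfrac{\theta}{\rho}$); everything after that identification is a routine term-by-term verification.
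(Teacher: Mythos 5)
Your proposal is correct and matches the paper's own proof: the paper likewise invokes \cref{lem:equivalent_systems} to pass from $G$ to $G'$ and then verifies $\bx^\top G''\bx = {\bx'}^\top G'\bx' \geq 0$ with $\bx'=[0~x_1~x_2~x_3~x_1]^\top$, which is exactly your principal-submatrix extraction plus folding map written as a single zero-padded embedding. Your observation that the fold must be applied to $G'$ rather than $G$ (since $(\tfrac{\theta}{\rho}-1)-\tfrac{\theta}{\rho}=-1 \neq -|1-\tfrac{\theta}{\rho}|-\tfrac{\theta}{\rho}$ when $\theta>\rho$) is also the correct reason the paper routes the argument through \cref{lem:equivalent_systems}.
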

\begin{proof}
\sa{Note that}
$\bx^\top
G''
\bx
= 
{\bx'}^\top
G'
\bx'\geq 0$ for all $\bx=[x_1~x_2~x_3]^\top\in \mathbb{R}^3$, where  $\bx'=[0~x_1~x_2~x_3~x_1]^\top$ and $G'$ is defined in \eqref{eq:Gp-def}. Then the desired result follows from \cref{lem:equivalent_systems}.
\end{proof}


\sa{Finally, with the following observation, we will be ready to proceed to the proof of \cref{lemma: dist bound with fg-general}. Let $\{\cF_k^{t,x}\}$ and $\{\cF_k^{t,y}\}$ be the filtrations such that $\mathcal{F}^{t,x}_k \triangleq \mathcal{F}(\{x^t_i\}_{i=0}^{k},\{y^t_i\}_{i=0}^{k+1})$ and $\mathcal{F}^{t,y}_k \triangleq \mathcal{F}(\{x^t_i\}_{i=0}^{k},\{y^t_i\}_{i=0}^{k})$ denote the $\sigma$-algebras generated by the random variables in their arguments.} 
A consequence of Assumption~\ref{ASPT: unbiased noise assumption} is that
\sa{for $\mathcal{F}^{t,x}_k$-measurable random variable $v$, i.e.,} $v\in\mathcal{F}^{t,x}_k$, we have that 
$\mathbb{E}\left[\langle \tilde{\nabla}\Phi_x(x^{t}_{k},y^{t}_{k+1};\omega_k^x) - \nabla\Phi_x(x^{t}_{k},y^{t}_{k+1}),v \rangle \right]= 0$; similarly,
for \sa{$v\in\mathcal{F}^{t,y}_k$}, it holds that
$\mathbb{E}\left[\langle \tilde{\nabla}\Phi_y(x^{t}_{k},y^{t}_{k};\omega_k^y) - \nabla\Phi_y(x^{t}_{k},y^{t}_{k}),v \rangle \right]= 0$.
\subsection{Proof of \cref{lemma: dist bound with fg-general}}\label{pf:dist-lemma}
\begin{proof}
Fix arbitrary $(x,y)\in\dom f\times \dom g$. Since $(x^{t}_{k+1},y^{t}_{k+1})\in\dom f\times \dom g$, using the concavity 
of $\cL^{t}(x^{t}_{k+1}, \cdot)$
and the convexity
of $\cL^{t}(\cdot, y^{t}_{k+1})$, Jensen's lemma immediately implies that
{\small
\begin{align}
\label{eq:jensen bounded version dist}
 K_{N}(\rho)  \left(\cL^{t}(\bar{x}^{t}_{N}, y)  - \cL^{t}(x, \bar{y}^{t}_{N}) \right) 
            \leq 
            \sum_{k=0}^{N-1}\rho^{-k} \left( \cL^{t}(x^{t}_{k+1}, y) - \cL^{t}(x, y^{t}_{k+1}) \right),
            \;\sa{\forall}\rho\in \sa{(0,1]},
\end{align}}%
where $\bar{x}^{t}_N=\frac{1}{K_N(\rho)}\sum_{k=0}^{N-1}\sa{\rho^{-k}}x^{t}_{k+1},~\bar{y}^{t}_N=\frac{1}{K_N(\rho)}\sum_{k=0}^{N-1}\sa{\rho^{-k}}y^{t}_{k+1}$, $K_N(\rho)=\sum_{k=0}^{N-1}\rho^{-k+1}$. Thus, if we multiply both sides of 
\eqref{D1} \sa{by $\rho^{-k}$} and \sa{sum the resulting inequality from $k=0$ to $N-1$, then using \eqref{eq:jensen bounded version dist} we get}
{\footnotesize
\begin{equation}\label{INEQ: difference of gap  _bounded version}
    \begin{aligned}
            K_{N} &(\rho)  \left(\cL^{t}(\bar{x}^{t}_{N}, y)  - \mathcal{L}(x, \bar{y}^{t}_{N}) \right)  \\
            \leq & 
             \sum_{k=0}^{N-1}\rho^{-k} \Big( 
                \underbrace{ -\langle q^{t}_{k+1}, y^{t}_{k+1} - y \rangle + \theta \langle q^{t}_{k}, y^{t}_{k} - y \rangle}_{\text{\bf part 1}} + \Lambda^{t}_{k} \sa{(x,y)} - \Sigma^{t}_{k+1} \sa{(x,y)}+ \Gamma^{t}_{k+1} 
            \\
             & \quad  \underbrace{-
           \langle  \tilde{\nabla}_x \Phi^{t}(x^{t}_{k}, y^{t}_{k+1};\omega_k^x) - \nabla_x \Phi^{t}(x^{t}_{k}, y^{t}_{k+1}) , x^{t}_{k+1} - x \rangle}_{\text{\bf part 2} }
            +
           \underbrace{\langle \tilde{s}^{t}_k -s^{t}_k, 
          y^{t}_{k+1} - y \rangle}_{ \text{\bf part 3} }
          \Big).
    \end{aligned}
\end{equation}}%
\sa{Using Cauchy–Schwarz inequality and \eqref{INEQ: Cauchy Ineqaulity 1}} leads to
{\begin{equation}\label{eq:cauchy-S-dist}
    |\langle q^{t}_{k+1}, y^{t}_{k+1} - y \rangle|\leq  \sa{S^{t}_{k+1}}\xzh{(x,y)} \triangleq L_{yx}\|x^{t}_{k+1}-x^{t}_{k}\| \|y^{t}_{k+1}-y\|+L_{yy}\|y^{t}_{k+1}-y^{t}_{k}\| \|y^{t}_{k+1}-y\|
\end{equation}}%
for $k \geq -1$.
Recall $x^{t}_{-1} = x^{t}_0,\;y^{t}_{-1} = y^{t}_0$, thus \sa{$q_0=\mathbf{0}$; therefore, for \sa{\bf part\;1},}
{\small
\begin{align}
\label{eq:noisy-rate-part1-dsit}
         \sum_{k=0}^{N-1}&\rho^{-k} 
                 ( \theta \langle q^{t}_k, y^{t}_{k} - y \rangle-\langle q^{t}_{k+1}, y^{t}_{k+1} - y \rangle )
                 = \sum_{k=0}^{N-2}\rho^{-k}\Big(\frac{\theta}{\rho}-1\Big)\langle q^{t}_{k+1}, y^{t}_{k+1} - y \rangle - \rho^{-N+1}\langle q^{t}_{N}, y^{t}_{N} - y \rangle
                 \\
                 \leq & \sum_{k=0}^{N-2}\rho^{-k}\sa{|1-\frac{\theta}{\rho}|~S^{t}_{k+1}\xzh{(x,y)}} + \rho^{-N+1}\sa{S^t_{N}\xzh{(x,y)}}
                 =  \sum_{k=0}^{N-1}\rho^{-k}\sa{|1-\frac{\theta}{\rho}|~S^{t}_{k+1}\xzh{(x,y)}} 
                 + \rho^{-N+1}\frac{\theta}{\rho}\sa{S^{t}_{N}\xzh{(x,y)}},\nonumber
\end{align}}%
where \sa{the first inequality follows from 
\cref{eq:cauchy-S-dist}.}

Next, letting $\Delta^{t,x}_{k}$ and $\hat{x}_{k+1}$ be defined as in
\cref{lemma: intermediate noisy bound},  we equivalently write \textbf{part 2} as
{\footnotesize
\begin{equation}\label{eq:noisy-rate-part2-dist}
    \begin{aligned}
           \sum_{k=0}^{N-1} -\rho^{-k} \langle \Delta^{t,x}_{k} , x^{t}_{k+1} -  x \rangle
            =
             \sum_{k=0}^{N-1} \rho^{-k}
            \Big(\langle \Delta^{t,x}_{k} , {\hat{x}^{t}_{k+1}}-x^{t}_{k+1} \rangle  - \langle \Delta^{t,x}_{k} , {\hat{x}^{t}_{k+1}} -  x \rangle\Big). 
    \end{aligned}
\end{equation}}%
Moreover, for $\Delta^{t,y}_{k}$, $\hat{y}^{t}_{k+1}$ and $\hat{\hat{y}}^{t}_{k+1}$ defined as in 
\cref{lemma: intermediate noisy bound}, we also equivalently write \textbf{part 3} as 
{\small
\begin{equation}\label{eq:noisy-rate-part3-dist}
\begin{aligned}
 \sum_{k=0}^{N-1} &\rho^{-k} \langle \tilde{s}^{t}_k -s_k, 
       y^{t}_{k+1} - y \rangle \\
      &=  
         \sum_{k=0}^{N-1} \rho^{-k}\Big[(1 +\theta) \langle \Delta^{t,y}_{k}, 
       y^{t}_{k+1} - {\hat{y}^{t}_{k+1}}  + {\hat{y}^{t}_{k+1}}  - y \rangle
       - \theta \langle \Delta^{t,y}_{k-1}, 
       y^{t}_{k+1} - {\hat{\hat{y}}^{t}_{k+1}} + {\hat{\hat{y}}^{t}_{k+1}}  - y \rangle\Big].
    \end{aligned}
\end{equation}}%
Adding $\rho^{-N+1} D^{t}_{N}(x,y)$ to both sides of \eqref{INEQ: difference of gap  _bounded version}, then using \eqref{eq:noisy-rate-part1-dsit}, \eqref{eq:noisy-rate-part2-dist} and \eqref{eq:noisy-rate-part3-dist}, \sa{for any fixed $(x,y)\in \cX\times \cY$, we get} 
{
\begin{equation}
\label{INEQ: ergordic gap-dist}
\begin{aligned} 
        K_{N} ({\rho})  \left(\mathcal{L}^{t}(\bar{x}^{t}_{N}, y)  - \mathcal{L}^{t}(x, \bar{y}^{t}_{N}) \right) 
        + {\rho}^{-N+1} D^{t}_{N}(x,y)
        \leq
      U^{t}_N(x,y) + \sum_{k=0}^{N-1}{\rho}^{-k}(P^{t}_k(x,y) + Q^{t}_k), 
\end{aligned}
\end{equation}}%
where $U^{t}_N(x,y)$, $ D^{t}_{N}(x,y)$ are defined as
\begin{subequations}
\label{eq:U_N-DeltaN-dist}
\begin{equation}
    \begin{aligned}
        \sa{U^{t}_N(x,y)}   \triangleq 
      &  \sum_{k=0}^{N-1}{\rho}^{-k}
      \left(\Gamma^{t}_{k+1} + \Lambda^{t}_k{(x,y)} - \Sigma_{k+1} \sa{(x,y)} + \sa{|1-\frac{\theta}{\rho}|~S^{t}_{k+1}}\xzh{(x,y)} \right) 
      \\
      & 
      - {\rho}^{-N+1}\Big( -  D^{t}_{N}(x,y) - \frac{\theta}{\rho}\sa{S^{t}_{N}}\xzh{(x,y)}\Big), 
      \label{eq:U_N-dist} 
    \end{aligned}
\end{equation}
 \begin{equation}
       D^{t}_{N}(x,y)\triangleq \frac{1}{2\rho}\Big(\frac{1}{\tau}-\mu_x\Big) \|x^{t}_{N}-x\|^2 +  \frac{1}{2\rho}{\left(\frac{1}{\sigma} - \alpha \right)}\|y^{t}_{N}-y\|^2, \label{eq:DeltaN-dist}
\end{equation}
\end{subequations}
and $P^{t}_k(x,y)$, $Q^{t}_k$ for $k=0,\cdots,N-1$ are defined as
{
\begin{subequations}
\begin{align}
           P^{t}_k(x,y)  \triangleq 
      & -\langle \Delta^{t,x}_{k} , {\hat{x}^{t}_{k+1}} -  x \rangle 
     +    (1+\theta)\langle \Delta^{t,y}_{k}, {\hat{y}^{t}_{k+1}} - y \rangle 
     -\theta \langle \Delta^{t,y}_{k-1}, {\hat{\hat{y}}^{t}_{k+1}} -y  \rangle,
     \label{eq:Pk-dist}
     \\
     Q^{t}_k  \triangleq &
     \langle \Delta^{t,x}_{k} , {\hat{x}^{t}_{k+1} -  x^{t}_{k+1}} \rangle 
     +   
     (1+\theta)\langle \Delta^{t,y}_{k}, {y^{t}_{k+1} - \hat{y}^{t}_{k+1}}  \rangle 
     -\theta \langle \Delta^{t,y}_{k-1}, {y^{t}_{k+1} - \hat{\hat{y}}^{t}_{k+1}}  \rangle.
     \label{eq:Qk-dist}
    \end{align}
\end{subequations}
}%
{For any fixed $(x,y)\in \cX \times \cY$}, we first analyze $U^{t}_N(x,y)$. 
After adding and subtracting $\tfrac{\alpha}{2}\|y^{t}_{k+1}-y^{t}_{k}\|^2$,  and rearranging the terms, we get
{\footnotesize
\begin{equation}
\label{eq:U_def-dist}
    \begin{aligned}
           U^{t}_N(x,y) =  & 
           \frac{1}{2}\sum_{k=0}^{N-1}{\rho}^{-k}\Big(\xi_k^\top A\xi_k-\xi_{k+1}^\top B \xi_{k+1}\Big)
           -{\rho}^{-N+1}( -  D^{t}_{N}(x,y) - \frac{\theta}{\rho}\sa{S^{t}_{N}}\xzh{(x,y)})\\
           =  & 
           \frac{1}{2}\xi_0^\top A\xi_0- \frac{1}{2}\sum_{k=1}^{N-1}{\rho}^{-k+1}[\xi_k^\top{( B 
           - \tfrac{1}{\rho}A)}\xi_k]
           -\rho^{-N + 1 }\Big(\frac{1}{2}\xi_N^\top   B \xi_N  -  D^{t}_{N}(x,y) -\frac{\theta}{\rho}S^{t}_{N}\xzh{(x,y)}\Big),    
    \end{aligned}
\end{equation}}%
\sa{
where $A, B \in\reals^{5\times 5}$ 
and $\xi_k\in\reals^5$ are defined for $k\geq 0$ as follows:}
{\small
$\xi_k \triangleq \left( 
    \begin{array}{*{20}{c}}
  \|x^{t}_{k} - x\| \\ 
  \|y^{t}_{k} - y\| \\ 
  \|x^{t}_{k} - x^{t}_{k-1}\| \\ 
  \|y^{t}_{k} - y^{t}_{k-1}\| \\
  \|y^{t}_{k+1} - y^{t}_{k}\| 
   \end{array}
   \right)$ {\normalsize such that $x^{t}_{-1} = x^{t}_{0}$, $y^{t}_{-1} = y^{t}_{0}$, and} 
\begin{small}   
$$
A \triangleq   
  \begin{pmatrix}
    \frac{1}{\tau}-\mu_x & 0 & 0 & 0 & 0\\ 
  0 & \frac{1}{\sigma} & 0 & 0 & 0\\ 
  0 & 0 & 0 & 0 & {\theta L_{yx}}\\ 
  0& 0 & 0 & 0 & {\theta L_{yy}}\\
  0 & 0 & \theta L_{yx} & \theta L_{yy} & -\alpha
\end{pmatrix},
\quad
   B \triangleq   
  \begin{pmatrix}
    \frac{1}{\tau} & 0 & 0 & 0 & 0\\ 
  0 & \frac{1}{\sigma}+\mu_y & \sa{-|1-\frac{\theta}{\rho}|}~L_{yx} & \sa{-|1-\frac{\theta}{\rho}|}~L_{yy} & 0\\ 
  0 & \sa{-|1-\frac{\theta}{\rho}|}~L_{yx} & \tfrac{1}{\tau} - L'_{xx} & 0 & 0\\ 
  0& \sa{-|1-\frac{\theta}{\rho}|}~L_{yy} & 0 & \frac{1}{\sigma} - \alpha & 0\\
  0 & 0 & 0 & 0 & 0
\end{pmatrix}.$$
\end{small}}%
\sa{In Lemma~\ref{lem:equivalent_systems} we show that \cref{eq: SCSC SAPD LMI-dist} 
is equivalent to $B -\tfrac{1}{\rho}A\succeq 0$;} therefore, it follows from \eqref{eq:U_def-dist} that for any given $(x,y)\in \cX\times \cY$,
\begin{equation*}
    U^{t}_N(x,y) \leq  \sa{\frac{1}{2}\xi_0^\top A\xi_0}- {\rho}^{-N + 1 } ( \tfrac{1}{2}\xi_N^\top  B \xi_N  -   D^{t}_{N}(x,y) -\frac{\theta}{\rho}\sa{S^{t}_{N}\xzh{(x,y)}}), \text{ holds w.p.~1.}
\end{equation*}
 Furthermore, we have
{
$$\frac{1}{2}\xi_N^\top    B  \xi_N  - D^{t}_{N}(x,y)-\frac{\theta}{\rho}\sa{S^{t}_{N}\xzh{(x,y)}}=
\frac{1}{2}\xi_N^\top  
     \begin{pmatrix} 
     \sa{\frac{1}{\tau}(1-\frac{1}{\rho})}+\frac{\mu_x}{\rho}
       & \mathbf{0}_{1\times3} & 0\\ 
       \mathbf{0}_{3\times 1} & \sa{G''} & \mathbf{0}_{3\times 1} \\
       0 & \mathbf{0}_{1\times3} & 0\\ 
     \end{pmatrix}
\xi_N
  \geq 
0,$$}%
which 
follows from 
\cref{eq: SCSC SAPD LMI-dist,lemma: sub positive matrix}, where \sa{$G''$} is defined in \cref{eq: sub matrix of general SAPD LMI}.
Finally, 
$$\frac{1}{2}\xi_0^\top A\xi_0\sa{\leq} \frac{1}{2\tau}\norm{x-x^{t}_{0}}^2+\frac{1}{2\sigma}\norm{y-y^{t}_{0}}^2.$$ \xzd{Thus, for any $(x,y)\in\cX\times \cY$,}
\begin{align}
\label{eq:U-bound-dist}
    U^{t}_N(x,y)\leq \frac{1}{2\tau}\norm{x-x^{t}_{0}}^2+\frac{1}{2\sigma}\norm{y-y^{t}_{0}}^2,\quad \text{w.p.}~1.
\end{align}%

Now, we are ready to show 
\cref{eq:dist bound-general}. \xz{It follows from \cref{INEQ: ergordic gap-dist} and \cref{eq:U-bound-dist} that, for any $(x,y)\in\cX\times \cY$,}
{
\begin{equation}\label{eq:gap_bound_fg_2-dist}
\begin{aligned}
K_{N}& ({\rho})\Big( \cL^{t}(\bar{x}^{t}_{N}, y)  - \cL^{t}(x, \bar{y}^{t}_{N})\Big) +  \xz{{\rho}^{-N+1} D^{t}_{N}(x,y)} \\
&\leq 
      \frac{1}{2\tau}\|x-x^{t}_{0}\|^2
      +\frac{1}{2\sigma}\|y-y^{t}_{0}\|^2 
       +\sum_{k=0}^{N-1}{\rho}^{-k}(P^{t}_k(x,y)+ Q^{t}_k). 
\end{aligned}
\end{equation}}%
Let $(x^t_*,y^t_*)$ be the unique saddle point of $\cL^t$. If we substitute $(x,y)=(x^t_*,y^t_*)$ into \cref{eq:gap_bound_fg_2-dist} and use the fact $ \cL^{t}(\bar{x}^{t}_{N}, y^{t}_{*})  - \cL^{t}(x^{t}_{*}, \bar{y}^{t}_{N})\geq 0$, we obtain that
{
\begin{equation}\label{eq:gap_bound_fg_3}
\begin{aligned}
 \xz{{\rho}^{-N+1} D^{t}_{N}(x^{t}_*,y^{t}_{*})} \leq 
     \frac{1}{2\tau}\|x^{t}_*-x^{t}_{0}\|^2
      +\frac{1}{2\sigma}\|y^{t}_{*}-y^{t}_{0}\|^2 
       +\sum_{k=0}^{N-1}{\rho}^{-k}(P^{t}_k(x^{t}_{*},y^{t}_{*})+ Q^{t}_k). 
\end{aligned}
\end{equation}}%

From \cref{ASPT: unbiased noise assumption}, for 
$k\geq-1$, we have
$$\mathbb{E}\left[ \langle \Delta^{t,x}_{k} , {\hat{x}^{t}_{k+1}} -  x^t_* \rangle \right] = \mathbb{E}\left[ \langle \Delta^{t,y}_{k}, {\hat{y}^{t}_{k+1}} - y_*^t  \rangle \right] = \mathbb{E}\left[ \langle \Delta^{t,y}_{k-1},{\hat{\hat{y}}^{t}_{k+1}} - y_*^t  \rangle \right] = 0.
$$
Thus, 
$$\mathbb{E}[P^{t}_k(x^{t}_{*},y^{t}_{*})]=0.$$
Moreover, from \cref{ASPT: unbiased noise assumption}, for $k\geq -1$, we have
$$
\mathbb{E}\left[ \|\Delta^{t,x}_{k}\|^2 \right] \leq \delta_x^2,\quad \mathbb{E}\left[ \|\Delta^{t,y}_{k}\|^2 \right] \leq \delta_y^2.
$$
Therefore, we uniformly upper bound $\mathbb{E}\left[ Q^{t}_k \right]$ for $k\geq 0$ using \cref{Lemma:noisy bound with fg}, i.e.,
     $$\mathbb{E}[\sum_{k=0}^{N-1}{\rho}^{-k} Q^{t}_k]  \leq
    \Big(\tau\Xi_{\tau,\sigma,\theta}^x \delta_x^2  +  \sigma\Xi_{\tau,\sigma,\theta}^y\delta_y^2\Big) \sum_{k=0}^{N-1}{\rho}^{-k},$$
where $\Xi^x_{\tau,\sigma,\theta}$ and $\Xi^y_{\tau,\sigma,\theta}$ are defined in~\eqref{eq:xi_x} and \eqref{eq:xi_y}. Therefore, combining this result with $\mathbb{E}[P^{t}_k(x^{t}_{*},y^{t}_{*})]=0$ for any $k\in\{0,\ldots, N-1\}$, we get
{
\begin{equation}
\label{eq:variance-bound-dist}
   \mathbb{E}[ \sum_{k=0}^{N-1}{\rho}^{-k}(P^{t}_k(x^{t}_{*},y^{t}_{*})+ Q^{t}_k)]\leq \xzh{\sum_{k=0}^{N-1}{\rho}^{-k}}~\Big(\tau\Xi_{\tau,\sigma,\theta}^x \delta_x^2  +  \sigma\Xi_{\tau,\sigma,\theta}^y\delta_y^2\Big) .
\end{equation}}
Then, 
using the definition of $ D^{t}_{N}(x^t_*,y^t_*)$ in \cref{eq:DeltaN-dist} and the fact $$\xzh{\sum_{k=0}^{N-1}{\rho}^{-k}}=\rho^{-N+1}\frac{1-\rho^N}{1-\rho}\leq \rho^{-N+1}\frac{1}{1-\rho},$$
\sa{for any $\rho\in(0,1)$, the desired inequality in~\eqref{eq:dist bound-general} follows from} \eqref{eq:gap_bound_fg_3} and \eqref{eq:variance-bound-dist}. 
\end{proof}

\subsection{Proof of Lemma~\ref{lemma:bound-gap}}\label{pf:gap-lemma}
Throughout this proof, our analysis is based on the proof of \cref{lemma: dist bound with fg-general}. To analyze the expected gap in \cref{lemma:bound-gap}, we consider the setting with $\rho=1$,  which implies that $K_N(\rho)=N$ and $\bar{x}^{t}_N=\frac{1}{ N }\sum_{k=0}^{N-1}x^{t}_{k+1},~\bar{y}^{t}_N=\frac{1}{ N }\sum_{k=0}^{N-1}y^{t}_{k+1}$. The proof of \cref{lemma:bound-gap} is different than that of \cref{lemma: dist bound with fg-general} in the way we analyze the variance terms. To be precise, we construct the auxiliary sequences --see $\tilde{x}_{k}$, $\tilde{y}^{+}_k$, $\tilde{y}^{-}_k$ defined in~\cref{eq:tilde-x} and \cref{eq:tilde-y} --for the analysis of \textbf{part 2} and \textbf{part 3} in \cref{INEQ: difference of gap  _bounded version} to provide guarantees on the expected gap function. 

\begin{proof}
Fix arbitrary $(x,y)\in\dom f\times \dom g$. Since $(x^{t}_{k+1},y^{t}_{k+1})\in\dom f\times \dom g$, using the concavity 
of $\cL^{t}(x^{t}_{k+1}, \cdot)$
and the convexity
of $\cL^{t}(\cdot, y^{t}_{k+1})$, Jensen's lemma immediately implies that
{
\begin{align}
\label{eq:jensen bounded version gap}
  N   \left(\cL^{t}(\bar{x}^{t}_{N}, y)  - \cL^{t}(x, \bar{y}^{t}_{N}) \right) 
            \leq 
            \sum_{k=0}^{N-1} \left( \cL^{t}(x^{t}_{k+1}, y) - \cL^{t}(x, y^{t}_{k+1}) \right),
\end{align}}%
where $\bar{x}^{t}_N=\frac{1}{ N }\sum_{k=0}^{N-1}x^{t}_{k+1},~\bar{y}^{t}_N=\frac{1}{ N }\sum_{k=0}^{N-1}y^{t}_{k+1}$. \sa{ Summing \cref{D1} from $k=0$ to $N-1$ and using \eqref{eq:jensen bounded version gap}, we get}
{
\begin{equation}\label{INEQ: difference of gap  bounded version gap}
    \begin{aligned}
            N & \left(\cL^{t}(\bar{x}^{t}_{N}, y)  - \mathcal{L}(x, \bar{y}^{t}_{N}) \right)  \\
            \leq & 
             \sum_{k=0}^{N-1}
                \underbrace{ -\langle q^{t}_{k+1}, y^{t}_{k+1} - y \rangle + \theta \langle q^{t}_{k}, y^{t}_{k} - y \rangle}_{\text{\bf part 1}} + \Lambda^{t}_{k} \sa{(x,y)} - \Sigma^{t}_{k+1} \sa{(x,y)}+ \Gamma^{t}_{k+1} 
            \\
             & \quad  \underbrace{-
          \langle  \tilde{\nabla}_x \Phi^{t}(x^{t}_{k}, y^{t}_{k+1};\omega_k^x) - \nabla_x \Phi^{t}(x^{t}_{k}, y^{t}_{k+1}) , x^{t}_{k+1} - x \rangle}_{\text{\bf part 2} }
            +
          \underbrace{\langle \tilde{s}^{t}_k -s^{t}_k, 
          y^{t}_{k+1} - y \rangle}_{ \text{\bf part 3} }
         .
    \end{aligned}
\end{equation}}%
The bound on \textbf{Part 1} immediately follows from \cref{eq:noisy-rate-part1-dsit} with $\rho=1$, i.e.,
{
\begin{align}
\label{eq:noisy-rate-part1-gap}
         \sum_{k=0}^{N-1}&
                  \theta \langle q^{t}_k, y^{t}_{k} - y \rangle-\langle q^{t}_{k+1}, y^{t}_{k+1} - y \rangle 
                 \leq \sum_{k=0}^{N-1}\sa{|1-\theta |~S^{t}_{k+1}\xzh{(x,y)}} 
                 + \sa{\theta} \sa{S^{t}_{N}\xzh{(x,y)}}.
\end{align}}%
Recall that $x^{t}_{-1} = x^{t}_0$ and $y^{t}_{-1} = y^{t}_0$; thus, \sa{$q_0^t=\mathbf{0}$.}

Next we consider \textbf{part 2}, 
\sa{let $\Delta^{t,x}_{k}$ be defined as in 
\cref{lemma: intermediate noisy bound}.} \sa{For some arbitrary $\eta_x> 0$}, define \sa{$\{\tilde{x}_k\}$} sequence as follows:
{
\begin{equation}\label{eq:tilde-x}
    \tilde{x}_0 \triangleq x^{t}_0, \quad \tilde{x}_{k+1} \triangleq \argmin_{x'\in \sa{\cX}}  - \langle \Delta^{t,x}_{k} , x' \rangle + \frac{\sa{\eta_x}}{2}\| x' - \tilde{x}_k\|^2,\quad \forall~k\geq 0.
\end{equation}
}%
Then by \sa{\cite[Lemma 2.1]{nemirovski2009robust}}, \sa{for all $k\geq 0$ and {$x\in \cX$}}, we have that
      $$\langle \Delta^{t,x}_{k} , x -  \tilde{x}_{k} \rangle   \leq \frac{\eta_x}{2}\| x - \tilde{x}_{k}\|^2 -\frac{\eta_x}{2}\| x - \tilde{x}_{k+1}\|^2  +\frac{1}{2\eta_x}\| \Delta^{t,x}_{k}\|^2.$$
\sa{Thus, using $\tilde{x}_0 = x^{t}_{0}$ we get}
{\footnotesize
\begin{equation}\label{INEQ: bound of x - tilde x }
    \begin{aligned}
          \sum_{k=0}^{N-1} & \langle \Delta^{t,x}_{k} , x -  \tilde{x}_{k} \rangle   \leq
          \sum_{k=0}^{N-1} \Big(\frac{\eta_x}{2}\| x - \tilde{x}_{k}\|^2 -\frac{\eta_x}{2}\| x - \tilde{x}_{k+1}\|^2  +\frac{1}{2\eta_x}\| \Delta^{t,x}_{k}\|^2\Big) \\
            = & \frac{\eta_x}{2}(\| x - {{x}^{t}_{0}}\|^2 -\| x - \tilde{x}_{N}\|^2)+ \sum_{k=0}^{N-1}\frac{1}{2\eta_x} \| \Delta^{t,x}_{k}\|^2 
            \leq  {\frac{\eta_x}{2}} \sa{\norm{x-x^{t}_{0}}^2} 
            + {\frac{1}{2\eta_x}}\sum_{k=0}^{N-1}\| \Delta^{t,x}_{k}\|^2;
     \end{aligned}
\end{equation}}%
hence, \textbf{part 2} becomes
{\footnotesize
\begin{equation}\label{eq:noisy-rate-part2-gap}
    \begin{aligned}
            & \sum_{k=0}^{N-1}  \langle \Delta^{t,x}_{k} , \sa{x-x^{t}_{k+1}} \rangle\\  
            =&    \sum_{k=0}^{N-1} 
            \langle \Delta^{t,x}_{k} , {\hat{x}_{k+1}}^{t}-x^{t}_{k+1} \rangle  - \langle \Delta^{t,x}_{k} , {\hat{x}^{t}_{k+1}} -  \tilde{x}_k \rangle
            + \langle \Delta^{t,x}_{k} , x -\tilde{x}_k   \rangle
            \\
            \leq&    \frac{\sa{\eta_x}}{2}\sa{\norm{x-x^{t}_0}^2}+\sum_{k=0}^{N-1} 
            \langle \Delta^{t,x}_{k} , {\hat{x}^{t}_{k+1}}-x^{t}_{k+1} \rangle  - \langle \Delta^{t,x}_{k} , {\hat{x}^{t}_{k+1}} -  \tilde{x}_k \rangle
          +\frac{1}{2\sa{\eta_x}}\| \Delta^{t,x}_{k}\|^2,
    \end{aligned}
\end{equation}}%
\sa{which follows} from \cref{INEQ: bound of x - tilde x }, and $\hat{x}_{k+1}$ is defined in \cref{lemma: intermediate noisy bound}.\footnote{When $\delta_x=0$, clearly $\Delta^{t,x}_{k}=\mathbf{0}$; thus, \textbf{part 2} is equal to $0$ and we can set $\eta_x=0$ for which \eqref{eq:noisy-rate-part2-gap} 
becomes $0\leq 0$.} 

Next, we consider \textbf{part 3}, let $\Delta^{t,y}_{k}$ be defined as in 
\cref{lemma: intermediate noisy bound}.
{For some arbitrary $\eta_y> 0$, we construct two auxiliary sequences:
let $\tilde{y}_0^+  = \tilde{y}_0^- = y^{t}_{0}$, and  for $k\geq 0$, we define}
{
\begin{equation}\label{eq:tilde-y}
    \tilde{y}^+_{k+1} \triangleq \argmin_{y'\in \cY}   \langle \Delta^{t,y}_{k} , y' \rangle + \frac{\sa{\eta_y}}{2}\| y' - \tilde{y}^+_k\|^2, \quad 
          \tilde{y}^-_{k+1} \triangleq \argmin_{y'\in \cY}   -\langle \Delta^{t,y}_{k} , y' \rangle + \frac{\sa{\eta_y}}{2}\| y' - \tilde{y}^-_k\|^2.
\end{equation}
}%
{Thus, it follows from \cite[Lemma 2.1]{nemirovski2009robust} that for $y\in \cY$,}
\begin{align*}
      &\langle \Delta^{t,y}_{k} ,   \tilde{y}_{k}^+ - y  \rangle   \leq \frac{\eta_y}{2}\| y - \tilde{y}_{k}^+\|^2 -\frac{\eta_y}{2}\| y - \tilde{y}_{k+1}^+\|^2  +\frac{1}{2\eta_y}\| \Delta^{t,y}_{k}\|^2, 
      \\
      & \langle \Delta^{t,y}_{k} ,   y-\tilde{y}_{k}^-  \rangle   \leq \frac{\eta_y}{2}\| y - \tilde{y}_{k}^-\|^2 -\frac{\eta_y}{2}\| y - \tilde{y}_{k+1}^-\|^2  +\frac{1}{2\eta_y}\| \Delta^{t,y}_{k}\|^2.
\end{align*}
\sa{Therefore, 
as in~\eqref{INEQ: bound of x - tilde x }, 
we get\footnote{\sa{As in~\textbf{part 2}, when $\delta_y=0$, we can set $\eta_y=0$.}}
{\footnotesize
\begin{equation}\label{INEQ:bound of y-tilde y}
\begin{aligned}
            \sum_{k=0}^{N-1}  &(1+\theta)\langle \Delta^{t,y}_{k} , \tilde{y}_{k}^+ -  y \rangle+ \theta \langle \Delta^{t,y}_{k-1} ,y-\tilde{y}_{k-1}^-   \rangle 
            \\ 
              \leq&
            \frac{\eta_y}{2}(1+2\theta) \sa{\norm{y-y^{t}_{0}}^2} 
            + \frac{1}{2\eta_y}\sum_{k=0}^{N-1}\Big( (1+\theta)\| \Delta^{t,y}_{k}\|^2 + \theta\| \Delta^{t,y}_{k-1}\|^2\Big).
\end{aligned}
\end{equation}}}%
Next, using \cref{INEQ:bound of y-tilde y}, we can bound \textbf{part 3} as follows:
{\footnotesize
\begin{equation}\label{eq:noisy-rate-part3-gap}
    \begin{aligned}
        &\sum_{k=0}^{N-1} \langle \tilde{s}^{t}_{k} -s^{t}_{k},
      y^{t}_{k+1} - y \rangle 
      \\
      = &  
         \sum_{k=0}^{N-1}(1 +\theta) \langle \Delta^{t,y}_{k}, 
      y^{t}_{k+1} - {\hat{y}^{t}_{k+1}}  + {\hat{y}^{t}_{k+1}} - \tilde{y}_k^+ + \tilde{y}_k^+ - y \rangle 
      - \theta \langle \Delta^{t,y}_{k-1}, 
      y^{t}_{k+1} - {\hat{\hat{y}}^{t}_{k+1}} + {\hat{\hat{y}}^{t}_{k+1}}  - \tilde{y}_{k-1}^- + \tilde{y}_{k-1}^- - y \rangle
      \\
      \leq & 
        \sum_{k=0}^{N-1} (1 +\theta) \langle \Delta^{t,y}_{k},
      y^{t}_{k+1} -{\hat{y}^{t}_{k+1}} + {\hat{y}^{t}_{k+1}} - \tilde{y}_k^+  \rangle  
      - \theta \langle \Delta^{t,y}_{k-1}, 
      y^{t}_{k+1} - {\hat{\hat{y}}^{t}_{k+1}} + {\hat{\hat{y}}^{t}_{k+1}} - \tilde{y}_{k-1}^-  \rangle 
      \\
      & + \frac{1}{2{\eta_y}}\sum_{k=0}^{N-1} \Big((1+\theta)\| \Delta^{t,y}_{k}\|^2 +  \theta\| \Delta^{t,y}_{k-1}\|^2 \Big) +\frac{\sa{\eta_y}}{2}(1+2\theta) \sa{\norm{y-y^t_0}^2}, 
    \end{aligned}
\end{equation}}%
where $\hat{y}^{t}_{k+1}$ and $\hat{\hat{y}}^{t}_{k+1}$ are defined in \cref{lemma: intermediate noisy bound}. 

For any fixed $(x,y)\in \dom f\times \dom g$,
we use \eqref{eq:noisy-rate-part1-gap}, \eqref{eq:noisy-rate-part2-gap} and \eqref{eq:noisy-rate-part3-gap}
{to get}
{
\begin{equation}
\label{INEQ: ergordic gap-gap}
\begin{aligned} 
        N & \left(\cL^{t}(\bar{x}^{t}_{N}, y)  - \cL^{t}(x, \bar{y}^{t}_{N}) \right) \\
        &\leq 
      \sa{\tilde U^{t}_N(x,y)} + \frac{\sa{\eta_x}}{2} \norm{x-x^{t}_0}^2 
      +\frac{\sa{\eta_y}}{2}(1+2\theta)\sa{\norm{y-y^{t}_0}^2} 
      + \sum_{k=0}^{N-1}(\tilde{P}^{t}_{k}+ \tilde{Q}^{t}_{k}), 
\end{aligned}
\end{equation}}%
where \sa{$\tilde U_N^t(x,y)$} and $\tilde{P}^{t}_{k}$, $\tilde{Q}^{t}_{k}$ for $k=0,\ldots,N-1$ are defined as follows:
\begin{subequations}
\label{eq:U_N-DeltaN-gap}
\begin{align}
   \label{eq:U_N-gap}
     \sa{\tilde U^{t}_N(x,y)}  & \triangleq 
        {\sum_{k=0}^{N-1}
      \left(\Gamma^{t}_{k+1} + \Lambda^{t}_k{(x,y)} - \Sigma^{t}_{k+1} \sa{(x,y)} + {\sa{|1-\theta |~S^{t}_{k+1}}\xzh{(x,y)}} \right) 
      +\sa{\theta \sa{S^{t}_{N}}\xzh{(x,y)}},}
      \\
      \tilde{P}^{t}_{k}  &\triangleq 
       -\langle \Delta^{t,x}_{k} , {\hat{x}^{t}_{k+1}} -  \tilde{x}_k \rangle 
     +    (1+\theta)\langle \Delta^{t,y}_{k}, {\hat{y}^{t}_{k+1}} - \sa{\tilde{y}_k^+}  \rangle 
     -\theta \langle \Delta^{t,y}_{k-1}, {\hat{\hat{y}}^{t}_{k+1}} - \sa{\tilde{y}_{k-1}^-}  \rangle,
     \label{eq:tilde-Pk-def}
     \\
     \tilde{Q}^{t}_{k}  &\triangleq 
     \langle \Delta^{t,x}_{k} , {\hat{x}^{t}_{k+1} -  x^{t}_{k+1}} \rangle 
     +   
     (1+\theta)\langle \Delta^{t,y}_{k}, {y^{t}_{k+1} - \hat{y}^{t}_{k+1}}  \rangle 
     -\theta \langle \Delta^{t,y}_{k-1}, {y^{t}_{k+1} - \hat{\hat{y}}^{t}_{k+1}}  \rangle \nonumber
     \\
     & \quad +
     \frac{1}{2{\eta_x}}\|\Delta^{t,x}_{k} \|^2  
     +
     \frac{1+\theta}{2{\eta_y}}\| \Delta^{t,y}_{k}\|^2
     + 
     \frac{\theta}{2{\eta_y}}\|\Delta^{t,y}_{k-1}\|^2. 
     \label{eq:tilde-Qk-def}
    \end{align}
\end{subequations}%

\sa{The remaining part of the analysis directly follows from the arguments we used in the proof of \cref{lemma: dist bound with fg-general}.} {For any fixed $(x,y)\in \cX \times \cY$}, we first analyze \sa{$\tilde U^{t}_N(x,y)$}. \sa{For some given $\alpha>0$,} after adding and subtracting $\tfrac{\alpha}{2}\|y^{t}_{k+1}-y^{t}_{k}\|^2$, and rearranging the terms, we get
{
\begin{equation}
\label{eq:U_def-gap}
    \begin{aligned}
          \tilde U^{t}_N(x,y) =  & 
          \frac{1}{2}\sum_{k=0}^{N-1}\Big(\xi_k^\top \tilde{A}\xi_k-\xi_{k+1}^\top \tilde{B} \xi_{k+1}\Big)
          +\theta \sa{S^{t}_{N}}\xzh{(x,y)}\\
          =  & 
         \frac{1}{2}\xi_0^\top \tilde{A}\xi_0- \frac{1}{2}\sum_{k=1}^{N-1}[\xi_k^\top{( \tilde{B} 
          - \tilde{A})}\xi_k]
          \sa{-\Big( \frac{1}{2}\xi_N^\top \tilde{B} \xi_N  
          -\theta S^{t}_{N}\xzh{(x,y)}\Big),}  
          \end{aligned}
\end{equation}}%
\sa{
where $A, B \in\reals^{5\times 5}$ 
and $\xi_k\in\reals^5$ are defined for $k\geq 0$ as follows:}
{
$$
\xi_k \triangleq
\begin{pmatrix}
  \|x^{t}_{k} - x\| \\ 
  \|y^{t}_{k} - y\| \\ 
  \|x^{t}_{k} - x^{t}_{k-1}\| \\ 
  \|y^{t}_{k} - y^{t}_{k-1}\| \\
  \|y^{t}_{k+1} - y^{t}_{k}\| 
\end{pmatrix},
\qquad
\tilde{A} \triangleq   
  \begin{pmatrix}
    \frac{1}{\tau}-\mu_x & 0 & 0 & 0 & 0\\ 
  0 & \frac{1}{\sigma} & 0 & 0 & 0\\ 
  0 & 0 & 0 & 0 & {\theta L_{yx}}\\ 
  0& 0 & 0 & 0 & {\theta L_{yy}}\\
  0 & 0 & \theta L_{yx} & \theta L_{yy} & -\alpha
\end{pmatrix},
$$
and
$$
\quad
\tilde{B} \triangleq   
  \begin{pmatrix}
    \frac{1}{\tau} & 0 & 0 & 0 & 0\\ 
  0 & \frac{1}{\sigma}+\mu_y & \sa{-|1-\theta |}~L_{yx} & \sa{-|1-\theta |}~L_{yy} & 0\\ 
  0 & \sa{-|1-\theta |}~L_{yx} & \tfrac{1}{\tau} - L'_{xx} & 0 & 0\\ 
  0& \sa{-|1-\theta |}~L_{yy} & 0 & \frac{1}{\sigma} - \alpha & 0\\
  0 & 0 & 0 & 0 & 0
\end{pmatrix}
$$}%
such that  $x^{t}_{-1} = x^{t}_{0}$,  $y^{t}_{-1} = y^{t}_{0}$.
\sa{Lemma~\ref{lem:equivalent_systems} together with $\rho=1$ implies that \cref{eq: SCSC SAPD LMI} is equivalent to $\tilde{B} -\tilde{A}\succeq 0$;}  therefore, it follows from \eqref{eq:U_def-gap} that, for any given $(x,y)\in \cX\times \cY$, \sa{we have}
$$
U^{t}_N(x,y) \leq  \sa{\frac{1}{2}\xi_0^\top \tilde{A}\xi_0}- ( \tfrac{1}{2}\xi_N^\top  \tilde{B} \xi_N  
-\theta \sa{S^{t}_{N}}\xzh{(x,y)}),\quad \text{w.p.}~1.
$$
Furthermore, 
we also have
{
$$\frac{1}{2}\xi_N^\top    \tilde{B}  \xi_N  
-\theta \sa{S^{t}_{N}\xzh{(x,y)}} \sa{\geq}
\frac{1}{2}\xi_N^\top  
     \begin{pmatrix}
      \mu_x
      & \mathbf{0}_{1\times3} & 0\\ 
      \mathbf{0}_{3\times 1} & \sa{G''} & \mathbf{0}_{3\times 1} \\
      0 & \mathbf{0}_{1\times3} & 0\\ 
     \end{pmatrix}
\xi_N
  \geq 
0,$$}%
which 
follows from \cref{eq: SCSC SAPD LMI} \sa{and \cref{lemma: sub positive matrix} with $\rho=1$, where} $G''$ is defined in \cref{eq: sub matrix of general SAPD LMI}.
Finally, 
$$
\frac{1}{2}\xi_0^\top \tilde{A}\xi_0\sa{\leq} \frac{1}{2\tau}\norm{x-x^{t}_{0}}^2+\frac{1}{2\sigma}\norm{y-y^{t}_{0}}^2.
$$ \xzd{Thus, the above three inequalities imply that, for any $(x,y)\in\cX\times \cY$,}
\begin{align}
\label{eq:U-bound-gap}
    \sa{\tilde U^{t}_N(x,y)}\leq \frac{1}{2\tau}\norm{x-x^{t}_{0}}^2+\frac{1}{2\sigma}\norm{y-y^{t}_{0}}^2,\quad \text{w.p.}~1.
\end{align}%

Now, we are ready to show \cref{eq:Gap bound with fg}. \xz{It follows from \cref{INEQ: ergordic gap-gap} and \cref{eq:U-bound-gap} that}
{
\begin{equation}\label{eq:gap_bound_fg_2-gap}
\begin{aligned}
N &    \sup_{(x,y)\in\cX\times\cY}\left\{\cL^{t}(\bar{x}^{t}_{N}, y)  - \cL^{t}(x, \bar{y}^{t}_{N}) \right\} \\
&\leq 
      \left(\frac{1}{2\tau}+\frac{\eta_x}{2}\right)\|x^{t}_*(\bar{y}^{t}_N)-x^{t}_{0}\|^2
      +\left(\frac{1}{2\sigma}+\frac{\eta_y(1+2\theta)}{2}\right)\|y_{*}(\bar{x}^{t}_{N})-y^{t}_{0}\|^2 
      +\sum_{k=0}^{N-1}(\tilde{P}^{t}_{k}+ \tilde{Q}^{t}_{k}), 
\end{aligned}
\end{equation}}%
\sa{where $(x^{t}_*(\bar{y}^{t}_N),y_{*}(\bar{x}^{t}_{N}))$ is the point achieving the supremum on the left hand side.}
Indeed, to derive the above inequality, we substitute $(x,y) = (x^{t}_*(\bar{y}^{t}_N),y_{*}(\bar{x}^{t}_{N}))$ into the \cref{INEQ: ergordic gap-gap} and use the fact that 
$$
\sup_{(x,y)\in\cX\times\cY}\left\{\cL^{t}(\bar{x}^{t}_{N}, y)  - \cL^{t}(x, \bar{y}^{t}_{N}) \right\} = \cL^{t}(\bar{x}^{t}_{N}, y_{*}(\bar{x}^{t}_{N}))  - \cL^{t}(x^{t}_*(\bar{y}^{t}_N), \bar{y}^{t}_{N}).
$$
From \cref{ASPT: unbiased noise assumption}, for 
$k\geq-1$, we have
$$
\mathbb{E}\left[ \langle \Delta^{t,x}_{k} , {\hat{x}^{t}_{k+1}} -  \tilde{x}_k \rangle \right] = \mathbb{E}\left[ \langle \Delta^{t,y}_{k},
\;
{\hat{y}^{t}_{k+1}} - \tilde{y}_k^{\pm}  \rangle \right] =
\mathbb{E}\left[ \langle \Delta^{t,y}_{k-1},{\hat{\hat{y}}^{t}_{k+1}} - \tilde{y}_{k-1}^{-}  \rangle \right] = 0.$$
Thus, 
$\mathbb{E}[\tilde{P}^{t}_{k}]=0$.
Moreover, for $k\geq -1$, from \cref{ASPT: unbiased noise assumption} we also have
$$
\mathbb{E}\left[ \|\Delta^{t,x}_{k}\|^2 \right] \leq \delta_x^2,\qquad \mathbb{E}\left[ \|\Delta^{t,y}_{k}\|^2 \right] \leq \delta_y^2.
$$
Next, we uniformly upper bound $\mathbb{E}\left[ \tilde{Q}^{t}_{k} \right]$ for $k\geq 0$ using \cref{Lemma:noisy bound with fg}, i.e.,
$$
\mathbb{E}[\sum_{k=0}^{N-1}\tilde{Q}^{t}_{k}]  \leq
     N\Big[\Big(\tau\Xi_{\tau,\sigma,\theta}^x + \frac{1}{2\eta_x}\Big)\delta_x^2  + \Big(     \sigma\Xi_{\tau,\sigma,\theta}^y +  \sa{\frac{1+2\theta}{2\eta_y}}\Big)\delta_y^2\Big] .
     $$   
Therefore, combining this result with $\mathbb{E}[\tilde{P}^{t}_{k}]=0$ for any $k\in\{0,\ldots, N-1\}$, we get
{
\begin{equation}
\label{eq:variance-bound-gap}
  \mathbb{E}[ \sum_{k=0}^{N-1}(\tilde{P}^{t}_{k}+ \tilde{Q}^{t}_{k})]\leq N~\Xi_{\tau,\sigma,\theta}.
\end{equation}}%
Finally, 
setting $\eta_x=\frac{1}{\tau}$, $\eta_y=\frac{1}{\sigma}$, $x^{t+1}_{0}=\bar{x}^{t}_{N}$, and $y^{t+1}_{0}=\bar{y}^{t}_{N}$, the desired result in~\eqref{eq:Gap bound with fg} follows from \eqref{eq:gap_bound_fg_2-gap} and \eqref{eq:variance-bound-gap}.
\end{proof}
\section{Computation of $\epsilon$-stationary point in practice}
\label{sec:eps-stationary}
In this section, we discuss how to compute a point $x_\epsilon$ such that $\mathbb{E}[\|\grad\phi_{\lambda}(x_\epsilon)\|]\leq \epsilon$ --as the \na{$t_*=\argmin_{t=0,1,\ldots,T}\|\grad\phi_{\lambda}(x_0^t)\|$ in \cref{remark-exchange-min and expectation}} can not be computed \na{trivially}. This result is shown in Theorem~\ref{thm:find-min-GNME}, which directly follows from Theorem~\ref{cor:complexity-fg} and Lemma~\ref{lemma: SAPD single call complexity}. Below, for the sake of completeness, we state a known technical result that we need for the proof of Theorem~\ref{thm:find-min-GNME}.
\begin{lemma}\label{lemma:smooth-GNME}
Suppose Assumptions~\ref{ASPT: fg} and \ref{ASPT: lipshiz gradient} hold. Then $\phi_\lambda(\cdot)$ is $\frac{1}{\lambda}$-smooth for $\lambda\in (0,\gamma^{-1})$, where $\phi_\lambda(\cdot)$ is defined in \cref{Def: Moreau envelope} for $\phi(\cdot) = \max_{y\in\cY}\cL(\cdot,y)$.
\end{lemma}
\begin{proof}
Let $R(x)\triangleq x - \prox{\lambda\phi}(x)$ for $\lambda\in(0,\gamma^{-1})$ and $x\in\dom f$. Indeed, by \cref{Def: Moreau envelope}, we know $R(x)=\lambda\grad\phi_\lambda(x)$. Then by the optimality condition of $\prox{\lambda\phi}(x)$, we obtain that
$$
R(x)\in \partial f(\prox{\lambda\phi}(x))
$$
holds for $x\in \dom f$. Hence, for $x_1,x_2\in \dom f$, we have that
$$
\langle R(x_1)-R(x_2), \prox{\lambda\phi}(x_1) - \prox{\lambda\phi}(x_2)\rangle \geq 0,
$$
which further implies that
\begin{align*}
    \|x_1-x_2\|^2 & = \|R(x_1)-\prox{\lambda\phi}(x_1) -R(x_2)+\prox{\lambda\phi}(x_2)\|^2
    \\
    & \geq \|R(x_1) -R(x_2)\|^2+\|\prox{\lambda\phi}(x_1)-\prox{\lambda\phi}(x_2)\|^2
    \\ &
    \geq \|R(x_1) -R(x_2)\|^2.
\end{align*}
Then using the fact $R(x) = \lambda \grad \phi_\lambda(x)$ completes the proof.
\end{proof}
\begin{theorem}\label{thm:find-min-GNME}
Consider $\cL$ defined in \eqref{eq:main problem}. Suppose Assumptions~\ref{ASPT: fg},~\ref{ASPT: lipshiz gradient},~\ref{ASPT: unbiased noise assumption} hold. Under the premise of \cref{cor:complexity-fg}, for any $\epsilon>0$, \texttt{SAPD+} can generate an point $x_\epsilon$ such that $\mathbb{E}[\|\grad\phi_{\lambda}(x_\epsilon)\|]\leq \epsilon$
within $ \cO\left(\frac{L\kappa_y\cG(x_0^0,y_0^0) }{\epsilon^2}\ln(1/\epsilon) + \frac{ L\kappa_y\delta^2\cG(x_0^0,y_0^0) }{\epsilon^4}\ln(1/\epsilon) \right)$ stochastic first-order oracle calls.
\end{theorem}
\begin{proof}
\xzrev{Under the premise of \cref{cor:complexity-fg}, given $\epsilon>0$, \texttt{SAPD+} generates $\{x^t_0\}_{t=0}^T$ such that  $\min_{t=0,\ldots,T}\mathbb{E}\left[\|\nabla\phi_{\lambda}(x_0^t)\|\right]\leq \epsilon/4$, 
for $T \geq 96\cG(x_0^0,y_0^0)  \cdot\tfrac{16\gamma}{\epsilon^2}+1$. Therefore, for each $x^t_0$, if we let $\hat{x}^t_0= \prox{\lambda\phi}(x^t_0)$, then \cref{lemma: SAPD single call complexity} ensures that we can generate a point $\tilde{x}^t_{*}$ such that 
\begin{equation*}
    \mathbb{E}[\|\tilde{x}^t_{*}-\hat{x}^t_0\|]\leq \hat{\epsilon}
\end{equation*}
within $N_{\hat{\epsilon}}$ many iterations, 
where
{\footnotesize
\begin{equation*}
N_{\hat\epsilon} = \mathcal{O}\left(  
\frac{\max\{L_{xx},L_{yx}\}}{\gamma} + \frac{\max\{L_{yx},L_{xy}\}}{\sqrt{\gamma\mu_y}} + \frac{\max\{L_{yy},L_{yx}\}}{\mu_y}
+ \Big(
\frac{\delta_x^2}{\gamma} +
\frac{\delta_y^2}{\mu_y} \Big)
\frac{1}{\gamma\hat{\epsilon}^2}\right)\cdot\ln\Big( \frac{\max\{1,\mu_y/\gamma\}}{\hat{\epsilon}}\Big)
\end{equation*}}%
Moreover, if we compute the GNME of $\phi(\cdot)$ at $\tilde{x}^t_*$, it follows that
\begin{equation*}
\begin{aligned}
         \|\grad \phi_{\lambda}(\tilde{x}^t_*)\| \leq &  \|\grad \phi_{\lambda}(\tilde{x}^t_*)-\grad \phi_{\lambda}(\hat{x}^t_0)\| + \|\grad \phi_{\lambda}(\hat{x}^t_*)-\grad \phi_{\lambda}(x^t_0)\| + \|\grad \phi_{\lambda}(x^t_0)\| 
         \\
        \leq& \frac{2}{\lambda}\|\tilde{x}^t_*-\hat{x}^t_0\| + \frac{1}{\lambda}\|\hat{x}^t_*-x^t_0\| + \|\grad \phi_{\lambda}(x^t_0)\| \\
         =& \frac{2}{\lambda}\|\tilde{x}^t_*-\hat{x}^t_0\| + 2\|\grad \phi_{\lambda}(x^t_0)\|
                  \\
         \leq & \frac{2}{\lambda}\hat{\epsilon} +  2 \|\grad \phi_{\lambda}(x^t_0)\|.
\end{aligned}
\end{equation*}
where the second inequality is by \cref{lemma:smooth-GNME}; the first equality is by \cref{Def: Moreau envelope} and the fact $\hat{x}^t_0=\prox{\lambda\phi}(x^t_0)$. Furthermore, because $\min_{t=0,\ldots,T}\mathbb{E}\left[\|\nabla\phi_{\lambda}(x_0^t)\|\right]\leq \epsilon/4$, then we have 
$$
\min_{t=0,\ldots,T}\mathbb{E} [\|\grad \phi_{\lambda}(\tilde{x}^t_*)\| ]\leq \frac{2}{\lambda}\hat{\epsilon} + \frac{\epsilon}{2}
$$
and we let $x_\epsilon=\tilde{x}^{\tilde{t}}_*$, where $ \tilde{t}_*\triangleq\argmin_{\{ t=0,..,T\}}\mathbb{E}[\|\grad \phi_{\lambda}(\tilde{x}^t_*)\| ]$.
Therefore, 
setting $\lambda=\frac{1}{2\gamma}$ and $\hat{\epsilon}=\frac{1}{8\gamma}$, \cref{lemma: SAPD single call complexity} implies that calling \texttt{SAPD} $T$ times, each with $\tilde{N}$ iterations, one can generate $x_\epsilon$ such that$$
 \mathbb{E}\big[\|\grad \phi_{\lambda}(x_\epsilon)\|\big] \leq \epsilon,
$$
where $T$ is given in \cref{cor:complexity-fg} and 
{\footnotesize
\begin{equation*}
\tilde{N} = \mathcal{O}\left(  
\frac{\max\{L_{xx},L_{yx}\}}{\gamma} + \frac{\max\{L_{yx},L_{xy}\}}{\sqrt{\gamma\mu_y}} + \frac{\max\{L_{yy},L_{yx}\}}{\mu_y}
+ \Big(
\frac{\delta_x^2}{\gamma} +
\frac{\delta_y^2}{\mu_y} \Big)
\frac{\gamma}{\epsilon^2}\right)\cdot\ln\Big( \frac{\max\{\gamma,\mu_y\}}{\epsilon}\Big)
\end{equation*}}%
Thus, considering the setting in \eqref{eq:uniform setting}, one can compute $x_\epsilon$ in practice requiring 
$T \tilde{N} = \cO\left(\frac{L\kappa_y\cG(x_0^0,y_0^0) }{\epsilon^2}\ln(1/\epsilon) + \frac{ L\kappa_y\delta^2\cG(x_0^0,y_0^0) }{\epsilon^4}\ln(1/\epsilon) \right)$ oracle calls; furthermore, $\ln(1/\epsilon)$ can be  removed by employing a restarting strategy as in~\cite{zhang2021robust}.}
\end{proof}
\section{Proof of Theorem~\ref{thm:unbounded-domains}}
\label{sec:Thm3_proof}
\sa{For completeness,} we provide a technical lemma below establishing Lipschitz continuity of the best response functions (see also~\cite[Lemma 2.5]{zhang2021robust} and \cite[Lemma B.2(a)]{lin-near-optimal}).

\begin{lemma}\sa{\cite[Proposition 1]{chen2021proximal}}
\label{lem:implicit_function}
\sa{Suppose Assumptions~\ref{ASPT: fg} and~\ref{ASPT: lipshiz gradient} hold.} {For any given $y\in\dom g$}, let $x^t_*(y)\triangleq\argmin_{x\in\cX}\cL^{t}(x,y)$; and {for any given $x\in\dom f$}, let $y_*(x)\triangleq\argmax_{y\in\cY}\cL^{t}(x,y)=\argmax_{y\in\cY}\cL(x,y)$. Then $x^t_*(\cdot)$ and \nsa{$y_*(\cdot)$} are Lipschitz maps on {$\dom g$ and $\dom f$}, with constants 
\sa{$\kappa_{xy}$ and $\kappa_{yx}$}, respectively, where $\kappa_{xy}\triangleq L_{xy}/\mu_x$ and $\kappa_{yx}\triangleq L_{yx}/\mu_y$.
\end{lemma}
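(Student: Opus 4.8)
The plan is to derive both Lipschitz bounds from the first-order optimality conditions of the strongly convex (resp.\ strongly concave) subproblems defining $x^t_*(y)$ and $y_*(x)$, combined with monotonicity of the subdifferentials of $f$ and $g$ and the block-Lipschitz estimates of Assumption~\ref{ASPT: lipshiz gradient}. I would establish the bound for $x^t_*(\cdot)$ in full and then observe that the argument for $y_*(\cdot)$ is entirely symmetric, so it suffices to track how the roles of the moduli and Lipschitz constants are interchanged.

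First I would fix $y_1,y_2\in\dom g$ and write $x_i \triangleq x^t_*(y_i)$ for $i\in\{1,2\}$. Since $\Phi^t(\cdot,y)=\Phi(\cdot,y)+\tfrac{\mu_x+\gamma}{2}\|\cdot-x^t_0\|^2$ is $\mu_x$-strongly convex for each fixed $y$ (recall $\Phi(\cdot,y)$ is $\gamma$-weakly convex), the subproblem $\min_{x}f(x)+\Phi^t(x,y_i)$ admits a unique minimizer, so each $x_i$ is well-defined and the optimality condition reads $-\nabla_x\Phi^t(x_i,y_i)\in\partial f(x_i)$. Monotonicity of $\partial f$ then yields $\fprod{\nabla_x\Phi^t(x_2,y_2)-\nabla_x\Phi^t(x_1,y_1),\,x_1-x_2}\geq 0$. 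Inserting the cross term $\nabla_x\Phi^t(x_2,y_1)$ and invoking $\mu_x$-strong convexity of $\Phi^t(\cdot,y_1)$, which gives $\fprod{\nabla_x\Phi^t(x_1,y_1)-\nabla_x\Phi^t(x_2,y_1),\,x_1-x_2}\geq\mu_x\|x_1-x_2\|^2$, I would rearrange and apply Cauchy--Schwarz to obtain $\mu_x\|x_1-x_2\|^2\leq\|\nabla_x\Phi^t(x_2,y_1)-\nabla_x\Phi^t(x_2,y_2)\|\,\|x_1-x_2\|$. The key observation here is that the added proximal term $\tfrac{\mu_x+\gamma}{2}\|\cdot-x^t_0\|^2$ is independent of $y$, so $\nabla_x\Phi^t$ inherits the same $y$-Lipschitz constant $L_{xy}$ as $\nabla_x\Phi$; the right-hand side is therefore at most $L_{xy}\|y_1-y_2\|\,\|x_1-x_2\|$, and dividing by $\|x_1-x_2\|$ (the estimate being trivial when $x_1=x_2$) gives $\|x_1-x_2\|\leq\tfrac{L_{xy}}{\mu_x}\|y_1-y_2\|=\kappa_{xy}\|y_1-y_2\|$. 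The identical chain applied to $y_*(x)=\argmax_{y}\Phi(x,y)-g(y)$, using $\mu_y$-strong concavity of $\Phi(x,\cdot)$ (equivalently, $\mu_y$-strong convexity of $-\Phi(x,\cdot)+g$), monotonicity of $\partial g$, and the $x$-Lipschitz constant $L_{yx}$ of $\nabla_y\Phi$, produces $\kappa_{yx}=L_{yx}/\mu_y$.

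The argument is a routine application of monotone-operator tools and presents no genuine obstacle; the only point requiring care is the bookkeeping around the subdifferentials, namely justifying the optimality inclusions and the monotonicity step when $f$ (resp.\ $g$) is nonsmooth rather than naively differentiating. A secondary subtlety is confirming that the effective strong-convexity modulus is exactly $\mu_x$ (resp.\ $\mu_y$) and that the $y$-Lipschitz (resp.\ $x$-Lipschitz) constant of the regularized gradient is unchanged from the unregularized one, both of which follow immediately from the definition of $\Phi^t$. Since the statement is quoted from \cite[Proposition 1]{chen2021proximal}, one may alternatively simply cite that reference, but the self-contained derivation above makes the dependence of the constants on $L_{xy},L_{yx},\mu_x,\mu_y$ transparent.
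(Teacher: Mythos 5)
Your proof is correct. There is, however, no internal proof to compare it against: the paper states this lemma as a quoted result, citing \cite[Proposition 1]{chen2021proximal} (with pointers to \cite[Lemma 2.5]{zhang2021robust} and \cite[Lemma B.2(a)]{lin-near-optimal}), and never derives it. Your self-contained derivation is the standard argument underlying those references, and every step checks out in the paper's setting: the optimality inclusion $-\nabla_x\Phi^{t}(x_i,y_i)\in\partial f(x_i)$ is legitimate because $\Phi$ is differentiable on an open set containing $\dom f\times\dom g$ (Assumption~\ref{ASPT: fg}(iii)), so the convex sum rule applies; adding monotonicity of $\partial f$ to strong monotonicity of $\nabla_x\Phi^{t}(\cdot,y_1)$ and applying Cauchy--Schwarz gives exactly $\mu_x\|x_1-x_2\|\leq\|\nabla_x\Phi^{t}(x_2,y_2)-\nabla_x\Phi^{t}(x_2,y_1)\|$; and, as you note, the proximal term $\tfrac{\mu_x+\gamma}{2}\|\cdot-x^t_0\|^2$ raises the convexity modulus to exactly $\mu_x$ while leaving the $y$-Lipschitz constant at $L_{xy}$, since its gradient is independent of $y$. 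The symmetric argument for $y_*(\cdot)$ (which also explains why $\argmax_{y}\cL^{t}(x,y)=\argmax_{y}\cL(x,y)$, the regularizer being constant in $y$) is equally sound. The only implicit restriction, shared with the lemma statement itself, is that $\mu_y>0$ so that $\kappa_{yx}=L_{yx}/\mu_y$ is finite; this is consistent with the WCSC setting in which the lemma is invoked.
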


\begin{lemma}\label{lemma: bounded Gap and disance}
 \sa{For any $t\geq 0$, let $z^t_*\triangleq (x^t_*,y^t_*)$ be the unique saddle point of $\cL^{t}$ defined in \cref{eq: SCSC problem chpt2}, and let $\{z_k^t\}_{k=0}^{N_t}$ be generated by running \texttt{SAPD} on $\min_{x\in\cX}\max_{y\in\cY}\cL^{t}(x,y)$ for $N_t\in\integers_+$ iterations, where $z^t_k\triangleq (x^t_k,y^t_k)$; and define $z_0^{t+1} \triangleq \tfrac{1}{ N_t }\sum_{i=0}^{N_t-1}z^t_{i+1}$.} Under the setting of \cref{lemma:bound-gap}, 
{\begin{equation}\label{eq: bounded Gap and disance}
     \max\Big\{\mE\left[\cG^{t}(z_0^{t+1})\right],~\mE\left[\norm{z_0^{t+1}-z_*^t}^2\right]\Big\}\leq \frac{1}{ N_t } C_{\tau,\sigma,\theta} \mE[\norm{z_0^{t}-z_*^t}^2] + C'_{\tau,\sigma,\theta}
\end{equation}}%
holds for all {$t\geq 0$ and $N_t\geq 1$}, for some \sa{positive constants} $C_{\tau,\sigma,\theta}$ and $C'_{\tau,\sigma,\theta}$.
\end{lemma}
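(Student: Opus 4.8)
The plan is to control the gap $\mE[\cG^{t}(z_0^{t+1})]$ and the squared distance $\mE[\norm{z_0^{t+1}-z_*^t}^2]$ separately and then take their maximum, exploiting that $z_0^{t+1}$ is a convex combination of the inner \texttt{SAPD} iterates. The single tool I would use is the per-iterate linear convergence of \texttt{SAPD} on the SCSC subproblem $\min_x\max_y\cL^{t}$: since $(\tau,\sigma,\theta)$ satisfy the LMI \eqref{eq: SCSC SAPD LMI} with $\rho=\theta$, the Lyapunov analysis underlying \eqref{eq: SCSC SAPD LMI} (cf.\ \cite{zhang2021robust}) yields constants $a,b>0$, depending only on $\tau,\sigma,\theta$ and the problem constants, such that
\begin{equation*}
\mE\big[\norm{z_k^t - z_*^t}^2\big] \le a\,\rho^{k}\,\mE\big[\norm{z_0^t - z_*^t}^2\big] + b, \qquad k\ge 0,
\end{equation*}
where the decay rate is exactly $\rho=\theta$ and the noise floor $b$ is proportional to the gradient variance and uniform in the initialization and in $t$ (the subproblems $\cL^t$ share the same moduli, differing only by their center $x_0^t$). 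Matching this rate to the averaging weights is what produces the $N_t$ factor.

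Indeed, because $z_0^{t+1}=\tfrac{1}{K_{N_t}(\rho)}\sum_{k=0}^{N_t-1}\rho^{-k}z_{k+1}^t$ with nonnegative weights summing to one, convexity of $\norm{\cdot}^2$ gives $\norm{z_0^{t+1}-z_*^t}^2\le \tfrac{1}{K_{N_t}(\rho)}\sum_{k=0}^{N_t-1}\rho^{-k}\norm{z_{k+1}^t-z_*^t}^2$. Taking expectations and inserting the per-iterate bound, the product $\rho^{-k}\rho^{k+1}=\rho$ telescopes so that the decay term contributes $N_t$ equal copies, while the floor $b$ reproduces itself after dividing by $K_{N_t}(\rho)$:
\begin{equation*}
\mE\big[\norm{z_0^{t+1}-z_*^t}^2\big]\le \frac{a\,\rho\,N_t}{K_{N_t}(\rho)}\,\mE\big[\norm{z_0^t-z_*^t}^2\big] + b.
\end{equation*}
Using $K_{N_t}(\rho)=\sum_{k=0}^{N_t-1}\rho^{-k}\ge \rho^{-(N_t-1)}$ bounds the prefactor by $a\,N_t\rho^{N_t}$, which is exactly the distance half of \eqref{eq: bounded Gap and disance}, and this holds for every $N_t\ge1$ with no threshold on $N_t$; this is precisely why I avoid a self-referential rearrangement of the gap inequality, which would require $K_{N_t}(\rho)$ to exceed a fixed constant.

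For the gap I would invoke the already-proven bound \eqref{eq:Gap bound with fg} of \cref{lemma: Gap bound with fg}, controlling $\mE[\cG^t(z_0^{t+1})]$ by $\tfrac{M_{\tau,\sigma,\theta}}{K_{N_t}(\rho)}$ times the best-response terms $\mE\norm{x^t_*(y_0^{t+1})-x_0^t}^2$ and $\mE\norm{y_*(x_0^{t+1})-y_0^t}^2$, plus $\Xi_{\tau,\sigma,\theta}$. I would split each term with the triangle inequality around the saddle point and apply the Lipschitz continuity of $x^t_*(\cdot)$ and $y_*(\cdot)$ from \cref{lem:implicit_function} (constants $\kappa_{xy},\kappa_{yx}$), e.g.\ $\norm{x^t_*(y_0^{t+1})-x_0^t}^2\le 2\kappa_{xy}^2\norm{y_0^{t+1}-y_*^t}^2+2\norm{x_0^t-x_*^t}^2$ and symmetrically for $y$. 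Each resulting piece is bounded either by $\norm{z_0^{t+1}-z_*^t}^2$ (controlled by the displayed distance estimate) or by $\norm{z_0^t-z_*^t}^2$. Substituting and using $\tfrac{M_{\tau,\sigma,\theta}}{K_{N_t}(\rho)}\le \tfrac{M_{\tau,\sigma,\theta}}{\rho}\rho^{N_t}$ together with $1\le N_t$ and $\rho^{2N_t}\le\rho^{N_t}$ collapses every contribution into the form $\rho^{N_t}N_t\,C\,\mE\norm{z_0^t-z_*^t}^2+C'$; taking $C_{\tau,\sigma,\theta}$ and $C'_{\tau,\sigma,\theta}$ to be the larger of the constants arising in the two halves finishes the bound on the maximum.

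The main obstacle is the per-iterate linear convergence step in the unbounded-domain regime: the natural one-step estimates generate best-response quantities evaluated at the running iterates, which a priori need not be bounded without compactness. Closing this requires exactly the Lipschitzness of the saddle-subproblem best responses (\cref{lem:implicit_function}) to tie all such terms back to $\norm{z_0^t-z_*^t}$, and checking that the induced noise floor (equivalently $\Xi_{\tau,\sigma,\theta}$) is uniform in both $t$ and $N_t$.
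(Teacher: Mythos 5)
Your proposal is correct and follows essentially the same route as the paper's own proof: a per-iterate linear-convergence-to-a-noise-ball bound for the SAPD iterates, transferred to the averaged point $z_0^{t+1}$ via Jensen's inequality and $K_{N_t}(\rho)\geq\rho^{-(N_t-1)}$ to produce the $N_t\rho^{N_t}$ prefactor, and then the gap controlled through \cref{lemma: Gap bound with fg} combined with the triangle inequality and the Lipschitz best-response maps of \cref{lem:implicit_function}, feeding the distance estimate back in. The only cosmetic difference is that you source the per-iterate contraction from the Lyapunov/LMI analysis of \cite{zhang2021robust}, whereas the paper derives it internally by substituting the saddle point $(x_*^t,y_*^t)$ into its ergodic bound and using nonnegativity of the gap there; both yield the same estimate with constants uniform in $t$.
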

\begin{proof}
For simplicity we assume $N_t=N$ for all $t\geq 0$ --the proof still holds for arbitrary $\{N_t\}_{t\geq 0}\subset\integers_+$. The proof mainly follows the proof of \cref{lemma:bound-gap}. 
\sa{We first show a bound for 
$\mE\left[\norm{z_0^{t+1}-z_*^t}^2\right]$ that is in the form of the rhs of \cref{eq: bounded Gap and disance};
then, 
we show it for $\mE\left[\cG^{t}(z_0^{t+1})\right]$. In addition, given $\{z_k^t\}^{N_t}_{k=0}$, we let $\bar{z}_{N_t}^t=(\bar{x}_{N_t}^t,\bar{y}_{N_t}^t)$, and $\bar{z}_{N_t}^t=z_0^{t+1}=\frac{1}{ N_t }\sum_{i=0}^{N_t-1}z^{t}_{i+1}$ for all $t\geq 0$ and $N_t\geq 1$.}

\sa{Now, we start with analyzing $\mE\left[\norm{z_0^{t+1}-z_*^t}^2\right]$. 
The analysis below mainly relies on the proof of \cref{lemma:bound-gap}. 
Indeed,} given $z^t_0=(x^t_0,y^t_0)$ for $t\geq0$, substituting $x=x^{t}_*$ and $y = y^{t}_*$ within \eqref{INEQ: ergordic gap-gap} and then using \cref{eq:U-bound-gap}, we obtain that
{\small
\begin{equation}
    \begin{aligned}
        \label{eq: custom gap bound}
\lefteqn{N  \mathbb{E}\big[\cL^{t}(\bar{x}^t_{N}, y^t_*)  - \cL^{t}(x^t_*, \bar{y}^t_{N}) \big] }\\
&\leq 
      \mathbb{E}\left[\left(\frac{1}{2\tau}+\frac{\eta_x}{2}\right)\|x^{t}_*-x^{t}_{0}\|^2
      +\left(\frac{1}{2\sigma}+\frac{\eta_y(1+2\theta)}{2}\right)\|y^t_{*}-y^{t}_{0}\|^2 
       +\sum_{k=0}^{N-1}{\rho}^{-k}(\tilde{P}^{t}_{k}+ \tilde{Q}^{t}_{k})\right]. 
    \end{aligned}
\end{equation}}%
Moreover, since $\cL^t(\cdot,y^t_*)$ is $\mu_x$-strongly convex and  $\cL^t(x^t_*,\cdot)$ is $\mu_y$-strongly concave, and $(x^t_*,y^t_*)$ is the unique saddle point of $\cL^t$, we have that
\begin{equation}
\label{eq:distance to gap}
    \frac{\mu_x}{2}\|\bar{x}^t_{N}-x^t_*\|^2 + \frac{\mu_y}{2}\|\bar{y}^t_{N}-y^t_*\|^2 \leq \cL^{t}(\bar{x}^t_{N}, y^t_*)  - \cL^{t}(x^t_*, \bar{y}^t_{N}) .
\end{equation}

If we let $\eta_x = \frac{1}{\tau}$ and $\eta_y = \frac{1}{\sigma}$, \sa{then
it follows from eqs.~(\ref{eq: custom gap bound}, \ref{eq:distance to gap},\ref{eq:variance-bound-gap}) and the fact that $\bar{z}^t_N=z^{t+1}_0$ that}
\begin{equation}
N\mathbb{E}\left[ \frac{\mu_x}{2}\|x^{t+1}_{0}-x^t_*\|^2 + \frac{\mu_y}{2}\|y^{t+1}_{0}-y^t_*\|^2\right] \leq \mathbb{E}\left[\overline{U}^t(x^t_*,y^t_*)\right] + \sa{N{\Xi}_{\tau,\sigma,\theta}},
\end{equation}
where \sa{${\Xi_{\tau,\sigma,\theta}}$} is defined in \cref{lemma:bound-gap} and \sa{for any $(x,y)\in\cX\times\cY$, we define}
\begin{align}
\label{eq:bar_U_t}
    \overline{U}^t(x,y) \triangleq \frac{1}{\tau}\|x-x^{t}_{0}\|^2
+\frac{1+\theta}{\sigma}\|y-y^{t}_{0}\|^2.
\end{align}

Therefore, we conclude that
\begin{equation}\label{eq: distance metric bound}
    \mathbb{E}\left[\|z^{t+1}_0 -z^t_*\|^2 \right]\leq \frac{1}{N}\overline{C}_{\tau,\sigma,\theta}\mathbb{E}\left[\|z^t_0-z^t_*\|^2\right] + \overline{C}'_{\tau,\sigma,\theta},
\end{equation}
where 
$$
\overline{C}_{\tau,\sigma,\theta} \triangleq \frac{2\max\{\frac{1}{\tau},\frac{1+\theta}{\sigma} \}}{\min\{\mu_x,\mu_y\}},\qquad \overline{C}'_{\tau,\sigma,\theta} \triangleq \frac{2}{\min\{\mu_x, \mu_y\}}\Xi_{\tau,\sigma,\theta}.
$$
\sa{This completes the first part of the proof. Next, we will bound $\mE[\cG^{t}(z^{t+1}_0)]$ using the bound on $\mE[\|z^{t+1}_{0}-z^{t}_{*}\|^2]$ we derived in the first part.}

Given $z^t_0$, using \cref{eq:gap_bound_fg_2-gap} and \cref{eq:variance-bound-gap} in the proof \cref{lemma:bound-gap} for $\eta_x = \frac{1}{\tau}$ and $\eta_y = \frac{1}{\sigma}$ as above, we obtain that
\begin{equation}\label{eq: simplified gap bound}
    \mE\left[\cG^{t}(z_0^{t+1})\right]
\leq \frac{1}{ N }\mathbb{E}\left[\overline{U}^t\Big(x^{t}_*(y^{t+1}_0), y_{*}(x^{t+1}_0)\Big)\right]
      +\Xi_{\tau,\sigma,\theta},
\end{equation}
where $\overline{U}^t(x,y)$ is defined in~\eqref{eq:bar_U_t} and \sa{${\Xi_{\tau,\sigma,\theta}}$} is defined in \cref{lemma:bound-gap}; furhermore, $x^t_*(\cdot)$ and $y_*(\cdot)$ are defined in \cref{eq: x* y*}. Next, we will use \cref{eq: distance metric bound} to \sa{derive} an upper bound for the right hand side of \cref{eq: simplified gap bound}.

\sa{Since $z_*^t$ is the unique saddle point for $\cL^t$, we have $x^t_*(y^t_*)=x^t_*$ and $y_*(x^t_*)=y^t_*$. Moreover,} according to 
\cref{lem:implicit_function}, $x^t_*(\cdot)$, $y_*(\cdot)$ is Lipschitz with \sa{constants} ${\kappa_{xy}=\frac{L_{xy}}{\mu_x}}$ and ${\kappa_{yx}=\frac{L_{yx}}{\mu_y}}$, respectively. 
Therefore, \cref{lem:implicit_function} and 
$$
\overline{U}^t(x^{t}_*(y^{t+1}_0), y_{*}(x^{t+1}_0)) \leq \frac{2}{\tau}\|x^t_*-x^{t}_*(y^{t+1}_0)\|^2
      +\frac{2+2\theta}{\sigma}\|y^t_*-y_{*}(x^{t+1}_0)\|^2 + 2\overline{U}^t(x^t_*,y^t_*),\quad \text{w.p.}~1,
$$
together imply that 
{\begin{equation*}
    \begin{aligned}
            \mathbb{E}& \left[\overline{U}^t(x^{t}_*(y^{t+1}_0), y_{*}(x^{t+1}_0))\right] 
            \\
            \leq&  \mathbb{E}\left[\frac{2\kappa_{xy}^2}{\tau}\|y^t_*-y^{t+1}_0\|^2
      +\frac{(2+2\theta)  \kappa_{yx}^2}{\sigma}\|x^t_*-x^{t+1}_0\|^2 + 2\overline{U}^t(x^t_*,y^t_*) \right]\\
      \leq &
      \mathbb{E}\left[ \max\Big\{
      \frac{2}{\tau},
      \frac{(2+2\theta)}{\sigma}
      \Big\} \Big(\max\{\kappa_{xy}^2,~\kappa_{yx}^2\}\|  z^{t+1}_0 -z^t_*\|^2  +\| z^{t}_0 -z^t_*\|^2 \Big)\right]
      \\
        \leq & \mathbb{E}\left[ \max\Big\{
      \frac{2}{\tau},
      \frac{(2+2\theta)}{\sigma}
      \Big\}\max\{\xzh{1},\kappa_{xy}^2,~\kappa_{yx}^2\} \left(\Big(\frac{1}{N}+1\Big)\overline{C}_{\tau,\sigma,\theta}\|z^t_0-z^t_*\|^2 + \overline{C}'_{\tau,\sigma,\theta}\right)\right],
    \end{aligned}
\end{equation*}}%
where we use \cref{eq: distance metric bound} for the last inequality. Then, if we use the above inequality within \cref{eq: simplified gap bound}, it follows that
{
\begin{equation*}
    \begin{aligned}
         \mE\left[\cG^{t}(z_0^{t+1})\right] & \leq\frac{1}{N} \overline{\overline{C}}_{\tau,\sigma,\theta}\mathbb{E}\left[\|z^t_0-z^t_*\|^2 \right]
          + \frac{1}{{ N }}\overline{\overline{C}}'_{\tau,\sigma,\theta}  +\Xi_{\tau,\sigma,\theta},
    \end{aligned}
\end{equation*}}%
where 
{\begin{align*}
&
\overline{\overline{C}}_{\tau,\sigma,\theta} \triangleq
    4~\max\Big\{
      \frac{1}{\tau},
      \frac{1+\theta}{\sigma}
      \Big\}\max\{\xzh{1},\kappa_{xy}^2,~\kappa_{yx}^2\} \overline{C}_{\tau,\sigma,\theta},
      \\
      & \overline{\overline{C}}'_{\tau,\sigma,\theta} \triangleq 2~\max\Big\{
      \frac{1}{\tau},
      \frac{1+\theta}{\sigma}
      \Big\}\max\{\xzh{1},\kappa_{xy}^2,~\kappa_{yx}^2\}\overline{C}'_{\tau,\sigma,\theta}.
\end{align*}
Thus, for $C_{\tau,\sigma,\theta} \triangleq \max\{\overline{C}_{\tau,\sigma,\theta},~ \overline{\overline{C}}_{\tau,\sigma,\theta}\}$ and $C'_{\tau,\sigma,\theta} \triangleq \max\{\overline{C}'_{\tau,\sigma,\theta},~\frac{1}{N}\overline{\overline{C}}'_{\tau,\sigma,\theta} + \Xi_{\tau,\sigma,\theta}\}$, we get the desired result in~\eqref{eq: bounded Gap and disance}.}
\end{proof}
\begin{lemma}
\label{lem:induction}
\sa{Under the premise of \cref{lemma:bound-gap}, $\mE[\norm{z_0^{t}-z_*^t}^2]$, $\mE\left[\cG^{t}(z_0^{t})\right]$ and $\mE\left[\cG^{t}(z_0^{t+1})\right]$ are finite for any $t\geq 0$ when either \cref{assump:compact} or \cref{assump:bounded-subdifferential} holds.} 
\end{lemma}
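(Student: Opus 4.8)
The plan is to separate the two hypotheses and dispatch the substantive case, \cref{assump:bounded-subdifferential}, by induction, using the one-step estimate of \cref{lemma: bounded Gap and disance} as the engine. Under \cref{assump:compact} there is nothing to do: the proximal steps of \texttt{SAPD} keep every iterate in $\dom f\times\dom g$, so the averaged outputs $z_0^t,z_0^{t+1}$ and the unique saddle point $z_*^t$ of the SCSC map $\cL^t$ all lie in a set of finite diameter, and $\cG^t$ is finite-valued and continuous there; hence the three expectations are bounded by the squared diameter and the maximal gap over the compact domain. For the rest I assume \cref{assump:bounded-subdifferential}. The recursion $\max\{\mE[\cG^t(z_0^{t+1})],\,\mE[\|z_0^{t+1}-z_*^t\|^2]\}\le \rho^{N_t}N_t\,C_{\tau,\sigma,\theta}\,\mE[\|z_0^t-z_*^t\|^2]+C'_{\tau,\sigma,\theta}$ with finite constants reduces the whole claim to proving $\mE[\|z_0^t-z_*^t\|^2]<\infty$ for every $t\ge 0$, which I establish by induction.

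For the induction I also carry along $\mE[\|z_0^t-z_0^0\|^2]<\infty$, since the second moment of the iterates is what controls the gradient terms later. The base case $t=0$ is immediate, as $z_0^0$ is deterministic and $z_*^0$ is a fixed finite saddle point. For the step, the recursion gives $\mE[\|z_0^{t+1}-z_*^t\|^2]<\infty$ from the hypothesis, but the target involves $z_*^{t+1}$, the saddle point of the \emph{next} subproblem. I bridge the two through a Lipschitz estimate $\|z_*^{t+1}-z_*^t\|\le \ell\,\|z_0^{t+1}-z_0^t\|$: the $x$-components obey $x_*^s=\prox{\lambda\phi}(x_0^s)$, a Lipschitz map since the inner objective is $\mu_x$-strongly convex for $\lambda=(\gamma+\mu_x)^{-1}$, while $y_*^s=y_*(x_*^s)$ with $y_*(\cdot)$ $\kappa_{yx}$-Lipschitz by \cref{lem:implicit_function}. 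Writing $\|z_0^{t+1}-z_0^t\|\le\|z_0^{t+1}-z_*^t\|+\|z_*^t-z_0^t\|$ and using $\|a+b\|^2\le 2\|a\|^2+2\|b\|^2$ shows both $\mE[\|z_0^{t+1}-z_0^t\|^2]$ and $\mE[\|z_0^{t+1}-z_*^{t+1}\|^2]$ are finite; summing the former over $s\le t$ propagates $\mE[\|z_0^{t+1}-z_0^0\|^2]<\infty$. This closes the induction, and the recursion then yields $\mE[\cG^t(z_0^{t+1})]<\infty$ for all $t$.

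The one genuinely new quantity is $\mE[\cG^t(z_0^t)]$, for which no earlier estimate applies, since $z_0^t$ was produced against $\cL^{t-1}$ yet is measured against $\cL^t$. I bound it deterministically and then integrate. Splitting through the shared saddle value, $\cG^t(x,y)=\big(\phi^t(x)-\phi^t(x_*^t)\big)+\big(d^t(y_*^t)-d^t(y)\big)$ with $\phi^t=\max_y\cL^t$ and $d^t=\min_x\cL^t$. For the primal gap I use a subgradient of the convex $\phi^t$ evaluated \emph{at} $z_0^t$: choosing $s_f\in\partial f(x_0^t)$ with $\|s_f\|\le B_f$ (\cref{assump:bounded-subdifferential}), convexity gives $\phi^t(x_0^t)-\phi^t(x_*^t)\le(\|\nabla h^t(x_0^t)\|+B_f)\|x_0^t-x_*^t\|$, where $h^t$ is the smooth part of $\phi^t$. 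The key simplification is that the regularizer $\tfrac{\mu_x+\gamma}{2}\|\cdot-x_0^t\|^2$ has zero gradient at its own center, so $\nabla h^t(x_0^t)=\nabla_x\Phi(x_0^t,y_*(x_0^t))$, whose norm grows only linearly in $\|z_0^t-z_0^0\|$ by \cref{ASPT: lipshiz gradient} and the Lipschitzness of $y_*$. The dual gap is handled symmetrically with $B_g$ and the $y$-gradient of the minimizing best response. Two applications of Cauchy--Schwarz, together with the finite moments $\mE[\|z_0^t-x_*^t\|^2]$ and $\mE[\|z_0^t-z_0^0\|^2]$, give $\mE[\cG^t(z_0^t)]<\infty$.

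The main obstacle is precisely this last step: converting the pointwise-finite but potentially unbounded gap $\cG^t(z_0^t)$ into an \emph{integrable} bound without a compact domain. The difficulty is that the gap entangles the nonsmooth terms $f,g$ with the smooth coupling $\Phi$, so a crude estimate leaves the subgradient directions uncontrolled; \cref{assump:bounded-subdifferential} supplies the bounded subgradient selections that tame the linear-in-distance contributions, the strong-convexity moduli $\mu_x,\mu_y>0$ supply the Lipschitz-gradient smoothness for the best-response maps and the quadratic remainders, and evaluating every subgradient at $z_0^t$ (where the regularizer gradient vanishes) keeps the remaining terms governed by the already-finite second moments. Everything else is routine triangle-inequality and Cauchy--Schwarz bookkeeping.
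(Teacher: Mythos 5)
Your proof is correct, and its engine coincides with the paper's: the induction on $t$ driven by the recursion of \cref{lemma: bounded Gap and disance}, with the consecutive saddle points bridged via $\|x_*^{t+1}-x_*^t\|\le \mathrm{Lip}(\prox{\lambda\phi})\|x_0^{t+1}-x_0^t\|$ and $\|y_*^{t+1}-y_*^t\|\le\kappa_{yx}\|x_*^{t+1}-x_*^t\|$ from \cref{lem:implicit_function}, then triangle inequalities to close the loop. (Your explicit propagation of $\mE[\|z_0^t-z_0^0\|^2]<\infty$ by telescoping is only implicit in the paper, and your justification of the Lipschitz constant of $\prox{\lambda\phi}$ via $\mu_x$-strong convexity of the inner objective is actually more careful than the paper's assertion that this map is non-expansive, which is not literally true for a weakly convex $\phi$ but is harmless here.) Where you genuinely differ is the terminal bound on $\mE[\cG^{t}(z_0^t)]$. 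You split through the saddle value, $\cG^{t}(z_0^t)=\bigl(\phi^t(x_0^t)-\phi^t(x_*^t)\bigr)+\bigl(d^t(y_*^t)-d^t(y_0^t)\bigr)$ with $\phi^t=\max_y\cL^{t}$, $d^t=\min_x\cL^{t}$, and use first-order convexity/concavity (subgradient) inequalities with the bounded selections from \cref{assump:bounded-subdifferential}; this gives a bound \emph{linear} in $\|x_0^t-x_*^t\|$, $\|y_0^t-y_*^t\|$, at the price of Danskin's theorem, both Lipschitz best-response maps, and a Cauchy--Schwarz step against the finite second moments. The paper never touches $z_*^t$ in this step: it uses $\mu_x$-strong convexity of $\cL^{t}(\cdot,y_0^t)$ and $\mu_y$-strong concavity of $\cL(x_0^t,\cdot)$ to complete the square around $z_0^t$ itself, obtaining $\cG^{t}(z_0^t)\lesssim \frac{1}{\mu}\bigl(\norm{\grad\Phi(z_0^t)}^2+B_f^2+B_g^2\bigr)$ with $\mu=\min\{\mu_x,\mu_y\}$, which by smoothness of $\Phi$ is a constant plus a multiple of $\norm{z_0^t-z_0^0}^2$ — \emph{quadratic} in the distance but needing no best responses and no Cauchy--Schwarz. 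Both routes rest on the same two ingredients, bounded subgradients at $z_0^t$ and the vanishing of the regularizer's gradient at its own center $x_0^t$, so the difference is bookkeeping rather than substance. One shared soft spot: like the paper, you dismiss the compact case as immediate via "the maximal gap over the compact domain"; strictly speaking a closed convex $f$ can be unbounded above even on a compact domain, so integrability of $f(x_0^t)+g(y_0^t)$ there deserves a word, but since the paper itself asserts this without proof, it is not a gap attributable to you.
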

\begin{proof}
\sa{In~\cref{lemma: bounded Gap and disance}, we show that}
{\begin{align}
    \label{eq:induction-key}
    \max\Big\{\mE\left[\cG^{t}(z_0^{t+1})\right],~\mE\left[\norm{z_0^{t+1}-z_*^t}^2\right]\Big\}\leq \frac{1}{ N_t } C_{\tau,\sigma,\theta} \mE[\norm{z_0^{t}-z_*^t}^2] + C'_{\tau,\sigma,\theta},
\end{align}}%
for some $C_{\tau,\sigma,\theta},C'_{\tau,\sigma,\theta}\in\reals_+$ constants, dependent on the SAPD parameters. Next, we show that $\{\mE\left[\norm{z_0^t-z_*^t}^2\right]\}<\infty$ for all $t\geq 0$ by induction. This is trivially true for $t=0$, i.e., $\mE\left[\norm{z_0^0-z_*^0}^2\right]=\norm{z_0^0-z_*^0}^2<\infty$. Next, for some $t\geq 0$, suppose $\mE\left[\norm{z_0^t-z_*^t}^2\right]<\infty$, \eqref{eq:induction-key} implies that 
\begin{align}
    \mE\left[\norm{z_0^{t+1}-z_*^t}^2\right]<\infty.
    \label{eq:first-bound-induction}
\end{align}
\sa{The inductive assumption  $\mE\left[\norm{z_0^t-z_*^t}^2\right]<\infty$ and \eqref{eq:first-bound-induction} imply that 
\begin{align}
    \mE\left[\norm{z_0^{t+1}-z_0^t}^2\right]\leq 2\mE\left[\norm{z_0^{t+1}-z_*^t}^2\right]+2\mE\left[\norm{z_0^{t}-z_*^t}^2\right]<\infty.
    \label{eq:intermediate-induction}
\end{align}}%
For any $\mu_x>0$, fix $\lambda=(\mu_x+\gamma)^{-1}$; since we have $x_*^\ell=\prox{\lambda\phi}(x_0^\ell)$ for $\ell=t, t+1$ and $\prox{\lambda\phi}(\cdot)$ is non-expansive, we have
$\mE[\norm{x_*^{t+1}-x_*^t}^2]\leq \mE[\norm{x_0^{t+1}-x_0^t}^2]$. Moreover, 
\sa{
\cref{lem:implicit_function}  
implies that $\mE[\norm{y_*^{t+1}-y_*^t}^2]\leq \kappa_{yx}^2\mE[\norm{x_*^{t+1}-x_*^t}^2]$ for $\kappa_{yx}=\frac{L_{yx}}{\mu_y}$;} thus, \sa{using \eqref{eq:intermediate-induction}, we get}
\begin{align}
    \mE[\norm{z_*^{t+1}-z_*^t}^2]
\leq (\kappa_{yx}^2+1)\mE[\norm{x_0^{t+1}-x_0^t}^2]\sa{\leq(\kappa_{yx}^2+1)\mE[\norm{z_0^{t+1}-z_0^t}^2] <\infty.}
\label{eq:second-bound-induction}
\end{align} 
Therefore, we can conclude that $\mE[\norm{z_0^{t+1}-z_*^{t+1}}^2]\leq 2\mE[\norm{z_0^{t+1}-z_*^{t}}^2] + 2 \mE[\norm{z_*^{t+1}-z_*^{t}}^2]<\infty$, which follows from \eqref{eq:first-bound-induction} and \eqref{eq:second-bound-induction}. This completes induction, providing us with $\mE[\norm{z_0^{t}-z_*^t}^2]<\infty$ for all $t\geq 0$. Note that using this result together with the definition of $\cG^{t}$ and \eqref{eq:induction-key} implies that $0\leq \mE[\cG^{t}(z_0^{t+1})]<\infty$ for $t\geq 0$.


Next, we will argue that $\mE[\cG^{t}(z_0^{t})]<\infty$ for all $t\geq 0$ as well. Recall that $\cG^{t}(z_0^t)=\sup_{y\in\cY}\cL^{t}(x_0^t,y)-\inf_{x\in\cX}\cL^{t}(x,y_0^t)$; furthermore, note that $\cL^{t}(x_0^t,y)=\cL(x_0^t,y)$ for all $y\in\cY$, and given $z_0^t$, we have $\cL^{t}(\cdot,y_0^t)$ strongly convex with modulus $\mu_x$ and $\cL(x_0^t,\cdot)$ strongly concave with modulus $\mu_y$. Therefore, we have
{\small
\begin{align}
    \cL(x_0^t,y)&\leq \cL(x_0^t,y_0^t)+\fprod{\sa{\grad_y\Phi(x_0^t,y_0^t)-s_g(y_0^t)},~y-y_0^t}-\frac{\mu_y}{2}\norm{y-y_0^t}^2\nonumber\\
    &\leq \cL(x_0^t,y_0^t)+\frac{1}{2\mu_y}\norm{\sa{\grad_y\Phi(x_0^t,y_0^t)-s_g(y_0^t)}}^2,\label{eq:G-upper}\\
    \cL^{t}(x,y_0^t)&\geq \cL(x_0^t,y_0^t)+\fprod{\sa{\grad_x\Phi(x_0^t,y_0^t)+s_f(x_0^t)},~x-x_0^t}\sa{+}\frac{\mu_x}{2}\norm{x-x_0^t}^2\nonumber\\
    &\geq \cL(x_0^t,y_0^t)-\frac{1}{2\mu_x}\norm{\sa{\grad_x\Phi(x_0^t,y_0^t)+s_f(x_0^t)}}^2,\label{eq:G-lower}
\end{align}}%
where \sa{$s_f(x_0^t)\in\partial f(x_0^t)$ and $s_g(y_0^t)\in\partial g(y_0^t)$ such that $\norm{s_f(x_0^t)}\leq B_f$ and $\norm{s_g(y_0^t)}\leq B_g$ --see \cref{assump:bounded-subdifferential}; moreover,} we have used the fact that $\cL^{t}(x_0^t,y_0^t)=\cL(x_0^t,y_0^t)$ and \sa{$\partial_x\cL^{t}(x_0^t,y_0^t)=\grad_x\Phi(x_0^t,y_0^t)+\partial f(x_0^t)$}. Thus, \eqref{eq:G-upper} and \eqref{eq:G-lower} imply that
\begin{align*}
    \cG^{t}(z_0^t)
    &=\sup_{\substack{x\in\cX, y\in\cY}}\{\cL(x_0^t,y)-\cL^{t}(x,y_0^t)\}\leq \sa{\left(\norm{\grad\Phi(z_0^t)}^2+\norm{s_f(x_0^t)}^2+\norm{s_g(y_0^t)}^2\right)}/\min\{\mu_x,\mu_y\}\\
    &\leq \sa{\frac{1}{\mu}\left(\norm{\grad\Phi(z_0^t)-\grad\Phi(z_0^0)}^2+\norm{\grad\Phi(z_0^0)}^2+B_f^2+B_g^2\right)}\\
    &\leq \frac{L}{\mu}\norm{z_0^t-z_0^0}^2+\sa{\frac{1}{\mu}\left(\norm{\grad\Phi(z_0^0)}^2+B_f^2+B_g^2\right)},
\end{align*}
where $L=\max\{L_{xx},L_{yy},L_{yx},L_{xy}\}$ and $\mu=\min\{\mu_x,\mu_y\}$. \sa{Finally,  \eqref{eq:intermediate-induction} implies that $\mathbb{E}[\norm{z_0^t-z_0^0}^2]<\infty$; therefore, we can conclude that $\mathbb{E}[\cG^{t}(z_0^t)]<\infty$ for all $t\geq 0$.}
\end{proof}
\sa{Thus, \cref{lem:induction} implies that the analysis given in \cref{sec:pre_analysis_thm1} directly goes through if we replace \cref{assump:compact} with \cref{assump:bounded-subdifferential}, which does not require compactness of the problem domain.}
\section{\sa{Proof of Theorem~\ref{cor:complexity VR} and preliminary technical results}}
\label{sec:Thm4_proof}
\nsa{The general proof structure of Theorem~\ref{cor:complexity VR} is the same with \cref{cor:complexity-fg}'s. The main difference is the way we bound the variance, which is given in \cref{Lemma: SPIDER}.}
\label{sec:VR-proofs}
\subsection{Construction for the iteration complexity result}
\begin{lemma}\label{lemma: Gap bound with fg SPIDER}
Suppose Assumptions~\ref{ASPT: fg}, \ref{ASPT: unbiased noise assumption}, \ref{ASPT: lipshiz gradient VR} and \ref{ASPT: independence} hold.
\sa{Given $\{N_t\}_{t\geq 0}\subset\integers_+$, let $\{x^{t}_{0},y^{t}_{0}\}_{t\geq 0}$ be generated by \xz{\texttt{SAPD+}}, stated in~Algorithm~\ref{Alg: SAPD-plus}, when \texttt{VR-flag}=\textbf{true}, initialized from {$(x_0^0,y_0^0)\in\dom f\times\dom g$}
and using} $\tau,\sigma,\theta,\mu_x>0$ that satisfy 
{
\begin{equation}
    \label{eq: SCSC SAPD LMI SPIDER}
    \sa{G-\diag(g)\succeq 0,}
\end{equation}}%
{for some} $\alpha \in [0,  {\tfrac{1}{\sigma})}$, $\rho \in (0,1]$ and \sa{$\pi_x,\pi_y>0$,} \sa{where $G$ is defined in~\eqref{eq: SCSC SAPD LMI-dist}, $g\triangleq [\pi_x,\pi_y,L_x',L_y',0]^\top$} and 
\begin{align*}
L_x' \triangleq \sa{c(\rho)}~\big( \frac{ \xz{{L'_{xx}}^2}}{\xz{\pi_x}\xzrev{b'_{x}}} + \frac{\xz{2}(1+2\theta+2\theta^2)\rho^{-1} L^2_{yx}}{\xz{\pi_y}{\xzrev{b'_{y}}}}\big), \; 
L_y' \triangleq c(\rho)~\big(\frac{\rho L^2_{xy}}{\xz{\pi_x}\xzrev{b'_{x}}} +  \frac{\xz{2}(1+2\theta+2\theta^2)\rho^{-1} L^2_{yy}}{\xz{\pi_y}{\xzrev{b'_{y}}}}\big),
\end{align*}
such that $c(\rho)=\frac{2}{1-\rho}(\rho^{-q+1}-1)$ for $\rho\in(0,1)$ and $c(\rho)=2(q-1)$ for $\rho=1$, \nsa{where $L'_{xx}\triangleq L_{xx}+\mu_x+\gamma$}.
 Then for all $t\geq {0}$, \sa{it holds that}
\begin{equation}\label{eq:Gap bound with fg-VR}
{\small
\begin{aligned}
  \mathbb{E}\left[\cG^{t}(x^{t+1}_{0}, y^{t+1}_{0})\right] \leq &\frac{1}{K_{N_t}(\rho)}\left[
M^{\texttt{VR}}\Big(\frac{\mu_x}{4}\ED{x^{t}_{*}(y^{t+1}_{0}) - x^{t}_{0}} + \frac{\mu_y}{4}\ED{y_*(x^{t+1}_{0}) - y^{t}_{0}} \Big)+\xzf{\Xi^{\texttt{VR}}(b_0)}
\right]\\ 
&+ \xzf{\frac{K_{N_t}(\rho)-1}{K_{N_t}(\rho)}}\Xi^{\texttt{VR}}\xzf{(b)},
\end{aligned}}%
\end{equation}
where $K_{N_t}(\rho) = \sum_{k=0}^{N_t-1}\rho^{-k}$, $\Xi^{\texttt{VR}}\xzf(b)\triangleq \frac{\delta^2_x}{2\pi_x b}+(1+2\theta+2\theta^2)\frac{\delta^2_y}{\pi_y b}$ and $M^{\texttt{VR}}\triangleq\max\{\frac{2}{\mu_x}(\frac{1}{\tau}-\mu_x),\frac{2}{\mu_y\sigma}\}$.
\end{lemma}
\begin{proof}
For easier readability, we provide the proof in a separate subsection, see~\cref{sec:spider-proof}. 
\end{proof}

\begin{theorem}
\label{thm:WC_SP SPIDER}
\sa{Under the premise of Lemma~\ref{lemma: Gap bound with fg SPIDER}, given an arbitrary $\zeta>0$ and $T\in\integers_+$, suppose $N_t=N$ for all $t=0,\ldots T$ for some $N\in\integers_+$ such that $ N  \geq (1+\zeta) M^{\texttt{VR}}$, and \eqref{eq: additional stepsize condition WCSC}
has a solution for \xzf{$M_{\tau,\sigma,\theta}$ replaced by $M^{\texttt{VR}}$} and some $\beta_{1},\beta_{2}\in(0,1)$ and $p_1,p_2,p_3>0$ such that $p_1+p_2+p_3=1$. If either \cref{assump:compact} or \cref{assump:bounded-subdifferential} holds, then \eqref{eq: sketch convergence result} holds with ${\Xi}_{\tau,\sigma,\theta}=\xzf{\frac{N-1}{N}\Xi^{\texttt{VR}}(b) + \frac{1}{N}\Xi^{\texttt{VR}}(b_0)}$ for $\lambda=(\gamma+\mu_x)^{-1}$ and for all $T\geq 1$.}
\end{theorem}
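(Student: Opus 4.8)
The plan is to recognize that \cref{lemma: Gap bound with fg SPIDER} delivers the per-outer-iteration gap inequality \eqref{eq:Gap bound with fg} in \emph{exactly} the structural form produced by \cref{lemma: Gap bound with fg}, the only difference being that the constants $\sM$ and $\sXi$ are replaced by the variance-reduced quantities $M^{\texttt{VR}}$ and $\Xi^{\texttt{VR}}$. Since the proof of \cref{thm:WC_SP} uses only the \emph{form} of \eqref{eq:Gap bound with fg} and never how $\sM,\sXi$ were obtained, the whole argument transfers after this substitution. Concretely, I would first invoke \cref{lemma: Gap bound with fg SPIDER} to obtain, for every $t\geq 0$,
\begin{equation*}
\mE\left[\cG^t(x_0^{t+1},y_0^{t+1})\right] \leq \frac{M^{\texttt{VR}}}{K_{N}(\rho)}\left(\tfrac{\mu_x}{4}\ED{x^t_*(y_0^{t+1})-x_0^t}+\tfrac{\mu_y}{4}\ED{y_*(x_0^{t+1})-y_0^t}\right)+\Xi^{\texttt{VR}}.
\end{equation*}

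Next I would replay the downstream chain of lemmas under $\sM\mapsto M^{\texttt{VR}}$ and $\sXi\mapsto\Xi^{\texttt{VR}}$. \cref{Lemma: gap convergence} combines the gap bound with the purely geometric inequality \cite[Lemma 1]{yan2020optimal}, which is independent of the noise model, so it holds verbatim and gives
\begin{equation*}
\left(1-\tfrac{M^{\texttt{VR}}}{K_{N}(\rho)}\right)\cG^t(x_0^{t+1},y_0^{t+1}) \leq \tfrac{M^{\texttt{VR}}}{K_{N}(\rho)}\cG^t(x_0^t,y_0^t)+\Xi^{\texttt{VR}}.
\end{equation*}
Both \cref{lemma: different bound of gap} (a restatement of \cite[Lemma 8]{yan2020optimal}) and \cref{Lemma: telescope sum chpt2} are phrased solely through the gap function and do not see the variance term, so the telescoping inequality \eqref{eq: WCSC step inequality chpt2} holds with $\Xi^{\texttt{VR}}$ in place of $\sXi$. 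Summing it over $t=0,\ldots,T$, enforcing the stepsize conditions \eqref{eq: additional stepsize condition WCSC} together with $K_{N}(\rho)\geq(1+\zeta)M^{\texttt{VR}}$ exactly as in \cref{thm:WC_SP}, and applying \cref{Lemma: graident of ME} then yields \eqref{eq: sketch convergence result} with $\Xi^{\texttt{VR}}$, as claimed.

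The one ingredient that must be re-verified rather than merely quoted is the \emph{finiteness} of $\mE[\cG^t(z_0^t)]$, $\mE[\cG^t(z_0^{t+1})]$ and $\mE[\|z_0^t-z_*^t\|^2]$, which is what permits subtracting gap terms to telescope. Under \cref{assump:compact} this is immediate. Under \cref{assump:bounded-subdifferential}, I would rerun the induction of the finiteness lemma used for \cref{thm:unbounded-domains}, which relies on: (i) the recursive bound \eqref{eq:induction-key}, i.e. the VR analog of \cref{lemma: bounded Gap and disance}; (ii) non-expansiveness of $\prox{\lambda\phi}$; (iii) Lipschitzness of the best-response maps (\cref{lem:implicit_function}); and (iv) the strong-convexity/concavity sandwich bounds on $\cG^t(z_0^t)$. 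Only (i) needs re-derivation, and it follows the computation of \cref{lemma: bounded Gap and disance} with \cref{lemma: Gap bound with fg SPIDER} substituted for \cref{lemma: Gap bound with fg}, since that derivation used only the ergodic-gap machinery that the VR lemma also supplies. Because \cref{ASPT: lipshiz gradient VR} implies \cref{ASPT: lipshiz gradient} (almost-sure Lipschitzness of the stochastic gradients forces Lipschitzness of their expectations), ingredients (ii)--(iv) remain available unchanged. The main obstacle is therefore bookkeeping: confirming that no step of the non-VR proof covertly used the explicit form of $\sM$ or $\sXi$, and that the finiteness induction still closes under the weaker VR hypotheses — neither poses a genuine difficulty.
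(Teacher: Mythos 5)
Your proposal takes exactly the paper's route: the paper's entire proof of this theorem is the single line ``the proof is omitted as it is essentially the same with the proof of \cref{thm:WC_SP}'', i.e., replay the non-VR argument of \cref{thm:WC_SP} with the substitutions $\sM\mapsto M^{\texttt{VR}}$ and $\sXi\mapsto\Xi^{\texttt{VR}}$, which is precisely what you do, and correctly so, since \cref{Lemma: gap convergence}, \cref{lemma: different bound of gap} and \cref{Lemma: telescope sum chpt2} depend only on the form of \eqref{eq:Gap bound with fg} and not on how its constants arise. Your additional care about finiteness of $\mE[\cG^{t}(z_0^t)]$ under \cref{assump:bounded-subdifferential} --- requiring a VR analogue of \cref{lemma: bounded Gap and disance} to close the induction --- addresses a detail the paper leaves entirely implicit, and your argument for it (including that \cref{ASPT: lipshiz gradient VR} implies \cref{ASPT: lipshiz gradient}) is sound.
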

\begin{proof}
\sa{The proof is omitted as it is essentially the same with the proof of \cref{thm:WC_SP}.}
\end{proof}

\subsection{Proof of Lemma~\ref{lemma: Gap bound with fg SPIDER} and preliminary technical results}
\label{sec:spider-proof}
\sa{In this section we prove \cref{lemma: Gap bound with fg SPIDER}.} We first state a technical lemma that will be used in our analysis.
\begin{lemma}\label{Lemma: SPIDER}
Suppose Assumptions~\ref{ASPT: fg}, \ref{ASPT: unbiased noise assumption},  \ref{ASPT: lipshiz gradient VR} and \ref{ASPT: independence} hold. Let $\{x_k^t,y_k^t\}_{k\geq 0}$ be \xz{\texttt{VR-SAPD}} iterates generated according to Algorithm~\ref{Alg: SAPD-VR} for solving $\min_x\max_y\cL^{t}(x,y)$.
\sa{Then, 
}
{\small
\begin{subequations}
\label{eq:variance-bound-gap-VR}
    \begin{align}
        &\ED{v_{k}^{t}- \nabla_x\Phi^t(x^{t}_{k},y^{t}_{k+1})}\leq \frac{\delta^2_x}{b}+ \sum_{i=(n_k-1)q+1}^{k}\frac{2\xz{{L'_{xx}}^2}}{\xz{\xzrev{b'_{x}}}}\ED{x^{t}_{i}-x^{t}_{i-1}}+ 
        \frac{2L^2_{xy}}{\xz{\xzrev{b'_{x}}}}\ED{y^{t}_{i+1}-y^{t}_{i}},
        \\
        & \ED{w_{k}^{t}- \nabla_y\Phi^t(x^{t}_{k},y^{t}_{k})}\leq  \frac{\delta^2_y}{b}+\sum_{i=(n_k-1)q+1}^{k}\frac{2L^2_{yx}}{\xz{{\xzrev{b'_{y}}}}}\ED{x^{t}_{i}-x^{t}_{i-1}}+ 
        \frac{2L^2_{yy}}{\xz{{\xzrev{b'_{y}}}}}\ED{y^{t}_{i}-y^{t}_{i-1}},\label{ineq: basic for wk}
    \end{align}
\end{subequations}}%
\na{for all $k>0$ such that $\mo(k,q)\neq 0$, 
where $n_k \triangleq \lceil k/q \rceil$; moreover, if $\mo(k,q)=0$ for any $k>0$,} then
\begin{equation}\label{eq:variance-bound-gap-VR-base}
        \ED{v_{k}^{t}- \nabla_x\Phi^t(x^{t}_{k},y^{t}_{k+1})}\leq   \frac{\delta^2_x}{b},\quad
         \ED{w_{k}^{t}- \nabla_y\Phi^t(x^{t}_{k},y^{t}_{k})}\leq \frac{\delta^2_y}{b}.
\end{equation}
\na{Finally, for $k=0$, 
\cref{eq:variance-bound-gap-VR} and \cref{eq:variance-bound-gap-VR-base} continue to hold after replacing $b$ with $b_0$.}
\end{lemma}
\begin{proof}
Recall that $\tilde\nabla_x\Phi_{I^x_k}^t(x^{t}_{k},y^{t}_{k+1}) \sa{\triangleq} \frac{1}{|I^x_k|} \sum_{\omega_{k}^{x,i}\in  I^x_k} \tilde\nabla_x\Phi^t(x^{t}_{k},y^{t}_{k+1};\omega_{k}^{x,i})$, where
$I_{k}^{x}=\{\omega_{k}^{x,i}\}_{i=1}^{\xzrev{b'_{x}}}$ is \sa{a randomly generated batch with $|I_{k}^{x}|=\xzrev{b'_{x}}$ independent elements which are also independent of $(x^t_{k-1},y^t_{k})$ and $(x^t_k,y^t_{k+1})$.}
According to the definition of \xz{$v_k$} in Algorithm~\ref{Alg: SAPD-VR}, for $\mo(k,q)>0$,
\begin{equation}
    v_{k}^{t}  =  v_{k-1}^{t} + \tilde\nabla_x\Phi_{I_{k}^{x}}^t(x^{t}_{k},y^{t}_{k+1})-\tilde\nabla_x\Phi_{I_{k}^{x}}^t(x^{t}_{k-1},y^{t}_{k}).
\end{equation}
\sa{Therefore,}
{\small
\begin{equation}\label{ineq: basic for vk}
    \begin{aligned}
        &\ED{v_{k}^{t}- \nabla_x\Phi^t(x^{t}_{k},y^{t}_{k+1})}
        \\
        &=  \ED{v_{k-1}^{t} + \tilde\nabla_x\Phi_{I_{k}^{x}}^t(x^{t}_{k},y^{t}_{k+1})-\tilde\nabla_x\Phi_{I_{k}^{x}}^t(x^{t}_{k-1},y^{t}_{k})- \nabla_x\Phi^t(x^{t}_{k},y^{t}_{k+1})} 
        \\
        &=  \ED{v_{k-1}^{t}- \nabla_x\Phi^t(x^{t}_{k-1},y^{t}_{k})  + \nabla_x\Phi^t(x^{t}_{k-1},y^{t}_{k}) - \tilde\nabla_x\Phi_{I_{k}^{x}}^t(x^{t}_{k-1},y^{t}_{k})+ \tilde\nabla_x\Phi_{I_{k}^{x}}^t(x^{t}_{k},y^{t}_{k+1})- \nabla_x\Phi^t(x^{t}_{k},y^{t}_{k+1})}
        \\
        &=  \ED{v_{k-1}^{t}- \nabla_x\Phi^t(x^{t}_{k-1},y^{t}_{k})}\\  
        &\quad+\ED{ \nabla_x\Phi^t(x^{t}_{k-1},y^{t}_{k}) - \tilde\nabla_x\Phi_{I_{k}^{x}}^t(x^{t}_{k-1},y^{t}_{k})+ \tilde\nabla_x\Phi_{I_{k}^{x}}^t(x^{t}_{k},y^{t}_{k+1})- \nabla_x\Phi^t(x^{t}_{k},y^{t}_{k+1})},
    \end{aligned}
\end{equation}}%
where for the last equality we used
{\small$$\mathbb{E}\left[\nabla_x\Phi^t(x^{t}_{k-1},y^{t}_{k}) - \tilde\nabla_x\Phi_{I_{k}^{x}}^t(x^{t}_{k-1},y^{t}_{k})+ \tilde\nabla_x\Phi_{I_{k}^{x}}^t(x^{t}_{k},y^{t}_{k+1})- \nabla_x\Phi^t(x^{t}_{k},y^{t}_{k+1})\right]=0.$$}%
Next, we bound the second expectation on the rhs of \eqref{ineq: basic for vk}. It follows that
{\small
\begin{equation}\label{ineq: basic for vk 2}
    \begin{aligned}
           & \ED{ \nabla_x\Phi^t(x^{t}_{k-1},y^{t}_{k}) - \tilde\nabla_x\Phi_{I_{k}^{x}}^t(x^{t}_{k-1},y^{t}_{k})+ \tilde\nabla_x\Phi_{I_{k}^{x}}^t(x^{t}_{k},y^{t}_{k+1})- \nabla_x\Phi^t(x^{t}_{k},y^{t}_{k+1})} 
           \\
           = & \frac{1}{{\xzrev{b'_{x}}}^2}\mathbb{E}\left[ 
           \|
           \sum_{i=1}^{\xzrev{b'_{x}}}\big( 
           \tilde\nabla_x\Phi^t(x^{t}_{k},y^{t}_{k+1};\omega_{k}^{x,i})- \tilde\nabla_x\Phi^t(x^{t}_{k-1},y^{t}_{k};\omega_{k}^{x,i}) -\nabla_x\Phi^t(x^{t}_{k},y^{t}_{k+1})
           + \nabla_x\Phi^t(x^{t}_{k-1},y^{t}_{k})
           \big)
           \|^2
           \right]
           \\
           = & \frac{1}{{\xzrev{b'_{x}}}^2}\sum_{i=1}^{{\xzrev{b'_{x}}}}\mathbb{E}\left[ 
           \|
           \tilde\nabla_x\Phi^t(x^{t}_{k},y^{t}_{k+1};\omega_{k}^{x,i})- \tilde\nabla_x\Phi^t(x^{t}_{k-1},y^{t}_{k};\omega_{k}^{x,i}) -\nabla_x\Phi^t(x^{t}_{k},y^{t}_{k+1})
           + \nabla_x\Phi^t(x^{t}_{k-1},y^{t}_{k})
           \|^2
           \right]      
           \\
           \leq & \frac{1}{{\xzrev{b'_{x}}}^2}\sum_{i=1}^{{\xzrev{b'_{x}}}}\big(\mathbb{E}\left[ 
           \|
           \tilde\nabla_x\Phi^t(x^{t}_{k},y^{t}_{k+1};\omega_{k}^{x,i})- \tilde\nabla_x\Phi^t(x^{t}_{k-1},y^{t}_{k};\omega_{k}^{x,i}) \|^2\right]
           \big)
           \\
            \leq &  \frac{2\xz{{L'_{xx}}^2}}{{\xzrev{b'_{x}}}}\ED{x^{t}_{k}-x^{t}_{k-1}}+\frac{2L^2_{xy}}{{\xzrev{b'_{x}}}}\ED{y^{t}_{k+1}-y^{t}_{k}},
     \end{aligned}
\end{equation}}%
where the second equality \sa{follows from the stochastic oracle being unbiased --see \cref{ASPT: unbiased noise assumption}, which implies $$\mathbb{E}\left[\nabla_x\Phi^t(x^{t}_{k-1},y^{t}_{k}) - \tilde\nabla_x\Phi^t(x^{t}_{k-1},y^{t}_{k};\omega_{k}^{x,i})+ \tilde\nabla_x\Phi^t(x^{t}_{k},y^{t}_{k+1};\omega_{k}^{x,i})- \nabla_x\Phi^t(x^{t}_{k},y^{t}_{k+1})\right]=0,$$ 
for all $i=1,\ldots,{\xzrev{b'_{x}}}$ and $\{\omega_i^k\}_{i=1}^{{\xzrev{b'_{x}}}}$ being independent}; the first inequality is because $\ED{\zeta-\mathbb{E}[\zeta]}\leq \ED{\zeta}$ for \sa{any} given random variable $\zeta$ with finite second order moment \sa{--we invoke this inequality for} $\zeta = \tilde\nabla_x\Phi^t(x^{t}_{k},y^{t}_{k+1};\omega_{k}^{x,i})-\tilde\nabla_x\Phi^t(x^{t}_{k-1},y^{t}_{k};\omega_{k}^{x,i})$; \sa{and finally, the last inequality follows from \cref{ASPT: lipshiz gradient VR} and the inequality $(a+b)^2\leq 2a^2+2b^2$ for any $a,b\in\reals$.}
Next, if we combine \cref{ineq: basic for vk} and \cref{ineq: basic for vk 2}, we get
\begin{equation*}
    \begin{aligned}
            &\ED{v_{k}^{t}- \nabla_x\Phi^t(x^{t}_{k},y^{t}_{k+1})}
            \\
          \leq & \ED{v_{k-1}^{t}- \nabla_x\Phi^t(x^{t}_{k-1},y^{t}_{k})} +\frac{2\xz{{L'_{xx}}^2}}{{\xzrev{b'_{x}}}}\ED{x^{t}_{k}-x^{t}_{k-1}}+\frac{2L^2_{xy}}{{\xzrev{b'_{x}}}}\ED{y^{t}_{k+1}-y^{t}_{k}}.
    \end{aligned}
\end{equation*}
\sa{Hence, if we sum the above inequality from $(n_k-1)q+1$ to $k$, we get a telescoping sum:}
\begin{equation}\label{ineq: basic for vk 3}
    \begin{aligned}
        &\ED{v_{k}^{t}- \nabla_x\Phi^t(x^{t}_{k},y^{t}_{k+1})}
        \\
        & \leq    \sum_{i=(n_k-1)q+1}^{k}\frac{2\xz{{L'_{xx}}^2}}{\xz{{\xzrev{b'_{x}}}}}\ED{x^{t}_{i}-x^{t}_{i-1}}+ \sum_{i=(n_k-1)q+1}^{k}\frac{2L^2_{xy}}{\xz{{\xzrev{b'_{x}}}}}\ED{y^{t}_{i+1}-y^{t}_{i}} 
        \\ 
        & +
        \ED{v_{(n_k-1)q}-\nabla_x\Phi^t(x^{t}_{(n_k-1)q}, y^{t}_{(n_k-1)q+1})}
        \\
        & \leq  \sum_{i=(n_k-1)q+1}^{k}\frac{2\xz{{L'_{xx}}^2}}{\xz{{\xzrev{b'_{x}}}}}\ED{x^{t}_{i}-x^{t}_{i-1}}+ \sum_{i=(n_k-1)q+1}^{k}\frac{2L^2_{xy}}{\xz{{\xzrev{b'_{x}}}}}\ED{y^{t}_{i+1}-y^{t}_{i}} + \frac{\delta^2_x}{b},
    \end{aligned}
\end{equation}
where \sa{the last inequality follows from \cref{ASPT: unbiased noise assumption} since $\mo((n_k-1) q,q)=0$ and for $\ell\in\integers_+$ such that $\mo(\ell,q)=0$, we have $v_{\ell} = \tilde\nabla_x\Phi^{t}_{\xzrev{\mathcal{B}^{x}_{\ell}}}(x^{t}_{\ell},y^{t}_{\ell+1})=\frac{1}{|\xzrev{\mathcal{B}^{x}_{\ell}}|} \sum_{\xzrev{\omega^{x,i}_{\ell}}\in  \xzrev{\mathcal{B}^{x}_{\ell}}} \tilde\nabla_x\Phi^t(x^{t}_{\ell},y^{t}_{\ell+1};\xzrev{\omega^{x,i}_{\ell}})$, where $\xzrev{\mathcal{B}^{x}_{\ell}}=\{\xzrev{\omega^{x,i}_{\ell}}\}$ is a randomly generated batch with $|\xzrev{\mathcal{B}^{x}_{\ell}}|=b$ independent elements which are also independent of $(x^t_{\ell},y^t_{\ell+1})$. This completes the proof of the case for $k$ such that $\mo(k,q)>0$.}%

When $\mo(k,q) = 0$, it follows from Algorithm~\ref{Alg: SAPD-VR} that $v_{k} = \tilde\nabla_x\Phi^{t}_{\mathcal{B}_k}(x^{t}_{k},y^{t}_{k+1})$. 
\sa{Hence, above discussion yields}
\begin{equation}
    \ED{v_{k}^{t}- \nabla_x\Phi^t(x^{t}_{k},y^{t}_{k+1})} \leq \frac{\delta^2_x}{b}.
\end{equation}
\sa{Finally, the second inequality in~\eqref{ineq: basic for wk} can be shown similarly.}
\end{proof}

Next, we will modify \cref{lem: basic lemma for SCSC} for \xz{\texttt{VR-SAPD}}, stated in Algorithm~\ref{Alg: SAPD-VR}. 
Specifically, instead of using the stochastic oracles $\tilde{\nabla}_x\Phi^t(x^{t}_{k},y^{t}_{k+1};\omega_k^x)$ and $\tilde{\nabla}_y\Phi^t(x^{t}_{k},y^{t}_{k};\omega_k^y)$ as in \cref{lem: basic lemma for SCSC}, we adopt $v_{k}^{t}$ and $w_{k}^{t}$ to estimate ${\nabla}_x\Phi^t(x^{t}_{k},y^{t}_{k+1})$ and ${\nabla}_y\Phi^t(x^{t}_{k},y^{t}_{k})$, respectively. 

\begin{lemma}\label{lem: basic lemma for SCSC VR}
\xz{Suppose Assumptions~\ref{ASPT: fg}, \ref{ASPT: unbiased noise assumption},  and \ref{ASPT: lipshiz gradient VR} hold.} Let $\{x_k^t,y_k^t\}_{k\geq 0}$ be \xz{\texttt{VR-SAPD}} iterates generated according to Algorithm~\ref{Alg: SAPD-VR} \sa{for solving $\min_x\max_y\cL^{t}(x,y)$}. 
Then 
for all $x\in\dom f\subset \cX$, $y\in\dom g\subset \cY$, and $k\geq 0$,
{\small
\begin{equation}\label{D1 VR}
    \begin{aligned}
       \cL^{t}( & x^{t}_{k+1}, y)  - \cL^{t}(x, y^{t}_{k+1})
        \\
    \leq
    &-\langle q^{t}_{k+1}, y^{t}_{k+1} - y \rangle + \theta \langle q^{t}_{k}, y^{t}_{k} - y \rangle  
    + \Lambda^{t}_k(x,y) - \Sigma^{t}_{k+1}(x,y)+ \Gamma^{t}_{k+1}+\sa{\varepsilon^{t,x}_{k}(x)+\varepsilon^{t,y}_{k}(y)},
\end{aligned}
\end{equation}}%
where $\sa{\varepsilon^{t,x}_k(x)}\triangleq 
       \langle v_{k}^{t} - \nabla_x \Phi^{t}(x^{t}_{k}, y^{t}_{k+1}),~x-x^{t}_{k+1} \rangle$ and $\sa{\varepsilon^{t,y}_{k}(y)}\triangleq\langle \tilde{s}^{t}_k -s^{t}_{k}, y^{t}_{k+1} - y \rangle$ for \sa{$\tilde{s}_k^t = (1+\theta) w_{k}^{t} - \theta w_{k-1}^t$} as defined in Algorithm~\ref{Alg: SAPD-VR},
       $q^{t}_{k}$ and $s^{t}_{k}$ are defined as in \eqref{eq:qksk}, and $\Lambda^{t}_{k}(x,y)$, $\Sigma^{t}_{k+1}(x,y)$, $\Gamma^{t}_{k+1}$ are the same with those in \cref{lem: basic lemma for SCSC}.
\end{lemma}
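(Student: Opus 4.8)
The plan is to transcribe the proof of \cref{lem: basic lemma for SCSC} almost verbatim, since the \xz{\texttt{VR-SAPD}} updates in algorithm~\ref{ALG: SAPD-VR} are structurally identical to the \xz{\texttt{SAPD}} updates: the prox steps are $x^{t}_{k+1}=\prox{\tau f}(x^{t}_{k}-\tau v^{t}_{k})$ and $y^{t}_{k+1}=\prox{\sigma g}(y^{t}_{k}+\sigma\tilde{s}^{t}_{k})$, so the only change is that the noisy gradient $\tilde\nabla_x\Phi^{t}(x^{t}_{k},y^{t}_{k+1};\omega_k^x)$ is replaced by the SPIDER estimator $v^{t}_{k}$, and the momentum direction $\tilde{s}^{t}_{k}$ is now $(1+\theta)w^{t}_{k}-\theta w^{t}_{k-1}$. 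The one-step gap inequality \eqref{D1 VR} is a \emph{deterministic} bound that holds for whatever linear term is fed into the two prox operators; the specific stochastic nature of that term only enters through the error quantities $\varepsilon^{t,x}_k$ and $\varepsilon^{t,y}_k$. I would first record that \cref{ASPT: lipshiz gradient VR} together with unbiasedness (\cref{ASPT: unbiased noise assumption}) forces the true gradient $\nabla\Phi^{t}$ to obey the Lipschitz bounds of \cref{ASPT: lipshiz gradient} with the \emph{same} constants (apply Jensen's inequality to the w.p.-$1$ bounds), so every structural ingredient of the original proof remains available.

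Next I would apply Lemma~7.1 of \cite{hamedani2018primal} to the $x$- and $y$-prox steps, now with $v^{t}_{k}$ and $\tilde{s}^{t}_{k}$ as the linear terms. For the $x$-step, the splitting $\fprod{v^{t}_{k},\ \cdot}=\fprod{\nabla_x\Phi^{t}(x^{t}_{k},y^{t}_{k+1}),\ \cdot}+\fprod{v^{t}_{k}-\nabla_x\Phi^{t}(x^{t}_{k},y^{t}_{k+1}),\ \cdot}$ isolates the exact gradient and produces exactly the redefined term $\varepsilon^{t,x}_k(x)=\fprod{v^{t}_{k}-\nabla_x\Phi^{t}(x^{t}_{k},y^{t}_{k+1}),\,x-x^{t}_{k+1}}$. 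For the $y$-step, replacing $\tilde{s}^{t}_{k}$ by the exact direction $s^{t}_{k}=\nabla_y\Phi^{t}(x^{t}_{k},y^{t}_{k})+\theta q^{t}_{k}$ from \eqref{eq:qksk} yields $\varepsilon^{t,y}_k(y)=\fprod{\tilde{s}^{t}_{k}-s^{t}_{k},\,y^{t}_{k+1}-y}$.

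From here the derivation is word-for-word that of \cref{lem: basic lemma for SCSC}: I lower-bound the $x$-inner product by $\mu_x$-strong convexity of $\Phi^{t}(\cdot,y^{t}_{k+1})$, bound $\Phi^{t}(x^{t}_{k+1},y^{t}_{k+1})-\Phi^{t}(x^{t}_{k},y^{t}_{k+1})-\fprod{\nabla_x\Phi^{t}(x^{t}_{k},y^{t}_{k+1}),\,x^{t}_{k+1}-x^{t}_{k}}\le\tfrac{L'_{xx}}{2}\|x^{t}_{k+1}-x^{t}_{k}\|^2$, invoke $\mu_y$-strong concavity of $\Phi^{t}(x^{t}_{k+1},\cdot)$, and use the definition of $q^{t}_{k+1}$ to peel off the telescoping cross terms $-\fprod{q^{t}_{k+1},y^{t}_{k+1}-y}+\theta\fprod{q^{t}_{k},y^{t}_{k}-y}$; the leftover $\theta\fprod{q^{t}_{k},y^{t}_{k+1}-y^{t}_{k}}$ is handled by Cauchy--Schwarz and the Lipschitz bound \eqref{INEQ: Cauchy Ineqaulity 1} on $\|q^{t}_{k+1}\|$, assembling $\Lambda^{t}_k$, $\Sigma^{t}_{k+1}$, and $\Gamma^{t}_{k+1}$ exactly as before. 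None of these steps sees $v^{t}_{k}$ or $w^{t}_{k}$; they involve only the exact gradient $\nabla\Phi^{t}$ and its (unchanged) Lipschitz constants, which is precisely why the conclusion retains the identical form.

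There is no genuine analytical obstacle, as the one-step inequality is insensitive to the choice of gradient estimator; the only point requiring care is the bookkeeping that the redefined $\varepsilon^{t,y}_k$ decomposes into SPIDER estimation errors. Concretely, since $s^{t}_{k}=(1+\theta)\nabla_y\Phi^{t}(x^{t}_{k},y^{t}_{k})-\theta\nabla_y\Phi^{t}(x^{t}_{k-1},y^{t}_{k-1})$ and $\tilde{s}^{t}_{k}=(1+\theta)w^{t}_{k}-\theta w^{t}_{k-1}$, one obtains $\tilde{s}^{t}_{k}-s^{t}_{k}=(1+\theta)\big(w^{t}_{k}-\nabla_y\Phi^{t}(x^{t}_{k},y^{t}_{k})\big)-\theta\big(w^{t}_{k-1}-\nabla_y\Phi^{t}(x^{t}_{k-1},y^{t}_{k-1})\big)$, matching exactly the error structure that \cref{Lemma: SPIDER} is built to bound. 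This identity is the hook that later lets the variance analysis in \cref{lemma: Gap bound with fg SPIDER} proceed, but it plays no role in establishing \eqref{D1 VR} itself.
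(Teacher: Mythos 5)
Your proposal is correct and takes essentially the same route as the paper, whose entire proof is the remark that the argument of \cref{lem: basic lemma for SCSC} goes through verbatim once $\tilde{\nabla}_x\Phi^{t}(x^{t}_{k},y^{t}_{k+1};\omega_k^x)$ and $\tilde{\nabla}_y\Phi^{t}(x^{t}_{k},y^{t}_{k};\omega_k^y)$ are replaced by $v^{t}_{k}$ and $w^{t}_{k}$, exactly as you describe. Your additional observation that \cref{ASPT: lipshiz gradient VR} together with unbiasedness yields (via Jensen) the Lipschitz bounds of \cref{ASPT: lipshiz gradient} for the true gradient $\nabla\Phi^{t}$ is a detail the paper leaves implicit, and it correctly justifies invoking the deterministic smoothness steps under the lemma's stated assumptions.
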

\begin{proof}
\sa{The proof uses the same arguments as the proof of \cref{lem: basic lemma for SCSC}. One only needs to replace $\tilde{\nabla}_x\Phi^t(x^{t}_{k},y^{t}_{k+1})$ and $\tilde{\nabla}_y\Phi^t(x^{t}_{k},y^{t}_{k})$ in the proof of \cref{lem: basic lemma for SCSC} with $v_{k}^{t},w_{k}^{t}$, respectively.}
\end{proof}

\subsection{Proof of Lemma~\ref{lemma: Gap bound with fg SPIDER}}\label{pf:gap-lemma SPIDER}
\begin{proof}
\sa{For simplifying the notation, let $N_t=N$ for some $N\in\integers_+$.}
\xzh{For arbitrary} $(x,y)\in\dom f\times \dom g$, 
 since $(x^{t}_{k+1},y^{t}_{k+1})\in\dom f\times \dom g$, using the concavity 
of $\cL^{t}(x^{t}_{k+1}, \cdot)$
and the convexity
of $\cL^{t}(\cdot, y^{t}_{k+1})$, \cref{lem: basic lemma for SCSC VR} and Jensen's lemma immediately implies that
{\small
\begin{align}
\label{eq:jensen bounded version SPIDER}
  K_N(\rho)   \left(\cL^{t}(\bar{x}^{t}_{N}, y)  - \cL^{t}(x, \bar{y}^{t}_{N}) \right) 
            \leq 
            \sum_{k=0}^{N-1}\rho^{-k} \left( \cL^{t}(x^{t}_{k+1}, y) - \cL^{t}(x, y^{t}_{k+1}) \right),
            \;\sa{\forall}\rho\in \sa{(0,1]},
\end{align}}%
where $\bar{x}^{t}_N=\frac{1}{ K_N(\rho) }\sum_{k=0}^{N-1}\sa{\rho^{-k}}x^{t}_{k+1},~\bar{y}^{t}_N=\frac{1}{ K_N(\rho) }\sum_{k=0}^{N-1}\sa{\rho^{-k}}y^{t}_{k+1}$, and $K_N(\rho)=\sum_{i=0}^{N-1}\rho^{-k}$. Thus, if we multiply both sides of 
\eqref{D1 VR} by $\rho^{-k}$ and sum the resulting inequality from $k=0$ to $N-1$, then using \eqref{eq:jensen bounded version SPIDER} we get
{\footnotesize
\begin{equation}\label{INEQ: difference of gap SPIDER}
    \begin{aligned}
            \sa{K_N(\rho)} & \left(\cL^{t}(\bar{x}^{t}_{N}, x)  - \mathcal{L}(x, \bar{y}^{t}_{N}) \right)  \\
            \leq & 
         \sum_{k=0}^{N-1}\rho^{-k} \Big( 
                -\langle q^{t}_{k+1}, y^{t}_{k+1} - x \rangle + \theta \langle q^{t}_{k}, y^{t}_{k} - x \rangle + \Lambda^{t}_{k} (x,y) - \Sigma^{t}_{k+1} (x,y)+ \Gamma^{t}_{k+1} 
            \\
             & \quad -
           \langle  v_{k}^{t} - \nabla_x \Phi^{t}(x^{t}_{k}, y^{t}_{k+1}) , x^{t}_{k+1} - x \rangle
            +
           \langle \tilde{s}^{t}_k -s^{t}_k, 
          y^{t}_{k+1} - x \rangle
          \Big)
          \\ 
        \leq & 
         \sum_{k=0}^{N-1}\rho^{-k} \Big( 
                \underbrace{-\langle q^{t}_{k+1}, y^{t}_{k+1} - x \rangle + \theta \langle q^{t}_{k}, y^{t}_{k} - x \rangle}_{\textbf{part 1}} + 
                \Lambda^{t}_{k} (x,y) - \Sigma^{t}_{k+1} (x,y) 
                + \Gamma^{t}_{k+1}
            \\
             & \quad  + \frac{\pi_x}{2} \|x^{t}_{k+1} - x \|^2 + \frac{\pi_y}{2}\|y^{t}_{k+1}-x\|^2 +
          \underbrace{\frac{1}{2\pi_x}\| v_{k}^{t} - \nabla_x \Phi^{t}(x^{t}_{k}, y^{t}_{k+1}) \|^2 
            + \frac{1}{2\pi_y}\|\tilde{s}^t_k-s^t_k\|^2 }_{\textbf{part 2}}
          \Big).
    \end{aligned}
\end{equation}}%
The second inequality follows from Young's inequality for some constants $\pi_x,\pi_y>0$. 

\sa{The following bound for \textbf{part1} can be obtained \xzh{from \eqref{eq:noisy-rate-part1-dsit}.} Indeed, for any $k \geq -1$, we get}
{\small
\begin{align}
\label{eq:noisy-rate-part1-gap SPIDER}
         \sum_{k=0}^{N-1}&\rho^{-k} 
                 ( \theta \langle q^{t}_k, y^{t}_{k} - y^{t}_{*} \rangle-\langle q^{t}_{k+1}, y^{t}_{k+1} - y^{t}_{*} \rangle )
             \leq \sum_{k=0}^{N-1}\rho^{-k}|1-{\frac{\theta}{\rho}} |~S^{t}_{k+1}\xzh{ (x,y)}
                 + \rho^{-N+1}{\frac{\theta}{\rho} }S^{t}_{N}\xzh{(x,y)}.
\end{align}}%
where \sa{$S_{k+1}^t\xzh{(x,y)}$ is defined in \eqref{eq:cauchy-S-dist}.} 

Next we consider \textbf{part 2}, recall that $n_k=\lceil k / q \rceil$ such that $(n_k-1)q+1\leq k \leq n_k q$, it follows from \cref{Lemma: SPIDER} that
\begin{equation}\label{eq:noisy-rate-part2-gap.1 SPIDER}
    \begin{aligned}
        &\sum_{k=\sa{0}}^{N-1}\rho^{-k}\ED{v_{k}^{t}- \nabla_x\Phi^t(x^{t}_{k},y^{t}_{k+1})}
        \\&\leq\sum_{\substack{k\in\{1,\ldots,N-1\}\\\text{s.t.}\ \mo(k,q)\neq 0}}\rho^{-k} \sum_{i=(n_k-1)q+1}^{k}\big(\frac{2\xz{{L'_{xx}}^2}}{\xz{{\xzrev{b'_{x}}}}}\ED{x^{t}_{i}-x^{t}_{i-1}}+ \frac{2L^2_{xy}}{\xz{{\xzrev{b'_{x}}}}}\ED{y^{t}_{i+1}-y^{t}_{i}}\big) + \frac{\delta^2_x}{b}\sum_{k=\xzf{1}}^{N-1}\rho^{-k}+\na{\frac{\delta_x^2}{b_0}}
        \\ &\sa{=} \sum_{k=1}^{N-1} \rho^{-k} \big(\frac{2\xz{{L'_{xx}}^2}}{\xz{{\xzrev{b'_{x}}}}}\ED{x^{t}_{k}-x^{t}_{k-1}}+  \frac{2L^2_{xy}}{\xz{{\xzrev{b'_{x}}}}}\ED{y^{t}_{k+1}-y^{t}_{k}} \big) \sa{\sum_{i=0}^{n_kq-k-1}\rho^{-i}}+ \frac{\delta^2_x}{b}\sum_{k=\xzf{1}}^{N-1}\rho^{-k}+\na{\frac{\delta_x^2}{b_0}}
        \\ &\sa{=} \sum_{k=0}^{N-1} \rho^{-k}\cdot\frac{\rho}{1-\rho}(\rho^{-n_kq+k}-1)\big(\frac{2\xz{{L'_{xx}}^2}}{\xz{{\xzrev{b'_{x}}}}}\ED{x^{t}_{k}-x^{t}_{k-1}}+ \frac{2L^2_{xy}}{\xz{{\xzrev{b'_{x}}}}}\ED{y^{t}_{k+1}-y^{t}_{k}}\big) + \frac{\delta^2_x}{b}\sum_{k=\xzf{1}}^{N-1} \rho^{-k}+\na{\frac{\delta_x^2}{b_0}}
        \\ &\leq \sum_{k=0}^{N-1} \rho^{-k}\cdot\frac{\rho}{1-\rho}(\rho^{-q+1}-1)\big(\frac{2\xz{{L'_{xx}}^2}}{\xz{{\xzrev{b'_{x}}}}}\ED{x^{t}_{k}-x^{t}_{k-1}}+ \frac{2L^2_{xy}}{\xz{{\xzrev{b'_{x}}}}}\ED{y^{t}_{k+1}-y^{t}_{k}}\big) + \frac{\delta^2_x}{b}\sum_{k=\xzf{1}}^{N-1} \rho^{-k} +\na{\frac{\delta_x^2}{b_0}}
    \end{aligned}
\end{equation}
\sa{where the first inequality \sa{follows from} \cref{Lemma: SPIDER}, the following equality is by rearranging terms, and for the last inequality we used the following bound: $n_k=\lceil k / q \rceil\leq k/q + (q-1)/q$; hence, $-n_k q+k\geq -q+1$.}
\sa{To bound \textbf{part\;2} in \cref{INEQ: difference of gap SPIDER}, we next consider $\|\tilde s^{t}_k-s^{t}_k\|^2$. {For $k> 0$},}
\begin{equation}\label{eq: bound of tilde s VR}
    \begin{aligned}
         & \|\tilde s^{t}_k-s^{t}_k\|^2 = \|(1+\theta)w_{k}^{t} - (1+\theta)\nabla_y\Phi^{t}(x^{t}_k,y^{t}_{k})  -\theta w_{k-1}^{t} +\theta\nabla_y\Phi^{t}(x^{t}_{k-1},y^{t}_{k-1})\|^2  \\
         &\leq \sa{2}(1+\theta)^2\|w_{k}^{t} - \nabla_y\Phi^{t}(x^{t}_k,y^{t}_{k}) \|^2+ \sa{2}\theta^2\|  w_{k-1}^{t} -\nabla_y\Phi^{t}(x^{t}_{k-1},y^{t}_{k-1})\|^2.
    \end{aligned}
\end{equation}
First, $x^{t}_{-1}=x^{t}_{0}$, $y^{t}_{-1}=y^{t}_{0}$ and
\eqref{eq:qksk} imply that $s^t_{0} = \nabla_y\Phi^{t}(x^{t}_0,y^{t}_{0})$, and recall that in Algorithm~\ref{Alg: SAPD-VR}, we set $\tilde{s}^{t}_{0} = w^t_0$; hence,
$$
\|\tilde s^{t}_0-s^{t}_0\|^2 = \|w_{0}^{t} - \nabla_y\Phi^{t}(x^{t}_0,y^{t}_{0}) \|^2,
$$
and \cref{eq: bound of tilde s VR} holds for $k\geq 0$ with \sa{$w_{-1}^t\triangleq\nabla_y\Phi^{t}(x^{t}_0,y^{t}_{0})$.}
Then, \cref{Lemma: SPIDER} implies that
\begin{equation}
    \begin{aligned}
           & \sum_{k=\sa{0}}^{N-1}\rho^{-k}\ED{\tilde s^{t}_{k}-  s^{t}_{k}} \\
           & \leq  \xz{2}(1+\theta)^2\sum_{\sa{\substack{k\in\{1,\ldots,N-1\}\\\text{s.t.}\ \mo(k,q)\neq 0}}}\rho^{-k}\sum_{i=(n_k-1)q+1}^{k}\big(\frac{2L^2_{yx}}{\xz{{\xzrev{b'_{y}}}}}\ED{x^{t}_{i}-x^{t}_{i-1}}+ \frac{2L^2_{yy}}{\xz{{\xzrev{b'_{y}}}}}\ED{y^{t}_{i}-y^{t}_{i-1}}\big)
           \\
           & \quad + \sa{\frac{\xz{2}\theta^2}{\rho}}\sum_{\sa{\substack{k\in\{1,\ldots,N-2\}\\\text{s.t.}\ \mo(k,q)\neq 0}}}\rho^{-k}\sum_{i=(\sa{n_{k}}-1)q+1}^{\sa{k}}\big(\frac{2L^2_{yx}}{\xz{{\xzrev{b'_{y}}}}}\ED{x^{t}_{i}-x^{t}_{i-1}}+ \frac{2L^2_{yy}}{\xz{{\xzrev{b'_{y}}}}}\ED{y^{t}_{i}-y^{t}_{i-1}} 
           \big)
           \\ 
           & \quad +  2(1+2\theta+2\theta^2)\frac{\delta^2_y}{b}\sum_{k=\xzf{1}}^{N-1}\rho^{-k} + \xzf{2(1+2\theta+2\theta^2)\frac{\delta^2_y}{b_0}}
           \\
           & \leq \xz{\frac{2}{\rho}}(1+2\theta+2\theta^2)\sum_{\sa{\substack{k\in\{1,\ldots,N-1\}\\\text{s.t.}\ \mo(k,q)\neq 0}}}\rho^{-k}\sum_{i=(n_k-1)q+1}^{k}\big(\frac{2L^2_{yx}}{\xz{{\xzrev{b'_{y}}}}}\ED{x^{t}_{i}-x^{t}_{i-1}}+ \frac{2L^2_{yy}}{\xz{{\xzrev{b'_{y}}}}}\ED{y^{t}_{i}-y^{t}_{i-1}}\big)
           \\ 
           & \quad +  \xz{2}(1+2\theta+2\theta^2)\frac{\delta^2_y}{b}\sum_{k=\xzf{1}}^{N-1}\rho^{-k}  + \xzf{2(1+2\theta+2\theta^2)\frac{\delta^2_y}{b_0}},
    \end{aligned}
\end{equation}
where the first inequality follows from \cref{Lemma: SPIDER}; 
in the last inequality we used $\rho\leq 1$ and combined the two sums. Next, as in \cref{eq:noisy-rate-part2-gap.1 SPIDER}, we can further obtain that
\begin{equation}\label{eq:noisy-rate-part2-gap.2 SPIDER}
    \begin{aligned}
           & \sum_{k=0}^{N-1}\rho^{-k}\ED{\tilde s^{t}_{k}-  s^{t}_{k}} \\
           & \leq \xz{2}(1+2\theta+2\theta^2)\sum_{k=0}^{N-1} \rho^{-k}\cdot\frac{1}{1-\rho}(\rho^{-q+1}-1)\big(\frac{2L^2_{yx}}{\xz{{\xzrev{b'_{y}}}}}\ED{x^{t}_{k}-x^{t}_{k-1}}+ \frac{2L^2_{yy}}{\xz{{\xzrev{b'_{y}}}}}\ED{y^{t}_{k}-y^{t}_{k-1}}\big)
           \\& +\xz{2}(1+2\theta+2\theta^2)\frac{\delta^2_y}{b}\sum_{k=\xzf{1}}^{N-1}\rho^{-k} + \xzf{2(1+2\theta+2\theta^2)\frac{\delta^2_y}{b_0}}.
    \end{aligned}
\end{equation}
Now we can bound \textbf{part\;2} in \cref{INEQ: difference of gap SPIDER} using \cref{eq:noisy-rate-part2-gap.1 SPIDER} and \cref{eq:noisy-rate-part2-gap.2 SPIDER}. In addition, Given {$(\bar{x}^{t}_N,\bar{y}^{t}_N)$}, the point $(x^{t}_{*}(\bar{y}^{t}_N),y_{*}(\bar{x}^{t}_N))\triangleq\argmax_{(x,y)\in\cX\times\cY}\cL^{t}(\bar{x}^{t}_{N},y)-\cL^{t}(x,\bar{y}^{t}_{N})$ uniquely exists. We will use the fact that
$$
\cG^t(\bar{x}^{t}_{N},\bar{y}^{t}_{N})=\sup_{(x,y)\in\cX\times\cY}\cL^{t}(\bar{x}^{t}_{N},y)-\cL^{t}(x,\bar{y}^{t}_{N})=\cL^{t}(\bar{x}^{t}_{N},y_{*}(\bar{x}^{t}_N))-\cL^{t}(x^{t}_{*}(\bar{y}^{t}_N),\bar{y}^{t}_{N})
$$
to complete the proof. 

  {Recall} that we defined $ D^{t}_{N}(x,y){=}\frac{1}{2\rho}(\frac{1}{\tau}-\mu_x) \|x^{t}_{N}-x\|^2 +  \frac{1}{2}{\left(\frac{1}{\rho\sigma} -  \alpha  \right)}\|y^{t}_{N}-y\|^2$; \nsa{first,} we substitute $(x,y) = (x^{t}_{*}(\bar{y}^{t}_N),y_{*}(\bar{x}^{t}_N))$ into \eqref{INEQ: difference of gap SPIDER}, and then
add $\rho^{-N+1} D^{t}_{N}\xzh{(x^{t}_{*}(\bar{y}^{t}_N),y_{*}(\bar{x}^{t}_N))}$ to both sides of \eqref{INEQ: difference of gap SPIDER}. \nsa{Finally,} taking the expectation of the new inequality, and then using \cref{eq:noisy-rate-part1-gap SPIDER}, \cref{eq:noisy-rate-part2-gap.1 SPIDER} and \cref{eq:noisy-rate-part2-gap.2 SPIDER} to bound \textbf{part\;1} and \textbf{part\;2}, we obtain
{
\begin{eqnarray}
\label{INEQ: ergordic gap  SPIDER}
\lefteqn{\mathbb{E}\big[K_N(\rho) \xzh{\cG^t(\bar{x}^{t}_{N},\bar{y}^{t}_{N})}+ {\rho}^{-N+1} D^{t}_{N}\xzh{(x^{t}_{*}(\bar{y}^{t}_N),y_{*}(\bar{x}^{t}_N))} \big]}\nonumber\\
&&\leq 
      \mathbb{E}\left[\hat{U}^{t}_{N}\xzh{(x^{t}_{*}(\bar{y}^{t}_N),y_{*}(\bar{x}^{t}_N))}\right] + \Big(\frac{\delta^2_x}{2\pi_x b}+(1+2\theta+2\theta^2)\frac{\delta^2_y}{\xz{\pi_y b}}\Big)\sum_{k=\xzf{1}}^{N-1}{\rho}^{-k} 
      \\
      &&+ \xzf{\Big(\frac{\delta^2_x}{2\pi_x b_0}+(1+2\theta+2\theta^2)\frac{\delta^2_y}{\xz{\pi_y b_0}}\Big)}, 
\end{eqnarray}}%
where $\hat{U}^{t}_{N}(x,y)$  is defined as
{
\small
\begin{equation}\label{eq:UPQ SPIDER}
\begin{aligned}
    & \hat{U}^{t}_{N}(x,y) \triangleq 
        \sum_{k=0}^{N-1}{\rho}^{-k}
      \Big(\Gamma^{t}_{k+1} + \Lambda^{t}_k{(x,y)} - \Sigma^{t}_{k+1} \sa{(x,y)} 
      \\
      & + |1-{\frac{\theta}{\rho}} |~S^{t}_{k+1}\xzh{(x,y)}
       + \frac{\pi_x}{2} \|x^{t}_{k+1} - x \|^2 + \frac{\pi_y}{2}\|y^{t}_{k+1}-y\|^2
      \Big) \\
      & +\sum_{k=0}^{N-1}\frac{\rho^{-k+1}}{1-\rho}(\rho^{-q+1}-1)\left(
      \left(\xz{\frac{ \xz{{L'_{xx}}^2}}{\xz{\pi_x}{\xzrev{b'_{x}}}} 
      +\frac{2(1+2\theta+2\theta^2)\rho^{-1}L^2_{yx}}{\pi_y {\xzrev{b'_{y}}}}}
      \right)
      \|x^{t}_{k}-x^{t}_{k-1}\|^2
      + 
      \frac{ L^2_{xy}}{\xz{\pi_x}\xz{{\xzrev{b'_{x}}}}}\|y^{t}_{k+1}-y^{t}_{k}\|^2 
      \right)
      \\&
      +  
      \sum_{k=0}^{N-1}\frac{\rho^{-k+1}}{1-\rho}(\rho^{-q+1}-1)\frac{\xz{2}(1+2\theta+2\theta^2)\rho^{-1} L^2_{yy}}{\pi_y\xz{{\xzrev{b'_{y}}}}}\|y^{t}_{k}-y^{t}_{k-1}\|^2 
      - 
      \rho^{-N+1}(- D^{t}_{N}(x,y) - {\frac{\theta}{\rho}} S^{t}_{N}\xzh{(x,y)}).
    \end{aligned}
\end{equation}}%
{The remaining part of the analysis directly follows from the arguments we used in the proof of \cref{lemma: dist bound with fg-general}.} 
We can analyze $\hat{U}^{t}_{N}\xzh{(x^{t}_{*}(\bar{y}^{t}_N),y_{*}(\bar{x}^{t}_N))}$ \sa{through writing it as a telescoping sum}. 
After adding and subtracting $\tfrac{\alpha}{2}\|y^{t}_{k+1}-y^{t}_{k}\|^2$,  and rearranging the terms, we get
{
\begin{equation}
\label{eq:U_def SPIDER}
    \begin{aligned}
           \hat{U}^{t}_{N}\xzh{(x^{t}_{*}(\bar{y}^{t}_N),y_{*}(\bar{x}^{t}_N))} 
           =  & 
           \frac{1}{2}\sum_{k=0}^{N-1}{\rho}^{-k}\Big({{\xi ^{*}_k}}^\top \hat{A}{\xi ^{*}_k}-{\xi ^{*}_{k+1}}^\top \hat{B} {\xi ^{*}_{k+1}}\Big)
           \\
           & -{\rho}^{-N+1}( -  D^{t}_{N}\xzh{(x^{t}_{*}(\bar{y}^{t}_N),y_{*}(\bar{x}^{t}_N))} - {\frac{\theta}{\rho}} \sa{S^{t}_{N}}\xzh{(x^{t}_{*}(\bar{y}^{t}_N),y_{*}(\bar{x}^{t}_N))})\\
           =  & 
           \frac{1}{2}{\xi^{*}_0}^\top \hat{A}\xi^{*}_0- \frac{1}{2}\sum_{k=1}^{N-1}{\rho}^{-k+1}[{{\xi ^{*}_k}}^\top{( \hat{B} 
           - \tfrac{1}{\rho}\hat{A})}{\xi ^{*}_k}]
           \\
           & -\rho^{-N + 1 }( \frac{1}{2}{\xi ^{*}_N}^\top   \hat{B} {\xi ^{*}_N}  -  D^{t}_{N}\xzh{(x^{t}_{*}(\bar{y}^{t}_N),y_{*}(\bar{x}^{t}_N))} -{\frac{\theta}{\rho}} S^{t}_{N}\xzh{(x^{t}_{*}(\bar{y}^{t}_N),y_{*}(\bar{x}^{t}_N))}),    
    \end{aligned}
\end{equation}}%
where ${\xi ^{*}_k}\in\reals^5$ is defined for $k\geq 0$ as follows:
{
${\xi ^{*}_k} \triangleq \left( 
    \begin{array}{*{20}{c}}
  \|x^{t}_{k} - \xzh{x^{t}_{*}(\bar{y}^{t}_N)}\| \\ 
  \|y^{t}_{k} - \xzh{y_{*}(\bar{y}^{t}_N)}\| \\ 
  \|x^{t}_{k} - x^{t}_{k-1}\| \\ 
  \|y^{t}_{k} - y^{t}_{k-1}\| \\
  \|y^{t}_{k+1} - y^{t}_{k}\| 
   \end{array}
   \right)$ {\normalsize such that $x^{t}_{-1} = x^{t}_{0}$, $y^{t}_{-1} = y^{t}_{0}$; and $\hat{A}, \hat{B} \in\reals^{5\times 5}$ are defined as}:
$$\hat{A} \triangleq   
  \begin{pmatrix}
    \frac{1}{\tau}-\mu_x & 0 & 0 & 0 & 0\\ 
  0 & \frac{1}{\sigma} & 0 & 0 & 0\\ 
  0 & 0 & \rho \sa{L_x'} & 0 & {\theta L_{yx}}\\ 
  0& 0 & 0 &  \rho \sa{L_y^+} & {\theta L_{yy}}\\
  0 & 0 & \theta L_{yx} & \theta L_{yy} & -\alpha 
\end{pmatrix},
$$
$$
   \hat{B} \triangleq   
  \begin{pmatrix}
    \frac{1}{\tau} -\pi_x& 0 & 0 & 0 & 0\\ 
  0 & \frac{1}{\sigma}+\mu_y-\pi_y & \sa{-|1-\frac{\theta}{\rho} |}~L_{yx} & \sa{-|1-\frac{\theta}{\rho} |}~L_{yy} & 0\\ 
  0 & \sa{-|1-\frac{\theta}{\rho} |}~L_{yx} & \tfrac{1}{\tau} - L'_{xx} & 0 & 0\\ 
  0& \sa{-|1-\frac{\theta}{\rho} |}~L_{yy} & 0 & \frac{1}{\sigma} - \alpha - \sa{L_y^-} & 0\\
  0 & 0 & 0 & 0 & 0
\end{pmatrix},$$}
with 
\begin{align*}
       & \sa{L_x'} \triangleq \frac{2}{1-\rho}(\rho^{-q+1}-1)
       \big(
       \frac{ \xz{{L'_{xx}}^2}}{\pi_x\xz{{\xzrev{b'_{x}}}}} 
       + 
       \frac{\xz{2}(1+2\theta+2\theta^2)\rho^{-1} L^2_{yx}}{\pi_y\xz{{\xzrev{b'_{y}}}}}
       \big),
    \\
   & \sa{L_y^+} \triangleq \frac{2}{1-\rho}(\rho^{-q+1}-1)\frac{\xz{2}(1+2\theta+2\theta^2)\rho^{-1} L^2_{yy}}{\pi_y\xz{{\xzrev{b'_{y}}}}},
   \\
   & \sa{L_y^-} \triangleq\frac{2\rho}{1-\rho}(\rho^{-q+1}-1)\frac{ L^2_{xy}}{\xz{\pi_x {\xzrev{b'_{x}}}}}.
\end{align*}
\sa{Using the same argument as in the proof of Lemma~\ref{lem:equivalent_systems}, and noticing that $L'_y$ in \cref{eq: SCSC SAPD LMI SPIDER} can be written as $L'_y=L_y^+ + L_y^-$, one can show that \cref{eq: SCSC SAPD LMI SPIDER} holds if and only if 
$\hat{B} -\tfrac{1}{\rho}\hat{A}\succeq 0$.} Therefore, it follows from \eqref{eq:U_def SPIDER} that 
{
\begin{align*}
    \hat{U}^{t}_{N}\xzh{(x^{t}_{*}(\bar{y}^{t}_N),y_{*}(\bar{x}^{t}_N))} \leq &  \sa{\frac{1}{2}{\xi^{*}_0}^\top \hat{A}\xi^{*}_0} \\
    & - {\rho}^{-N + 1 } \Big( \tfrac{1}{2}{\xi ^{*}_N}^\top  \hat{B} {\xi ^{*}_N}  -   D^{t}_{N}\xzh{(x^{t}_{*}(\bar{y}^{t}_N),y_{*}(\bar{x}^{t}_N))} -\xzh{\frac{\theta}{\rho}} \sa{S^{t}_{N}}\xzh{(x^{t}_{*}(\bar{y}^{t}_N),y_{*}(\bar{x}^{t}_N))}\Big), 
\end{align*}}
\text{holds w.p.~1. } Furthermore, define
$$
G''' \triangleq \begin{pmatrix}
     \frac{1}{\sigma}(1-\frac{1}{\rho})+\mu_y-\pi_y+ \alpha  & \big(\sa{-|1-\frac{\theta}{\rho} |}-\frac{\theta}{\rho} \big)L_{yx} & \big(\sa{-|1-\frac{\theta}{\rho} |}-\frac{\theta}{\rho} \big)L_{yy} \\ 
   \big(\sa{-|1-\frac{\theta}{\rho} |}-\frac{\theta}{\rho}\big)L_{yx} & \tfrac{1}{\tau} - L'_{xx} & 0 \\ 
   \big(\sa{-|1-\frac{\theta}{\rho} |}-\frac{\theta}{\rho} \big)L_{yy} & 0 & \frac{1}{\sigma} - \alpha - \sa{L_y^{-}}
\end{pmatrix},
$$
and recall that  $ D^{t}_{N}(x,y)=\frac{1}{2\rho}(\frac{1}{\tau}-\mu_x) \|x^{t}_{N}-x\|^2 +  \frac{1}{2}{\left(\frac{1}{\rho\sigma} -  \alpha  \right)}\|y^{t}_{N}-y\|^2$. \sa{Using a similar argument as in the proof of \cref{lemma: sub positive matrix}, we can show that \cref{eq: SCSC SAPD LMI SPIDER} implies}
{
\begin{eqnarray*}
    \lefteqn{\frac{1}{2}{\xi ^{*}_N}^\top \hat{B}  {\xi ^{*}_N}  -  D^{t}_{N}{\Big(x^{t}_{*}(\bar{y}^{t}_N),y_{*}(\bar{x}^{t}_N)\Big)} -\frac{\theta}{\rho} \sa{S^{t}_{N}}{\Big(x^{t}_{*}(\bar{y}^{t}_N),y_{*}(\bar{x}^{t}_N)\Big)}}\\
    &&=
\frac{1}{2}{\xi ^{*}_N}^\top  
     \begin{pmatrix}
       \frac{1}{\tau}(1-\frac{1}{\rho}) + \frac{\mu_x}{\rho}-\pi_x 
       & \mathbf{0}_{1\times3} & 0\\ 
       \mathbf{0}_{3\times 1} & G''' & \mathbf{0}_{3\times 1} \\
       0 & \mathbf{0}_{1\times3} & 0\\ 
     \end{pmatrix}
{\xi ^{*}_N}
  \geq 
0.
\end{eqnarray*}}%
Finally, \sa{since $x^{t}_{-1} = x^{t}_{0}$, $y^{t}_{-1} = y^{t}_{0}$, we have}
$$
\frac{1}{2}{\xi^{*}_0}^\top \hat{A}\xi^{*}_0\sa{\leq} \Big(\frac{1}{2\tau}-\frac{\mu_x}{2}\Big)\norm{x^{t}_{*}(\bar{y}^{t}_N)-x^{t}_{0}}^2+\frac{1}{2\sigma}\norm{y_{*}(\bar{x}^{t}_N)-y^{t}_{0}}^2.
$$ 
Therefore, we obtain that 
$$
\hat{U}^{t}_{N}(x^{t}_{*}(\bar{y}^{t}_N),y_{*}(\bar{x}^{t}_N))\leq \Big(\frac{1}{2\tau}-\frac{\mu_x}{2}\Big)\norm{x^{t}_{*}(\bar{y}^{t}_N)-x^{t}_{0}}^2+\frac{1}{2\sigma}\norm{y_{*}(\bar{x}^{t}_N)-y^{t}_{0}}^2,\text{ holds w.p.~1. }
$$
\sa{Now, we are ready to show the desired result of \cref{lemma: Gap bound with fg SPIDER}.} 
 Since $ D^{t}_{N}(x^{t}_{*}(\bar{y}^{t}_N),y_{*}(\bar{x}^{t}_N))\geq 0$, it follows from \eqref{INEQ: ergordic gap  SPIDER} that 
{\small
\begin{align*}
K_N(\rho)\mathbb{E}\big[ \cG^t(\bar{x}^{t}_{N},\bar{y}^{t}_{N})\big] \leq 
      &\mathbb{E}\Big[\left(\frac{1}{2\tau}-\frac{\mu_x}{2}\right)\|x^{t}_*(\bar{y}^{t}_N)-x^{t}_{0}\|^2
      +\frac{1}{2\sigma}\|y_{*}(\bar{x}^{t}_{N})-y^{t}_{0}\|^2 \Big]\\
       &\quad+\xzf{\rho^{-1}K_{N-1}(\rho)} \Big(\frac{\delta^2_x}{2\pi_x b}+(1+2\theta+2\theta^2)\frac{\delta^2_y}{\sa{\pi_y b}}\Big)+ \xzf{\Big(\frac{\delta^2_x}{2\pi_x b_0}+(1+2\theta+2\theta^2)\frac{\delta^2_y}{{\pi_y b_0}}\Big)}.
\end{align*}}%
Then dividing both side by $K_N(\rho)$ completes the proof.
\end{proof}
\subsection{A particular parameter choice}
We employ the 
matrix inequality~(MI) in~\cref{eq: SCSC SAPD LMI SPIDER} to describe the admissible set of \texttt{VR-SAPD} parameters that guarantee convergence.
\sa{Next, in \cref{LEMMA: sp parameter VR}, we  compute a particular solution to it 
 by exploiting its
 structure.} 
\begin{lemma}\label{LEMMA: sp parameter VR}
\sa{For any $\mu_x>0$, let $\xz{L'_{xx}=} L_{xx}+\gamma+\mu_x$. Let $\theta\in(0,1]$ and $\tau, \sigma>0$ \sa{be chosen as}
{\small
\begin{equation}
\label{eq:sufficient_cond_noisy_LMI-2}
\theta=1,\quad \tau = \frac{1}{L_{yx}+\sa{L'_{xx}}+\xz{L'_x}}, \quad \sigma = \frac{1}{2L_{yy}+L_{yx}+\xz{L'_y}},
\end{equation}}%
where $L'_x$ and $L'_y$ are defined in \cref{lemma: Gap bound with fg SPIDER}. Then $\{\tau, \sigma,\theta,\alpha,\rho, \pi_x,\pi_y\}$ is a solution to \eqref{eq: SCSC SAPD LMI SPIDER}  for $\rho=1$, $\pi_x=\mu_x$, $\pi_y=\mu_y$ and $\alpha =  L_{yx}+L_{yy}$.}
\end{lemma}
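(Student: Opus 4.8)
The plan is to substitute the proposed parameter values directly into the matrix inequality \eqref{eq: SCSC SAPD LMI SPIDER} and reduce the claim to a positive-semidefiniteness check on a single small principal submatrix. First I would set $\rho=\theta=1$ in the matrix $G$ of \eqref{eq: SCSC SAPD LMI}. At these values every factor $\frac{\theta}{\rho}-1$ vanishes, so the $(2,3),(2,4),(3,2),(4,2)$ off-diagonal couplings disappear; moreover $\frac{1}{\tau}(1-\frac{1}{\rho})$ and $\frac{1}{\sigma}(1-\frac{1}{\rho})$ vanish, collapsing the $(1,1)$ and $(2,2)$ entries to $\mu_x$ and $\mu_y$, respectively, while $\frac{\theta}{\rho}=1$ keeps the couplings of the last coordinate with the increment coordinates equal to $-L_{yx}$ and $-L_{yy}$. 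Since $\rho=\theta=1$, the sign term $-\lvert 1-\frac{\theta}{\rho}\rvert$ of \cref{lem:equivalent_systems} is also zero, so no distinction between $G$ and $G'$ is needed.

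Next I would subtract $\diag(g)=\diag(\pi_x,\pi_y,L'_x,L'_y,0)$ with $\pi_x=\mu_x$ and $\pi_y=\mu_y$, which annihilates the $(1,1)$ and $(2,2)$ entries. The first two rows and columns of $G-\diag(g)$ are then identically zero, so $G-\diag(g)\succeq 0$ is equivalent to the positive semidefiniteness of the $3\times 3$ principal block $M$ indexed by the remaining coordinates,
\[
M=\begin{pmatrix} \tfrac{1}{\tau}-L'_{xx}-L'_x & 0 & -L_{yx}\\ 0 & \tfrac{1}{\sigma}-\alpha-L'_y & -L_{yy}\\ -L_{yx} & -L_{yy} & \alpha\end{pmatrix}.
\]

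I would then evaluate the diagonal of $M$. Using $c(1)=2(q-1)$ and $1+2\theta+2\theta^2=5$ at $\theta=1$ in the definitions of $L'_x,L'_y$ from \cref{lemma: Gap bound with fg SPIDER}, together with the step sizes in \eqref{eq:sufficient_cond_noisy_LMI-2}, a short computation gives $\tfrac{1}{\tau}-L'_{xx}-L'_x=L_{yx}$ and, with $\alpha=L_{yx}+L_{yy}$, $\tfrac{1}{\sigma}-\alpha-L'_y=2L_{yy}+L_{yx}-\alpha=L_{yy}$, while the $(3,3)$ entry is $\alpha=L_{yx}+L_{yy}$. Thus $M$ has diagonal $(L_{yx},L_{yy},L_{yx}+L_{yy})$, and for any $\mathbf{x}=(x_1,x_2,x_3)^\top$ the quadratic form completes into squares,
\[
\mathbf{x}^\top M\mathbf{x}=L_{yx}(x_1-x_3)^2+L_{yy}(x_2-x_3)^2\geq 0,
\]
since $L_{yx},L_{yy}\geq 0$. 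This yields $M\succeq 0$ and hence $G-\diag(g)\succeq 0$.

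I expect no substantial obstacle: the argument is essentially a specialization-and-simplification followed by a completion of squares. The only care required is the bookkeeping of the two diagonal cancellations, i.e.\ matching the reciprocal step sizes $\tfrac{1}{\tau},\tfrac{1}{\sigma}$ against the correct specializations of $L'_x,L'_y$ at $\rho=\theta=1$, and then recognizing the sum-of-squares decomposition of $M$. As an alternative to the direct quadratic-form computation, the reduction to $M$ and its semidefiniteness could also be phrased through the sub-matrix argument already used in \cref{lemma: sub positive matrix}, but the completion-of-squares route is the cleanest.
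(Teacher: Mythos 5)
Your proof is correct and takes essentially the same approach as the paper: both arguments specialize the LMI at $\rho=\theta=1$, $\pi_x=\mu_x$, $\pi_y=\mu_y$ and reduce \eqref{eq: SCSC SAPD LMI SPIDER} to positive semidefiniteness of the same $3\times 3$ block with diagonal $(L_{yx},\,L_{yy},\,L_{yx}+L_{yy})$. Your completion of squares $L_{yx}(x_1-x_3)^2+L_{yy}(x_2-x_3)^2$ is precisely the paper's decomposition of that block into $M_1+M_2$ with $M_1\succeq 0$ and $M_2\succeq 0$, just written at the level of the quadratic form.
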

\begin{proof}
Define ${\everymath={\scriptstyle}
M_1 
 \triangleq 
 \begin{pmatrix}
 \frac{1}{\tau}  - \sa{L'_{xx}} -\xz{L'_x} & 0 & - L_{yx}\\ 
 0 & 0 & 0 \\
 - L_{yx} & 0 &L_{yx} 
\end{pmatrix}}$ and 
${\everymath={\scriptstyle}
M_2 \triangleq 
    \begin{pmatrix}
  0 & 0 & 0\\ 
 0 & \frac{1}{\sigma} \sa{-\alpha} -\xz{L'_y} & - L_{yy}\\
 0 & - L_{yy} & L_{yy} 
\end{pmatrix}}$.
\sa{Our choice of $\{\rho,\pi_x,\pi_y,\alpha\}$ implies that \eqref{eq: SCSC SAPD LMI SPIDER} holds whenever}
{\footnotesize
\begin{equation*}
\begin{aligned}
M_1 + M_2=
\begin{pmatrix}
  \tfrac{1}{\tau} - \sa{L'_{xx}}-\xz{L'_x} & 0 & - L_{yx}\\ 
 0 &\tfrac{1}{\sigma}- \alpha - \xz{L'_y}& - L_{yy}\\
 - L_{yx} & - L_{yy} & L_{yx} +L_{yy}
\end{pmatrix}\succeq \mathbf{0}.
\end{aligned}
\end{equation*}}%
\sa{Our choice of $\alpha=L_{yx}+L_{yy}$, and $\tau,\sigma>0$ as in \eqref{eq:sufficient_cond_noisy_LMI-2} implies that $M_1 
 \succeq 0$ and $M_2 \succeq 0$.} Thus, $M_1+M_2\succeq 0$. 
\end{proof}
Next, based on 
this lemma, we will give an explicit parameter choice for Algorithm~\ref{Alg: SAPD-VR}.
\subsection{Proof of Theorem~\ref{cor:complexity VR}}
\begin{proof}
\cref{LEMMA: sp parameter VR} implies \sa{that our choice of $\{\tau,\sigma,\theta,\alpha,\rho,\pi_x,\pi_y\}$ ensures that \cref{eq: SCSC SAPD LMI SPIDER}
holds. For the outer loop, if we set N as in~\eqref{eq: speical paramater VR innner loop} and}
\begin{equation}\label{eq: speical paramater VR outerloop}
    \begin{aligned}
        p_1 = \frac{1}{16},~p_2 = \frac{19}{32},~p_3=\frac{11}{32},~\beta_1 = \frac{4}{5},~\beta_2 = \frac{1}{2},~\zeta=32,
    \end{aligned}
\end{equation}
all assumptions of \cref{thm:WC_SP SPIDER} are satisfied, i.e., both the 
\sa{inequality system in~\eqref{eq: additional stepsize condition WCSC} \xzf{holds with $M_{\tau,\sigma,\theta}$ replaced by $M^{\texttt{VR}}$} and $N\geq (1+\zeta)M^{\texttt{VR}}$ holds. Specifically,  $M^{\texttt{VR}}=2\max\{\frac{1}{\gamma\tau}-1,~\frac{1}{\mu_y\sigma}\}$; thus, $N\geq (1+\zeta)M^{\texttt{VR}}$ trivially holds by our choice of $N$ in \eqref{eq: speical paramater VR innner loop}. The proof of \cref{eq: additional stepsize condition WCSC} holding 
for parameters in~\eqref{eq: speical paramater VR outerloop} follows directly from the proof of \cref{cor:complexity-fg}.}

Since all assumptions of \cref{thm:WC_SP SPIDER} are satisfied \sa{for $\mu_x = \gamma$, $\{\tau,\sigma,\theta\}$ as in \eqref{eq:sufficient_cond_noisy_LMI-2}, $N$ and $b$ as in \cref{eq: speical paramater VR innner loop} and other parameters chosen as in \cref{eq: speical paramater VR outerloop}, if
we substitute $\mu_x=\gamma$ and the specific parameter values given in~\cref{eq: speical paramater VR outerloop} into} \cref{eq: sketch convergence result} with \xzf{$\xzf{\Xi_{\tau,\sigma,\theta}=\frac{N-1}{N}\Xi^{\texttt{VR}}(b) + \frac{1}{N}\Xi^{\texttt{VR}}(b_0) }= \frac{N-1}{N}(\frac{\delta^2_x}{2\gamma b}+\sa{5}\frac{\delta^2_y}{\sa{\mu_y}b}) +
\frac{1}{N}(\frac{\delta^2_x}{2\gamma b_0}+\sa{5}\frac{\delta^2_y}{\sa{\mu_y}b_0})$}, it follows that 
\begin{equation}
    \begin{aligned}
        \frac{1}{T+1}\sum_{t=0}^{T}\mathbb{E}\left[\|\nabla\phi_{\lambda}(x^t_0) \|^2 \right]
        \leq
        &   
        48\gamma\left(\frac{1}{T+1}\cG(x_0^0,y_0^0)
        \xzf{
        +\frac{N-1}{N}(\frac{\delta^2_x}{2\gamma b}+\sa{5}\frac{\delta^2_y}{\sa{\mu_y}b}) +
\frac{1}{N}(\frac{\delta^2_x}{2\gamma b_0}+\sa{5}\frac{\delta^2_y}{\sa{\mu_y}b_0})
        } \right).
    \end{aligned}
\end{equation}
Thus, for any $\epsilon>0$,
the right side of the above inequality can be bounded by $\epsilon^2$ when
\begin{equation}\label{eq: split bound VR}
    \frac{48\gamma}{T+1}\cG(x_0^0,y_0^0)\leq \frac{\epsilon^2}{6},\quad \frac{24\delta_x^2}{ b}\leq\frac{\epsilon^2}{\xzf{12}},\quad \frac{240\gamma\delta_y^2}{ \mu_y b}\leq\frac{\epsilon^2}{\xzf{3}},\quad 
    \xzf{\frac{24\delta_x^2}{N b_0}\leq\frac{\epsilon^2}{12},}\quad 
    \xzf{\frac{240\gamma\delta_y^2}{ N\mu_y b_0}\leq\frac{\epsilon^2}{3}}.
\end{equation}
\sa{Our choice of $b$ \sa{in~\eqref{eq: speical paramater VR innner loop}} and $T\geq 288\cG(x_0^0,y_0^0)\frac{\gamma}{\epsilon^2}$ ensures that all the inequalities in~\eqref{eq: split bound VR} hold.}
\sa{Moreover, our choice of $N$ {and $\{\tau,\sigma,\theta\}$}in \eqref{eq: speical paramater VR innner loop}  and $\rho=1$ together with the definitions of $L_x'$ and $L_y'$ in \cref{lemma: Gap bound with fg SPIDER}  implies \eqref{eq:N-bound-VR}. Furthermore, it follows from the statement of Algorithm~\ref{Alg: SAPD-VR} that}
the total computation complexity is $T(\xzf{b_0}+Nb/q+N({\xzrev{b'_{x}}}+{\xzrev{b'_{y}}}))$, which completes the proof.
\end{proof}
\section{Proof of Theorem~\ref{Thm:WCMC} and preliminary technical results}
\label{sec:Thm5_proof}
\sa{Recall the definition of $\hat\cL$ given in \cref{eq:weakly convex-merely concave problem approx}.}
For any $x\in\cX$, define $\phi(x) \triangleq \max_{y\in\mathcal{Y}} \mathcal{L}(x,y)$ and $\hat{\phi}(x) \triangleq \max_{y\in\mathcal{Y}} \hat{\mathcal{L}}(x,y)$; moreover, let $\phi_\lambda(\cdot)$ and $\hat{\phi}_{\lambda}(\cdot)$ be respective Moreau envelopes for some $\lambda\in(0,\gamma^{-1})$.

\sa{We first show that one can obtain an $\epsilon$-stationary point for the WCMC problem in the form of \eqref{eq:main problem} such that $f(\cdot)=0$, $\mu_y=0$ and $\cD_y<\infty$ by computing an $\epsilon$-stationary point for \cref{eq:weakly convex-merely concave problem approx} with $\hat{\mu}_y=\Theta(\epsilon^2/(\gamma\cD_y^2))$.
Indeed, in \cref{lem:WCMC} below, we extend \cite[Corollary A.8]{lin-near-optimal} from $g$ being an indicator function of a closed convex set to a closed convex function.}

\begin{lemma}
\label{lem:WCMC}
Under the premise of \cref{Thm:WCMC},
for some fixed $\hat{\mu}_y=\Theta(\epsilon^2/(\gamma\cD_y^2))$, let $x_\epsilon\in \cX$ be such that $ \|\nabla\hat{\phi}(x_\epsilon)\|\leq \epsilon/(2\sqrt{6})$,
where $\hat{\phi}(x)\triangleq\max_{y\in\mathcal{Y}} \hat{\cL}(x,y)$. Then, $x_\epsilon$ is an $\epsilon$-stationary point of $\phi(\cdot)$, i.e., $\|\nabla\phi_{\lambda}(x_\epsilon)\|\leq\epsilon$ for $\lambda\in(0,\gamma^{-1})$, where $\phi(x) \triangleq \max_{y\in\cY}\cL(x,y)$.
\end{lemma}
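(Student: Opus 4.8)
The plan is to relate the two smoothed/unsmoothed primal functions $\phi$ and $\hat\phi$ through their Moreau envelopes and exploit the small additive regularizer $\frac{\hat\mu_y}{2}\|y-\hat y\|^2$ that separates them. First I would record the pointwise comparison between $\phi$ and $\hat\phi$. Since $\hat{\cL}(x,y)=\cL(x,y)-\frac{\hat\mu_y}{2}\|y-\hat y\|^2$, taking the max over $y\in\dom g$ and using $\|y-\hat y\|\leq \cD_{\cY}$ gives, for every $x\in\cX$,
\begin{equation*}
\hat\phi(x)\leq \phi(x)\leq \hat\phi(x)+\tfrac{\hat\mu_y}{2}\cD_{\cY}^2.
\end{equation*}
Thus $\|\phi-\hat\phi\|_\infty\leq \tfrac{\hat\mu_y}{2}\cD_{\cY}^2=:\nu$, and with $\hat\mu_y=\Theta(\epsilon^2/(\gamma\cD_{\cY}^2))$ we have $\nu=\Theta(\epsilon^2/\gamma)$.

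Next I would pass from the uniform closeness of $\phi$ and $\hat\phi$ to closeness of their proximal maps and hence their Moreau-envelope gradients. Both functions are $\gamma$-weakly convex (the $-\frac{\hat\mu_y}{2}\|\cdot\|^2$ term only makes $\hat\Phi$ more strongly concave in $y$, so weak convexity in $x$ is unaffected), so for $\lambda\in(0,\gamma^{-1})$ the maps $\prox{\lambda\phi}$ and $\prox{\lambda\hat\phi}$ are well defined and single-valued by \cref{Lemma: graident of ME}. The key quantitative step is the standard proximal-stability estimate: because $w\mapsto \phi(w)+\frac{1}{2\lambda}\|w-x_\epsilon\|^2$ is $(\lambda^{-1}-\gamma)$-strongly convex, and $\hat\phi$ differs from $\phi$ by at most $\nu$ in sup-norm, one obtains a bound of the form
\begin{equation*}
\|\prox{\lambda\phi}(x_\epsilon)-\prox{\lambda\hat\phi}(x_\epsilon)\|^2\leq \frac{2\nu}{\lambda^{-1}-\gamma}.
\end{equation*}
Translating through $\nabla\phi_\lambda(x_\epsilon)=\lambda^{-1}(x_\epsilon-\prox{\lambda\phi}(x_\epsilon))$ and the analogous identity for $\hat\phi$ gives a bound on $\|\nabla\phi_\lambda(x_\epsilon)-\nabla\hat\phi_\lambda(x_\epsilon)\|$ in terms of $\sqrt{\nu}$.

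Finally I would assemble the pieces via a triangle inequality,
\begin{equation*}
\|\nabla\phi_\lambda(x_\epsilon)\|\leq \|\nabla\hat\phi_\lambda(x_\epsilon)\|+\|\nabla\phi_\lambda(x_\epsilon)-\nabla\hat\phi_\lambda(x_\epsilon)\|.
\end{equation*}
For the first term I use that $\hat\phi$ is differentiable (since $\hat\Phi(x,\cdot)-g(\cdot)$ is strongly concave with modulus $\hat\mu_y>0$, $\hat\phi^s$ is smooth, and $f\equiv0$), so that $\|\nabla\hat\phi_\lambda(x_\epsilon)\|$ can be controlled by the genuine gradient $\|\nabla\hat\phi(x_\epsilon)\|\leq \epsilon/(2\sqrt6)$ together with a smoothness/contraction argument relating the envelope gradient to the true gradient at $x_\epsilon$. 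Choosing the proportionality constants in $\hat\mu_y=\Theta(\epsilon^2/(\gamma\cD_{\cY}^2))$ and $\lambda\in(0,\gamma^{-1})$ so that both terms are each at most $\epsilon/\sqrt2$ (the $2\sqrt6$ slack absorbs the constant factors from the two bounds) yields $\|\nabla\phi_\lambda(x_\epsilon)\|\leq\epsilon$. The main obstacle I anticipate is the middle step: carefully quantifying how an $O(\nu)$ sup-norm perturbation of a weakly convex function perturbs its Moreau-envelope gradient, i.e. getting the $\sqrt{\nu}$ dependence and the correct constant from strong convexity of the proximal subproblem, and checking this is consistent with the parallel development for indicator $g$ in \cite[Corollary A.8]{lin2020near} that this lemma is generalizing.
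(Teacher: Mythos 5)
Your route is genuinely different from the paper's and, with one repair, it works. The paper never compares $\phi$ and $\hat\phi$ as functions: it argues inside the saddle-point structure, introducing the auxiliary point $y^+=\prox{\alpha g}\big(\hat y_*(x_\epsilon)+\alpha\nabla_y\Phi(x_\epsilon,\hat y_*(x_\epsilon))\big)$, bounding $\|y^+-\hat y_*(x_\epsilon)\|\le \alpha\hat\mu_y\cD_\cY$ by nonexpansiveness of the prox, bounding $\|\nabla_x\cL(x_\epsilon,y^+)\|$ via Danskin's theorem and the hypothesis $\|\nabla\hat\phi(x_\epsilon)\|\le\epsilon/(2\sqrt6)$, and then combining strong convexity of $\cL(\cdot,y^+)+\frac{1}{2\lambda}\|\cdot-x_\epsilon\|^2$ with the prox optimality (subgradient) inequality for $g$, taking $\alpha=L_{yy}^{-1}$, to bound $\|\nabla\phi_\lambda(x_\epsilon)\|^2$ directly. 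Your argument instead works purely at the level of the primal functions: the sandwich $\hat\phi\le\phi\le\hat\phi+\frac{\hat\mu_y}{2}\cD_\cY^2$, stability of proximal points of two uniformly close $\gamma$-weakly convex functions (your bound $\|\prox{\lambda\phi}(x_\epsilon)-\prox{\lambda\hat\phi}(x_\epsilon)\|^2\le 2\nu/(\lambda^{-1}-\gamma)$ is correct; add the two strong-convexity inequalities centered at the two minimizers), and a triangle inequality on envelope gradients. Your route is cleaner in one respect: it avoids the Lipschitz-in-$y$ error term that forces the paper's final choice of $\hat\mu_y$ to be a minimum of two quantities (one involving $L_{yy}/L_{xy}$) rather than the pure $\Theta(\epsilon^2/(\gamma\cD_\cY^2))$ your argument yields.

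The repair concerns your last step, controlling $\|\nabla\hat\phi_\lambda(x_\epsilon)\|$ by $\|\nabla\hat\phi(x_\epsilon)\|$: a ``smoothness/contraction'' argument fails here. The Lipschitz constant of $\nabla\hat\phi$ is of order $L_{xx}+L_{xy}L_{yx}/\hat\mu_y$ (through the $L_{yx}/\hat\mu_y$-Lipschitz map $\hat y_*(\cdot)$), and since $\hat\mu_y=\Theta(\epsilon^2/(\gamma\cD_\cY^2))$ this blows up like $\epsilon^{-2}$, so any bound that pays this constant, or requires it to be below $\lambda^{-1}$, is useless. The step is nonetheless true, but you must use weak convexity rather than smoothness: with $\hat p=\prox{\lambda\hat\phi}(x_\epsilon)$, prox optimality gives $\nabla\hat\phi(\hat p)=\lambda^{-1}(x_\epsilon-\hat p)$, and monotonicity of the gradient of the convex function $\hat\phi+\frac{\gamma}{2}\|\cdot\|^2$ gives $\langle \nabla\hat\phi(x_\epsilon),x_\epsilon-\hat p\rangle\ge(\lambda^{-1}-\gamma)\|x_\epsilon-\hat p\|^2$; Cauchy--Schwarz then yields $\|\nabla\hat\phi_\lambda(x_\epsilon)\|\le\tfrac{\lambda^{-1}}{\lambda^{-1}-\gamma}\|\nabla\hat\phi(x_\epsilon)\|\le 2\|\nabla\hat\phi(x_\epsilon)\|\le\epsilon/\sqrt6$ for $\lambda=(2\gamma)^{-1}$. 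With this replacement, and the constant in $\hat\mu_y$ chosen small enough that your perturbation term $2\sqrt{2\nu\gamma}$ is at most $\epsilon/2$, the triangle inequality closes the proof for $\lambda=(2\gamma)^{-1}$, which is exactly the $\lambda$ used in \cref{Thm:WCMC}; like the paper's proof, yours establishes the claim for this fixed $\lambda$ (your constants degenerate as $\lambda\to0$ or $\lambda\to\gamma^{-1}$), which suffices for the notion of $\epsilon$-stationarity in \cref{Def: stationary point}.
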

\begin{proof}
Below We state some useful relations that will be used later in the proof. Since $f(\cdot)=0$, 
\cref{eq:weakly convex-merely concave problem approx} implies that for all $(x,y)\in\sa{\cX} \times \dom g$,
\begin{equation}\label{eq: gradient bound WCMC}
  \nabla_x \cL(x,y) = \nabla_x \hat{\cL}(x,y),\quad \|\nabla_y \Phi(x,y) - \nabla_y \hat{\Phi}(x,y) \|\leq \hat{\mu}_y\cD_y.
\end{equation}
We define $\hat{y}_{*}(\cdot)\triangleq \argmax_{y\in\cY} \hat{\cL}(\cdot,y)$.
It follows that from \cref{lemma: proximal point lemma v2} that
\begin{equation}
    \hat{y}_{*}(x_\epsilon)=\prox{\alpha g}\big(\hat{y}_{*}(x_\epsilon) + \alpha\nabla_y\hat{\Phi}(x_\epsilon,\hat{y}_{*}(x_\epsilon))\big).\label{eq: G_y}
\end{equation}
for \sa{any} $\alpha>0$. We are now ready for the proof of Lemma~\ref{lem:WCMC}.  

Let $y^+ \triangleq \prox{\alpha g}\big(\hat{y}_{*}(x_\epsilon) + \alpha\nabla_y\Phi(x_\epsilon,\hat{y}_{*}(x_\epsilon))\big)$, then we have 
\begin{equation}\label{eq: y bound WCMC}
    \begin{aligned}
            & \|y^+  - \hat{y}_{*}(x_\epsilon) \|
            \\
            = & \|\prox{\alpha g}\big(\hat{y}_{*}(x_\epsilon) + \alpha\nabla_y\Phi(x_\epsilon,\hat{y}_{*}(x_\epsilon))\big) - \prox{\alpha g}\big(\hat{y}_{*}(x_\epsilon) 
             + \alpha\nabla_y\hat{\Phi}(x_\epsilon,\hat{y}_{*}(x_\epsilon))\big) \|
             \\
             \leq & \alpha \|  \nabla_y\Phi(x_\epsilon,\hat{y}_{*}(x_\epsilon)) - \nabla_y\hat{\Phi}(x_\epsilon,\hat{y}_{*}(x_\epsilon))\| \leq \alpha\hat{\mu}_y\cD_y
    \end{aligned}
\end{equation}
where the first equality is by \cref{eq: G_y}; the second inequality is by $\|\prox{\alpha g}(y_1) - \prox{\alpha g}(y_2)\|\leq \| y_1 - y_2\|$ for all $y_1,y_2\in\cY$ and \cref{eq: gradient bound WCMC}. 
Moreover, using \cref{ASPT: lipshiz gradient} and  the above inequalities, we have 
\begin{equation*}
\begin{aligned}
             &\|\nabla_x \cL(x_\epsilon,y^+)\| \leq 
            \|\nabla_x \cL(x_\epsilon,y^+) 
             - \nabla\hat{\phi}(x_\epsilon)
             \|
             +
             \|
             \nabla\hat{\phi}(x_\epsilon)
             \|
             \\
             \leq &\|\nabla_x \cL(x_\epsilon,y^+) - \nabla_x \cL(x_\epsilon,\hat{y}_{*}(x_\epsilon)) \| 
             + \frac{\epsilon}{2\sqrt{6}} 
            \leq L_{xy}\alpha\hat{\mu}_y D_y +\frac{\epsilon}{2\sqrt{6}},
\end{aligned}
\end{equation*}
where 
\sa{the second inequality follows from Danskin's theorem and the fact that $\|\nabla\hat{\phi}(x_\epsilon)\|\leq \epsilon/(2\sqrt{6})$; finally, the last inequality use \cref{ASPT: lipshiz gradient} and \eqref{eq: y bound WCMC}. Thus, using $(a+b)^2\leq 2(a^2+b^2)$ for any $a,b\in\reals$, we get}
\begin{equation}\label{eq: x bound WCMC}
    \|\nabla_x \cL(x_\epsilon,y^+)\|^2  \leq \frac{\epsilon^2}{12} + 2L^2_{xy}\alpha^2\hat{\mu}_y^2 D_y^2.
\end{equation}
Later in the proof, \cref{eq: y bound WCMC} and \cref{eq: x bound WCMC} will be useful when we further analyze $y^+$.

\sa{Recall that our ultimate goal is to show that $\|\nabla \phi_{\lambda}(x_\epsilon)\|\leq\epsilon$. Now, for some arbitrary $\mu_x>0$, consider} $\prox{\lambda \phi}(x_\epsilon)=\argmin_{v\in\cX} \phi(v) + \frac{1}{2\lambda}\|v-x_\epsilon\|^{\sa{2}}$, where $\lambda = (\mu_x+\gamma)^{-1}$. It follows from \cref{Lemma: graident of ME}  that
$$\|\nabla \phi_{\lambda}(x_\epsilon)\|^2 = \frac{1}{\lambda^2} \|x_\epsilon - \prox{\lambda \phi}(x_\epsilon)\|^2
.$$
Since $\lambda = (\mu_x+\gamma)^{-1}$, $\phi(\cdot)+\frac{1}{2\lambda}\|\cdot - x_\epsilon\|^2$ is $\mu_x$-strongly convex with the unique minimizer $\prox{\lambda \phi}(x_\epsilon)$; therefore,
\begin{equation}
    \begin{aligned}\label{eq: lower bound 1}
            &\max_{y\in\cY} \cL(x_\epsilon,y) - \max_{y\in\cY} \cL(\prox{\lambda \phi}(x_\epsilon),y) - \frac{1}{2\lambda}\|\prox{\lambda \phi}(x_\epsilon)-x_\epsilon\|^2 \\
            & = \phi(x_\epsilon) - \phi(\prox{\lambda \phi}(x_\epsilon))  - \frac{1}{2\lambda}\|\prox{\lambda \phi}(x_\epsilon)-x_\epsilon\|^2\\ 
            &\geq \frac{\mu_x}{2}\| x_\epsilon - \prox{\lambda \phi}(x_\epsilon) \|^2 = \lambda^2\frac{\mu_x}{2}\|\nabla \phi_{\lambda}(x_\epsilon) \|^2.
    \end{aligned}
\end{equation}
\sa{In the following analysis, we will continue to polish the upper bound on $\|\nabla \phi_{\lambda}(x_\epsilon) \|^2$ on the left hand side of \cref{eq: lower bound 1}. Indeed,}
\begin{equation}
    \begin{aligned}\label{eq: upper bound 1}
            &\max_{y\in\cY} \cL(x_\epsilon,y) - \max_{y\in\cY} \cL(\prox{\lambda \phi}(x_\epsilon),y) - \frac{1}{2\lambda}\|\prox{\lambda \phi}(x_\epsilon)-x_\epsilon\|^2 \\
            & = \max_{y\in\cY} \cL(x_\epsilon,y) -\cL(x_\epsilon,y^+)+ \cL(x_\epsilon,y^+) - \max_{y\in\cY} \cL(\prox{\lambda \phi}(x_\epsilon),y) - \frac{1}{2\lambda}\|\prox{\lambda \phi}(x_\epsilon)-x_\epsilon\|^2
            \\
            & \leq \max_{y\in\cY} \cL(x_\epsilon,y) -\cL(x_\epsilon,y^+)+ \cL(x_\epsilon,y^+) - \cL(\prox{\lambda \phi}(x_\epsilon),y^+) - \frac{1}{2\lambda}\|\prox{\lambda \phi}(x_\epsilon)-x_\epsilon\|^2
            \\
            & \leq \max_{y\in\cY} \cL(x_\epsilon,y) -\cL(x_\epsilon,y^+)
            + \|x_\epsilon - \prox{\lambda \phi}(x_\epsilon) \|\|\nabla_x \cL(x_\epsilon, y^+)\| - \frac{\mu_x}{2}\| x_\epsilon - \prox{\lambda \phi}(x_\epsilon)\|^2
            \\
            & \leq \max_{y\in\cY} \cL(x_\epsilon,y) -\cL(x_\epsilon,y^+)
            + \frac{\|\nabla_x \cL(x_\epsilon, y^+)\|^2}{2\mu_x},
    \end{aligned}
\end{equation}
where the second inequality \sa{follows from} the $\mu_x$-strongly convexity of $\cL(\cdot,y^+) + \frac{1}{2\lambda}\|\cdot-x_\epsilon\|^2$ and \sa{Cauchy-Schwarz} inequality. \sa{Next, we continue to derive an appropriate upper bound on} $ \max_{y\in\cY} \cL(x_\epsilon,y) -\cL(x_\epsilon,y^+)$. Recall that $y^+ = \prox{\alpha g}\big(\hat{y}_{*}(x_\epsilon) + \alpha\sa{\nabla_y}\Phi(x_\epsilon,\hat{y}_{*}(x_\epsilon))\big)$; \sa{hence, the first-order optimality condition} yields that
$$
-\frac{1}{\alpha}\left(y^+ - \hat{y}_{*}(x_\epsilon) - \alpha\sa{\nabla_y}\Phi(x_\epsilon,\hat{y}_{*}(x_\epsilon))  \right) \in \partial g(y^+).
$$
Therefore, for any $y\in\cY$, we have that
$$
g(y) - g(y^+) \geq \langle y - y^+, -\frac{1}{\alpha}\left(y^+ - \hat{y}_{*}(x_\epsilon) - \alpha\sa{\nabla_y}\Phi(x_\epsilon,\hat{y}_{*}(x_\epsilon))  \right) \rangle ,
$$
which is equivalent to
\begin{equation}\label{eq:subgraident of g}
     g(y^+) - g(y)\leq \frac{1}{\alpha}\langle y - y^+, y^+ - \hat{y}_{*}(x_\epsilon)\rangle -\langle \sa{\nabla_y}\Phi(x_\epsilon,\hat{y}_{*}(x_\epsilon)),~ y - y^+  \rangle.
\end{equation}
Now, we ready to \sa{provide a useful} upper bound on  $ \max_{y\in\cY} \cL(x_\epsilon,y) -\cL(x_\epsilon,y^+)$. Indeed, given \sa{any} $\tilde{y}\in \argmax_{y\in\cY}\cL(x_\epsilon,y)$, we have
\begin{equation}\label{eq: upper bound 2}
    \begin{aligned}
             & \max_{y\in\cY} \cL(x_\epsilon,y) -\cL(x_\epsilon,y^+)
              =\cL(x_\epsilon,\tilde{y}) - \cL(x_\epsilon,\hat{y}_{*}(x_\epsilon))+ \cL(x_\epsilon,\hat{y}_{*}(x_\epsilon)) -\cL(x_\epsilon,y^+)
              \\
              & = \underbrace{\Phi(x_\epsilon,\tilde{y}) - \Phi(x_\epsilon,\hat{y}_{*}(x_\epsilon))}_{\textbf{part 1}}- g(\tilde{y}) + g(\hat{y}_{*}(x_\epsilon)) + 
              \underbrace{\Phi(x_\epsilon,\hat{y}_{*}(x_\epsilon)) -\Phi(x_\epsilon,y^+)}_{\textbf{part 2}}
              - g(\hat{y}_{*}(x_\epsilon)) + g(y^+)
              \\
              & \leq 
             \langle \nabla_y \Phi(x_\epsilon,\hat{y}_{*}(x_\epsilon)),\tilde{y} - \hat{y}_{*}(x_\epsilon)  \rangle \sa{-g(\tilde{y}) + g(y^+)}
             \\
             & \quad + \langle \nabla_y \Phi(x_\epsilon,\hat{y}_{*}(x_\epsilon)), \hat{y}_{*}(x_\epsilon) -y^+ \rangle + \frac{L_{yy}}{2}\|\hat{y}_{*}(x_\epsilon)-y^+\|^2
             \\
             & = \langle \nabla_y \Phi(x_\epsilon,\hat{y}_{*}(x_\epsilon)),\tilde{y} - y^+  \rangle - g(\tilde{y})+g(y^+) + \frac{L_{yy}}{2}\|\hat{y}_{*}(x_\epsilon)-y^+\|^2
            \\
              & \leq \frac{1}{\alpha}\langle \tilde{y} - y^+, y^+ - \hat{y}_{*}(x_\epsilon)\rangle + \frac{L_{yy}}{2}\|\hat{y}_{*}(x_\epsilon)-y^+\|^2 \\
              & =  -\frac{L_{yy}}{2}
              \| \hat{y}_{*}(x_\epsilon) -y^+ \|^2  + L_{yy} \langle \tilde{y}-\hat{y}_{*}(x_\epsilon), y^+-\hat{y}_{*}(x_\epsilon)\rangle
              \\
              &\leq L_{yy} \cD_{\cY}\|y^+-\hat{y}_{*}(x_\epsilon)\|, 
    \end{aligned}
\end{equation}
where in the first inequality, \sa{we use concavity and smoothness of $\Phi(x_\epsilon,\cdot)$ for \textbf{part 1} and \textbf{part 2}, respectively;} in the second inequality, we use \cref{eq:subgraident of g}; in the last equality, we set $\alpha=L_{yy}^{-1}$; and in the last inequality, we use Cauchy-Schwarz inequality and the fact that $\sup_{y_1,y_2\in\dom g}\|y_1-y_2\|\leq \cD_{\cY}$. Next, if we use \cref{eq: upper bound 2} within \cref{eq: upper bound 1}, it follows that
\begin{equation}
    \begin{aligned}\label{eq: bound 3}
            &\max_{y\in\cY} \cL(x_\epsilon,y) - \max_{y\in\cY} \cL(\prox{\lambda \phi}(x_\epsilon),y) - \frac{1}{2\lambda}\|\prox{\lambda \phi}(x_\epsilon)-x_\epsilon\|^2 \\
            & \leq L_{yy} \cD_{\cY}\|y^+-\hat{y}_{*}(x_\epsilon)\| 
            + \frac{\|\nabla_x \cL(x_\epsilon, y^+)\|^2}{2\mu_x}
            \\
            & \leq\hat{\mu}_y\cD_y^2  +\frac{\epsilon^2}{24\mu_x} + \frac{L^2_{xy}}{L^2_{yy}}\cdot\frac{\hat{\mu}_y^2}{\mu_x}\cdot D_y^2,
    \end{aligned}
\end{equation}
where the last inequality follows from \cref{eq: y bound WCMC} and \cref{eq: x bound WCMC} with $\alpha = L_{yy}^{-1}$. 

\sa{Finally,} if we use \cref{eq: bound 3} within \cref{eq: lower bound 1} and substitute $\lambda = (\gamma+\mu_x)^{-1}$, it follows that
\begin{equation}
    \frac{\mu_x}{2(\gamma+\mu_x)^2}\|\nabla \phi_{\lambda}(x_\epsilon) \|^2 \leq\sa{\hat{\mu}_y\cD_y^2}  +\frac{\epsilon^2}{24\mu_x} + \frac{L^2_{xy}}{L^2_{yy}}\cdot\frac{\hat{\mu}_y^2}{\mu_x}\cdot D_y^2.
\end{equation}
\sa{Thus, choosing the free parameter $\mu_x = \gamma$ implies that}
\begin{equation}
    \|\nabla \phi_{\lambda}(x_\epsilon) \|^2 \leq\sa{8\gamma\hat{\mu}_y\cD_y^2}  +\frac{\epsilon^2}{3} + 8\frac{L^2_{xy}}{L^2_{yy}}\cdot\hat{\mu}_y^2 \cdot D_y^2.
\end{equation}
\sa{Thus, we get $\|\nabla \phi_{\lambda}(x_\epsilon) \| \leq \epsilon$ for}
$\hat{\mu}_y = \min\left\{ \frac{\epsilon^2}{\sa{24}\gamma\cD_y^2},\;\frac{L_{yy}}{L_{xy}}\cdot\frac{\epsilon}{2\sqrt{6}\cD_y}\right\}$.
\end{proof}
\subsection{Proof of Theorem~\ref{Thm:WCMC}}
\begin{proof}
 To get a worst-case complexity, as in the previous sections, let
\begin{equation*}
   L \triangleq \max\{L_{xy},L_{yx},L_{xx},L_{yy}\},~\delta \triangleq \max\{\delta_x,\delta_y\},~\gamma = L. 
\end{equation*}
\cref{ASPT: lipshiz gradient} implies that $\grad_y \hat{\Phi}$ and $\grad_x \hat{\Phi}$ are Lipschitz such that for all $x,x'\in\cX$ and $y,y'\in\dom g$,
\begin{align*}
    \norm{\grad_y \hat{\Phi}(x,y)-\grad_y \hat{\Phi}(x',y')}\leq L_{yx}\norm{x-x'}+\hat{L}_{yy}\norm{y-y'},\\
    \norm{\grad_x \hat{\Phi}(x,y)-\grad_x \hat{\Phi}(x',y')}\leq L_{xx}\norm{x-x'}+L_{xy}\norm{y-y'},
\end{align*}
where $\hat{L}_{yy}= L_{yy}+\hat{\mu}_y$. Therefore,
the proof immediately follows from \cref{lem:WCMC} and \cref{cor:complexity-fg}, considering \texttt{SAPD+} with $\texttt{VR-flag}=\textbf{false}$ is applied on \eqref{eq:weakly convex-merely concave problem approx} with $\hat{\mu}_y = \min\left\{ \frac{\epsilon^2}{\sa{24}\gamma\cD_y^2},\;\frac{L_{yy}}{L_{xy}}\cdot\frac{\epsilon}{2\sqrt{6}\cD_y}\right\}$.
\end{proof}
\section{Details of fair classification example}\label{sec:experiement-details}
In the experiment of fair classification, $\{(\mathbf{a}_i,b_i)\}_{i=1}^n$ denotes the (data,label) pairs of the labeled image data set. 
$a_i\in\reals^{d_1\times d_2\times c}$, and $b_i$ is a label associated with one of the $K$-classes, i.e., $b_i\in \cC\triangleq \{\ C_j\}_{j=1}^K$ with $K\leq n$.
We employ the classifier  
$$
h(\cdot\;; \mathbf{x}): \mathbf{a}_i\in \mathbb{R}^{d_1\times d_2\times c} \rightarrow \mathbf{p}_i \in \mathbb{R}^{K},
$$
where $\mathbf{p}_i=(p_{ij})_{j=1}^K$ s.t. $\sum_{j=1}^{K}p_{ij} = 1$ and $p_{ij}\geq0$ for $j=1,2,..,K$, and $\mathbf{x}$ is the parameters of the classifier. Specifically, $h(\cdot;\mathbf{x})$ is a CNN with the structure as follows:
$$
[input]\rightarrow[conv-elu-maxpool]\times3\rightarrow [fc-elu]\times2\rightarrow [softmax]
$$
where \emph{exponential linear unit}~(elu) \cite{clevert2015fast} is the smoothed variant of \emph{rectified linear units}~(relu) activation function. Furthermore, given the input $\{(\mathbf{a}_i,b_i)\}_{i=1}^n$ and the output $\{\mathbf{p}_i\}_{i=1}^n$, the loss functions $\{l_j\}_{j=1}^K$ used in \cref{eq: experiment CNN} are
$$
\ell_j(\{(\mathbf{a}_i,b_i)\}_{i=1}^n;\mathbf{\mathbf{x}}) = -\frac{1}{N_j}\sum_{i=1}^{n}\log(p_{ij})\mathbf{1}_{C_j}(b_i)$$
where $N_j$ is the number of data with label $C_j$, i.e., $N_j=\sum_{i=1}^n\mathbf{1}_{C_j}(b_i)$ and
    $$\mathbf{1}_{C_j}(b_i) =
    \begin{cases}
    1&\text{ if } b_i=C_j
    \\
    0&\text{ o.w.}
    \end{cases}
    $$
and $p_{ij}$ is the $j$-th element of $\mathbf{p}_i$, and $\mathbf{p}_i = h(\mathbf{a}_i;\mathbf{\mathbf{x}})$.
\section{Additional analyses on the related work}
\label{sec:existing_work}
\sa{In some of the existing work on WCSC problems, particularly~\cite{huang2021efficient,huang2022accelerated,luo2020stochastic,xu2020enhanced}, except for $\kappa_y=L/\mu_y$, the individual effects of $L$ or $\mu_y$ are not explicitly stated in the final complexity bounds.}
\sa{\xzrev{To better} compare existing bounds with ours,} it is necessary to \xzrev{state the complexity bound dependence on $L$ and $\mu_y$}. For example, Huang~\emph{et al.}~\cite{huang2021efficient,huang2022accelerated} assume that {$\frac{1}{\mu_y}\leq L$}, that is equivalent to {$L\geq \sqrt{\kappa_y}$}; \sa{however, a constant factor depending on $L$ was ignored} in their oracle complexity result. Moreover, Huang \emph{et al.}\cite{huang2021efficient} employ a different convergence metric and claim that they obtain a competitive result. It turns out that their convergence metric is scaled by \sa{an algorithmic constant and when their results are converted into GNP metric, i.e., $\|\nabla\phi(\cdot)\|$, this constant adversely affects their complexity bounds}. \sa{A similar issue with the claimed complexity bounds also exists in~\cite{huang2022accelerated}, where the complexity bound are computed after the objective function is rescaled.} 
\sa{In this section, to provide a fair comparison,}
\begin{itemize}
    \item we give an explicit oracle complexity bound for the related works \sa{in~\cite{huang2021efficient,huang2022accelerated,luo2020stochastic,xu2020enhanced};}
    \item \sa{we discuss those parts in their analysis that are not convincing, and try our best to provide the corrected and optimized complexity bounds} based on their analysis. 
\end{itemize}
 Without loss of generality, \sa{for the sake of easier comparison, we consider the \emph{smooth minimax} problems,} i.e., $\min_{x\in\cX}\max_{y\in\cY}\cL(x,y)=\Phi(x,y)$.  
 \sa{We first fix \xzrev{the} notation to unify the discussion for the WCSC setting, \sa{i.e., $\cL(x,y)$ is weakly convex in $x$ and strongly concave in $y$.}} 
 
 Recall that $\phi(x)\triangleq\max_{y\in\cY}\cL(x,y)$; thus, $\phi(\cdot)$ is differentiable and we use $\|\nabla \phi(\cdot)\|$ as the convergence metric. In addition, we let $\phi_*\triangleq\inf_{x\in\cX}  \phi(x)$ and recall that $y_*(\cdot)=\argmax_{y\in\cY}\cL(\cdot,y)$.
 Moreover, for simplicity of the notation, we consider the  worst-case complexity bounds using $L$, i.e., 
\begin{equation}
\label{eq:worst-case-parameters}
   L = \max\{L_{xy},L_{yx},L_{xx},L_{yy}\},\quad\kappa_y = \frac{L}{\mu_y},\quad\delta= \max\{\delta_x,\delta_y\},\quad\gamma = L. 
\end{equation}
\subsection{Revisit of \cite[Theorem 1]{huang2021efficient}}
In this section, we provide the oracle complexity of Huang~\emph{et al.}\cite[Theorem 1]{huang2021efficient} using the metric $\|\nabla\phi(\cdot)\|$ for the Stochastic Mirror Descent Algorithm~(\texttt{SMDA}), stated in \cite[algorithm 1]{huang2021efficient}. Let $\tau,\sigma$ be the primal and dual stepsizes, respectively, $\eta$ be the momentum parameter, $b$ be the \xzrev{large} batchsize, and $u$ be convexity modulus of the Bregman distance generating function. We also list our notational convention in \cref{table:Important notations comparison NIPS} for reader's convenience.
\begin{table}[]
    \centering
    \begin{tabular}{c c c}
    \hline
    \textbf{Notation in \cite{huang2021efficient}} & \textbf{Notation in our paper} & \textbf{Meaning}\\
    \hline
    $\gamma$ & $\tau$  & primal stepsize \\
        $\lambda$ & $\sigma $ & dual stepsize \\
       $L_f$ & $L$ & Lipschitz constant as in~\eqref{eq:worst-case-parameters} \\
        $\mu$ & $\mu_y$ & concavity modulus of $\cL(x,\cdot)$\\
        $\kappa$ & $\kappa_y$ & condition number \\
        $\sigma$ & $\delta$ & variance bound for the SFO \\
        $b_1$ & $b'$ & small batch size for VR methods\\
        \hdashline
        $\rho$ & $u$ & \thead{convexity modulus of\\ Bregman distance generating function}\\
        \hline
    \end{tabular}
    \caption{Important notation for \cite{huang2021efficient} and this paper.\\ \textbf{Table notes.} (1) SFO: stochastic first-order oracle. (2) $u$ is only used in the analysis provided in this section.}
    \label{table:Important notations comparison NIPS}
\end{table}

Below, we restate the convergence result of \texttt{SMDA} for the class of Bregman distance functions such that $D_t(x,x')\xzrev{\triangleq}(x-x')^\top H_t (x-x')/2$ for some $H_t\succ 0$ --this class of Bregman functions are used for all the numerical experiments reported in~\cite{huang2021efficient}.
\begin{theorem}\label{NIPS THM1} \cite[Thoerem 1]{huang2021efficient} 
Suppose Assumptions~\ref{ASPT: fg},~\ref{ASPT: lipshiz gradient},~\ref{ASPT: unbiased noise assumption} hold with $f(\cdot)=g(\cdot)=0$. Let $\{x_t,y_t\}_{t=1}^{T}$ be generated by \texttt{SMDA}, stated in \cite[Algorithm 1]{huang2021efficient}, employing a stochastic first-order oracle to sample stochastic partial derivatives. For parameters chosen as $\eta\in(0,1]$, $\tau\in(0, \min\{ \tfrac{3u}{4L(1+\kappa_y)}, \tfrac{9\eta u \mu_y \sigma}{800\kappa_y^2}, \tfrac{2\eta \mu_y u \sigma}{25L^2}\}\Big]$ and $\sigma\in(0, 
\frac{1}{6L}]$, let $\eta_t=\eta$, $\tau_t=\tau$ and $\sigma_t=\sigma$ for $t\geq 0$. Then, for any given initial point $(x_0,y_0)$, \xzrev{it holds that}
\begin{equation}\label{eq: convergence result Huang NIPS 1}
\frac{1}{T}\sum_{t=1}^{T}\mE[\|\mathbf{G}^{t}\|] \leq \frac{4\sqrt{2( \phi(x_0)-\phi_*)}}{\sqrt{3T\tau u}} + \frac{4\sqrt{2}\Delta_0}{\sqrt{3T\tau u}} + \frac{10\delta}{\sqrt{3b} u} + \frac{20\delta\sqrt{\eta\sigma}}{3\sqrt{\tau u\mu_y b}},
\end{equation}
where $\phi(x) = \max_{y}\cL(x,y)$, $\phi_* = \inf_{x\in\cX} \phi(x)$,  $\Delta_0 = \|y_0 - y_*(x_0)\|$, $y_*(x_0) = \argmax_{y\in\cY} \cL(x_0,y)$, $\mathbf{G}^{t} =  H_t^{-1} \nabla\phi(x_t)$, and $H_t$ is a diagonal matrix such that $H_t\succeq u\mathbf{I}$ for \xzrev{all $t\geq 1$} and $u>0$.
\end{theorem}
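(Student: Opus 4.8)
The plan is to establish a one-step potential decrease and then telescope, following the standard template for stochastic primal-dual/mirror-descent methods on weakly convex-strongly concave problems. The structural facts I would invoke first are consequences of $\mathcal{L}(x,\cdot)$ being $\mu_y$-strongly concave together with $L$-smoothness of $\Phi$: by Danskin's theorem $\phi(x)=\max_{y}\mathcal{L}(x,y)$ is differentiable with $\nabla\phi(x)=\nabla_x\Phi(x,y_*(x))$ and $\nabla\phi$ is Lipschitz with constant of order $L(1+\kappa_y)$, and the best-response map $y_*(\cdot)$ is $\kappa_y$-Lipschitz (cf.\ \cref{lem:implicit_function}). These let me relate the iterate $x_t$, whose dynamics are driven by the stochastic estimate of $\nabla_x\Phi(x_t,y_t)$, to the true primal gradient $\nabla\phi(x_t)$.

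First I would derive a primal descent inequality. Using $L_\phi$-smoothness of $\phi$ and the Bregman-proximal primal update, whose generating function has convexity modulus $u$ so that $D_t(x,x')\ge\tfrac{u}{2}\|x-x'\|^2$, one obtains
\[
\phi(x_{t+1}) \le \phi(x_t) - \frac{c_1 u}{\tau}\,\|x_{t+1}-x_t\|^2 + c_2\,\tau\,\|\Delta_t^x\|^2 + c_3\,L^2\,\|y_t - y_*(x_t)\|^2,
\]
where $\Delta_t^x$ denotes the stochastic gradient error at step $t$ and the last term measures the cost of using $y_t$ in place of $y_*(x_t)$. Second, I would control the dual tracking error $e_t \triangleq \|y_t - y_*(x_t)\|^2$. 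From $\mu_y$-strong concavity and the contraction of the (mirror) ascent step, absorbing the drift $\|y_*(x_{t+1})-y_*(x_t)\|\le\kappa_y\|x_{t+1}-x_t\|$, I would obtain
\[
\mathbb{E}[e_{t+1}] \le (1-c_4\,\eta\mu_y\sigma)\,\mathbb{E}[e_t] + c_5\,\kappa_y^2\,\mathbb{E}[\|x_{t+1}-x_t\|^2] + c_6\,\frac{\sigma^2\delta^2}{b}.
\]

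The heart of the argument is to combine these two recursions into a single Lyapunov function $V_t = \phi(x_t)-\phi_* + \beta\,e_t$ and to pick $\beta$ together with the step-size restrictions on $\tau,\sigma,\eta$ (exactly the admissible ranges stated in the theorem) so that, after adding the dual term's $\kappa_y^2\|x_{t+1}-x_t\|^2$ contribution, the net coefficient of $\|x_{t+1}-x_t\|^2$ remains negative while the contraction on $e_t$ dominates. Telescoping then yields $\frac{1}{T}\sum_t\mathbb{E}[\|x_{t+1}-x_t\|^2]\lesssim \frac{\tau\,V_1}{uT}+\text{noise}$. Since the primal step satisfies $\frac{1}{\tau}(x_t-x_{t+1})=H_t^{-1}g_t$ with $g_t$ estimating $\nabla_x\Phi(x_t,y_t)$, the gradient mapping obeys $\|\mathbf{G}^t\|^2 \lesssim \frac{1}{\tau^2}\|x_{t+1}-x_t\|^2 + \frac{L^2}{u^2}e_t + \frac{1}{u^2}\|\Delta_t^x\|^2$, giving an averaged bound on $\mathbb{E}[\|\mathbf{G}^t\|^2]$. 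Finally, Jensen's inequality $\frac{1}{T}\sum_t\mathbb{E}[\|\mathbf{G}^t\|]\le\big(\frac{1}{T}\sum_t\mathbb{E}[\|\mathbf{G}^t\|^2]\big)^{1/2}$ produces the square-root form of \eqref{eq: convergence result Huang NIPS 1}, with $\sqrt{\phi(x_0)-\phi_*}$ and $\Delta_0$ emerging from $V_1$ and the two $\delta/\sqrt{b}$ terms from the primal and dual noise contributions, respectively.

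The main obstacle I anticipate is the balancing in the third step. The cross term $\kappa_y^2\|x_{t+1}-x_t\|^2$ in the dual recursion is precisely what forces $\tau$ to scale like $\eta\mu_y\sigma/\kappa_y^2$, matching the stated threshold, and making the Lyapunov weight $\beta$ and all three step-size thresholds mutually consistent—so that every $\|x_{t+1}-x_t\|^2$ term carries a net negative coefficient—is the delicate part. Carrying the Bregman factors $H_t\succeq uI$ through both recursions adds bookkeeping but introduces no essential new difficulty.
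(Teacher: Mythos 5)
There is nothing in this paper to compare your argument against: Theorem~\ref{NIPS THM1} is not proven here. It is a restatement (with translated notation, cf.\ Table~\ref{table:Important notations comparison NIPS}) of Theorem~1 of Huang \emph{et al.}~\cite{huang2021efficient}, reproduced in the appendix only so that the authors can plug the bound \eqref{eq: convergence result Huang NIPS 1} into the subsequent corollary and expose how the factor $u/\tau=\Omega(\kappa_y^3)$ inflates the complexity of \texttt{SMDA} once the preconditioned metric $\|\mathbf{G}^t\|=\|H_t^{-1}\nabla\phi(x_t)\|$ is converted to the plain metric $\|\nabla\phi(x_t)\|$. So the relevant comparison is with the proof in the cited reference, not with anything internal to this paper.

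Judged on that basis, your sketch is a faithful reconstruction of the architecture that the cited proof uses: Danskin's theorem plus $L(1+\kappa_y)$-smoothness of $\phi$ and $\kappa_y$-Lipschitzness of $y_*(\cdot)$; a primal descent inequality from the Bregman step with modulus $u$; a geometric tracking recursion for $e_t=\|y_t-y_*(x_t)\|^2$ with contraction of order $\eta\mu_y\sigma$ and a $\kappa_y^2\|x_{t+1}-x_t\|^2$ cross term; a Lyapunov combination whose consistency requirements are exactly what force $\tau\lesssim \eta\mu_y\sigma u/\kappa_y^2$; and a final passage from $\|x_{t+1}-x_t\|$, $e_t$, and the gradient noise to $\|\mathbf{G}^t\|$, followed by Jensen. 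This is consistent with every term of \eqref{eq: convergence result Huang NIPS 1}: the two $1/\sqrt{T\tau u}$ terms come from the initial Lyapunov value (note $\Delta_0$ enters unsquared precisely because it sits under the square root), the $\delta/(\sqrt{b}\,u)$ term comes from the primal noise in the gradient-mapping decomposition (this one enters linearly, no Jensen needed), and the last term from the accumulated dual noise divided by the contraction. Two pieces of bookkeeping you glossed over, which matter for reproducing the constants and the exact step-size thresholds: in \texttt{SMDA} the momentum $\eta$ multiplies \emph{both} updates, $x_{t+1}=x_t+\eta(\tilde{x}_{t+1}-x_t)$ and likewise for $y$, so $\eta$ must be carried through the primal descent inequality as well, not only through the dual recursion; and the dual noise term in your recursion should scale like $\eta^2\sigma^2\delta^2/b$ rather than $\sigma^2\delta^2/b$ --- that $\eta$-dependence is exactly what produces the factor $\sqrt{\eta\sigma}$ in the fourth term of \eqref{eq: convergence result Huang NIPS 1}. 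Neither issue is structural; with that bookkeeping your outline would recover the cited bound.
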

\begin{remark}
When $f(\cdot)=g(\cdot)=0$, it follows from the update rules and the definition of Bregman distance function in  \cite[eq.(12-13),~eq.(22-23)]{huang2021efficient} that
$$
\mathbf{G}^{t} =  H_t^{-1} \nabla\phi(x_t),
$$
where $H_t$ is a diagonal matrix such that $H_t\succeq  u \mathbf{I}$. Note that 
$$
\mathbf{G}^{t} =  \nabla\phi(x_t) \iff H_t = \mathbf{I}.
$$
We noticed that the authors \xzrev{chose} the value of $u$ to improve their bounds; but, without addressing its effect on $\mathbf{G}^{t}$. More precisely, they still use $\|\mathbf{G}^{t}\|$ as the convergence metric and compare their complexity results with those papers using $\|\nabla\phi(x_t)\|$ as the convergence metric.
\end{remark}
In the following corollary, we will provide the optimal complexity for \texttt{SMDA} based the result in~\cref{eq: convergence result Huang NIPS 1}, i.e., \cite[Thoerem 1]{huang2021efficient}.
\begin{corollary}
Suppose Assumptions~\ref{ASPT: fg},~\ref{ASPT: lipshiz gradient},~\ref{ASPT: unbiased noise assumption} hold with $f(\cdot)=g(\cdot)=0$, and $\frac{1}{\mu_y}\leq L$ hold\footnote{The assumption $\frac{1}{\mu_y}\leq L$ is also made in \cite{huang2021efficient}.}. 
Consider the setting of \cref{NIPS THM1}, then \texttt{SMDA}~\cite[Algorithm 1]{huang2021efficient} can generate $x_\epsilon$ such that $\mathbb{E}\left[\|\nabla\phi(x_\epsilon)\| \right]\leq\epsilon$ by requiring at most $\mathcal{O}(\frac{\kappa_y^5\delta^2}{\mu_y ^2\epsilon^4})$ stochastic first-order oracle calls.
\end{corollary}
\begin{proof}
Recall that $H_t\succeq u\mathbf{I}$, $\mathbf{G}^{t} =  H_t^{-1} \nabla\phi(x_t)$ and $H_t$ is a diagonal matrix; therefore, we can obtain a tight upper bound on $\mE[\|\nabla\phi(x_t)\|]$ using \cref{eq: convergence result Huang NIPS 1} as follows:
\begin{equation}\label{eq: convergence result Huang NIPS 2}
\begin{aligned}
\frac{1}{T}\sum_{t=1}^{T} \mE[\|\nabla\phi(x_t)\|] \leq 
\frac{4\sqrt{2( \phi(x_0)-\phi_*)}}{\sqrt{3T}}\sqrt{\frac{ u}{\tau}} + \frac{4\sqrt{2}\Delta_0}{\sqrt{3T}}\sqrt{\frac{ u}{\tau}} + \frac{ 10\delta}{\sqrt{3b}} + \frac{ 20\delta\sqrt{\eta\sigma}}{3\sqrt{\mu_y  b}}\sqrt{\frac{ u}{\tau}}.
\end{aligned}
\end{equation}
If we use their parameter choices, i.e., $\eta\in(0,1]$,
\begin{equation}
\label{eq:Huang-param-choice}
    \sigma= \mathcal{O}\Big(\frac{1}{L}\Big),\quad \tau = u~\min\Big\{\frac{3 }{4L(1+\kappa_y)},~\frac{9\eta \mu_y\sigma}{800\kappa_y^2},~\frac{2\eta\mu_y  \sigma}{25L^2}\Big\},\quad u = \mathcal{O}(L^{\nu}),
\end{equation}
for some free parameter $\nu\geq0$, then we get
\begin{equation}
\label{eq:u-effect}
    \frac{ u}{\tau} = \max\Big\{\frac{4L(1+\kappa_y)}{3},\frac{800\kappa_y^2}{9\eta\mu_y\sigma},\frac{25L^2}{2\eta\mu_y\sigma}\Big\} =\Omega(\kappa_y^3),
\end{equation}
where the second term leads to $\kappa_y^3$. It is essential to note that $\tau$ choice in \eqref{eq:Huang-param-choice} implies that $u/\tau$ ratio is independent of $u$; hence, the parameter $u$ indeed does not affect the bound on the right-hand-side of \cref{eq: convergence result Huang NIPS 2}. Therefore, contrary to what is suggested in \cite{huang2021efficient}, choosing different values for $u$ through picking different $\nu\geq 0$ values indeed is not useful for proving tighter bounds in GNP metric $\norm{\grad\phi(x_k)}$ in this simple scenario using their parameter choices.

Note \cref{eq: convergence result Huang NIPS 2} can be simplified as
\begin{equation*}
    \frac{1}{T}\sum_{t=1}^{T}\mE[ \|\nabla\phi(x_t)\|] \leq \cO\Big( \sqrt{\frac{\kappa_y^3(\phi(x_0)-\phi_*)}{T}}+\frac{\delta}{\sqrt{b}} + \frac{\delta\kappa_y^2}{\sqrt{b}L}\Big).
\end{equation*}
Thus, for any $\epsilon>0$, to find point $x_t$ such that $\mE[\|\nabla\phi(x_t)\|]\leq\epsilon$, one should choose $t\geq T$ for
$$
T = \mathcal{O}\Big(\frac{\kappa_y^3}{\epsilon^2}(\phi(x_0)-\phi_*)\Big),\quad b = \mathcal{O}\Big(\frac{\kappa_y^4\delta^2}{L^2\epsilon^2}\Big),
$$
which leads to the oracles complexity of $$2bT=\mathcal{O}\Big(\frac{\kappa_y^7\delta^2}{L^2\epsilon^4}\Big)=\mathcal{O}\Big(\frac{\kappa_y^5\delta^2}{\mu_y ^2\epsilon^4}\Big).$$
\end{proof}

\subsection{Revisit of \cite[Theorem 3]{huang2021efficient}}
 
 In this section, we provide the oracle complexity of Huang~\emph{et al.}~\cite[Theorem 3]{huang2021efficient} using the metric $\|\nabla\phi(\cdot)\|$ for the Stochastic Mirror Descent Algorithm with variance reduction~(\texttt{SMDA-VR}), stated in \cite[algorithm 2]{huang2021efficient}. Let $\tau,\sigma$ be the primal and dual stepsizes, respectively, $\eta$ be the momentum parameter, $b$ be the large batchsize, $b'$ be the small batchsize, $q$ be the period for sampling large batch size (i.e., once every $q$ batches is large), and $u$ be the strongly-convex constant of the Bregman distance generating function. We also list our notational convention in \cref{table:Important notations comparison NIPS} for reader's convenience.
 
 Below, as we did in the previous section for \texttt{SMDA}, we restate the convergence result of \texttt{SMDA-VR} for the class of Bregman distance functions such that $D_t(x,x')=(x-x')^\top H_t (x-x')/2$ for some $H_t\succ 0$ --this class of Bregman functions are used for all the numerical experiments reported in~\cite{huang2021efficient}.
\begin{theorem}\label{NIPS THM2} \cite[Thoerem 3]{huang2021efficient} 
Suppose Assumptions~\ref{ASPT: fg},~\ref{ASPT: lipshiz gradient},~\ref{ASPT: unbiased noise assumption} hold with $f(\cdot)=g(\cdot)=0$. Let $\{x_t,y_t\}_{t=1}^{T}$ be generated by \texttt{SMDA-VR}, stated in \cite[Algorithm 2]{huang2021efficient}, employing a stochastic first-order oracle to sample stochastic partial derivatives. For parameters chosen as $\eta\in(0,1]$, $\tau=(0,~\min\Big\{ 
\tfrac{3u}{4L(1+\kappa_y)},
\tfrac{\eta \mu_y  \sigma u}{38L^2},
\frac{3u}{19L^2\eta},
\frac{u\eta}{8},
\tfrac{9 u \eta\mu_y \sigma}{400\kappa_y^2},
\Big\}\Big]$ and $\sigma\in(0,~\min\Big\{\frac{1}{6L}, \frac{9\mu_y}{100\eta^2L^2}\Big\}\Big]$, let $\eta_t=\eta$, $\tau_t=\tau$ and $\sigma_t=\sigma$ for $t\geq 0$ and $b'=q$. Then, for any given initial point $(x_0,y_0)$, we have
\begin{equation}\label{eq: convergence result Huang NIPS 3}
\frac{1}{T}\sum_{t=1}^{T}\mE[\|\mathbf{G}^{t}\|] 
\leq 
\frac{4\sqrt{2(\phi(x_0)-\phi_*)}}{\sqrt{3T\tau u}} 
+ \frac{4\sqrt{2}\Delta_0}{\sqrt{3T\tau u}} 
+ \frac{2\sqrt{2}\delta}{\sqrt{\tau u\eta b}L}.
\end{equation}
where $\phi(x) = \max_{y}\cL(x,y)$, $\phi_* = \inf_{x\in\cX} \phi(x)$,  $\Delta_0 = \|y_0 - y_*(x_0)\|$, $y_*(x_0) = \argmax_{y\in\cY} \cL(x_0,y)$,  $\mathbf{G}^{t} =  H_t^{-1} \nabla\phi(x_t)$, and $H_t$ is a diagonal matrix such that $H_t\succeq u\mathbf{I}$ for some $u>0$.
\end{theorem}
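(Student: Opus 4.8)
The plan is to treat the statement for what it is: a verbatim specialization of \cite[Theorem 3]{huang2021efficient} to the subclass of Bregman distance-generating functions whose divergence is quadratic, $D_t(x,x')=\tfrac12 (x-x')^\top H_t(x-x')$ with $H_t\succeq u\mathbf{I}$. The immediate route is therefore to invoke the general convergence result of \texttt{SMDA-VR} and simply identify the modulus $u$ with the uniform lower eigenvalue bound on $H_t$, since a quadratic form $\tfrac12(x-x')^\top H_t(x-x')$ is exactly $u$-strongly convex when $H_t\succeq u\mathbf{I}$. Under this identification the stepsize admissibility window and the right-hand side of \eqref{eq: convergence result Huang NIPS 3} are inherited unchanged from the cited theorem, so no new argument is strictly required beyond checking that the quadratic divergence satisfies the structural hypotheses (strong convexity, smoothness of the potential) imposed in the original analysis.

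For a self-contained derivation I would mirror the SPIDER/SARAH analysis used elsewhere in this paper for \texttt{VR-SAPD}. First I would establish a recursive bound on the mean-squared error of the variance-reduced gradient estimators over each inner epoch of length $q$, exactly in the spirit of \cref{Lemma: SPIDER}: on iterations with $\mo(k,q)\neq 0$ the error accumulates through the consecutive displacement terms $\ED{x_{k}-x_{k-1}}$ and $\ED{y_{k}-y_{k-1}}$ scaled by $L^2/b'$, while the large-batch refresh every $q$ steps resets it to $\delta^2/b$. Second, I would combine this with a one-step descent inequality for the primal function $\phi$, using that $\phi$ is smooth with constant $\mathcal{O}(L(1+\kappa_y))$ together with the contraction of the dual-tracking error $\|y_t-y_*(x_t)\|$ driven by the strong concavity modulus $\mu_y$ and the momentum parameter $\eta$.

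The remaining step is to telescope the per-iteration descent inequality over $t=1,\dots,T$. The stepsize constraints on $\tau,\sigma,\eta$ are engineered precisely so that the coefficients of the accumulated displacement terms are nonpositive after telescoping, leaving only the boundary contributions: the initial suboptimality $\phi(x_0)-\phi_*$, the dual initialization error $\Delta_0=\|y_0-y_*(x_0)\|$, and the residual variance $\delta^2/(\eta b)$. Dividing by $T$ and taking square roots produces the three terms in \eqref{eq: convergence result Huang NIPS 3}; the prescribed relation $b'=q$ is what balances the small- and large-batch sampling costs within each epoch.

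The main obstacle, and the reason this restatement is worth isolating, is bookkeeping of how the modulus $u$ propagates. The convergence metric is $\mathbf{G}^{t}=H_t^{-1}\nabla\phi(x_t)$ rather than $\nabla\phi(x_t)$ itself, and $u$ appears inside the square roots as the ratio $u/\tau$. Under the admissible stepsize choice $\tau=\Theta(u/(\cdots))$ this ratio is independent of $u$, so the apparent freedom to inflate $u$ does not improve the bound once one converts back to $\|\nabla\phi(x_t)\|$ via $H_t\succeq u\mathbf{I}$. Making this cancellation explicit is the delicate part of the argument and is exactly what the subsequent corollary will exploit to recover the honest oracle complexity in the GNP metric.
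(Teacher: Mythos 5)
Your proposal is correct and takes essentially the same approach as the paper: the paper offers no independent proof of this statement, but simply restates \cite[Theorem 3]{huang2021efficient} specialized to quadratic Bregman distance-generating functions $D_t(x,x')=\tfrac12(x-x')^\top H_t(x-x')$, which is exactly your primary route of invoking the cited theorem with $u$ identified as the uniform lower eigenvalue bound of $H_t$. Your additional self-contained SPIDER-style derivation and the observation that $u/\tau$ is independent of $u$ are not needed for the theorem itself; the latter point is precisely the content of the paper's \emph{subsequent corollary}, as you correctly anticipate.
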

In the following corollary, we will provide the optimal complexity for \texttt{SMDA-VR} based the result in~\cref{eq: convergence result Huang NIPS 3}, i.e., \cite[Thoerem 3]{huang2021efficient}.
\begin{corollary}
Suppose Assumptions~\ref{ASPT: fg},~\ref{ASPT: lipshiz gradient},~\ref{ASPT: unbiased noise assumption} hold with $f(\cdot)=g(\cdot)=0$, and $\frac{1}{\mu_y}\leq L$ hold\footnote{The assumption $\frac{1}{\mu_y}\leq L$ is also made in \cite{huang2021efficient}.}.  Consider the setting of \cref{NIPS THM2}, then \texttt{SMDA-VR}~\cite[Algorithm 2]{huang2021efficient} can generate $x_\epsilon$ such that $\mathbb{E}\left[\|\nabla\phi(x_\epsilon)\| \right]\leq\epsilon$ by requiring at most $\mathcal{O}(\frac{\kappa_y^5\delta^2}{\mu_y\epsilon^3})$ stochastic first-order oracle calls.
\end{corollary}

\begin{proof}
Recall that $H_t\succeq u\mathbf{I}$, $\mathbf{G}^{t} =  H_t^{-1} \nabla\phi(x_t)$ and $H_t$ is a diagonal matrix; therefore, we can obtain a tight upper bound on $\mE[\|\nabla\phi(x_t)\|]$ using \cref{eq: convergence result Huang NIPS 3} as follows:
\begin{equation}\label{eq: convergence result Huang NIPS 4}
\begin{aligned}
\frac{1}{T}\sum_{t=1}^{T} \mE[\|\nabla\phi(x_t)\|]\leq \frac{4\sqrt{2(\phi(x_0)-\phi_*)}}{\sqrt{3T}}\sqrt{\frac{ u}{\tau}} + \frac{4\sqrt{2}\Delta_0}{\sqrt{3T}}\sqrt{\frac{ u}{\tau}}  + \frac{2\sqrt{2}\delta}{\sqrt{\eta b}L}\sqrt{\frac{ u}{\tau}}.
\end{aligned}
\end{equation}
If we use their parameter choices, i.e., $\eta\in(0,1]$,
\begin{equation}
\label{eq:Huang-param-choice-vr}
    \sigma = \mathcal{O}\Big(\frac{1}{\kappa_y L}\Big),~\tau = u~\min\Big\{\frac{3 }{4L(1+\kappa_y)},
    \frac{\eta\mu_y\sigma}{38L^2},
    \frac{3}{19L^2\eta},
    \frac{\eta}{8},
    \frac{9\eta\mu_y\sigma}{400\kappa_y^2}\Big\},~ u = \mathcal{O}(L^{1+\nu}),
\end{equation}
for some free design parameter $\nu\geq0$, then we get
\begin{equation*}
    \frac{ u}{\tau} = \max\Big\{\frac{4L(1+\kappa_y)}{3},\frac{38 L^2}{\eta\mu_y\sigma},
    \frac{19L^2\eta}{3},
    \frac{8}{\eta},
    \frac{400\kappa_y^2}{9\eta\mu_y\sigma}\Big\} =\Omega(\kappa_y^4),
\end{equation*}
 where the last term leads to $\kappa_y^4$. It is essential to note that $\tau$ choice in \eqref{eq:Huang-param-choice-vr} implies that $u/\tau$ ratio is independent of $u$; hence, the parameter $u$ indeed does not affect the bound on the right-hand-side of \cref{eq: convergence result Huang NIPS 4}. Therefore, contrary to what is suggested in \cite{huang2021efficient}, \xzrev{for the simple scenarios considered here choosing} different values for $u$ through picking different $\nu\geq 0$ values indeed is not useful for proving tighter bounds in GNP metric $\norm{\grad\phi(x_k)}$ \xzrev{with} their parameter choices.
 
 Note \xzrev{that} \cref{eq: convergence result Huang NIPS 4} can be simplified as
\begin{equation*}
    \frac{1}{T}\sum_{t=1}^{T} \mE[\|\nabla\phi(x_t)\|] \leq \cO\Big(\sqrt{\frac{\kappa_y^4(\phi(x_0)-\phi_*)}{T}} + \delta\frac{\kappa_y^2}{\sqrt{b}L}\Big).
\end{equation*}
Thus, for any $\epsilon>0$, to find point $x_t$ such that $\mE[\|\nabla\phi(x_t)\|]\leq\epsilon$, one should choose $t\geq T$ for
$$
T = \mathcal{O}\Big(\frac{\kappa_y^4(\phi(x_0)-\phi_*)}{\epsilon^2}\Big),\quad b = \mathcal{O}\Big(\frac{\kappa_y^4\delta^2}{L^2\epsilon^2}\Big),
$$
which leads to the oracle complexity of 
$$4b'T + 2bT/q=\mathcal{O}\Big(\frac{b'\kappa_y^4}{\epsilon^2} + \frac{\kappa_y^8\delta^2}{L^2\epsilon^4}/q\Big).$$
Since their parameter choice requires $b'=q$, to optimize the above bound, we let $b'=q=\mathcal{O}\left(\frac{\kappa_y^2}{L\epsilon}\right)$, which leads to
$$
\mathcal{O}\left(\frac{\kappa_y^6\delta^2}{L\epsilon^3}\right) =\mathcal{O}\left(\frac{\kappa_y^5\delta^2}{\mu_y\epsilon^3}\right).
$$
\end{proof}


 \subsection{
 Revisit of \cite[Theorem~1]{luo2020stochastic}
 }
Recall that $\phi(x)=\max_{y}\cL(x,y)$ and $\phi_* = \inf_{x\in\cX} \phi(x)$.
In this paper, the total oracle complexity to find point $x_\epsilon$ such that $\mE[\|\nabla\phi(x_\epsilon)\|]\leq\epsilon$  is given by
\begin{equation}\label{eq: complexity luo2020 1}
    \mathcal{O}(\kappa_y^2\epsilon^{-2}\log(\kappa_y/\epsilon)) + \mathcal{O}(T/q\cdot \sa{b}) + \mathcal{O}(T\cdot \sa{b'} \cdot m)
\end{equation}
where\footnote{In \cite{luo2020stochastic}, \xzrev{there is a typo in the choice} of $b = \lceil \frac{2250}{19}\delta^2\kappa_y^{-2}\epsilon^{2}\rceil$. Here, we provide the correct one.} 
\begin{equation}
\label{eq:SREDA-parameters}
    \begin{aligned}
            T = \Big\lceil \frac{100\kappa_y L\Delta_f}{9\epsilon^{2}}\Big\rceil,~q = \lceil \epsilon^{-1} \rceil,\ b = \lceil \frac{2250}{19}\delta^2\kappa_y^{2}\epsilon^{-2}\rceil,\ b' = \Big\lceil \frac{3687}{76}\kappa_y q \Big\rceil,\ m = \lceil 1024\kappa_y \rceil.
    \end{aligned}
\end{equation}
Given an arbitrary initial point $x_0$, let $y_0$ be obtained by inexactly solving $\max_y \cL(x_0,y)$, and they define $\Delta_f = \cL(x_0,y_0) - \frac{134\epsilon^2}{\kappa_yL} - \phi_*$.
In~\eqref{eq:SREDA-parameters}, the other parameters are defined as follows:
$b$ is the large batchsize, $b'$ is the small batchsize, $q$ is the period such that once every $q$ outer iterations, \texttt{SREDA} calls for a large batchsize,
$T$ is the number of the outer iterations and $m$ is the number of the inner iterations --each outer iteration requires $m$ inner iterations and each inner iteration calls for a small batchsize. Then \cref{eq: complexity luo2020 1} becomes $\mathcal{O}(\frac{L\kappa_y^3}{\epsilon^3})$. 

\subsection{
Revisit of \cite[Theorem~1]{xu2020enhanced}
}

Recall that $\phi(x)=\max_{y}\cL(x,y)$ and $\phi_* = \inf_{x\in\cX} \phi(x)$. In this paper, the precise parameter selection for \cite[Theorem~1]{xu2020enhanced} is provided in \cite[Theorem~3]{xu2020enhanced} of the supplementary material. Using these parameter choice implies that the total oracle complexity to find point $x_\epsilon$ such that $\mE[\|\nabla\phi(x_\epsilon)\|]\leq\epsilon$ is given by
\begin{equation}\label{eq: complexity xu2020 1}
    T\cdot b' \cdot m + \Big\lceil \frac{T}{q} \Big\rceil \cdot b + T_0,
\end{equation}
for an arbitrary initial point $x_0$, where the number of outer iterations, $T$,  the number of the inner iterations per each outer iteration, $m$, are set as follows:
\begin{equation*}
\begin{aligned}
    &T = \max\Big\{\frac{3345\kappa_y}{\epsilon^2},\ 6600(1+\kappa_y)L\frac{(\phi(x_0)-\phi_*)}{\epsilon^2}\Big\},\quad b = \frac{9366\delta^2\kappa_y^2}{\epsilon^2},
    \\&b' = \frac{\kappa_y}{\epsilon},\quad m = 52\kappa_y-1,\quad q = \frac{2}{13(1+\kappa_y)}\frac{\kappa_y}{\epsilon},\quad T_0=\cO(\kappa_y\log(\kappa_y)).
\end{aligned}
\end{equation*}
Above $b$ is the large batchsize, $b'$ is the small batchsize, $q$ is the period such that once every $q$ outer iterations, a large batch size is sampled rather than a small batch size. Then \cref{eq: complexity xu2020 1} becomes $\mathcal{O}(\frac{L\kappa_y^3}{\epsilon^3})$.

 \subsection{
 Revisit of 
\cite[Theorem~12]{huang2022accelerated}
 }
In this section, we provide the oracle complexity of \cite[Theorem 12]{huang2022accelerated} using the metric $\|\nabla\phi(\cdot)\|$ for the Accelerated first-order Momentum Descent Ascent~(\texttt{ACC-MDA}) algorithm, stated in \cite[algorithm 3]{huang2022accelerated}. Let $\tau,\sigma$ be the primal and dual stepsizes, respectively, $\{\eta_t\}$ be the momentum parameter sequence, and $b$ be the batchsize. We also list our notational convention in \cref{table:Important notations comparison JMLR} for reader's convenience.

\begin{table}[h]
    \centering
    \begin{tabular}{c c c}
    \hline
    Notations in \cite{huang2022accelerated} & Notations in our paper & Meaning\\
    \hline
    $\gamma$ & $\tau$  & primal stepsize \\
        $\lambda$ & $\sigma $ & dual stepsize \\
       $L_f$ & $L$ & Lipschitz constant as in~\eqref{eq:worst-case-parameters} 
       \\
       $L_g$ & $L(1+\kappa_y)$ &
       $L$-smooth constant of $\phi(x)$
       \\
        $\tau$ & $\mu_y$ &  concavity modulus of $\cL(x,\cdot)$\\
        \hline
    \end{tabular}
    \caption{Important notations for \cite{huang2022accelerated} and this paper.}
    \label{table:Important notations comparison JMLR}
\end{table}

Below, we restate the convergence result of \texttt{ACC-MDA} reported in \cite{huang2022accelerated}.
\begin{theorem}\label{JMRL THM12} \cite[Thoerem 12]{huang2022accelerated} 
Suppose Assumptions~\ref{ASPT: fg},~\ref{ASPT: lipshiz gradient},~\ref{ASPT: unbiased noise assumption} hold with $f(\cdot)=g(\cdot)=0$. Let  $\{x_t,y_t\}_{t=1}^{T}$ be generated by \texttt{ACC-MDA} algorithm, stated in~\cite[Algorithm 3]{huang2022accelerated}, when applied to the \emph{smooth minimax} problem $\min_{x\in\cX}\max_{y\in\cY}\cL(x,y)=\Phi(x,y)$. For some given $p>0$, let 
$\eta_t = \tfrac{p}{(\psi+t)^{1/3}}$ for all $t\geq 0$, $\tau\in (0,\ \min\{ \tfrac{\sigma\mu_y}{2L}\sqrt{\tfrac{2b}{8\sigma^2+75\kappa_y^2b}},\tfrac{\psi^{1/3}}{2L(1+\kappa_y)p}\}]$ and $\sigma\in(0, \min\{\tfrac{1}{6L}, \tfrac{27b\mu_y}{16}\}]$
such that $\psi\geq \max\{2, p^3,(c_1 p)^3, (c_2 p)^3\}$ for some $c_1\geq \tfrac{2}{3p^3} + \tfrac{9\mu_y^2}{4}$ 
and
$c_2\geq \tfrac{2}{3p^3} + \tfrac{75L^2}{2}$. Then for any given $x_0$, we have
\begin{equation}\label{eq: convergence result JMLR 1}
\frac{1}{T}\sum_{t=1}^{T}\mE[\|\nabla\phi(x_t)\|] 
\leq 
\frac{\sqrt{2M''}\psi^{1/6}}{T^{1/2}} + \frac{\sqrt{2M''}}{T^{1/3}},
\end{equation}
where $\phi(x) = \max_{y}\cL(x,y)$, $\Delta_0 = \|y_0 - y_*(x_0)\|^2$, $y_*(x_0) = \argmax_{y\in\cY} \cL(x_0,y)$, $M''=\frac{\phi(x_0)-\phi_*}{\tau p} + \frac{9L^2\Delta_0}{p\sigma\mu_y} + \frac{2\psi^{1/3}\delta^2}{b\mu_y^2 p^2} + \frac{2(c_1^2+c_2^2)\delta^2p^2}{b\mu_y^2}\ln(\psi+T)$, and $\phi_* = \inf_{x\in\cX} \phi(x)$.
\end{theorem}


\begin{remark}\cite[Remarks 13 and 14]{huang2022accelerated}
When $b=\cO\big(\kappa_y^{\nu}\big)$ for $\nu>0$ and $\kappa_y^{\nu}\leq \tfrac{8}{81 L\mu_y}$, they claim that they can obtain the gradient complexity of $\tilde{\mathcal{O}}(\kappa_y^3\epsilon^{-3})$ if $\nu=3$, and $\tilde{\mathcal{O}}(\kappa_y^{2.5}\epsilon^{-3})$ if $\nu=4$. However, for the assumption $\kappa_y^\nu\leq \frac{8}{81 L \mu_y}$ to hold in general, one needs to rescale the original objective function $\cL(x,y)$ with some $s\in(0,1]$ to define
\begin{align}
\label{eq:Ls}
\cL_s(x,y) \triangleq s\cdot\cL(x,y).
\end{align}
Then the Lipschitz constant of $\grad \cL_s$, strongly concavity modulus of $\cL(x,\cdot)$ for any $x\in\cX$ and the variance bound of the stochastic oracle for $\grad_x\cL_s$ and $\grad_y\cL_s$ can be written as $sL$, $s \mu_y$, and $s^2\delta^2$, respectively. We notice that the effect of scaling $\cL$ on the problem parameters is not discussed in \cite{huang2022accelerated} and \cref{eq: convergence result JMLR 1} is directly used to derive the convergence result assuming $\kappa_y^\nu\leq \tfrac{8}{81L\mu_y}$. As a consequence, the complexity results of $\tilde{\mathcal{O}}(\kappa_y^3\epsilon^{-3})$, $\tilde{\mathcal{O}}(\kappa_y^{2.5}\epsilon^{-3})$ do not hold without loss of generality unless the original function $\cL$ satisfies the restrictive assumption of $\kappa_y^\nu\leq \tfrac{8}{81L\mu_y}$.
\end{remark}
In the following discussion, we analyze the effect of scaling $\cL$ on the complexity bounds whenever $\kappa_y^\nu\leq \tfrac{8}{81L\mu_y}$ is not satisfied for the original objective function $\cL$, and we provide complexity bounds holding without loss of generality that are optimized by choosing $\nu>0$ properly. Now, consider implementing \texttt{ACC-MDA} on an appropriately scaled problem $\min_x\max_y\cL_s(x,y)$ where $\cL_s$ is defined in~\eqref{eq:Ls}. Let
\begin{equation}\label{eq: rescale constants}
    L_s \triangleq sL,\quad \mu_s \triangleq s\mu_y,\quad \delta_s \triangleq s\delta.
\end{equation}
Note that the condition numbers of $\cL_s$ and $\cL$ are the same, and are equal to $\kappa_y$, i.e., $\kappa_y=\frac{L}{\mu_y}=\frac{L_s}{\mu_s}$. In the upcoming discussion, suppose that $s\in(0,1]$ is chosen such that $\kappa_y^\nu\leq \tfrac{8}{81 L_s\mu_s}$.

To facilitate the complexity analysis and make the upcoming discussion easier, first we restate \cref{JMRL THM12} for the function  $\cL_s$, where we used the relation $\grad \phi$ and the derivative of $\max_y\cL_s(\cdot,y)$; indeed, the derivative of $\max_y\cL_s(\cdot,y)$ is equal to $s \grad \phi(\cdot)$, where $\phi(x) = \max_{y}\cL(x,y)$.
\begin{theorem}\label{JMRL THM12 v2} \cite[Thoerem 12]{huang2022accelerated} 
Suppose Assumptions~\ref{ASPT: fg},~\ref{ASPT: lipshiz gradient},~\ref{ASPT: unbiased noise assumption} hold with $f(\cdot)=g(\cdot)=0$. Let  $\{x_t,y_t\}_{t=1}^{T}$ be generated by \texttt{ACC-MDA} algorithm, stated in~\cite[Algorithm 3]{huang2022accelerated}, when applied to the \emph{smooth minimax} problem $\min_{x\in\cX}\max_{y\in\cY}\cL_s(x,y)=s\cdot\cL(x,y)$. 
For some given $p\geq0$,
let 
$\eta_t = \tfrac{p}{({\psi}+t)^{1/3}}$ for all $t\geq 0$, 
$\tau\in (0, \min\{ \tfrac{\sigma\mu_s}{2L_s}\sqrt{\tfrac{2b}{8\sigma^2+75\kappa_y^2b}},\tfrac{{\psi}^{1/3}}{2L_s(1+\kappa_y)p} \}]$
and
$\sigma\in(0, \min\{\tfrac{1}{6L_s}, \tfrac{27b\mu_s}{16}\}]$ such that
${\psi}\geq \max\{2, p^3,(c'_{1} p)^3, (c'_{2} p)^3 \}$ for some
$c'_{1}\geq \tfrac{2}{3p^3} + \tfrac{9\mu_s^2}{4}$ 
and
$c'_{2}\geq \tfrac{2}{3p^3} + \tfrac{75L_s^2}{2}$.
Then for any given $x_0$,
we have
\begin{equation}\label{eq: convergence result JMLR v2}
\frac{1}{T}\sum_{t=1}^{T}\mE[\|\nabla\phi(x_t)\|] 
\leq 
\frac{1}{s}\left(\frac{\sqrt{2M_s''}{\psi}^{1/6}}{T^{1/2}} + \frac{\sqrt{2M_s''}}{T^{1/3}}\right),
\end{equation}
where $\phi(x) = \max_{y}\cL(x,y)$, $\Delta_0 = \|y_0 - y_*(x_0)\|^2$, $y_*(x_0) = \argmax_{y\in\cY} \cL(x_0,y)$, $M_s''=\frac{s(\phi(x_0)-\phi_*)}{\tau p} + \frac{9L_s^2\Delta_0}{p\sigma\mu_s} + \frac{2{\psi}^{1/3}\delta_s^2}{b\mu_s^2 p^2} + \frac{2({c'_{1}}^2+{c'_{2}}^2)\delta_s^2p^2}{b\mu_s^2}\ln({\psi}+T)$, and $\phi_* = \inf_{x\in\cX} \phi(x)$.
\end{theorem}
Next, following the analysis in \cite[Remarks 10 and 13]{huang2022accelerated}, we provide a particular parameter choice for \texttt{ACC-MDA} so that it is applicable to the setting of \cref{JMRL THM12 v2}.
\begin{lemma}\label{eq: parameter JMLR}
Under the premise of \cref{JMRL THM12 v2}. Suppose  $\kappa_y^{\nu}\leq\frac{8}{81 L_s\mu_s}$, $b=\kappa_y^{\nu}$ for some $\nu>0$, and
\begin{equation}
    \label{eq: suggested choice}
    \frac{\sigma\mu_s}{2L_s}\sqrt{\frac{2b}{8\sigma^2+75\kappa_y^2b}}\leq\frac{{\psi}^{1/3}}{2L_s(1+\kappa_y)p}.
\end{equation}
If 
$\sigma = \min\{\tfrac{1}{6L_s}, \tfrac{27b\mu_s}{16}\}$
and 
$\tau = \min\{ \tfrac{\sigma\mu_s}{2L_s}\sqrt{\tfrac{2b}{8\sigma^2+75\kappa_y^2b}},\tfrac{{\psi}^{1/3}}{2L_s(1+\kappa_y)p} \}$, then ${\psi} = \Theta(\max\{1, L_s^6\})$ satisfies the condition in \cref{JMRL THM12 v2} and 
\begin{equation}
        \sigma=\Theta(b\mu_s),\quad
        \tau^{-1} = \Theta \Big(\frac{\kappa_y^3}{bL_s }\Big).
\end{equation}
\end{lemma}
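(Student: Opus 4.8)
The plan is to verify that each prescribed value lies in the admissible range of \cref{JMRL THM12 v2} and then to track the resulting orders of magnitude, using the standing hypotheses $b=\kappa_y^\nu$ and $\kappa_y^\nu\le\frac{8}{81L_s\mu_s}$ throughout, and treating the design parameter $p$ as a fixed constant of order one.

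First I would resolve the minimum defining $\sigma$. From $b\mu_s\le\frac{8}{81L_s}$ a direct computation gives $\frac{27b\mu_s}{16}\le\frac{27}{16}\cdot\frac{8}{81L_s}=\frac{1}{6L_s}$, so the minimum in $\sigma=\min\{\frac{1}{6L_s},\frac{27b\mu_s}{16}\}$ is attained by the second term. Hence $\sigma=\frac{27b\mu_s}{16}=\Theta(b\mu_s)$, and this value sits at the right endpoint of the admissible $\sigma$-interval, so it is admissible. For $\tau$, the standing hypothesis~\eqref{eq: suggested choice} guarantees that the first argument of the defining minimum is no larger than the second, whence $\tau=\frac{\sigma\mu_s}{2L_s}\sqrt{\frac{2b}{8\sigma^2+75\kappa_y^2b}}$, which likewise attains the right endpoint of its admissible interval.

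The crux is to show that the term $75\kappa_y^2b$ dominates the denominator $8\sigma^2+75\kappa_y^2b$. Substituting $\sigma=\Theta(b\mu_s)$ and $\kappa_y=L_s/\mu_s$ I would form the ratio $\frac{8\sigma^2}{75\kappa_y^2b}=\Theta\big(\frac{b\mu_s^4}{L_s^2}\big)$; the bound $b\mu_s\le\frac{8}{81L_s}$ then yields $\frac{b\mu_s^4}{L_s^2}\le\frac{8}{81}\kappa_y^{-3}=O(\kappa_y^{-3})$, so this ratio is bounded by a small constant and $8\sigma^2+75\kappa_y^2b=\Theta(\kappa_y^2b)$. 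Consequently $\tau=\frac{\sigma\mu_s}{2L_s}\cdot\Theta(\kappa_y^{-1})=\Theta\big(\frac{b\mu_s^3}{L_s^2}\big)$, and inverting gives $\tau^{-1}=\Theta\big(\frac{L_s^2}{b\mu_s^3}\big)=\Theta\big(\frac{\kappa_y^3}{bL_s}\big)$, using the identity $\frac{\kappa_y^3}{bL_s}=\frac{L_s^2}{b\mu_s^3}$.

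Finally I would check the admissibility of $\psi$. Choosing the smallest allowed constants $c_1'=\frac{2}{3p^3}+\frac{9\mu_s^2}{4}$ and $c_2'=\frac{2}{3p^3}+\frac{75L_s^2}{2}$, and using $p=\Theta(1)$, gives $(c_2'p)^3=\Theta(\max\{1,L_s^6\})$ and $(c_1'p)^3=\Theta(\max\{1,\mu_s^6\})$. Since $\kappa_y\ge1$ forces $L_s\ge\mu_s$, the $c_2'$-term dominates, so $\max\{2,p^3,(c_1'p)^3,(c_2'p)^3\}=\Theta(\max\{1,L_s^6\})$; setting $\psi$ equal to this maximum both satisfies the hypothesis of \cref{JMRL THM12 v2} and yields $\psi=\Theta(\max\{1,L_s^6\})$. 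The main obstacle is the dominance argument of the third step: one must exploit the scaling constraint $b\mu_s\le\frac{8}{81L_s}$ sharply so that $8\sigma^2$ is negligible against $75\kappa_y^2b$, as this is precisely what collapses the complicated expression for $\tau$ into the clean order $\tau^{-1}=\Theta(\kappa_y^3/(bL_s))$.
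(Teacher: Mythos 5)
Your resolution of the two minima and the order computations for $\sigma$ and $\tau$ are correct and essentially the same as the paper's: your ratio bound $8\sigma^2/(75\kappa_y^2 b)=\Theta(b\mu_s^4/L_s^2)=O(\kappa_y^{-3})$ is the same inequality the paper uses when it collapses $\sqrt{b\mu_s^2+\kappa_y^2}$ into $\Theta(\kappa_y)$, and both rest on $b\mu_s\le\tfrac{8}{81L_s}$, i.e., $L_s^2\le\tfrac{8}{81}\kappa_y^{1-\nu}$.

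The gap is in your treatment of $\psi$. Condition \eqref{eq: suggested choice} is not only the device that resolves the minimum defining $\tau$; it is itself a lower bound on $\psi$, being equivalent to
\begin{equation*}
\psi\;\ge\;\Big(\sigma\mu_s(1+\kappa_y)p\sqrt{\tfrac{2b}{8\sigma^2+75\kappa_y^2 b}}\Big)^3 .
\end{equation*}
Since the lemma simultaneously assumes \eqref{eq: suggested choice} and asserts that $\psi=\Theta(\max\{1,L_s^6\})$ is a valid choice, you must verify that the two are compatible; otherwise the prescribed $\psi$ could violate the assumed inequality, making the hypothesis set unsatisfiable and the lemma (together with the corollary that actually instantiates all of these parameters) vacuous. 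This is precisely the part of the paper's proof you skipped: it merges the requirement of \cref{JMRL THM12 v2} with \eqref{eq: suggested choice} into the single sufficient condition \eqref{eq: m bound 1}, whose fifth entry encodes \eqref{eq: suggested choice}, and then shows that this entry is $O(1)$. Your step 4 checks only $\psi\ge\max\{2,p^3,(c_1'p)^3,(c_2'p)^3\}$ and never confronts the fifth constraint. The repair is short with what you already have: using your estimates $\sigma=\Theta(b\mu_s)$ and $8\sigma^2+75\kappa_y^2b=\Theta(\kappa_y^2 b)$, the quantity above is $\Theta\big((b\mu_s^2)^3\big)=\Theta\big((\kappa_y^{\nu-2}L_s^2)^3\big)=O(\kappa_y^{-3})=O(1)$, where the last bound again uses $L_s^2\le\tfrac{8}{81}\kappa_y^{1-\nu}$; hence \eqref{eq: suggested choice} only demands $\psi\ge O(1)$, which is already implied by $\psi\ge 2$, so the choice $\psi=\Theta(\max\{1,L_s^6\})$ is consistent with all of the constraints.
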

\begin{remark}
The conditions $\kappa_y^{\nu}\leq \frac{8}{81L_s\mu_s}$ and \cref{eq: suggested choice},
and the choice $b=\kappa_y^{\nu}$ are as suggested in \cite{huang2022accelerated}.
\end{remark}
\begin{proof}
Since $\kappa_y^{\nu}\leq \frac{8}{81L_s\mu_s}$, we have $\sigma=\frac{27b\mu_s}{16}=\Theta(b\mu_s)$. Furthermore, \cref{eq: suggested choice} implies that we can simplify $\tau$ as 
$$
\tau= \frac{\sigma\mu_s}{2L_s}\sqrt{\frac{2b}{8\sigma^2+75\kappa_y^2b}}.
$$
Then it follows that,
\begin{equation*}
    \label{eq: tau bound JMLR during proof}
    \tau^{-1} = \Theta \Big(\frac{\kappa_y}{b\mu_s} \sqrt{b\mu_s^2 + \kappa_y^2} \Big) =\Theta \big(\frac{\kappa_y^2}{b L_s}(\kappa_y^{\nu/2}\mu_y +\kappa_y) \Big) = \Theta \Big(\frac{\kappa_y^2}{b }\Big(\kappa_y^{\nu/2-1} +\frac{\kappa_y}{L_s}\Big) \Big)= \Theta \Big(\frac{\kappa_y^3}{bL_s }\Big),
\end{equation*}
where we use the relation $\kappa^{\nu}_y\leq \frac{8}{81L_s\mu_s}\Rightarrow L_s^2\leq\frac{8}{81} \kappa_y^{1-\nu}$ for the last equality.
Next, from \cref{eq: suggested choice} and the requirement on $\psi$ in \cref{JMRL THM12 v2}, a sufficient condition $\psi$ is
\begin{equation}\label{eq: m bound 1}
    {\psi} \geq  \max\left\{2,\ p^3,\ (c'_{1} p)^3,\ (c'_{2} p)^3,\  \left(\sigma\mu_s(1+\kappa_y)p \sqrt{\frac{2b}{8\sigma^2+75\kappa_y^2b}}\right)^3 \right\},
\end{equation}
Now we consider the components of $\max$ operator in \eqref{eq: m bound 1}. Note positive constant $p$ can be chosen independent of other problem parameters, e.g., $p=1$. Furthermore, the requirement on $c_1',c_2'$ can be satisfied for
\begin{equation}
\label{eq:c-choice}
    \begin{aligned}
    c'_{1} = \Theta\big( \mu_s^2 \big),\quad c'_{2} = \Theta(L_s^2).
    \end{aligned}
\end{equation}
Finally, using $\sigma=\frac{27b\mu_s}{16}$ together with $b=\kappa_y^{\nu}$ yields that
\begin{equation*}
    \begin{aligned}
    & \sigma\mu_s(1+\kappa_y)p \sqrt{\frac{2b}{8\sigma^2+75\kappa_y^2b}}  = \Theta \Big(\kappa_y b\mu_s^2 \sqrt{\frac{1}{b\mu_s^2 + \kappa_y^2}}\Big) = \Theta \Big( L_s^2\kappa_y^{\nu-1}\sqrt{\frac{1}{\kappa_y^{\nu-2}L_s^2 + \kappa_y^2}}\Big)
    \\
    & = \Theta \Big( L_s^2\kappa_y^{\nu-1}\sqrt{\frac{1}{\kappa_y^2}}\Big) \leq \Theta(1),
    \end{aligned}
\end{equation*}
where we use the relation $\kappa^{\nu}_y\leq \frac{8}{81L_s\mu_s}\Rightarrow L_s^2\leq\frac{8}{81} \kappa_y^{1-\nu}$ for the last equality and the last inequality. Therefore, using the above relations within \cref{eq: m bound 1}, we observe that one can set
\begin{equation}
\label{eq:psi-choice}
    {\psi}^{1/3} = \Theta(\max\{1, L_s^2\}),
\end{equation}
which completes the proof.
\end{proof}
 
Next, we will use the parameters in \cref{eq: parameter JMLR} to provide an optimized complexity for \texttt{ACC-MDA} \cite[Algorithm 12]{huang2022accelerated} to generate $x_\epsilon$ such that $\mathbb{E}\left[\|\nabla\phi(x_\epsilon)\| \right]\leq\epsilon$.
\begin{corollary}
Suppose Assumptions~\ref{ASPT: fg},~\ref{ASPT: lipshiz gradient},~\ref{ASPT: unbiased noise assumption} hold with $f(\cdot)=g(\cdot)=0$, and  $\kappa_y^\nu> \tfrac{8}{81L\mu_y}$ for the original function $\cL$. Running \texttt{ACC-MDA} on $\min_x\max_y\cL_s(x,y)$ for  
\begin{equation}\label{eq: choice of s}
    s = \frac{2\sqrt{2}}{9}\frac{1}{L}\kappa_y^{(1-\nu)/2},
\end{equation}
and $b=\kappa_y^{\nu}$, one can generate $x_\epsilon$ such that $\mathbb{E}\left[\|\nabla\phi(x_\epsilon)\| \right]\leq\epsilon$ requiring at most  $\tilde{O}(\frac{L^{1.5}\kappa_y^{3.5}}{\epsilon^3})$ stochastic first-order oracle calls.
\end{corollary}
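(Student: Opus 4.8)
The plan is to combine the rescaling-corrected convergence guarantee in \cref{JMRL THM12 v2} with the parameter calibration of \cref{eq: parameter JMLR}, substitute the explicit scale factor $s$ from \eqref{eq: choice of s}, and then read off the total oracle complexity $\Theta(Tb)$ after optimizing over the free exponent $\nu$. First I would check admissibility of the proposed $s$ and that it exactly saturates the structural condition of \cref{eq: parameter JMLR}: a direct computation gives $L_s=sL=\Theta(\kappa_y^{(1-\nu)/2})$, $\mu_s=s\mu_y=\Theta(\kappa_y^{-(1+\nu)/2})$ and $\delta_s=s\delta=\Theta(\tfrac{\delta}{L}\kappa_y^{(1-\nu)/2})$, whence $L_s\mu_s=\tfrac{8}{81}\kappa_y^{-\nu}$, so $\kappa_y^\nu=\tfrac{8}{81L_s\mu_s}$ holds with equality; moreover $s\le 1$ is precisely equivalent to the standing hypothesis $\kappa_y^\nu>\tfrac{8}{81L\mu_y}$, which is exactly the regime that forces the rescaling. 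This lets me invoke \cref{eq: parameter JMLR} with $b=\kappa_y^\nu$ to obtain $\sigma=\Theta(\kappa_y^{(\nu-1)/2})$, $\tau^{-1}=\Theta(\kappa_y^{(5-\nu)/2})$, $\psi^{1/3}=\Theta(\max\{1,\kappa_y^{1-\nu}\})$, together with $c_1'=\Theta(1)$ and $c_2'=\Theta(\max\{1,\kappa_y^{1-\nu}\})$.

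Next I would substitute these scalings, along with $1/s=\Theta(L\kappa_y^{(\nu-1)/2})$, into the constant $M_s''$ of \cref{JMRL THM12 v2} and simplify its four summands. The bookkeeping shows the initialization term $\tfrac{s(\phi(x_0)-\phi_*)}{\tau p}$ scales like $\tfrac1L\kappa_y^{3-\nu}$, while the variance summands scale like $\tfrac{\delta^2}{L^2}\kappa_y^{4-3\nu}$ up to the logarithmic factor in $\ln(\psi+T)$. Inserting $M_s''$ and $s$ into the convergence bound \eqref{eq: convergence result JMLR v2}, the $T^{-1/3}$ term dominates for small $\epsilon$, so $\tfrac1T\sum_t\mE\|\nabla\phi(x_t)\|\le\epsilon$ is achieved once $T=\tilde{\mathcal O}\big(\tfrac{(M_s'')^{3/2}}{s^3\epsilon^3}\big)$, the $T^{-1/2}$ term imposing only a weaker $\epsilon^{-2}$ requirement. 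The crucial simplification is that the powers of $\kappa_y$ from $M_s''$ and from $s^{-3}$ combine so that the $L$-scaling of $T$ decouples from $\nu$: the deterministic part yields $T=\tilde{\mathcal O}(L^{3/2}\kappa_y^{3}\epsilon^{-3})$ and hence $Tb=\tilde{\mathcal O}(L^{3/2}\kappa_y^{3+\nu}\epsilon^{-3})$, whereas the variance part yields $Tb=\tilde{\mathcal O}(\delta^3\kappa_y^{(9-4\nu)/2}\epsilon^{-3})$. Since each oracle call uses a mini-batch of size $b$ per iteration, the total is $\Theta(Tb)$; minimizing the $\kappa_y$-exponent over $\nu$ balances the two regimes at $\nu=\tfrac12$, giving $Tb=\tilde{\mathcal O}\big((L^{3/2}+\delta^3)\kappa_y^{7/2}\epsilon^{-3}\big)$, i.e. the claimed $\tilde{\mathcal O}(L^{1.5}\kappa_y^{3.5}\epsilon^{-3})$ once $\delta$ is treated as a constant.

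The main obstacle will be the power-counting through $M_s''$, because $\psi^{1/3}$ and $c_2'$ carry the factor $\max\{1,\kappa_y^{1-\nu}\}$ that changes regime at $\nu=1$, and the dominant summand of $M_s''$ switches between the initialization term and the variance term at $\nu=\tfrac12$. I would therefore track both regimes explicitly, verify that the optimizing value $\nu=\tfrac12$ lies in the range where the stated scalings remain valid (in particular confirming $\kappa_y^{1-\nu}>1$ there, so that the $\psi$- and $c_2'$-dependent terms are the correct ones), and check that the $\epsilon^{-2}$ contribution of the $T^{-1/2}$ term is genuinely dominated by the $\epsilon^{-3}$ term for small $\epsilon$. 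A secondary but essential point is to keep the comparison honest: the metric in \cref{JMRL THM12} is $\mathbf G^t=H_t^{-1}\nabla\phi(x_t)$ rather than $\nabla\phi(x_t)$, so I rely on the already-restated \cref{JMRL THM12 v2}, in which the factor $1/s$ relating $\nabla(\max_y\cL_s)=s\nabla\phi$ to $\nabla\phi$ is made explicit, ensuring that no hidden preconditioner or scaling artificially deflates the reported complexity.
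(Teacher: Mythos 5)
Your proposal is correct and follows essentially the same route as the paper's own proof: invoke the rescaled guarantee of \cref{JMRL THM12 v2}, plug in the parameter calibration of \cref{eq: parameter JMLR} with $b=\kappa_y^\nu$ and the scale $s$ from \eqref{eq: choice of s}, track the dominant initialization term $\Theta(\kappa_y^{3-\nu}/L)$ and variance term $\tilde\Theta(\delta^2\kappa_y^{4-3\nu}/L^2)$ in $M_s''$, derive $T=\tilde{\mathcal{O}}((M_s''/s^2)^{3/2}\epsilon^{-3})$ from the dominant $T^{-1/3}$ term, and balance $L^{3/2}\kappa_y^{3+\nu}$ against $\delta^3\kappa_y^{9/2-2\nu}$ at $\nu=\tfrac12$ to get $\tilde{\mathcal{O}}(L^{1.5}\kappa_y^{3.5}\epsilon^{-3})$ — all exponents match the paper's computation in \eqref{eq: bT bound JMLR}. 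Your explicit verification that $s\leq 1$ is equivalent to the standing hypothesis $\kappa_y^\nu>\tfrac{8}{81L\mu_y}$, and that the proposed $s$ saturates $\kappa_y^\nu=\tfrac{8}{81L_s\mu_s}$ with equality, is a small addition the paper leaves implicit.
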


\begin{proof}
It follows from \cref{JMRL THM12 v2} that
that
\begin{equation}\label{eq: convergence result JMLR 2}
\frac{1}{T}\sum_{t=1}^{T}\mE[\|\nabla\phi(x_t)\| ]
\leq 
\frac{\sqrt{2M_s''}}{s}\Big(\frac{{\psi}^{1/6}}{T^{1/2}} + \frac{1}{T^{1/3}}\Big).
\end{equation}
Based on \cref{eq: parameter JMLR}, let ${\psi}=\Theta(\max\{1, L_s^6\})$; thus, $\frac{{\psi}^{1/6}}{T^{1/2}} \leq \frac{1}{T^{1/3}}$ when $T$ is large enough. Therefore, for all sufficiently small $\epsilon>0$, $\frac{1}{T^{1/3}}\leq \frac{s}{\sqrt{2M_s''}}\cdot \frac{\epsilon}{2}$ implies that $\frac{\psi^{1/6}}{T^{1/2}}\leq \frac{s}{\sqrt{2M_s''}}\cdot\frac{\epsilon}{2}$, and we get
\begin{equation}\label{eq: convergence eq JMLR}
    \frac{1}{s}\frac{\sqrt{2M_s''}}{T^{1/3}}\leq\frac{\epsilon}{2} \implies \min_{t\in\{0,\ldots,T\}}\mE[\|\nabla\phi(x_t)\|]=\epsilon.
\end{equation}
Moreover, note that \cref{eq: choice of s} implies $\kappa_y\leq \frac{8}{81 L_s\mu_s}$; thus, we can  choose $\tau,\sigma,b$ as in \cref{eq: parameter JMLR} which satisfy
\begin{equation*}
    \begin{aligned}
    &\sigma=\Theta(b\mu_s),\quad
        \tau^{-1} = \Theta \big(\frac{\kappa_y^3}{bL_s }\big), \quad
    b = \kappa_y^\nu.
\end{aligned}
\end{equation*}
Recall that $c'_{1}$, $c'_{2}$ chosen as in \eqref{eq:c-choice} and $\psi$ chosen as in \eqref{eq:psi-choice} satisfy all the required conditions in \cref{JMRL THM12 v2}; hence,
$M_s''=\frac{s(\phi(x_0)-\phi_*)}{\tau p} + \frac{9L_s^2\Delta_0}{p\sigma\mu_s} + \frac{2{\psi}^{1/3}\delta_s^2}{b\mu_s^2 p^2} + \frac{2({c'_{1}}^2+{c'_{2}}^2)\delta_s^2p^2}{b\mu_s^2}\ln({\psi}+T)$ implies that
\begin{equation}
\begin{aligned}
    \frac{1}{s^2}M_s''  &= \Theta \Big(
    \frac{\kappa_y^3}{sbL_s}
    +  
    \frac{\kappa_y^2}{s^2b} + \frac{\delta_s^2}{s^2b\mu_s^2}\max\{1, L_s^2\} +  \frac{\kappa_y^2L_s^2\delta_s^2}{s^2b}\ln({\psi}+T)
    \Big) \\
    & = \tilde{\Theta} \Big(
    \frac{\kappa_y^3}{s^2bL}
    +  
    \frac{\kappa_y^2}{s^2b} + \frac{\delta^2}{s^2b\mu_y^2}\max\{1, s^2L^2\} +  \frac{s^2\kappa_y^2L^2\delta^2}{b}
    \Big).
\end{aligned}
\end{equation}
Moreover, to satisfy \cref{eq: convergence eq JMLR}, one needs to choose $T\geq \Theta(\big(\frac{1}{s^2}M''_s\big)^{3/2} \frac{1}{\epsilon^3})$.  Since the total oracle complexity is $bT$, we obtain that
\begin{equation}\label{eq: bT bound with s JMLR}
\begin{aligned}
         & bT \geq\tilde{\Theta}\left(\frac{1}{\epsilon^3} \big(
    \frac{\kappa_y^{9/2-\nu/2}}{s^3L^{3/2}}
    +  
    \frac{\kappa_y^{3-\nu/2}}{s^3} + \frac{\delta^3\kappa^{-\nu/2}}{s^3\mu_y^3}\max\{1, s^3L^3\} +  s^3\kappa_y^{3-\nu/2}L^3\delta^{3}
    \big) \right).
\end{aligned}
\end{equation}
From \cref{eq: choice of s}, i.e., $s^2 = \frac{8}{81}\frac{1}{L^2}\kappa_y^{1-\nu}$, it follows that
\begin{equation}
    \label{eq: bT bound JMLR}
   bT \geq \tilde{\Theta}\left(\frac{1}{\epsilon^3} \big(
    L^{3/2}\kappa_y^{\nu+3}
    +  
    L^{3}\kappa_y^{\nu+3/2} + \delta^3\kappa^{\nu+3/2}\max\{1, \kappa_y^{\frac{3-3\nu}{2}}\} +  \kappa_y^{9/2-2\nu}\delta^{3}
    \big) \right).
\end{equation}
When $\nu\geq1$,  we have 
\begin{equation*}
    bT \geq \tilde{\Theta}\big(L^{3/2}\kappa_y^{\nu+3} 
    +  
    L^{3}\kappa_y^{\nu+3/2} + \delta^3\kappa_y^{3-\nu/2} +  \kappa_y^{9/2-2\nu}\delta^{3}
    \big),
\end{equation*}
the optimal value is achieved at $\nu=1$ and $bT \geq \Theta(\frac{L^{1.5}\kappa_y^4}{\epsilon^3})$; when $\nu<1$, we have 
\begin{equation*}
    bT \geq \tilde{\Theta}\big( L^{3/2}\kappa_y^{\nu+3} 
    +  
    L^{3}\kappa_y^{\nu+3/2} + \delta^3\kappa_y^{\nu+3/2} +  \kappa_y^{9/2-2\nu}\delta^{3}
    \big),
\end{equation*}
the optimal value is achieved at $\nu=\frac{1}{2}$ and $bT \geq\tilde{\Theta}(\frac{L^{1.5}\kappa_y^{3.5}}{\epsilon^3})$, which completes the proof.
\end{proof}
\begin{remark}
In \cite{huang2022accelerated}, Huang \emph{et al.} claims the oracle complexity of $\tilde{\mathcal{O}}(\kappa_y^3\epsilon^{-3})$ for $\nu=3$, and $\tilde{\mathcal{O}}(\kappa_y^{2.5}\epsilon^{-3})$ for $\nu=4$. However, our analysis leading to \cref{eq: bT bound JMLR} demonstrates that the complexities would be ${\tilde{\mathcal{O}}(\frac{L^{1.5}\kappa^6_y}{\epsilon^3})}$ for $\nu =3$ and  $\tilde{\mathcal{O}}(\frac{L^{1.5}\kappa^7_y}{\epsilon^3})$ for $\nu=4$.
\end{remark}

\end{document}